\newif\ifpix \pixtrue
\numberwithin{equation}{section} 
\newtheorem{thm}{Theorem}[section]  
\newtheorem{cor}[thm]{Corollary}    % Corollary environment
\newtheorem{lem}[thm]{Lemma}        % Lemma environment
\newtheorem{prop}[thm]{Proposition}  % Proposition environment
\theoremstyle{definition} \newtheorem{dfn}[thm]{Definition}
\newtheorem{assume}[thm]{Assumption}
\newtheorem{prob}[thm]{Problem}
\newtheorem*{claim*}{Claim} 
\newtheorem*{thm*}{Theorem}
\newtheorem{claim}[thm]{Claim} 
\newtheorem{rmk}[thm]{Remark}
\newcommand{\cG}{\mathcal{G}}
\newcommand{\wh}{\widehat}
\newcommand{\wt}{\widetilde}
\newcommand{\cD}{\mathcal{D}}
\newcommand{\cT}{\mathcal{T}}
\newcommand{\cQ}{\mathcal{Q}}
\newcommand{\cE}{\mathcal{E}}
\newcommand{\rd}{{\mathrm d}}
\newcommand{\p}{\partial}
\newcommand{\dbar}{\overline{\partial}}
\DeclareMathOperator{\im}{im}
\DeclareMathOperator{\Ker}{ker}
\DeclareMathOperator{\SO}{SO} 
\newcommand{\CC}{\mathbb{C}}
\newcommand{\RR}{\mathbb{R}} 
\newcommand{\ZZ}{\mathbb{Z}}
\newcommand{\bN}{\mathbb{N}}
\newcommand{\bCP}{\mathbb{CP}} 
\newcommand{\bP}{\mathbb{P}}
\newcommand{\cH}{\mathcal{H}}
\newcommand{\cL}{\mathcal{L}}
\newcommand{\cF}{\mathcal{F}}
\newcommand{\cU}{\mathcal{U}}
\newcommand{\cB}{\mathcal{B}}
\newcommand{\cM}{\mathcal{M}}
\newcommand{\cV}{\mathcal{V}}
\newcommand{\cZ}{\mathcal{Z}}
\newcommand{\cW}{\mathcal{W}}
\newcommand{\cP}{\mathcal{P}}
\renewcommand{\phi}{\varphi}
\newcommand{\ve}{\varepsilon}
\newcommand{\defeq}{\mathrel{\mathop:}=}
\renewcommand{\geq}{\geqslant}
\renewcommand{\leq}{\leqslant}
\DeclareMathOperator\SU{SU}
\DeclareMathOperator\id{id}
\author{Joel Fine}
\address{Joel Fine: D\'epartment de math\'ematique, Universit\'e libre de Bruxelles (ULB), CP 218, Boulevard du Triomphe, B-1050 Bruxelles, Belgium}
\email{joel.fine@ulb.ac.be}
\author{Jason D. Lotay}
\address{Jason D. Lotay: Department of Mathematics, University College, London WC1E 6BT, UK}
\email{j.lotay@ucl.ac.uk}
\author{Michael Singer}
\address{Michael Singer: Department of Mathematics, University College, London WC1E 6BT, UK}
\email{michael.singer@ucl.ac.uk}
\date{\today}
\title{The space of hyperk\"ahler metrics on a $4$-manifold with boundary}
\begin{document}

\begin{abstract} % _(_
Let $X$ be a compact 4-manifold with boundary. We study the space of hyperk\"ahler triples $\omega_1,\omega_2,\omega_3$ on $X$, modulo diffeomorphisms which are the identity on the boundary. We prove that this moduli space is a smooth infinite-dimensional manifold and describe the tangent space in terms of triples of closed anti-self-dual 2-forms. We also explore the corresponding boundary value problem: a hyperk\"ahler triple restricts to a closed framing of the bundle of 2-forms on the boundary; we identify the infinitesimal deformations of this closed framing that can be filled in to hyperk\"ahler deformations of the original triple. Finally we study explicit examples coming from gravitational instantons with isometric actions of $\SU(2)$.
\end{abstract} % _)_
\maketitle

\setcounter{tocdepth}{1}
\tableofcontents

\section{Introduction}

In this paper we study hyperk\"ahler metrics on a compact $4$-manifold
$X$ with boundary $\p X = Y$.  Recall that a Riemannian
manifold $(M,g)$, of dimension $4n$, is {\em hyperk\"ahler} if there
exists a triple $(J_1,J_2, J_3)$ of orthogonal complex structures
which satisfy the quaternionic relations
\begin{equation*}
J_1^2 = J_2^2 = J_3^2 = J_1J_2J_3 = -1
\end{equation*}
and such that the corresponding triple $\omega=(\omega_1,\omega_2,\omega_3)$
of $2$-forms is closed, where $\omega_i(\cdot,\cdot) =
g(J_i\,\cdot,\cdot)$.  Thus $(M,g)$ is K\"ahler with respect to each of
the complex structures $J_i$.

When $n=1$, i.e.\ in 4 dimensions, the $\omega_i$ form a flat basis of the bundle of
self-dual $2$-forms $\Lambda^2_+$ and the quaternionic relations
imply
\begin{equation}
\label{omegas_orthogonal}
\omega_i\wedge \omega_j = 2\delta_{ij}\,\mu
\end{equation}
where the volume element $\mu$ is also determined by the triple:
\begin{equation}
\label{hk_volume_form}
\mu = \frac{1}{6}(\omega_1^2 + \omega_2^2 + \omega_3^2).
\end{equation}
Conversely, given a triple of {\em symplectic forms} $\omega =
(\omega_1,\omega_2,\omega_3)$ satisfying \eqref{omegas_orthogonal} and
\eqref{hk_volume_form} we may use the span of the $\omega_i$ to {\em define} a rank 3 sub-bundle of
$\Lambda^2T^*$ on which the natural quadratic form is
positive-definite. By the correspondence between such sub-bundles and
conformal structures in $4$ dimensions, we see that our triple
$\omega$  defines first a conformal structure, then a metric by
declaring $\mu$ in \eqref{hk_volume_form} to be the metric volume form. The conditions \eqref{omegas_orthogonal} and \eqref{hk_volume_form} together with $\rd \omega =0$ then imply that this metric is hyperk\"ahler (cf.\ \cite{Donaldson,Hitchin} and Lemma~\ref{l1.12.8.15} below).

We propose to study hyperk\"ahler metrics in terms of the
corresponding triples.  Our work is inspired in part by
the following local thickening result of
Bryant~\cite{Bryant}.  Suppose that $\iota: Y\subset X$ is a real
hypersurface in the hyperk\"ahler manifold $(X,\omega)$. The pull-back $\gamma
= \iota^*\omega$ is then a {\em closed framing}\footnote{For any closed orientable $3$-manifold $Y$, framings of $\Lambda^2_Y$ exist and a result of Gromov \cite[p.~182]{Gromov} shows that any framing can be deformed to become a closed framing.} of the bundle
$\Lambda^2_Y$ of 2-forms on $Y$.  Conversely, Bryant showed that if $\gamma$ is a
real-analytic closed framing of $\Lambda^2_Y$, then there is a
hyperk\"ahler $4$-manifold $(X,\omega)$ and an embedding $\iota: Y\to
X$ such that $\gamma = \iota^*\omega$. This is a consequence of the Cauchy--Kowalevski theorem applied to a certain evolution equation \eqref{flow.eq} and the reader is referred
to Theorem~\ref{bryant.thm} below for a stronger and more precise statement.  It
has been pointed out to us by Claude LeBrun that the same result can be proved by
twistor-theoretic considerations \cite[Theorem 3.6.I]{Claude_thesis} using 
embeddability results for real-analytic CR manifolds.

Suppose now that $(X,\omega)$ is a compact hyperk\"ahler manifold with boundary
$\partial X = Y$ and inclusion map $\iota:Y\subset X$.  As before, $\gamma =\iota^*\omega$ is a
closed framing of $\Lambda^2_Y$ and our boundary value problem is to
determine which small deformations of $\gamma$ as a closed framing of
$\Lambda^2_Y$ arise as boundary values of hyperk\"ahler deformations
of $\omega$.

There is a well-known analogue of this problem in the context of
self-dual and anti-self-dual (ASD) Einstein metrics of negative scalar
curvature. Here one is given a conformally compact ASD Einstein metric $g$
on the interior of $X$ with conformal infinity $h$, and the problem is
to determine which small deformations of $h$ arise as boundary values
of conformally compact ASD Einstein metrics near $g$. If $X$ is the
$4$-ball, and $g$ is the hyperbolic metric, we are in the realm of
LeBrun's positive frequency conjecture \cite{LeB91}, proved by Biquard in
\cite{Biquard}.  We shall see that in the present context of hyperk\"ahler
metrics, there is a similar positive frequency condition that needs to be satisfied by
deformations of the closed framing $\gamma$ if they are to arise as boundary values of
hyperk\"ahler deformations of $\omega$.

\subsection{Overview of results}

\subsubsection{Formal picture}

We begin with a formal, non-technical discussion of the set-up in
order to motivate the statements of our main theorems.
Let\footnote{Unless indicated otherwise, all functions, metrics, forms etc.~on $X$ are smooth up
  to (and including) the boundary of $X$.}
\begin{equation}\label{e1.10.2.16}
\cF = \{ \omega = (\omega_1,\omega_2,\omega_3) \in  \Omega^2(X)\otimes
\RR^3: \omega_1^2+\omega_2^2+\omega_3^2 >0\}.
\end{equation}
The `gauge group' $\cG_0$ of orientation-preserving diffeomorphisms of $X$ which are the
identity on $\p X$ acts on $\cF$ by pull-back.

The space of hyperk\"ahler triples $\cH$ on $X$ is defined as the subset
\begin{equation}\label{e2.10.2.16}
\cH = \{\omega \in \cF: \rd \omega =0,\; Q(\omega) =0\},
\end{equation}
where $Q$ is the $3\times 3$ matrix whose entries are
\begin{equation}\label{Q0}
Q(\omega)_{ij} 
=
\frac{\omega_i \wedge \omega_j}
{\frac{1}{3}(\omega_1^2 + \omega_2^2 +\omega_3^2)}
-
\delta_{ij}.
\end{equation}
This follows by combining \eqref{omegas_orthogonal} and
\eqref{hk_volume_form}. 

By linearizing $Q$ at a  hyperk\"ahler triple $\omega$, we see that
\begin{equation}\label{e21.12.8.15}
T_\omega \cH  = \{(\theta_1,\theta_2,\theta_3)\in
\Omega^2(X)\otimes\RR^3 : \rd\theta_i=0 \text{ and }
s^2_0(\theta_i,\omega_j)=0\},
\end{equation}
where $(\cdot,\cdot)$ denotes the pointwise inner product on
$2$-forms and $s^2_0(A)$ denotes the symmetric trace-free part of the
matrix $A$.

The group $\cG_0$ of diffeomorphisms acts on $\cH$ so we would like
to construct the moduli space $\cM = \cH/\cG_0$, and in particular to
understand its local structure. 

Continuing formally, we note that the tangent space to the
$\cG_0$-orbit through $\omega$ is the infinite-dimensional space
\[
\cB_\omega = \{ \cL_v \omega : v \in C^\infty(X,TX), \text{ $v$ vanishes on
} \p X\}.
\]
If $\omega$ is a closed triple, $\cL_v\omega = \rd(\iota_v\omega)$ by
Cartan's formula.  Therefore we define
\begin{equation}\label{e11.10.2.16}
L: C^\infty(X,TX) \rightarrow \Omega^2(X)\otimes
\RR^3,\quad Lv= \rd(\iota_v\omega).
\end{equation}
Denote by $L^*$ the formal adjoint of $L$. This operator
\begin{equation}\label{e12.10.2.16}
L^*: \Omega^2(X)\otimes \RR^3 \longrightarrow C^\infty(X,TX)
\end{equation}
is a composite
\begin{equation}\label{e13.10.2.16}
\Omega^2(X)\otimes \RR^3 \stackrel{\rd^*}{\longrightarrow}
\Omega^1(X)\otimes \RR^3 \stackrel{\pi}{\longrightarrow} \Omega^1(X)
\end{equation}
where $\pi$ is zeroth order, i.e.\  is given by a linear
bundle map.

Continuing formally, we should expect
\begin{equation}\label{e12.12.8.15}
T_{[\omega]}\cM =  T_\omega\cH/\cB_\omega = T_{\omega}\cH \cap \ker L^*
\end{equation}
and indeed we shall establish a suitable version of this statement in \S\S2--3.

Written this way, the infinitesimal geometry of the moduli space is
not so clear.  However, there are two subspaces of \eqref{e12.12.8.15}
that can be written down by hand, and it turns out that together 
these give all infinitesimal deformations.

The
first family consists of triples $\theta\in\cZ^2_-(X)\otimes\RR^3$ of closed anti-self-dual (ASD)
$2$-forms. Such a triple clearly satisfies the conditions
\eqref{e21.12.8.15}: it is closed by hypothesis, and 
satisfies $(\theta_i,\omega_j)=0$ because the self-dual
and anti-self-dual $2$-forms are pointwise orthogonal.  Being closed
and ASD also means that $\theta$ is coclosed, so it also lies in the
kernel of $L^*$ by  \eqref{e12.10.2.16} and \eqref{e13.10.2.16}.

The second family of deformations comes from diffeomorphisms which `move the boundary'.  More precisely,
suppose that $X \subset (X',\omega')$ is a domain in a larger
hyperk\"ahler 4-manifold and that $\omega$ is the restriction of the
hyperk\"ahler triple $\omega'$ from $X'$. Given a smooth map $f \colon
X \to X'$ which is a diffeomorphism onto its image, $f^*(\omega')$
gives a new hyperk\"ahler triple on $X$. On the infinitesimal level,
these give deformations of the form $\theta=\cL_v\omega = L(v)$ where $v$ is any
vector field on $X$, not necessarily vanishing along $\p X$.  By
naturality of the equations, $\theta$ 
automatically lies in $T_\omega\cH$; if we choose $v$ so that
$L^*Lv=0$, then $\theta$ will also lie in the `gauge-fixed' tangent
space \eqref{e12.12.8.15}.  In other words, if we define
\[
\cW = \{ v\in C^{\infty}(X,TX) : L^*Lv = 0\},
\]
then $L(\cW) \subset T_\omega\cH \cap \ker L^*$.

Observe that (in contrast to the case where $X$ is a compact manifold
without boundary) both of these families of deformations are
infinite-dimensional (cf.\ \S\ref{sect5}).

\subsubsection{Local structure of $\cM$}

Our first result makes the above statements precise, giving Banach
manifold structures on the infinite-dimensional spaces we have discussed.  Denote by $H^s$ the Sobolev
space of functions with $s$ derivatives in $L^2$.  We shall fix $s>4$
so that our Sobolev functions have good multiplicative properties.
Then we have
`finite-regularity' versions of the above spaces: $\cF^s$ is the space
of triples of $2$-forms with coefficients in $H^s$, $\cG^{s+1}_0$ is
the group of orientation-preserving diffeomorphisms which are the identity on $\p X$ given by mappings in $H^{s+1}$ and so
on.   In particular $\cG^{s+1}_0$ acts on $\cH^s$ and leads to the
regularity-$s$ moduli space $\cM^s=\cH^s/\cG^{s+1}_0$.  Our first result gives
information about the local structure of $\cM^s$ at a point $[\omega]$
represented by a {\em smooth} triple $\omega$.

\begin{thm}\label{smooth_moduli_space}
Let $\omega$ be a smooth hyperk\"ahler triple on $X$ and fix $s>4$.
\begin{enumerate}
\item[(i)] (Slice theorem.) A neighbourhood of $[\omega]$ in
  $\cF^s/\cG^{s+1}_0$ is homeomorphic to 
 a neighbourhood of $0$ in
$$
T_{\omega}\cF^s \cap \ker L^*=H^s(X,\Lambda^2\otimes \RR^3)\cap\ker L^*.
$$
\item[(ii)]  A neighbourhood of $[\omega]$ in $\cM^s =
  \cH^s/\cG_0^{s+1}$ is homeomorphic to a neighbourhood of $0$ in
\begin{equation}\label{e31.11.2.16}
[\cZ^2_-(X)\otimes \RR^3 \cap H^s]+ L(\cW^{s+1})  \subset
H^s(X,\Lambda^2\otimes \RR^3).
\end{equation}
Here $H^s(X,E)$ denotes the space of sections of the bundle $E$ with
coefficients in $H^s$ and $\cZ^2_-(X)$ is the space of closed anti-self-dual $2$-forms on $X$. \label{all_deformations}
\end{enumerate}
\end{thm}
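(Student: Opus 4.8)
The plan is to prove (i) by a slice argument resting on the ellipticity of $L^*L$, and then to deduce (ii) by cutting the slice down with the linearised hyperk\"ahler equations and matching the resulting tangent model with \eqref{e31.11.2.16}; the one genuinely new ingredient is the final identification.

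\emph{Part (i): the slice theorem.} First I would check that $L$ of \eqref{e11.10.2.16} has injective principal symbol: if $\xi\neq 0$ and $\xi\wedge\iota_v\omega_i=0$ for $i=1,2,3$, then each $1$-form $\iota_v\omega_i=(J_iv)^\flat$ is a multiple of $\xi$, and the quaternionic relations force $v=0$ (a complex structure has no real eigenvalue). Hence $L^*L$ is elliptic on vector fields, and with the Dirichlet condition $v|_{\p X}=0$ — the one adapted to $\cG_0$ — it is a self-adjoint elliptic boundary value problem; its kernel is trivial, since $Lv=0$ together with $v|_{\p X}=0$ makes $v$ a Killing field (it preserves $\omega_1,\omega_2,\omega_3$, hence the metric) vanishing along a hypersurface, hence identically zero. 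So $L^*L\colon H^{s+1}_0(X,TX)\to H^{s-1}$ is an isomorphism, and $\psi=L\bigl((L^*L)^{-1}L^*\psi\bigr)+\bigl(\psi-L(L^*L)^{-1}L^*\psi\bigr)$ gives the $L^2$-orthogonal Hodge-type splitting
$$
H^s(X,\Lambda^2\otimes\RR^3)=L\bigl(H^{s+1}_0(X,TX)\bigr)\ \oplus\ \bigl(\ker L^*\cap H^s\bigr),
$$
orthogonality being the identity $\langle Lv,\psi\rangle=\langle v,L^*\psi\rangle$, valid because $v$ vanishes on $\p X$. The map $(\phi,\theta)\mapsto\phi^*(\omega+\theta)$ on $\cG^{s+1}_0\times\{\theta\in\ker L^*\cap H^s:\text{ small}\}$ then has derivative $(v,\theta)\mapsto Lv+\theta$ at $(\id,0)$, an isomorphism onto $T_\omega\cF^s$ by the splitting; the inverse function theorem, with the triviality of the isotropy of $\omega$, yields (i).

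\emph{Part (ii): reduction to a tangent model.} Inside the slice $\{\omega+\theta:\theta\in\ker L^*\cap H^s\}$ the subset $\cH^s$ is cut out, near $\omega$, by $\rd\theta=0$ and $Q(\omega+\theta)=0$, whose linearisation at $\theta=0$ is $\theta\mapsto\bigl(\rd\theta,\ s^2_0((\theta_i,\omega_j))\bigr)$, valued in (exact $H^{s-1}$ $3$-form triples)$\times$(trace-free symmetric $3\times3$ matrices of $H^s$ functions). This is a split surjection: $\rd\theta$ is prescribed by Hodge theory on $X$ while remaining in $\ker L^*$, and the residual freedom — closed elements of $\ker L^*$ — suffices to prescribe $s^2_0((\theta_i,\omega_j))$ since $\omega_1,\omega_2,\omega_3$ pointwise span $\Lambda^2_+$ (this is the content of the analysis promised in \S\S2--3). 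The implicit function theorem then makes $\cM^s$ a smooth Banach submanifold near $[\omega]$, modelled on its complemented tangent space
$$
V:=T_\omega\cH\cap\ker L^*=\{\theta\in H^s(X,\Lambda^2\otimes\RR^3):\ \rd\theta=0,\ L^*\theta=0,\ s^2_0((\theta_i,\omega_j))=0\},
$$
so a neighbourhood of $[\omega]$ in $\cM^s$ is homeomorphic to a neighbourhood of $0$ in $V$. It remains to identify $V$ with $[\cZ^2_-(X)\otimes\RR^3\cap H^s]+L(\cW^{s+1})$.

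\emph{Part (ii): the model space.} The inclusion ``$\supseteq$'' is the computation already indicated: a closed ASD triple is closed, is pointwise orthogonal to $\omega$, and is coclosed, so it lies in $V$; and for $v\in\cW^{s+1}$, $Lv=\cL_v\omega$ lies in $T_\omega\cH$ by naturality of the hyperk\"ahler equations and in $\ker L^*$ by the definition of $\cW$. The content is the reverse inclusion. Given $\theta\in V$ I would seek $v\in\cW^{s+1}$ with $\theta-Lv$ anti-self-dual: since $\theta$ is closed and $Lv$ exact, $\zeta:=\theta-Lv$ is automatically closed, hence a closed ASD triple as soon as $(Lv)^+=\theta^+$; and since both $\cZ^2_-(X)\otimes\RR^3$ and $\theta$ lie in $\ker L^*$, requiring $L^*Lv=0$ is the same as requiring $\zeta\in\ker L^*$. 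So everything reduces to solving the overdetermined first-order equation $(Lv)^+=\theta^+$ together with the gauge condition $L^*Lv=0$, and it is precisely the constraints $\rd\theta=0$ and $s^2_0((\theta_i,\omega_j))=0$ defining $V$ that render this coupled problem solvable. I would assemble it into a single elliptic boundary value problem on $X$ — for $v$, with an auxiliary unknown if needed — verify the Lopatinski--Shapiro condition, and show the cokernel vanishes; the candidate obstructions satisfy a homogeneous adjoint problem and are eliminated by a Weitzenb\"ock identity using that the $\omega_i$ are parallel, together with $\p X\neq\varnothing$, which removes the obstruction present when $X$ is closed. This elliptic existence step is the main obstacle; the remainder is an exercise with the inverse function theorem.
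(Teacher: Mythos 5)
You have hit exactly the trap that the paper goes out of its way to flag in \S\ref{slice_theorem_section}. In part (i) you apply the inverse function theorem to the map $(\phi,\theta)\mapsto\phi^*(\omega+\theta)$ on $\cG_0^{s+1}\times(\ker L^*\cap H^s)$, but this map is not $C^1$: differentiating in $\phi$ produces $\cL_v(\omega+\theta)$, and since $\theta$ is only $H^s$ the term $\cL_v\theta$ lies in $H^{s-1}$ — the linearised action differentiates $\theta$ while the action itself does not — so the candidate derivative does not land in $\cF^s$ and the Banach IFT does not apply as stated. The paper's (Ebin's) fix is to apply the IFT instead to $F(\phi,\chi)=\phi^*\omega+\chi$ on the ``normal bundle'' $\cE^s\subset\cG_0^{s+1}\times\cF^s$ whose fibre over $\phi$ is $\phi^*(\ker L^*)$: only the \emph{smooth} triple $\omega$ is pulled back, so $F$ is smooth, at the cost of having to prove that $\cE^s$ is a Banach vector bundle (smooth dependence on $\phi$ of the projection $P_\phi=L_\phi(L_\phi^*L_\phi)^{-1}L_\phi^*$). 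Your symbol computation, the injectivity of $L$ from Killing fields vanishing on $Y$, and the splitting $\cF^s=\im L\oplus\ker L^*$ via the Dirichlet problem for $L^*L$ all agree with the paper; but note also that the ``meets each nearby orbit exactly once'' part of (i) is obtained from properness of the $\cG_0^{s+1}$-action (Ebin--Palais), not merely from triviality of the isotropy of $\omega$.

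In part (ii) two essential steps are asserted rather than proved. First, the claim that the linearised constraints $(\rd\theta,\,s^2_0((\theta_i,\omega_j)))$ form a split surjection off the slice is exactly the hard analytic content; imposing $\rd\theta=0$ directly as an equation is awkward, and the paper instead parametrises closed triples as $\chi+\rd a$ with $\chi\in\cH^2_\bot\otimes\RR^3$, $\rd^*a=0$, $a_\bot=0$, replaces $L^*\theta=0$ by $\Delta(J\cdot a)=0$, and proves the resulting mixed-order operator $\cQ$ is a submersion using surjectivity of a coupled Dirac operator (which relies on $\cD\cD^*=\nabla^*\nabla$, the curvature term vanishing on a hyperk\"ahler $4$-manifold), plus an interior regularity theorem. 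Second, for the identification of the tangent space you propose solving the coupled system $(Lv)_+=\theta_+$, $L^*Lv=0$ as a single elliptic boundary value problem with a Lopatinski--Shapiro check and a Weitzenb\"ock argument killing the cokernel — but none of this is carried out, and it is in any case unnecessary: the paper's Lemma~\ref{l1.12.2.16} shows $L^*L=L^*L_+$ and that $\rd\theta=0$ together with $L^*\theta=0$ forces $L^*\theta_+=0=L^*\theta_-$, so \emph{any} solution of $L_+v=\theta_+$ automatically satisfies $L^*Lv=0$; one then only needs surjectivity of $D=\rd^*+\rd_+$ with no boundary condition (Lemma~\ref{l11.13.2.16}, again from $DD^*=\nabla^*\nabla$ and a Dirichlet problem), the algebraic condition $s^2_0(\omega,\theta)=0$ guaranteeing that $\theta_+$ lies in the image of $\Omega^0\oplus\Omega^2_+\hookrightarrow\Omega^2_+\otimes\Omega^2_+$. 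Until you supply that surjectivity (or an equivalent existence statement), the reverse inclusion in \eqref{e31.11.2.16} — the heart of (ii) — remains unproven.
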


In particular, for each (sufficiently high) degree of regularity $s$,
each smooth point $[\omega]$ of $\cM^s$ has a neighbourhood which is
homeomorphic to an open subset of a Banach space.  Note, however, that
 triples $\wh{\omega}$ near $\omega$ in the slice
$L^*(\wh{\omega}-\omega) =0$ are all smooth in the interior of $X$.

\subsubsection{The results of Bryant and Cartan}

Bryant's `thickening' result mentioned above suggests the formulation
of a natural boundary value problem for hyperk\"ahler metrics.
Namely:
\begin{prob}\label{pr1.11.2.16}
Given a closed framing $\gamma$ of $\Lambda^2_Y$, does there exist a
hyperk\"ahler triple $\omega$ on $X$ with $\iota^*\omega = \gamma$?
\end{prob}
We consider, and give an answer to, the following easier question:
\begin{prob}\label{pr2.11.2.16}
Given a hyperk\"ahler triple $\omega$ on $X$, with induced boundary
framing $\gamma$ of $\Lambda^2_Y$, which nearby closed framings of
$\Lambda^2_Y$ also arise from hyperk\"ahler triples on $X$?
\end{prob}
Before describing our results, let us discuss the relation between
these boundary value problems and the thickening result of Bryant
(which he attributes to Cartan).  A statement of this result is as follows:

\begin{thm}[Bryant, \protect{\cite[Theorem~1]{Bryant}}]\label{bryant.thm}
Let $Y$ be a closed 3-manifold. Given a closed framing $\gamma$ of $\Lambda^2_Y$ which is real-analytic for some analytic structure on $Y$ there is an essentially unique embedding $f \colon Y \to (X,\omega)$ into a hyperk\"ahler 4-manifold for which $f^*\omega = \gamma$. Here, ``essentially unique'' means that if $f' \colon Y \to (X',\omega')$ is another such embedding then there is an isometry $\phi$ from a neighbourhood of $f(Y) \subset X$ to a neighbourhood of $f'(Y) \subset X'$ such that $f' = \phi \circ f$. 
\end{thm}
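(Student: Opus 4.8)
The plan is to take $X=Y\times(-\ve,\ve)$ and build $\omega$ on it by reducing the hyperk\"ahler equations to a Cauchy problem governed by \eqref{flow.eq}, to which the Cauchy--Kowalevski theorem applies; the uniqueness statement will then come from the uniqueness in Cauchy--Kowalevski. Write $t$ for the coordinate on $(-\ve,\ve)$. After orienting $Y$ appropriately we may express the given framing as $\gamma_i=\tfrac12\epsilon_{ijk}\,e_0^j\wedge e_0^k$ for a coframing $(e_0^1,e_0^2,e_0^3)$ of $Y$, determined by $\gamma$ up to overall sign. I would look for $\omega$ of the form
\[
\omega_i=\rd t\wedge e^i(t)+\tfrac12\epsilon_{ijk}\,e^j(t)\wedge e^k(t),\qquad e^i(0)=e_0^i,
\]
for a one-parameter family of coframings $(e^1(t),e^2(t),e^3(t))$ of $Y$. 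A pointwise computation shows that \emph{any} such triple satisfies the algebraic hyperk\"ahler relations \eqref{omegas_orthogonal}--\eqref{hk_volume_form}, with volume form $\mu=\rd t\wedge e^1(t)\wedge e^2(t)\wedge e^3(t)$, so by Lemma~\ref{l1.12.8.15} the only remaining condition is $\rd\omega_i=0$. Setting $\gamma_i(t)=\tfrac12\epsilon_{ijk}e^j(t)\wedge e^k(t)$ for the induced framing of $\Lambda^2_Y$ at time $t$, one computes
\[
\rd\omega_i=\rd t\wedge\bigl(\partial_t\gamma_i(t)-\rd_Y e^i(t)\bigr)+\rd_Y\gamma_i(t),
\]
so that $\rd\omega_i=0$ is equivalent to the constraint that $\gamma(t)$ be a closed framing of $\Lambda^2_Y$ together with the evolution equation $\partial_t\gamma_i(t)=\rd_Y e^i(t)$, which is \eqref{flow.eq}.

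The key point is that this evolution equation is of Cauchy--Kowalevski type. Since $\partial_t\gamma_i=\epsilon_{ijk}(\partial_t e^j)\wedge e^k$ and, for a fixed coframing $e$, the linear map $\dot e\mapsto(\epsilon_{ijk}\dot e^j\wedge e^k)_i$ is an isomorphism, the equation can be solved for $\partial_t e$ as a real-analytic (in fact rational) function of $e$ and $\rd_Y e$. Equivalently, inverting the algebraic map $e\mapsto\gamma$, which in a local coframing of $Y$ reads $E\mapsto(\det E)(E^{-1})^{\top}$ and is $2$-to-$1$ onto the positively oriented framings, gives a real-analytic assignment $\gamma\mapsto e(\gamma)$ (unique up to sign) and turns \eqref{flow.eq} into $\partial_t\gamma=\rd_Y e(\gamma)$. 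In either form this is a determined, first-order, real-analytic system solved for the $t$-derivatives of the unknowns, so $\{t=0\}$ is non-characteristic. As $\gamma$ is real-analytic for some analytic structure on $Y$ and $Y$ is compact, the Cauchy--Kowalevski theorem --- applied in finitely many analytic charts and patched via its uniqueness clause --- produces a real-analytic solution $e(t)$ on $Y\times(-\ve,\ve)$ for some $\ve>0$. The constraint $\rd_Y\gamma_i(t)=0$ holds at $t=0$ and propagates, because $\partial_t(\rd_Y\gamma_i)=\rd_Y(\partial_t\gamma_i)=\rd_Y\rd_Y e^i=0$, so $\rd\omega_i=0$ throughout $X$; and since $e(0)$ is a coframing, $\sum_i\omega_i^2=6\,\rd t\wedge e^1(t)\wedge e^2(t)\wedge e^3(t)>0$ for $|t|$ small, so after shrinking $\ve$ we get $\omega\in\cF$ and Lemma~\ref{l1.12.8.15} shows that $\omega$ is hyperk\"ahler. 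Finally $\iota^*\omega=\gamma(0)=\gamma$ for $\iota\colon y\mapsto(y,0)$, which gives the required embedding.

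For the essential uniqueness, suppose $f\colon Y\to(X,\omega)$ and $f'\colon Y\to(X',\omega')$ both pull $\omega$, $\omega'$ back to $\gamma$. I would pass to the geodesic collar of $f(Y)$ for the metric $g_\omega$ (which is real-analytic, hyperk\"ahler metrics being Einstein) --- canonical once the inward normal direction is fixed --- so that $g_\omega=\rd t^2+g_t$. In an adapted orthonormal coframing $(\rd t,e^1(t),e^2(t),e^3(t))$ the $\omega_i$ form a positively oriented orthogonal frame of $\Lambda^2_+$ with $|\omega_i|^2=2$, hence differ from $\bigl(\rd t\wedge e^i+\tfrac12\epsilon_{ijk}e^j\wedge e^k\bigr)_i$ by a pointwise $\SO(3)$ rotation, which can be absorbed into $(e^1,e^2,e^3)$. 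Thus $\omega$ lies \emph{exactly} in the normal form of the first paragraph, and $t\mapsto e(t)$ solves \eqref{flow.eq} with the analytic Cauchy datum $e(0)=e_0$ determined by $\gamma$; the same applies to $\omega'$. By the uniqueness in Cauchy--Kowalevski the two families coincide, so if $\Phi$, $\Phi'$ denote the corresponding collar parametrisations then $\Phi^*\omega=\Phi'^*\omega'$, and $\phi=\Phi'\circ\Phi^{-1}$ is an isometry from a neighbourhood of $f(Y)$ onto a neighbourhood of $f'(Y)$ with $\phi\circ f=f'$.

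I expect the main obstacle to be the content of the second paragraph: checking that, once the purely algebraic hyperk\"ahler relations have been used to bring $\omega$ into normal form, the remaining equation genuinely becomes a determined, non-characteristic, analytic Cauchy problem --- equivalently, that $\gamma\mapsto e(\gamma)$ is real-analytic and essentially single-valued on closed framings and that the derived system meets the hypotheses of Cauchy--Kowalevski. A secondary difficulty, needed only for uniqueness, is to show that a hyperk\"ahler triple restricted to a geodesic collar lies exactly --- not merely up to a gauge transformation --- in this normal form, and to match up the analytic structures so that Cauchy--Kowalevski uniqueness can be invoked.
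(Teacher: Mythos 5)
First, a point of comparison: the paper does not prove Theorem~\ref{bryant.thm} at all --- it is quoted from Bryant, with only the remark that it follows from the Cauchy--Kowalevski theorem applied to the evolution equation \eqref{flow.eq}. Your existence argument is a correct reconstruction of exactly that route: the ansatz $\omega_i=\rd t\wedge e^i(t)+\tfrac12\epsilon_{ijk}e^j(t)\wedge e^k(t)$ satisfies \eqref{omegas_orthogonal}--\eqref{hk_volume_form} identically; closedness reduces to \eqref{flow.eq} together with the constraint $\rd_Y\gamma_i(t)=0$, which propagates; the algebraic map $e\mapsto\gamma$ is, at a coframing, an analytic two-to-one correspondence with analytic inverse (your cofactor formula), so the system is in determined, non-characteristic Cauchy--Kowalevski form; and compactness of $Y$ plus the uniqueness clause of Cauchy--Kowalevski lets you patch the local analytic solutions into a solution on $Y\times(-\ve,\ve)$, after which Lemma~\ref{l1.12.8.15} gives the hyperk\"ahler triple. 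Your reduction of a hyperk\"ahler triple in a geodesic collar to the exact normal form, by absorbing the pointwise $\SO(3)$ rotation into the coframe, is also fine.

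The genuine gap is in the uniqueness half. Cauchy--Kowalevski uniqueness holds only within the class of real-analytic solutions, but the competing embedding $f'\colon Y\to(X',\omega')$ is a priori merely smooth: although $(X',g')$ carries a canonical real-analytic structure (hyperk\"ahler metrics are Einstein, hence analytic in harmonic coordinates), the hypersurface $f'(Y)$ need not be an analytic submanifold, nor $f'$ an analytic map for the analytic structure on $Y$ making $\gamma$ analytic. Consequently the family $e'(t)$ you extract in its geodesic collar is only a \emph{smooth} solution of \eqref{flow.eq} with analytic initial data, and since \eqref{flow.eq} is neither hyperbolic nor parabolic there is no general smooth-uniqueness theorem to invoke (Holmgren does not apply to the nonlinear problem, and smooth uniqueness can genuinely fail for non-hyperbolic analytic systems); so ``by the uniqueness in Cauchy--Kowalevski the two families coincide'' does not follow as stated. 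The missing ingredient is an automatic-analyticity step: since $\gamma$ determines both the induced metric and the second fundamental form of $f'(Y)$ (as the paper notes, the matrix $(\gamma_i,\rd*\gamma_j)$ encodes the second fundamental form), and these are then real-analytic on $Y$, the pair consisting of $f'$ and an adapted orthonormal frame solves the Gauss--Weingarten equations, an analytic, Frobenius-integrable total differential system; a smooth solution of such a system is real-analytic, whence $f'$, the collar coordinates and $e'(t)$ are analytic, and only then does Cauchy--Kowalevski uniqueness give $e=e'$ and the desired isometry $\phi$. You flag ``matching up the analytic structures'' as a difficulty, but without supplying this step the uniqueness assertion of the theorem is not proved.
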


\begin{rmk}\label{r1.12.2.16}
In \cite{Cartan}, Cartan states that such hyperk\"ahler metrics depend, locally and
modulo diffeomorphism, on two functions of three variables.  This is
obvious from the above description.  Each closed $2$-form is locally
determined by $2$ functions of $3$ variables, making $6$ functions of
$3$ variables in all. Dividing by (ambient) diffeomorphisms reduces
this by $4$ functions of $3$ variables, leaving two functions of $3$ variables.
\end{rmk}

A closed framing $\gamma$ of $\Lambda^2_Y$ determines
an induced metric on $Y$, which 
is fixed by the requirement that $\gamma$ be an {\em
  orthonormal} frame.  To see that such a metric exists and is unique,
plugging $\gamma$ into the canonical isomorphism
\begin{equation*}
\Lambda^2_Y = TY\otimes \Lambda^3_Y
\end{equation*} gives us a
framing $\alpha=(\alpha_1,\alpha_2,\alpha_3)$ of $TY\otimes
\Lambda^3_Y$.  Hence
$$\alpha_1\wedge\alpha_2\wedge\alpha_3 \in \Lambda^3
TY\otimes (\Lambda^3_Y)^3 = (\Lambda^3_Y)^2
$$
is non-zero at all points of $Y$.  Since $(\Lambda^3_Y)^2$ is a
trivial real line-bundle, there is a volume form
$\rd \mu_Y$ and a (unique) choice of sign such that $(\rd\mu_Y)^2 = \pm
\alpha_1\wedge\alpha_2\wedge\alpha_3$.  Using this volume form to
trivialize $\Lambda^3_Y$ we may now declare the $\alpha_i$ to be
orthonormal and we have our induced metric. 

However, a closed framing of $\Lambda^2_Y$ is strictly more
information than a metric.  
We can see this explicitly
through the Eguchi--Hanson and Taub--NUT spaces.  These provide a pair
of non-isometric hyperk\"ahler spaces $X$ and $X'$ in which we can
find isometric hypersurfaces $Y$ and $Y'$. (It is enough to pick $Y$
and $Y'$ to be suitable $\SU(2)$-orbits---further
details are provided in $\S$\ref{examples}, where we shall see that
the induced framings are different.)   

In general, a given
metric on $Y$ can admit many different closed orthonormal framings of
$\Lambda^2_Y$. Starting from one such framing $\gamma$, the others are
of the form $r ( \gamma)$ where $r \colon Y \to \SO(3)$ is such that
$\rd(r( \gamma)) = 0$. This is an underdetermined equation with an
infinite-dimensional space of solutions. (On the infinitesimal level,
it is analogous to prescribing the curl of a vector field on a
3-manifold.) At least when $r$ is non-constant, Bryant's theorem
applied to these different framings gives \emph{non-isometric}
hyperk\"ahler extensions of the same metric on $Y$ (assuming the
analyticity hypotheses are met).

These remarks are intended to explain that one's first guess, that the
boundary value of a hyperk\"ahler metric should be the induced metric
on the boundary, leads to an ill-posed boundary problem: the above
discussion shows that the same metric can bound non-isometric
hyperk\"ahler structures, whereas Theorem~\ref{bryant.thm} shows that
the formulation in terms of framings does not suffer from this problem.

As an aside, we have a further interpretation of a hyperk\"ahler
extension of a closed framing $\gamma$ of $\Lambda^2_Y$ as an
evolution equation, sometimes called an
``$\SU(2)$-flow''\footnote{Though this is not in any reasonable sense a parabolic equation.}.  If 
$\gamma_t$ is a 1-parameter family of closed framings of $\Lambda^2_Y$ then $\omega=\rd t\wedge *_t\gamma_t+\gamma_t$, where $*_t$ is the Hodge star on $Y$ determined by $\gamma_t$, is a hyperk\"ahler triple on 
$I\times Y$ for some interval $I\subseteq\RR$ if and only if
\begin{equation}\label{flow.eq}
\frac{\partial}{\partial t}\gamma_t=\rd *_t\gamma_t.
\end{equation}
This is sometimes interpreted as an evolution equation with $\gamma_0=\gamma$, which evolves amongst closed framings.  Thus we can also view 
our study of the boundary value problem as identifying initial conditions 
for which the evolution equation has good existence and convergence properties.  

\subsubsection{Boundary value problem for hyperk\"ahler metrics}

We shall now describe what we can say about Problem~\ref{pr2.11.2.16}.

As first order systems of equations are involved, it is not
reasonable to expect that all such nearby closed framings on $Y$ will bound
hyperk\"ahler triples on $X$;
instead, only those satisfying a {\em negative frequency condition}
will do so.  This is motivated by experience with boundary value
problems for Dirac operators, the prototypical example of which is the
$\dbar$-operator on the disc.  In this case, a given function 
$f:S^1\to \CC$ is only the boundary value of a function $u$ in the
disc with $\dbar u=0$ if all negative Fourier coefficients of $f$
vanish. 

Our results are cleanest in the case that $\omega$ induces {\em
positive mean curvature} on $Y$.   It is worth noting that if $\omega$ is a hyperk\"ahler triple on $X$ and $\gamma=\iota^*\omega$, the trace of the matrix of inner products 
$(\gamma_i,\rd *\gamma_j)$ is twice the mean curvature of $Y$ in $X$
(in fact, this matrix encodes the second fundamental form of $Y$).
 Denote by $\cM^s_+$ the
submoduli space of such hyperk\"ahler structures on $X$.  For
simplicity, suppose also that $H^1(Y)=0$.

The relevant Dirac operator for the hyperk\"ahler problem
turns out to be 
\begin{equation}\label{e21.11.2.16}
D : \Omega^1(X) \longrightarrow
 \Omega^0(X)\oplus \Omega^2_+(X),\quad D = \rd^* + \rd_+.
\end{equation}

On $Y$, with its induced Riemannian metric, we have the operator
\begin{equation}\label{e1.11.3.16}
D_Y: \Omega^0(Y)\oplus \Omega^1(Y) 
\longrightarrow \Omega^0(Y)\oplus \Omega^1(Y),
\;\;
D_Y = \begin{bmatrix} 0 & \rd^* \\ \rd & *\rd \end{bmatrix}.
\end{equation}
This is a self-adjoint operator of Dirac type, so if we define
\begin{equation}\label{e2.11.13.16}
H_\lambda = \ker(D_Y - \lambda),
\end{equation}
then $\dim H_\lambda < \infty$ and the set of $\lambda$ for which
$H_\lambda\neq 0$ is discrete and unbounded in both directions.
Define further
\begin{equation}
G_\lambda = H_\lambda \cap \ker(\rd^*).
\end{equation}
Clearly the set of $\lambda$ with $G_\lambda\neq 0$ is also discrete,
$\dim(G_\lambda)<\infty$ and it can also be shown that the set of
$\lambda$ with $G_\lambda\neq 0$ is unbounded in both
directions\footnote{We thank Dmitri Vassiliev for useful discussions
  on this point.}.

For any given $s>1/2$, define  $H^{s-1/2}_+(Y)$ to be the completion
of $\oplus_{\lambda>0} H_\lambda$ with respect to the $H^{s-1/2}$-norm and
$G^{s-1/2}_{-}(Y)$ to be the completion of $\oplus_{\lambda <0}
G_{\lambda}$ with respect to the same norm.  Then we have the
following result.

\begin{thm}\label{asd_boundary_values}
Let $\omega$ be a smooth hyperk\"ahler triple on $X$ inducing positive mean curvature on $Y$.  
Then the gauge-fixed tangent space
\begin{equation}\label{e11.11.2.16}
T_{[\omega]}\cM^s_+ = T_{\omega}\cH^s \cap \ker L^*
\end{equation}
is naturally isomorphic to the direct sum
\begin{equation}\label{e12.11.2.16}
\cH^2_{0,-}(X)\oplus \cH^2_-(X) \oplus G^{s+1/2}_-(Y)\otimes \RR^3 \oplus H^{s+1/2}_+(Y),
\end{equation}
where $\cH^2_{0,-}(X)$ and $\cH^2_-(X)$ are certain finite-dimensional spaces of
closed anti-self-dual $2$-forms whose dimensions depend only on the cohomologies of
$X$, $Y$ and $(X,Y)$.
\end{thm}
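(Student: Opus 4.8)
The plan is to identify $T_{[\omega]}\cM^s_+ = T_\omega\cH^s \cap \ker L^*$ with a space of closed anti-self-dual $2$-forms with a Dirac-type boundary condition, then solve that boundary value problem using the Fredholm theory of $D = \rd^* + \rd_+$ on a $4$-manifold with boundary. The first step is to show that every element $\theta \in T_\omega\cH^s \cap \ker L^*$ decomposes, componentwise, as $\theta_i = h_i + \rd_+^* \beta_i$ where $h_i$ is closed and ASD, and $\beta_i$ is a self-dual $2$-form. Concretely: from \eqref{e21.12.8.15} each $\theta_i$ is closed with $s^2_0(\theta_i,\omega_j)=0$, and from $L^*\theta = 0$ together with \eqref{e13.10.2.16} one extracts that $\theta_i$ is coclosed up to a term controlled by $\pi$; the conditions $s^2_0(\theta_i,\omega_j)=0$ pin down the self-dual part of $\theta_i$ as the trace part, which can be absorbed. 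I expect that after a Hodge-type decomposition on $X$ (with appropriate boundary behaviour, so not the closed-manifold Hodge theorem but the relative/absolute version), the self-dual part of $\theta_i$ is exact of the form $\rd_+^* \beta_i$ and the remaining closed ASD part $h_i$ is the genuine degree of freedom on $X$. This is where the finite-dimensional spaces $\cH^2_{0,-}(X)$ and $\cH^2_-(X)$ enter: they are the pieces of closed ASD forms not captured by the boundary data, distinguished by whether they are, e.g., exact as forms on $X$ or merely relatively closed, hence their dimensions are cohomological (involving $H^2(X)$, $H^2(X,Y)$, $H^1(Y)$).

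The second and main step is the boundary value problem. Pulling back to $Y$, a closed ASD $2$-form $\theta_i$ on $X$ restricts (via $\iota^*$ and contraction with the outward normal) to a pair in $\Omega^0(Y)\oplus\Omega^1(Y)$, and the ASD-closed condition forces this boundary pair to lie in the appropriate half of the spectral decomposition of the tangential Dirac operator $D_Y$ of \eqref{e1.11.3.16}. This is the Atiyah--Patodi--Singer picture: $D = \rd^* + \rd_+$ near $Y$ takes the form $\sigma(\p_t + D_Y)$ for a bundle isomorphism $\sigma$, so the space of $L^2$-or-better solutions on the half-cylinder, hence the admissible Cauchy data on $Y$, is governed by the sign of the $D_Y$-eigenvalue — giving the $H^{s+1/2}_+(Y)$ factor for the `normal' component and the $G^{s+1/2}_-(Y)\otimes\RR^3$ factor for the three `framing' components. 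The precise matching of which component sees $H_\lambda$ versus $G_\lambda = H_\lambda\cap\ker\rd^*$ comes from the additional closedness (not just ASD-ness) and from the gauge condition $L^*\theta = 0$, which kills the exact part of the boundary $1$-form and thus restricts from $H_\lambda$ to $G_\lambda$ on the relevant factors. The positive-mean-curvature hypothesis is what makes the indicial/spectral picture clean: it guarantees that the operator $D_Y$ has no kernel (i.e.\ $H_0 = 0$ is not an obstruction, or rather that $\lambda = 0$ does not interfere), so the completion of $\oplus_{\lambda>0}$ and $\oplus_{\lambda<0}$ genuinely splits the boundary data and the extension operator into $X$ is well-defined and bounded.

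The third step is to assemble these: given boundary data in \eqref{e12.11.2.16}, solve the elliptic boundary value problem for $D$ on $X$ with that data to produce $\beta_i$, set $\theta_i = h_i + \rd_+^*\beta_i$, and check $\theta \in T_\omega\cH^s\cap\ker L^*$; conversely the decomposition step shows every such $\theta$ arises this way, and injectivity follows because the closed ASD $h_i$ is determined by its (finite-dimensional) cohomological content while the boundary data determines $\beta_i$ modulo the kernel of the extension, which is exactly what the spectral completions exclude. The hard part will be the second step: controlling the interaction between the three genuinely coupled conditions — closedness, anti-self-duality, and the gauge-fixing $L^*\theta = 0$ — at the boundary, and showing that this coupling is precisely what cuts $H_\lambda$ down to $G_\lambda$ on three of the four summands while leaving the full $H_\lambda$ on the fourth. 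This requires a careful unwinding of $L^*$ in \eqref{e12.10.2.16}--\eqref{e13.10.2.16} near $Y$ and identification of the zeroth-order map $\pi$ with a projection that, on restriction to $Y$, implements the passage to $\ker\rd^*$; the positive-mean-curvature assumption is needed here to ensure the relevant boundary operator is an isomorphism onto its image and the Fredholm/regularity estimates (elliptic boundary estimates of APS type, gaining the $1/2$ derivative on $Y$) hold uniformly.
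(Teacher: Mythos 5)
Your overall flavour is right (APS-type boundary theory for $D=\rd^*+\rd_+$ plus Hodge theory with boundary for the topological pieces), but two steps as written would fail. First, the decomposition $\theta_i=h_i+\rd_+^*\beta_i$, done componentwise with three independent potentials $\beta_i$, is not the decomposition that produces \eqref{e12.11.2.16}: besides the type mismatch ($\rd_+^*\beta_i$ is a $1$-form), treating the three components independently gives far too many degrees of freedom and cannot yield a single, untripled $H^{s+1/2}_+(Y)$ summand. The crucial missing idea is that the complement of the closed ASD triples inside $T_\omega\cH^s\cap\ker L^*$ is $L(\cW_+^{s+1})$, the image of a \emph{single} vector field $w$ with $L^*Lw=0$ and positive-frequency boundary value; the paper gets this by noting that the condition $s^2_0(\theta_i,\omega_j)=0$ in \eqref{e21.12.8.15} says exactly that $\theta_+$ lies in the image of the inclusion $\Omega^0\oplus\Omega^2_+\hookrightarrow\Omega^2_+\otimes\Omega^2_+$ (not ``the trace part can be absorbed''), so that $L_+v=\theta_+$ is solvable by surjectivity of $D$ (Lemma~\ref{l11.13.2.16}, Proposition~\ref{linearisation.vector.field}), and then $\theta-Lv$ is closed and ASD while $L^*Lv=0$ by Lemma~\ref{l1.12.2.16}. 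This coupling through one vector field is what makes Cartan's count come out and what the statement \eqref{claim_eqn} encodes; your proposal never produces it. The $G^{s+1/2}_-(Y)\otimes\RR^3$ factor then parameterizes the \emph{exact} closed ASD triples via Proposition~\ref{z2-.splitting.prop} (boundary values of $1$-forms $a$ with $Da=0$, $a_\bot=0$), while $H^{s+1/2}_+(Y)$ parameterizes boundary values of $w\in\cW_+$, i.e.\ gauge motions of the boundary --- not a ``normal component'' of $\theta$.

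Second, your account of the role of positive mean curvature is wrong: it does not make $\ker D_Y$ vanish ($H_0(Y)$ contains the constants and the harmonic $1$-forms on $Y$, and the zero modes must be split into $H_{0,\pm}(Y)$, with $H_{0,-}(Y)=\im(H^1(X)\to H^1(Y))$, which is how the finite-dimensional pieces enter). Positivity of $H$ enters through the boundary terms $\int_Y H|\iota_\nu u|^2$ in the Green's/Weitzenb\"ock identities such as \eqref{e5.11.1.16}: this gives injectivity of $D^*$ on negative-frequency sections (hence surjectivity of $D$ with the APS condition, Theorem~\ref{t1.26.1.16}) and, via Lemma~\ref{l1.11.10.16} and Proposition~\ref{kerd}, forces any element of $\ker D$ with nonnegative-frequency boundary data to be topological. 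That is precisely what makes $L$ injective on $\cW_+^{s+1}$, makes $[\cZ^2_-(X)\otimes\RR^3]\cap L(\cW_+^{s+1})=0$, and hence makes the sum in \eqref{e12.11.2.16} direct; without this mechanism your third step (injectivity of the assembled map) has no proof.
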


The spaces $\cH^2_-(X)$ and $\cH^2_{0,-}(X)$ are defined in
\eqref{H2pm.eq} and \eqref{H20pm.eq} before
Theorem~\ref{cohom.thm}.  Up to these finite-dimensional topological
pieces, we have an effective parameterization of the tangent
space of $\cM^s_+$ in terms of boundary data.  In a little more
detail, $G^{s+1/2}_-(Y)$ gives a parameterization of the
boundary values of the {\em exact} ASD $2$-forms on $X$: given $\alpha
\in G^{s+1/2}_-(Y)$, we find a unique $u\in H^{s+1}(X,\Lambda^1)$ such that $D u =
0$; then $\rd u \in H^s(X,\Lambda^2_-)$ is an exact ASD $2$-form.
Similarly if $\beta \in H^{s+1/2}_+(Y)$, there is a unique solution
$w\in H^{s+1}$ of $L^*Lw=0$ with boundary value $\beta$. This existence and uniqueness of $w$ is
true without the frequency condition, but the point is that with this
frequency condition imposed, $Lw$ cannot be a triple of ASD
$2$-forms and so this restriction allows us to replace the sum in 
Theorem~\ref{smooth_moduli_space}(ii) by a direct sum.

Theorem~\ref{asd_boundary_values} is in agreement with Cartan's count of
 degrees of freedom in the
local moduli space of hyperk\"ahler metrics
(Remark~\ref{r1.12.2.16}) in the following
sense. Discarding the finite-dimensional spaces in
\eqref{e12.11.2.16}, the space of negative frequency coclosed
$1$-forms should be counted as $2$ negative frequency functions on $Y$
(i.e.~of $3$ variables). We have a triple of such coclosed $1$-forms,
so $6$ negative-frequency functions. On the other hand
$H^{s+1/2}_+(Y)$ contributes $4$ positive frequency functions of $3$
variables. Since diffeomorphisms which move the boundary are given by
$4$ full functions of $3$ variables, subtracting this leaves us with
just $2$ negative-frequency functions on $Y$.   This agrees with
Cartan's count: for a global boundary value problem, one can only
prescribe `half the data' on the boundary as compared with the
thickening problem.

Another way of stating \eqref{e12.11.2.16} is that we have a
direct sum decomposition
\begin{equation*}
T_{[\omega]}\cM^s_+ = [\cZ^2_-(X)\otimes \RR^3 \cap H^s] \oplus L(\cW_+^{s+1}),
\end{equation*}
where we introduce the obvious notation $\cW_+^{s+1}$ for elements of
$\cW^{s+1}$ with positive frequency boundary values.  This 
improves the description of this tangent space in
\eqref{e31.11.2.16} and allows us to prove the following refinement
of Theorem~\ref{smooth_moduli_space}.

\begin{thm}\label{global_moduli_space} 
The moduli space $\cM_+$ of hyperk\"ahler triples on $X$ inducing
positive mean curvature on the boundary $Y$, modulo the action of
$\cG_0$, is a  
  Fr\'echet manifold with
  $$T_{[\omega]}\cM_+=[\cZ^2_-(X)\otimes\RR^3]\oplus L(\cW_+).$$
\end{thm}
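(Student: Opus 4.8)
The plan is to upgrade the finite-regularity results of Theorems~\ref{smooth_moduli_space} and~\ref{asd_boundary_values} to the smooth ($C^\infty$) category, and then to assemble the charts into a Fréchet manifold structure. First I would recall that, by Theorem~\ref{asd_boundary_values}, for each admissible $s>4$ the gauge-fixed tangent space $T_\omega\cH^s\cap\ker L^*$ decomposes as a direct sum $[\cZ^2_-(X)\otimes\RR^3\cap H^s]\oplus L(\cW^{s+1}_+)$; the key analytic input is that the two summands meet only in $0$, which is exactly where the negative frequency condition on boundary values of elements of $\cW$ is used. The finite-dimensional pieces $\cH^2_{0,-}(X)\oplus\cH^2_-(X)$ are harmonic ASD forms, hence automatically smooth, so they sit inside $\cZ^2_-(X)\otimes\RR^3$ for every $s$ and cause no regularity loss. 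Thus at each level $s$ the slice through $\omega$ is modelled on this Banach space, and the slice theorem part of Theorem~\ref{smooth_moduli_space}(i) provides a homeomorphism from a neighbourhood of $[\omega]$ in $\cM^s_+$ onto a neighbourhood of $0$ in it.

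The second step is an elliptic regularity / bootstrapping argument to show the charts are compatible across all $s$ and therefore descend to genuinely smooth objects. The point already flagged in the text after Theorem~\ref{smooth_moduli_space} is that a triple $\wh\omega$ in the slice $L^*(\wh\omega-\omega)=0$ is automatically smooth in the interior of $X$; the issue is only regularity up to the boundary. I would argue that the elements of $\cZ^2_-(X)\otimes\RR^3$ appearing in the decomposition are, by the Hodge-theoretic description behind Theorem~\ref{asd_boundary_values}, obtained from solutions of the elliptic boundary value problems $Du=0$ and $L^*Lw=0$ with prescribed boundary data in $H^{s+1/2}$; when the boundary datum is taken in $C^\infty(Y)$ the solution lies in $C^\infty(X)$ up to the boundary, by standard elliptic regularity for these (overdetermined-elliptic, but with the gauge fixing turned into a genuine elliptic system, cf.\ the construction of $D$ and $L^*L$) boundary problems. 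Hence the smooth tangent model $[\cZ^2_-(X)\otimes\RR^3]\oplus L(\cW_+)$ is precisely the intersection over all $s$ of the Banach models, and it inherits a natural Fréchet topology as such an inverse limit.

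The third step is to verify the Fréchet manifold axioms: the transition maps between overlapping slice charts are smooth in the tame-Fréchet sense. Here I would invoke that the implicit-function-theorem machinery used to build the slices in \S\S2--3 (a Nash--Moser or a level-by-level Banach implicit function theorem with uniform estimates in $s$) produces charts whose transition maps are restrictions of the Banach-level transition maps, which are smooth, and the estimates are uniform in $s$; this is what lets one pass to the Fréchet limit while retaining smoothness of the transition maps. The positive-mean-curvature condition is open, so $\cM_+$ is an open subset of $\cM$ in each regularity and the manifold structure is inherited. Finally the identification of the tangent space with $[\cZ^2_-(X)\otimes\RR^3]\oplus L(\cW_+)$ follows by differentiating the chart at $[\omega]$ and using the decomposition established in the first two steps.

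The main obstacle I expect is the second step: proving that the two families of deformations remain transverse (direct, not merely summing) and smooth up to the boundary \emph{uniformly in $s$}, so that the Fréchet limit is clean. Concretely, one must show that the negative frequency condition that makes $L(\cW^{s+1}_+)\cap[\cZ^2_-(X)\otimes\RR^3\cap H^s]=0$ is stable as $s\to\infty$ and that the projections onto the two summands are bounded with $s$-independent constants after the natural normalisations; together with boundary elliptic regularity for $D$ and $L^*L$ this yields the tame estimates needed for the Nash--Moser argument. Everything else is a fairly mechanical assembly of the slice theorem and the boundary-value analysis already in hand.
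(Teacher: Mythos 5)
Your outline misses the step that is the actual crux of the paper's argument: the model space $[\cZ^2_-(X)\otimes\RR^3]\oplus L(\cW_+)$ is not a fixed Fr\'echet space but depends on $\omega$ --- anti-self-duality, the operator $L$, and the frequency decomposition (defined by $D_Y$ for the boundary metric induced by $\omega$) all move with the triple. Consequently the slice charts at two nearby points $[\omega_0]$, $[\omega_1]$ take values in genuinely different subspaces of $\Omega^2(X)\otimes\RR^3$, and before one can speak of smooth transition maps (or even of a well-defined tangent space as stated in the theorem) one must exhibit a canonical isomorphism between $\cT_0$ and $\cT_1$. The paper supplies this in Proposition~\ref{tangent.proj.prop}: for $\omega_0,\omega_1$ in the same path component of $\cM_+$, the restriction to $\cT_0$ of the $L^2$-orthogonal projection onto $\cT_1$ is an isomorphism. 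The proof is a continuity argument along a path $\omega(t)$, using the boundary-value description $\cZ^2_-(X)\otimes\RR^3\cong\cH^2_{0,-}(X)\oplus\cH^2_-(X)\oplus G_-(Y)$ and $L(\cW_+)\cong H_{0,+}(Y)\oplus H_+(Y)$, together with the fact that the ``zero-mode'' pieces $G_0(t)\cong H^1(Y)$ and $H_{0,+}(Y)$ have topologically fixed dimension along the path, which rules out $\cT_1\cap\cT_0^{\perp}\neq 0$. With that in hand, $\cM_+$ is a smooth graph over each tangent space, each tangent space is a graph over the other, and the transition maps are compositions of projections between such graphs, hence smooth. Your step 3 asserts that the transition maps are smooth ``as restrictions of the Banach-level transition maps'', but the Banach-level charts suffer from exactly the same base-point dependence, so this is an assertion rather than a proof; the comparison of the moving tangent models is the missing idea, and it is where the positive mean curvature hypothesis and the frequency splitting are really used again.

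By contrast, the machinery you single out as the main obstacle --- Nash--Moser, tame estimates, uniformity of constants in $s$ --- is not needed and is not what the paper does. No new equation is solved at the Fr\'echet level: the parameterisation by boundary values shows that choosing boundary data smooth on $Y$ (and satisfying the frequency conditions) produces elements smooth up to the boundary, so the finite-regularity charts simply restrict to the smooth category. That part of your plan is consistent with the paper, but it is the comparatively easy part; without an argument replacing Proposition~\ref{tangent.proj.prop}, the proposal does not establish the Fr\'echet manifold structure or the stated identification of $T_{[\omega]}\cM_+$.
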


\begin{rmk} The point here is that while the space on the RHS depends
  on $\omega$, these spaces are canonically isomorphic for triples
  $\omega$ and $\omega'$ in the same path component of $\cM_+$.
\end{rmk}

\subsubsection{Examples}

An immediate source of examples of hyperk\"ahler 4-manifolds with
boundary to which we may apply our theory arises from taking balls (or
other bounded domains) in gravitational instantons, by which we mean
complete hyperk\"ahler 4-manifolds.  The simplest examples are those
with an isometric $\SU(2)$-action, and we relate our approach to the
standard classification of these spaces in \S\ref{examples} and use
them to illustrate our main results.

We start our discussion with the flat metric on $\RR^4$.  There are two isometric actions of $\SU(2)$ on $\RR^4$ (corresponding to left-
and right-multiplication by unit quaternions).  
Consider first the $\SU(2)$ action which rotates the hyperk\"ahler triple.  This gives rise to a
standard framing of $S^3$ by closed $2$-forms.  On the other hand, the
Taub--NUT metric is a (non-flat) complete hyperk\"ahler metric on
$\RR^4$ with $\SU(2)$-action which rotates the triple, and depending
on a parameter $m>0$. In \S\ref{examples} we shall find a sphere in
Taub--NUT such that the induced framing is within $O(m^{-2})$ of the
standard round framing of $S^3$, and we shall be able to verify
explicitly that the difference between these framings is negative
frequency; this is consistent with Taub--NUT being a small deformation,
over this ball, of the flat metric.

Playing a similar game (now choosing the other $\SU(2)$ action on $\RR^4$), by comparing the flat metric with
Eguchi--Hanson, we are also able to construct explicit positive
frequency deformations of the induced framing; they cannot be filled
by hyperk\"ahler metrics over the ball precisely because the
Eguchi--Hanson metric lives on the $4$-manifold $T^*S^2$, and this
obstructs the problem of filling this particular positive frequency deformation.
Put another way, this provides an initial condition on $S^3$ for the
hyperk\"ahler evolution equation \eqref{flow.eq} which develops a
singularity. 
 
\subsection{Contents}

We begin in \S\ref{slice_theorem_section} with the proof of part (i)
of Theorem~\ref{smooth_moduli_space}, showing, roughly speaking, that
$\ker(L^*)$ gives a transverse slice to the action of the
diffeomorphism group $\cG_0$.  The rest of
Theorem~\ref{smooth_moduli_space} is proved in \S\S\ref{spinor.section}--\ref{dirac.background.section}. 

Our results on the moduli space $\cM_+$,
Theorems~\ref{asd_boundary_values} and \ref{global_moduli_space} follow in
\S\ref{sglobal_mod}.   Finally, in \S\ref{examples}, we
turn to concrete examples, coming from gravitational instantons with
isometric $\SU(2)$-actions. 

\subsection{Acknowledgements} The authors would like to thank
 Olivier Biquard, Robert Bryant, Claude LeBrun and Dmitri Vassiliev for useful
  conversations.  JF was supported by ERC consolidator grant 646649 ``SymplecticEinstein'' and FNRS grant MIS F.4533.15. JDL was supported by EPSRC grant EP/K010980/1.  This
  work was started when MAS was in receipt of a Leverhulme Research
  Fellowship and concluded as a visitor at MSRI in early 2016. The support of
  Leverhulme, MSRI and the NSF (grant number DMS-1440140) are gratefully acknowledged.

\section{Gauge fixing for the action of diffeomorphisms}\label{slice_theorem_section}

We begin by recalling some fundamental facts about the group of
diffeomorphisms, setting up notation along the way. (Details can be
found in \cite{Ebin, EbinMarsden}.) Let $X$ be a compact 4-manifold
with boundary $Y$ and let $\omega = (\omega_1, \omega_2, \omega_3)$ be a
hyperk\"ahler triple on $X$. We write $\cF = \Omega^2(X) \otimes \RR^3$
for the space of triples of 2-forms on $X$ and $\cG_0$ for the
group of orientation-preserving diffeomorphisms of $X$ which are the
identity on the boundary 
$Y$. The group $\cG_0$ acts on $\cF$ by pull-back. The goal in this
section is to find a slice for the action.  

To do so we will work with Hilbert space completions of $\cF$ and $\cG_0$. Given $s\geq 0$, we write $\cF^s$ for the Hilbert space of triples of 2-forms whose distributional derivatives are square-integrable up to order $s$. We can also talk of $H^s$-maps $X \to X$. When $s>3$,  $H^s$ embeds into $C^1$ and so an $H^s$-diffeomorphism makes sense: we write $\cG_0^s$ for the collection of orientation-preserving $H^s$-maps $X\to X$ which are also $C^1$-diffeomorphisms and which restrict to the identity on the boundary. The inverse of $\phi \in \cG_0^s$ is again of regularity $H^s$ and the composition of $H^s$-diffeomorphisms is in $H^s$. This makes $\cG^s_0$ into a topological group.

We next briefly recall the Banach manifold structure on $\cG^s_0$. Fix a Riemannian metric on $X$. (This metric is purely auxiliary and the manifold structure on $\cG_0$ turns out not to depend on this choice.) Write $\cV_0^s$ for the space of $H^s$ vector fields which vanish on the boundary. The geodesic exponential map is not defined on all tangent vectors, since geodesics can leave through the boundary. However, if $v \in \cV_0^s$ is sufficiently small in $H^s$, with $s>3$, then it is also bounded by $1/2$ say, in $C^1$. This means that for any $p \in X$, $|v(p)| \leq \frac{1}{2}d$ where $d$ is the distance of $p$ from $Y$. It follows that $\exp(v(p))$ still lies in $X$ and hence that exponentiating vectors to geodesics defines a map from a small neighbourhood of the origin in $\cV_0^s$ to $\cG_0^s$. One can check that this map is a homeomorphism onto a neighbourhood of the identity giving us a chart there. To define a chart near another point $\phi \in \cG_0^s$, we repeat the same construction, using $H^s$-sections of $\phi^*TX$. One then checks that these charts have smooth transition functions making $\cG_0^s$ into a Banach manifold. We refer to \cite{Ebin, EbinMarsden} for details. It is also shown there that right multiplication by a fixed diffeomorphism $\phi \in \cG_0^{s}$ gives a smooth map $\cG_0^s \to \cG_0^s$; this will be  important in what follows. (On the other hand, left multiplication is merely continuous, so that $\cG_0^s$ is a topological group, but not strictly speaking a Banach Lie group.)

The group $\cG^{s+1}_0$ acts on $\cF^s$ by pull-back. To find a slice for this action we follow the standard approach of taking the orthogonal complement to the infinitesimal group action. Given a vector field $v$ on $X$ which vanishes on $Y$, the infinitesimal action of $v$ on $\cF$ at $\omega$ is by Lie derivative, $\cL_v\omega$. We write $L$ for the map defined by $Lv = \cL_v\omega$ and $L^*$ for its formal adjoint. The main aim of this section is to prove the following, for which it is crucial that $\omega$ is smooth.

\begin{thm}\label{main.slice.thm}
Fix $s>4$. There exist constants $\epsilon, \delta >0$ such that for every $\hat{\omega}\in\cF^s(X)$ with $\|\hat{\omega}-\omega\|_{H^s} < \epsilon$ there exists a unique diffeomorphism $\phi\in \cG^{s+1}_0$ such that both $L^*(\phi^*\hat{\omega}-\omega)=0$ and $\| \phi^*\hat{\omega} - \omega\|_{H^s} < \delta$. In other words, the slice
\[
S_\delta = \{ \omega + \chi : \chi \in \cF^s,\ L^*\chi = 0,\ \| \chi\|_{H^s} < \delta \}
\]
meets every nearby orbit of $\cG^{s+1}_0$ exactly once.  
\end{thm}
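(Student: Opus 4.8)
The plan is to set this up as an application of the inverse (or implicit) function theorem in the Banach category, in the style of the Ebin--Palais slice theorem. The map to study is
\[
\Phi : \cG^{s+1}_0 \times S_\delta \longrightarrow \cF^s, \qquad \Phi(\phi, \omega + \chi) = (\phi^{-1})^*(\omega + \chi),
\]
or, what amounts to the same thing for the local statement, the ``gauge-fixing map'' $N : \cG^{s+1}_0 \times \{\chi : L^*\chi = 0\} \to \cF^s$ together with the projection onto $\im L^*$. Concretely, I would consider the smooth map $F(\phi, \hat\omega) = L^*(\phi^*\hat\omega - \omega)$ from (a neighbourhood of $(\id,\omega)$ in) $\cG^{s+1}_0 \times \cF^s$ into $\cV^{s-1}_0$ and seek, for each $\hat\omega$ near $\omega$, a solution $\phi = \phi(\hat\omega)$ of $F(\phi,\hat\omega) = 0$ near $\id$, depending continuously on $\hat\omega$; uniqueness of $\phi$ within a $\delta$-ball then comes out of the local-diffeomorphism part of the IFT.

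\emph{Key steps, in order.} First I would verify that $F$ is a $C^1$ (indeed smooth) map of Banach manifolds near $(\id,\omega)$: this rests on the standard facts recalled above that right multiplication and the pull-back action $(\phi,\hat\omega)\mapsto \phi^*\hat\omega$ are smooth on the $H^s$ spaces when $s$ is large (here $s>4$ guarantees the multiplicative/composition estimates), and that $L^*$, being a first-order operator with smooth coefficients (it is $\pi\circ \rd^*$ by \eqref{e13.10.2.16}), maps $H^s \to H^{s-1}$ continuously. Second, I would compute the partial derivative of $F$ in the $\phi$-direction at $(\id, \omega)$: differentiating $\phi^*\hat\omega$ along a vector field $v \in \cV^{s+1}_0$ at $\phi=\id$, $\hat\omega=\omega$ gives $\cL_v\omega = Lv$, so
\[
D_\phi F_{(\id,\omega)}(v) = L^*L v.
\]
Third — and this is the crux — I must show $L^*L : \cV^{s+1}_0 \to \cV^{s-1}_0$ is an isomorphism of Banach spaces. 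Since $L = \rd(\iota_\bullet\omega)$ and $\omega$ is a hyperkähler triple (so the $\omega_i$ frame $\Lambda^2_+$ pointwise), the bundle map $v\mapsto \iota_v\omega$ is a pointwise isomorphism onto $\Lambda^2_+\otimes\RR^3$ up to the obvious identification; one then checks that $L^*L$ is a second-order, elliptic operator on $1$-forms with the correct (Dirichlet-type) boundary behaviour coming from $v|_Y = 0$, and that it has trivial kernel and cokernel on the spaces vanishing at the boundary. Triviality of the kernel is the key point: if $L^*Lv = 0$ with $v|_Y=0$, pairing against $v$ (integration by parts, no boundary term since $v$ vanishes on $Y$) gives $\|Lv\|^2 = 0$, hence $\cL_v\omega = 0$, i.e. $v$ generates a flow of symmetries of $\omega$ fixing the boundary pointwise; a unique-continuation / Killing-field argument (the flow is by isometries of the hyperkähler metric, and an isometry equal to the identity to first order along a hypersurface is the identity) forces $v\equiv 0$. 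Self-adjointness of $L^*L$ with these boundary conditions then gives surjectivity via elliptic theory, and elliptic regularity gives the $H^{s+1}\to H^{s-1}$ isomorphism. Fourth, with $D_\phi F_{(\id,\omega)}$ invertible, the implicit function theorem produces a unique $C^1$ family $\phi(\hat\omega) \in \cG^{s+1}_0$ near $\id$ with $F(\phi(\hat\omega),\hat\omega)=0$ for $\|\hat\omega-\omega\|_{H^s}<\epsilon$, and shrinking $\epsilon,\delta$ as needed gives the quantitative ``exactly once'' statement: existence is the IFT solution, and if $\phi_1^*\hat\omega, \phi_2^*\hat\omega$ both lie in $S_\delta$ then $\psi = \phi_2\phi_1^{-1}$ satisfies $L^*(\psi^*\eta - \eta)=0$ for $\eta = \phi_1^*\hat\omega$ close to $\omega$, and the local uniqueness clause of the IFT (applied with base point $\eta$ in place of $\omega$, using openness of the invertibility condition) forces $\psi = \id$.

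\emph{Main obstacle.} The analytic heart is Step 3: establishing that $L^*L$ is an isomorphism between the right Sobolev spaces with the boundary condition $v|_Y = 0$. Two things have to be handled with care here. One is that $L^*L$ is \emph{not} simply the Hodge Laplacian — one must identify its principal symbol and check ellipticity, and check that $v|_Y=0$ is an elliptic (Lopatinski--Shapiro) boundary condition for it, so that Fredholm theory and elliptic regularity apply; the factorization $L = \rd\circ(\text{iso})$ is what makes this work, essentially reducing it to $\rd^*\rd$ on $1$-forms which is elliptic in the de Rham complex. The other is the injectivity argument: ruling out non-trivial $v$ with $\cL_v\omega = 0$, $v|_Y = 0$. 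This is where smoothness of $\omega$ is genuinely used (as the authors emphasize): $v$ is then a smooth Killing-type field for a smooth metric, and one invokes that a vector field whose flow preserves the metric and which vanishes on a hypersurface must vanish identically (it is determined by its $1$-jet along $Y$, which is trivial since $v|_Y=0$ and $\nabla v|_Y$ is skew and tangential-plus-zero). Once these are in place the rest is a fairly mechanical application of the implicit function theorem.
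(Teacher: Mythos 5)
Your Step 3 (invertibility of $L^*L$ on $\cV_0^{s+1}$, injectivity via the Killing-field argument, surjectivity via self-adjointness of the boundary condition and elliptic theory) matches the paper's Proposition~\ref{tgt.split.prop} and Lemma~\ref{L.inj} in substance. But your Step 1 contains a genuine gap that your whole strategy rests on: the map $F(\phi,\hat\omega)=L^*(\phi^*\hat\omega-\omega)$ is \emph{not} $C^1$ (let alone smooth) jointly in $(\phi,\hat\omega)$ on $\cG_0^{s+1}\times\cF^s$. The partial derivative in the group direction at $(\id,\hat\omega)$ is $L^*(\cL_v\hat\omega)$, and this differentiates $\hat\omega$: for $\hat\omega\in H^s$ one only gets $\cL_v\hat\omega\in H^{s-1}$, so the candidate derivative does not land continuously in the target $\cV^{s-1}$, and the derivative fails to depend continuously on $\hat\omega$ in the required operator norm. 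The pull-back action is smooth in $\phi$ only when the form being pulled back is fixed and smooth (this is Lemma~\ref{smooth.action}); the joint action $\cG_0^{s+1}\times\cF^s\to\cF^s$ is merely continuous. This is exactly the pitfall the paper flags (``in spite of what one sometimes reads''), and it is why the implicit function theorem cannot be applied directly to your $F$. The paper's repair, due to Ebin, is to arrange that only the fixed smooth $\omega$ ever gets differentiated: one forms the set $\cE^s=\{(\phi,\chi):L^*((\phi^{-1})^*\chi)=0\}$, proves it is a Banach vector bundle over $\cG_0^{s+1}$ using the smooth family of projections $P_\phi=L_\phi(L_\phi^*L_\phi)^{-1}L_\phi^*$ (Lemmas~\ref{Lphi-Lphi-star}--\ref{bundle.split.lem}), and applies the \emph{inverse} function theorem to the smooth map $F(\phi,\chi)=\phi^*\omega+\chi$, whose derivative at $(\id,0)$ is $(v,\eta)\mapsto Lv+\eta$ and is an isomorphism by the splitting $\cF^s=\im L\oplus\ker L^*$. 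Your Step 3 is the analytic input to this, but without the bundle device the IFT step you propose does not get off the ground.

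A second, smaller gap is in your uniqueness argument. The local uniqueness clause of the IFT only controls diffeomorphisms \emph{near the identity}; a priori there could be $\phi_1\neq\phi_2$ with both $\phi_i^*\hat\omega$ in $S_\delta$ but with $\psi=\phi_1^{-1}\circ\phi_2$ far from $\id$ in $\cG_0^{s+1}$, and nothing in the IFT excludes this. (Also note that your reduction is slightly off: with $\eta=\phi_1^*\hat\omega$ the second condition reads $L^*(\psi^*\eta-\omega)=0$, with $L^*$ still the operator at the smooth triple $\omega$, not $L^*(\psi^*\eta-\eta)=0$; and $\eta$ is only of class $H^s$, so one cannot simply rerun the construction based at $\eta$.) The paper closes this by a properness argument (Theorem~\ref{proper.action}, deduced from Ebin--Palais properness for the induced metrics): if $\psi^*\eta$ and $\eta$ are both $H^s$-close to $\omega$ then $\psi$ subconverges in $\cG_0^{s+1}$ to an isometry fixing the boundary, which forces $\psi$ into the neighbourhood where the local uniqueness of Theorem~\ref{existence} applies. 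You need some version of this compactness input to get the ``meets every nearby orbit exactly once'' statement as claimed.
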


This is the analogue of the Ebin--Palais slice theorem (see e.g.~\cite{Tromba}), which applies to diffeomorphisms acting on metrics (on a compact manifold without boundary) and we follow the outline of their proof closely. Accordingly, we omit the parts of the proof which are identical to Ebin--Palais.

%\subsection{Proof of existence in Theorem~\ref{main.slice.thm}}

Theorem~\ref{main.slice.thm} contains two assertions: existence and uniqueness. We begin with existence. The idea is to employ the implicit function theorem, but there is a subtlety because the action
\[
\cG_0^{s+1} \times \cF^s \to \cF^s
\]
is \emph{not} smooth. To see this note that the derivative at the point $(\id, \hat{\omega})$ should be given by the Lie derivative $\cL_v \hat{\omega}$. But if $\hat{\omega} \in H^s$ then $\cL_v\hat{\omega} \in H^{s-1}$ which is of insufficient regularity. \emph{The linearised action differentiates $\hat{\omega}$ whereas the full action doesn't.} This means that the existence part of the slice theorem does not follow from a simple application of the implicit function theorem (in spite of what one sometimes reads!) since this would need the action to be $C^1$. 

Instead, following Ebin, we proceed as follows. Write $\cE^s$ for the following subset of $\cG^{s+1}_0 \times \cF^s$:
\begin{equation}
\cE^s = \left\{ (\phi, \chi): L^*\left((\phi^{-1})^*\chi\right) = 0 \right\}.
\label{normal.bundle}
\end{equation}
There is a map $\cE^s \to \cF^s$ given by $F(\phi, \chi) = \phi^*\omega + \chi$. The idea is to apply the inverse function theorem to $F$, to show that it is a diffeomorphism between a neighbourhood of $(\id, 0) \in \cE^s$ and a neighbourhood of $\omega \in \cF^s$. Once this is done, it will follow that any $\hat{\omega}$ which is $H^s$-close to $\omega$ is of the form $\hat{\omega} = \phi^*\omega + \chi$ for $(\phi^{-1})^*\chi \in \Ker L^*$. Then $(\phi^{-1})^* \hat\omega - \omega \in \Ker L^*$ as required. One should think of $\cE^s$ as the normal bundle of the orbit of $\omega$ (pulled back to $\cG_0^{s+1}$) and $F$ as giving a tubular neighbourhood of the orbit. 

To push this argument through, there are three things which must be established:
\begin{itemize}
\item
$\cE^s$ is a Banach manifold. More precisely, we will show that $\cE^s \to \cG_0^{s+1}$ is a Banach vector bundle.
\item
The map $F$ is smooth.
\item
$\rd F$ at $(\id, 0)$ is an isomorphism.
\end{itemize}
We now explain how this works. The first step is to note that, whilst the whole action is not smooth, its restriction through a $C^\infty$ triple does give a smooth map. The proof is identical to that in Ebin--Palais, and so we omit it.  

\begin{lem}\label{smooth.action}
Let $s>3$. For a triple of smooth 2-forms $\omega$, the map $A \colon \cG_0^{s+1}\to \cF^s$ given by $A(\phi) = \phi^*\omega$ is a smooth map of Banach manifolds.
\end{lem}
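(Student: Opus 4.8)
The plan is to reduce the statement to a standard fact about composition ("$\omega$-lemma") in the Ebin--Marsden theory of the diffeomorphism group. First I would observe that since $\omega$ is a triple of \emph{smooth} $2$-forms, at each point $x\in X$ the pullback $(\phi^*\omega)(x)$ depends only on $x$, on the point $\phi(x)$, and on the first derivative $D\phi(x)$; that is, $\phi^*\omega$ is obtained by applying a \emph{smooth} fibrewise (nonlinear) bundle map to the $1$-jet of $\phi$. Concretely, writing $\omega$ in local coordinates with smooth coefficient functions $\omega_{ab}$, the components of $\phi^*\omega$ are $\omega_{ab}(\phi)\,\partial_i\phi^a\,\partial_j\phi^b$. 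Thus $A$ factors as
\[
\cG_0^{s+1}\xrightarrow{\ j^1\ } H^s(X, J^1(X,X))\xrightarrow{\ \Phi_\omega\ }\cF^s,
\]
where $j^1$ sends a diffeomorphism to its $1$-jet and $\Phi_\omega$ is postcomposition with the smooth map of fibre bundles determined by $\omega$.

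The first map, $\phi\mapsto j^1\phi$, is smooth (indeed continuous affine in the chart data, and in any case its smoothness is part of the Banach manifold structure on $\cG_0^{s+1}$ recalled above, using $H^s$-sections of $\phi^*TX$). The key step is the smoothness of $\Phi_\omega$, and here I would invoke the standard $\omega$-lemma (see \cite{Ebin,EbinMarsden,Tromba}): if $f\colon E\to F$ is a smooth bundle map between finite-dimensional vector bundles over the compact manifold $X$, then the induced map $H^s(X,E)\to H^s(X,F)$, $u\mapsto f\circ u$, is smooth provided $s$ is large enough that $H^s\hookrightarrow C^0$ — which holds here since $s>3$ and $\dim X=4$, so that $H^s$ is a Banach algebra and the nonlinear substitution operators are well-behaved. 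The only point to check is that differentiating under the composition does not lose regularity: the $k$-th derivative of $\Phi_\omega$ at $u$ in directions $v_1,\dots,v_k$ is $(D^k f)(u)(v_1,\dots,v_k)$, again a smooth-bundle-map-composition, landing in $H^s$ because $H^s$ is multiplicatively closed for $s>3$ (this is precisely where the hypothesis $s>3$ — and in the main theorem $s>4$ — is used). Since $\omega$ is smooth, its coefficients contribute only $C^\infty$ factors and never degrade regularity; this is the crucial difference from the full action $(\phi,\hat\omega)\mapsto\phi^*\hat\omega$, where a merely $H^s$ form $\hat\omega$ \emph{is} differentiated by the chain rule and one loses a derivative.

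Finally, I would assemble the pieces: $A=\Phi_\omega\circ j^1$ is a composition of smooth maps of Banach manifolds, hence smooth, which is the assertion. The main obstacle — though it is really bookkeeping rather than a genuine difficulty — is verifying the smoothness (and the formula for the derivatives) of the substitution operator $\Phi_\omega$ carefully enough to see that no regularity is lost at any order; but as noted this is exactly the content of the classical $\omega$-lemma of Ebin--Marsden, whose proof goes through verbatim in the boundary case since all the vector fields and diffeomorphisms involved are adapted to $X$ (they fix $Y$), so I would simply cite it as in the remark preceding the statement that "the proof is identical to that in Ebin--Palais."
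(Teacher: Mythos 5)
Your proposal is correct and follows essentially the same route as the paper, which omits the argument and refers to Ebin--Palais/Ebin--Marsden: smoothness of $A$ comes precisely from factoring $\phi^*\omega$ through the $1$-jet of $\phi$ and a smooth fibrewise map determined by the fixed smooth triple $\omega$, then invoking the standard $\omega$-lemma (using $s>3$ so that $H^s$ embeds in $C^0$ and is an algebra), with the correct observation that no derivative falls on $\omega$ itself -- the very point the paper highlights as the reason the full action fails to be smooth.
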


We next explain why $\cE^s$, defined above in (\ref{normal.bundle}), is a smooth vector bundle over $\cG_0^{s+1}$. To do this (and still following Ebin) we define two operators. The first, $L_\phi \colon \cV^{s+1}_0 \to \cF^s$, is defined as follows. We use right multiplication to give a smooth trivialisation of the tangent bundle $T\cG_0^{s+1} \cong \cG_0^{s+1} \times \cV_0^{s+1}$. Then we set $L_\phi = \rd A_\phi$ to be the derivative of the action $A \colon \cG^{s+1}_0 \to \cF^s$ at $\phi \in \cG_0^{s+1}$ with respect to this trivialisation. 

%\begin{lem}\label{Lphi}
%$L_\phi (v) = \cL_{\phi^{-1}_*v}(\phi^*\omega)$.
%\end{lem}
%\begin{proof}
%Let $\psi_t$ be a smooth path in $\cG_0^{s+1}$ through the identity with $\dot\psi_0 = v$. Then $\phi_t = \psi_t \circ \phi$ is a smooth path through $\phi$ with $\dot\phi_0$ also corresponding to $v$ under the trivialisation of $T\cG_0^{s+1}$ by right multiplication. Now
%\[
%\left.\frac{\del}{\del t}\right|_{t=0} \phi^*_t \omega 
%	= \phi^*\left( \left.\frac{\del}{\del t}\right|_{t=0} \psi_t^* \omega \right)
%	= \phi^*(\cL_v\omega)
%		= \cL_{\phi^{-1}_*v}(\phi^*\omega)
%\]
%as claimed.
%\end{proof}

The second operator, $L^*_\phi \colon \cF^s \to \cV^{s-1}$, is defined
to be the formal $L^2$-adjoint of $L_\phi \colon \cV^{s+1}_0 \to
\cF^s$, defined with respect to the metric determined by the
hyperk\"ahler triple $\phi^*\omega$. In particular, $L_{\id}^*=L^*$ is
precisely the operator appearing in the statement of the Slice
Theorem~\ref{main.slice.thm}. (Note that, as is standard with
boundary value problems, whilst $L_\phi$ is defined on vector fields
vanishing on the boundary, its adjoint $L_\phi^*$ will in general take
values in arbitrary vector fields.) 

We need two results concerning these operators. Again the proofs are identical to those in Ebin--Palais and so we omit them. 

%\begin{lem}\label{kerLphi-star}
%We have $\Ker L_\phi^* = \phi^* \Ker L_{\id}^*$. It follows that $\cE^{s}$ is the kernel of the bundle map $\cG^{s+1}_0\times \cF^s \to \cV^{s-1}$ given by $(\phi, \chi) \mapsto L_{\phi}^*(\chi)$. 
%\end{lem}
%\begin{proof}
%We can express $L_\phi$ as a composition
%\[
%\cV_0^{s+1} \stackrel{\phi^{-1}_*}{\longrightarrow}
%\cV_0^{s+1} \stackrel{L}{\longrightarrow}
%\cF^s \stackrel{\phi^*}{\longrightarrow}
%\cF^s.
%\]
%It follows that the formal adjoint $L_\phi^*$ is given by a composition
%\[
%\cF^{s} \stackrel{C}{\longrightarrow} 
%\cF^s \stackrel{L^*}{\longrightarrow} 
%\cV^{s-1} \stackrel{D}{\longrightarrow}
%\cV^{s-1}
%\]
%where $C$ is the adjoint of $\phi^*$ and $D$ is the adjoint of $\phi^{-1}_*$ and, moreover, \emph{we use the metric $L^2(\phi^*\omega)$ on the domain of $C$ and range of $D$, but the $L^2(\omega)$-metric on the range of $C$ and the domain of $D$ to define these adjoints}. One then checks that $C = (\phi^{-1})^*$ and $D= \phi_*$ and so, in particular, $\Ker L_\phi^* = \phi^* \Ker L^*$ as claimed. 
%\end{proof}

\begin{lem}\label{Lphi-Lphi-star}
We have the following formulae for $L_\phi$ and $L_\phi^*$:
\[
L_\phi = \phi^* \circ L \circ \phi_*^{-1},\quad
L_\phi^* = \phi_* \circ L^* \circ (\phi^{-1})^*.
\]
In particular, $\ker L_\phi^* = \phi^* \ker L_{\id}^*$.
\end{lem}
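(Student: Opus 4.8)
The plan is to compute $L_\phi$ directly from its definition as the derivative of $A(\phi) = \phi^*\omega$ with respect to the right-trivialisation of $T\cG_0^{s+1}$, and then obtain the adjoint formula by an elementary change-of-variables in the $L^2$-pairing. First I would unwind the trivialisation: a tangent vector at $\phi \in \cG_0^{s+1}$, identified with $v \in \cV_0^{s+1}$ via right multiplication, corresponds to the path $t \mapsto \psi_t \circ \phi$ where $\psi_t$ is the flow of $v$ (so $\dot\psi_0 = v$ and $\psi_0 = \id$). Then
\[
L_\phi v = \frac{\rd}{\rd t}\Big|_{t=0} (\psi_t \circ \phi)^*\omega
= \frac{\rd}{\rd t}\Big|_{t=0} \phi^* \psi_t^* \omega
= \phi^*\big(\cL_v \omega\big) = \phi^*(L v).
\]
At first glance this reads $L_\phi = \phi^* \circ L$, but note that here $v$ is the vector field entering the \emph{right}-trivialised tangent vector, i.e.\ a vector field "at the identity"; to express $L_\phi$ as a map whose domain is vector fields on $X$ in the naive sense one must account for the fact that the trivialisation identifies $T_\phi\cG_0^{s+1}$ with $\cV_0^{s+1}$ via $w \mapsto (\rd\phi)(\phi^{-1})\cdot w = \phi_*^{-1}(w)$ — so replacing $v$ by $\phi_*^{-1} w$ gives $L_\phi = \phi^* \circ L \circ \phi_*^{-1}$, as claimed. (One should double-check the conventions for left vs.\ right trivialisation against \cite{Ebin, EbinMarsden}; the sign/direction of $\phi_*^{-1}$ versus $\phi_*$ is exactly the place an error could creep in, so I would verify it on a one-parameter subgroup.)

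For the adjoint formula, recall that $L_\phi^*$ is the formal $L^2$-adjoint taken with respect to the metric $g_\phi$ determined by $\phi^*\omega$, together with the induced volume form $\rd\mu_{\phi^*\omega} = \phi^*\rd\mu_\omega$. The key observation is that $\phi \colon (X, g_\phi) \to (X, g)$ is an isometry by construction of $g_\phi$ from $\phi^*\omega$, so pull-back $\phi^*$ and push-forward $\phi_*$ are isometries of the relevant $L^2$ spaces of forms and vector fields. Hence for $\chi \in \cF^s$ and $v \in \cV_0^{s+1}$,
\[
\langle L_\phi v, \chi \rangle_{\phi^*\omega}
= \langle \phi^* L \phi_*^{-1} v,\, \chi \rangle_{\phi^*\omega}
= \langle L \phi_*^{-1} v,\, (\phi^{-1})^*\chi \rangle_{\omega}
= \langle \phi_*^{-1} v,\, L^* (\phi^{-1})^*\chi \rangle_{\omega}
= \langle v,\, \phi_* L^* (\phi^{-1})^*\chi \rangle_{\phi^*\omega},
\]
where in the second and last steps I used that $\phi$ is an isometry between the two metrics, and in the third step the definition of $L^*$ as the $g$-adjoint of $L$ (here there is no boundary term precisely because $v$, hence $\phi_*^{-1}v$, vanishes on $\p X$). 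This gives $L_\phi^* = \phi_* \circ L^* \circ (\phi^{-1})^*$. Finally, $\ker L_\phi^* = \ker\big(\phi_* \circ L^* \circ (\phi^{-1})^*\big)$; since $\phi_*$ is an isomorphism this equals $(\phi^{-1})_*^{-1}\big(\ker(L^*\circ(\phi^{-1})^*)\big)$, and unwinding, $\chi \in \ker L_\phi^*$ iff $(\phi^{-1})^*\chi \in \ker L^*$ iff $\chi \in \phi^*(\ker L^*) = \phi^* \ker L_{\id}^*$.

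The only genuine subtlety — and the step I would be most careful about — is the bookkeeping of which map ($\phi_*$, $\phi_*^{-1}$, $\phi^*$, or $(\phi^{-1})^*$) appears where: this hinges on the right-trivialisation convention for $T\cG_0^{s+1}$ and on the fact that $g_\phi$ is defined so that $\phi$, not $\phi^{-1}$, is the isometry. Everything else is formal manipulation of adjoints, and the vanishing of $v$ on $\p X$ is what kills the boundary term in the integration by parts defining $L^*$ — consistent with the paper's remark that $L_\phi^*$ will in general produce vector fields that do not vanish on $Y$.
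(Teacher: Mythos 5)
The paper itself omits this proof (it defers to Ebin--Palais), and your overall strategy --- differentiate $A(\phi)=\phi^*\omega$ through the trivialisation of $T\cG_0^{s+1}$ and then transfer the adjoint using that $\phi\colon(X,\phi^*g)\to(X,g)$ is an isometry --- is the natural one. But as written your argument does not establish the stated formulae, and the problems sit exactly in the bookkeeping that is the content of the lemma. First, your own (correct) computation with the right-trivialisation gives $L_\phi v=\tfrac{\rd}{\rd t}\big|_{t=0}(\psi_t\circ\phi)^*\omega=\phi^*(\cL_v\omega)$, i.e.\ $L_\phi=\phi^*\circ L$. The subsequent insertion of $\phi_*^{-1}$ is not a relabelling: $\phi^*\circ L$ and $\phi^*\circ L\circ\phi_*^{-1}$ differ by precomposition with an invertible operator, so one of them is not the derivative in the chosen trivialisation. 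The identification you invoke to bridge the gap, $w\mapsto(\rd\phi)(\phi^{-1})\cdot w$, is the push-forward $\phi_*w$, not $\phi_*^{-1}w$, and in any case it is not the right-trivialisation you used one line earlier. You need to fix the convention and carry it through consistently (right multiplication gives $\phi^*\circ L$; left multiplication gives $\cL_v(\phi^*\omega)=\phi^*\circ L\circ\phi_*$, i.e.\ conjugation of $L$ by $\phi^*$); at present the displayed formula for $L_\phi$ is asserted rather than derived.

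Second, the last equality in your adjoint computation applies the isometry property with the two metrics on the wrong sides. Since $\phi\colon(X,\phi^*g)\to(X,g)$ is the isometry, the correct $L^2$ identities are $\langle\phi^*\alpha,\phi^*\beta\rangle_{\phi^*g}=\langle\alpha,\beta\rangle_{g}$ on forms (which you use correctly) and $\langle\phi_*a,\phi_*b\rangle_{g}=\langle a,b\rangle_{\phi^*g}$ on vector fields, equivalently $\langle\phi_*^{-1}v,u\rangle_{\phi^*g}=\langle v,\phi_*u\rangle_{g}$. Your step asserts instead $\langle\phi_*^{-1}v,u\rangle_{g}=\langle v,\phi_*u\rangle_{\phi^*g}$, i.e.\ $\langle a,u\rangle_{g}=\langle\phi_*a,\phi_*u\rangle_{\phi^*g}$, which holds only when $\phi$ is an isometry of $g$ itself; a one-dimensional example ($g=\rd x^2$, $\phi$ a boundary-fixing diffeomorphism with non-constant derivative) already falsifies it. Applying the identities in the correct direction, the same chain of equalities produces $\phi_*^{-1}\circ L^*\circ(\phi^{-1})^*$ if one starts from the left-trivialised $L_\phi=\phi^*\circ L\circ\phi_*$, and an operator with extra Jacobian/metric factors if one insists on starting from $\phi^*\circ L\circ\phi_*^{-1}$ with the $\phi^*g$-inner product on both domain and range; so the claimed $L_\phi^*=\phi_*\circ L^*\circ(\phi^{-1})^*$ is not obtained by your argument. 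What does survive, under any of these conventions, is the statement actually used later in the paper: $\im L_\phi=\phi^*(\im L)$ together with unitarity of $\phi^*$ on $2$-forms gives $\ker L_\phi^*=\phi^*\ker L_{\id}^*$ regardless of the precise placement of $\phi_*$ versus $\phi_*^{-1}$. But since the lemma asserts the formulae themselves, the proof as written has a genuine gap; the verification on a one-parameter family that you propose at the end is exactly the check that would expose both points, and it needs to be carried out rather than deferred.
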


\begin{lem}\label{Lphi.smooth}
Let $s>3$. The operators $L_\phi \in B(\cV^{s+1}_0,\cF^s)$ and $L_\phi^* \in B(\cF^s, \cV^{s-1})$ depend smoothly on $\phi \in \cG_0^{s+1}$. 
\end{lem}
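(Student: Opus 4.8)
The plan is to reduce the smooth dependence of $L_\phi$ and $L_\phi^*$ on $\phi$ to two ingredients: the explicit conjugation formulae of Lemma~\ref{Lphi-Lphi-star}, and the smooth dependence (in $\phi$) of the basic operations of pulling back forms, pushing forward vector fields, and forming the $L^2$-adjoint with respect to the metric $\phi^*\omega$. First I would treat $L_\phi$. By Lemma~\ref{Lphi-Lphi-star} we have $L_\phi = \phi^* \circ L \circ \phi_*^{-1}$, where $L = \rd(\iota_{(\cdot)}\omega)$ is a fixed first-order differential operator with smooth coefficients (because $\omega$ is smooth). So it suffices to show that, for $s>3$, the maps
\[
\cG_0^{s+1} \to B(\cV_0^{s+1}, \cV^{s+1}), \quad \phi \mapsto \phi_*^{-1},
\qquad
\cG_0^{s+1} \to B(\cF^{s}, \cF^{s}), \quad \phi \mapsto \phi^*,
\]
are smooth, and then compose with the fixed bounded operator $L \colon \cV^{s+1} \to \cF^s$ (here one uses $s+1 > 4$, so that $H^{s+1}$ is a multiplicative algebra and differentiation $H^{s+1}\to H^s$ is bounded). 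The smoothness of $\phi \mapsto \phi^*$ acting on tensor fields of fixed regularity is exactly the kind of statement established in Ebin--Palais: in local charts, $(\phi^*\chi)$ is built from $\chi\circ\phi$ and the matrix entries of $\rd\phi$, and the $\omega$-Lemma (smoothness of the composition map $(\phi,\chi)\mapsto\chi\circ\phi$ on Sobolev spaces, for $\phi$ of one higher order of regularity) together with the algebra property of $H^s$ gives the claim; the same applies to $\phi \mapsto \phi_*^{-1}$ after noting that $\phi\mapsto\phi^{-1}$ is smooth on $\cG_0^{s+1}$ and that $\phi_*$ on vector fields involves $(\rd\phi)$ and $v\circ\phi^{-1}$. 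Since composition of bounded operators is bilinear and continuous, hence smooth, $\phi \mapsto L_\phi$ is smooth as a map $\cG_0^{s+1}\to B(\cV_0^{s+1},\cF^s)$.

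For $L_\phi^*$ I would argue similarly using $L_\phi^* = \phi_* \circ L^* \circ (\phi^{-1})^*$ from Lemma~\ref{Lphi-Lphi-star}. The only new point is that $L^*$ here denotes the formal adjoint computed with respect to the \emph{fixed} metric determined by $\omega$ (not $\phi^*\omega$) — which is legitimate precisely because conjugating the $\omega$-adjoint by $\phi_*$ and $(\phi^{-1})^*$ reproduces the $\phi^*\omega$-adjoint, as recorded in that lemma. Thus $L^*\colon \cF^s\to\cV^{s-1}$ is again a single fixed first-order operator with smooth coefficients, bounded between the indicated Sobolev spaces, and $\phi\mapsto L_\phi^*$ is a composition of the smooth maps $\phi\mapsto(\phi^{-1})^*$ (on $\cF^s$), the fixed bounded operator $L^*$, and $\phi\mapsto\phi_*$ (on $\cV^{s-1}$), hence smooth into $B(\cF^s,\cV^{s-1})$.

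The main technical obstacle, and the one genuine point requiring care, is the loss-of-derivative bookkeeping in the $\omega$-Lemma: the pull-back $\phi\mapsto\phi^*$ fails to be smooth (or even $C^1$) as a map on $H^s$-tensors if $\phi$ is only of regularity $H^s$, and becomes smooth only when $\phi$ is taken one order smoother, i.e.\ $\phi\in\cG_0^{s+1}$. This is why the statement is phrased with $\cG_0^{s+1}$ acting on $\cF^s$, and it is exactly the same phenomenon already flagged before Lemma~\ref{smooth.action}. Since this is handled verbatim as in Ebin--Palais, I would simply invoke their argument rather than reproduce it, noting only that our first-order operators $L$ and $L^*$ have smooth coefficients because $\omega$ is smooth, so no regularity is lost in the fixed middle factor of each composition.
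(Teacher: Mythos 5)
There is a genuine gap, and it sits exactly at the point your last paragraph tries to wave away. Your reduction requires that
$\phi \mapsto \phi^* \in B(\cF^s,\cF^s)$ and $\phi \mapsto \phi_*^{-1} \in B(\cV_0^{s+1},\cV^{s+1})$ be smooth maps on $\cG_0^{s+1}$, and this is false. If $\phi\mapsto\phi^*$ were even $C^1$ into $B(\cF^s,\cF^s)$, then the evaluation $(\phi,\chi)\mapsto\phi^*\chi$ would be $C^1$ on $\cG_0^{s+1}\times\cF^s$, which is precisely what the paper points out fails: the linearised action is $\cL_v\chi$, which costs a derivative of $\chi$ and so does not land in $H^s$ for $\chi\in H^s$. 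The $\omega$-Lemma does not rescue this: it gives smoothness of $\phi\mapsto f\circ\phi$ only when $f$ is $C^\infty$ (that is Lemma~\ref{smooth.action}, which is stated for the \emph{smooth} triple $\omega$), or $C^k$-dependence at the price of losing $k$ derivatives of $f$; at a fixed finite Sobolev level with no loss one only gets continuity. Taking $\phi$ one order more regular than the tensor, as you suggest, buys exactly one derivative, i.e.\ $C^1$, not $C^\infty$. There is also a bookkeeping error in the first factor: for $\phi\in\cG_0^{s+1}$ one has $\rd\phi\in H^s$ only, so $\phi_*^{-1}$ is bounded $\cV_0^{s+1}\to\cV^s$, not into $\cV^{s+1}$ (the paper flags exactly this after the corollary to Proposition~\ref{tgt.split.prop}); routed through the fixed operator $L$ your composite is then only bounded into $\cF^{s-1}$, one derivative short of the claim.

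The smoothness asserted in the lemma does not come from smoothness of pull-back/push-forward on Sobolev tensors, but from the smoothness of the fixed triple $\omega$. For $L_\phi$ the efficient argument is simply that $L_\phi=\rd A_\phi$ by definition, read through the smooth right-translation trivialisation of $T\cG_0^{s+1}$, and $A(\phi)=\phi^*\omega$ is a smooth map $\cG_0^{s+1}\to\cF^s$ by Lemma~\ref{smooth.action}; the derivative of a smooth map depends smoothly on the base point, so $\phi\mapsto L_\phi\in B(\cV_0^{s+1},\cF^s)$ is smooth. Equivalently, in local coordinates $L_\phi$ is a first-order operator in $v$ whose coefficients are built from $\omega\circ\phi$ (smooth in $\phi$ because $\omega$ is $C^\infty$) and from $\rd\phi$, $(\rd\phi)^{-1}$, which depend smoothly on $\phi$ with values in $H^s$ by linearity, the algebra property of $H^s$ ($s>3$) and smoothness of matrix inversion; the derivative falling on $v$ is harmless since $v$ enters linearly. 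For $L_\phi^*$ one argues in the same spirit: it is given by a zeroth-order-in-$\phi$ formula analogous to Lemma~\ref{Lstar}, namely the $\phi^*\omega$-analogue of $\eta\mapsto(J\cdot\rd^*\eta)^\sharp$, whose coefficients (metric, Hodge star, complex structures) are algebraic, indeed real-analytic, functions of $\phi^*\omega\in\cF^s$ and its first derivatives, hence depend smoothly on $\phi$ via Lemma~\ref{smooth.action} and Sobolev multiplication. The conjugation formulae of Lemma~\ref{Lphi-Lphi-star} are useful identities, but they cannot serve as the engine of the smoothness proof in the way you propose.
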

%\begin{proof}
%For $L_\phi$ this follows from Lemma~\ref{smooth.action} (since $L_\phi$ is the derivative of the smooth map $A$ considered there). For $L_\phi^*$, one can either argue along the same lines as Ebin in \cite{Ebin}, or follow a slightly different route through the calculation. For this we first use Cartan's formula $\cL = \rd \circ \iota + \iota \circ \rd$ and Lemma~\ref{Lphi} to write
%\[
%L_\phi = \rd \circ B_\phi
%\]
%where $B_\phi(v) = \iota(\phi^{-1}_*v)(\phi^*\omega)$ is purely tensorial in $v$. It follows that
%\[
%L_\phi^* = B_\phi^* \circ \rd^*_{\phi^*\omega}
%\]
%where $\rd^*_{\phi^*\omega}$ denotes the formal adjoint of $\rd$ with respect to the metric $\phi^*\omega$. 
%
%Now, one can check that the formal adjoint $\rd^*_g \in B(H^s, H^{s-1})$ of $\rd$ with respect to a metric $g \in H^{s}$ depends smoothly on $g$ (provided $s>3$ so that Sobolev multiplication can be used on functions in $H^{s-1}$).  By Lemma~\ref{smooth.action}, $\phi^*\omega \in \cF^s$ depends smoothly on $\phi \in \cG_0^{s+1}$ and so $\rd^*_{\phi^*\omega}$ is smooth in $\phi$. Meanwhile, $B_\phi(v)$ is tensorial in $v$ and the coefficients of this tensor are rational functions of the coefficients of $\phi^*\omega$ and the first derivatives of $\phi$. It follows that the same is true for $B_\phi^*$ and so its action on $H^{s-1}$ also depends smoothly on $\phi$ (where again we use Sobolev multiplication). Combining these two facts we see that $L_\phi^*$ depends smoothly on $\phi$ as claimed.
%\end{proof}
 
So, by Lemmas~\ref{Lphi-Lphi-star} and~\ref{Lphi.smooth}, $\cE^s$ is the kernel of the smooth bundle map $\cG_0^{s+1} \times \cF^s \to \cV^{s-1}$, given by $(\phi, \chi) \mapsto L^*_\phi(\chi)$. This alone is not enough to ensure that $\cE^{s}$ is a vector bundle (even for finite rank bundles this can fail, since the dimension of the kernel can jump). To prove that $\cE^s$ is a Banach vector bundle we will use the following result. This is standard and so we do not give the proof.

\begin{lem}\label{bundle.split.lem}
Let $V$ be a Banach space and $M$ a Banach manifold. Suppose for each $x\in M$ we have a projection  $P_x\colon V\rightarrow V$, so that the map 
\[
M \rightarrow B(V), \quad
 x\mapsto P_{x}
 \]
is smooth. Then $\im P:=\cup_{x\in M} \im(P_x)$ and $\Ker P:=\cup_{x\in M} \Ker(P_x)$ are smooth sub-bundles of $M \times V$ and there is a splitting $M \times V=\im(P)\oplus \Ker(P)$. 
\end{lem}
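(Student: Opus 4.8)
The assertion is local over $M$, so the plan is to fix a point $x_0\in M$ and, on a neighbourhood of it, produce a smooth automorphism of the trivial bundle that carries the \emph{constant} splitting $V=\im(P_{x_0})\oplus\Ker(P_{x_0})$ onto the varying one $V=\im(P_x)\oplus\Ker(P_x)$. Recall first that a bounded projection $P_{x_0}$ satisfies $\im(P_{x_0})=\Ker(I-P_{x_0})$, so both $\im(P_{x_0})$ and $\Ker(P_{x_0})$ are closed, complemented subspaces; hence the pointwise splitting is automatic and only the smoothness of the two families of subspaces needs to be established.

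First I would set
\[
\Phi_x\defeq P_x P_{x_0}+(I-P_x)(I-P_{x_0})\in B(V).
\]
Since $x\mapsto P_x$ is smooth into $B(V)$ and $\Phi_x$ is built from it by the continuous --- hence $C^\infty$ --- operations of operator addition and composition, the map $x\mapsto\Phi_x$ is smooth; and $P_{x_0}^2=P_{x_0}$ gives $\Phi_{x_0}=P_{x_0}+(I-P_{x_0})=I$. As the invertible elements form an open subset of $B(V)$ on which inversion is smooth, there is a neighbourhood $U\ni x_0$ on which $\Phi_x$ is invertible with $x\mapsto\Phi_x^{-1}$ smooth.

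The key computation is the intertwining identity $\Phi_x P_{x_0}=P_x\Phi_x$ on $U$, which is purely algebraic and uses only $P_x^2=P_x$ and $P_{x_0}^2=P_{x_0}$. Hence $P_x=\Phi_x P_{x_0}\Phi_x^{-1}$, and likewise $I-P_x=\Phi_x(I-P_{x_0})\Phi_x^{-1}$. Conjugation by the invertible operator $\Phi_x$ therefore gives $\Phi_x\bigl(\im(P_{x_0})\bigr)=\im(P_x)$ and $\Phi_x\bigl(\Ker(P_{x_0})\bigr)=\Ker(P_x)$ for all $x\in U$. The map $\widetilde\Phi\colon U\times V\to U\times V$, $(x,v)\mapsto(x,\Phi_x v)$, is then a smooth bundle isomorphism --- smoothness of $(x,v)\mapsto\Phi_x v$ and of $(x,v)\mapsto\Phi_x^{-1}v$ follows from continuity of the evaluation map $B(V)\times V\to V$ together with smoothness of $x\mapsto\Phi_x^{\pm1}$ --- and it carries $U\times\im(P_{x_0})$ onto $\im(P)|_U$ and $U\times\Ker(P_{x_0})$ onto $\Ker(P)|_U$. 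Since $\im(P_{x_0})$ and $\Ker(P_{x_0})$ are complemented closed subspaces of $V$, this exhibits $\im(P)$ and $\Ker(P)$ as complemented smooth subbundles of $M\times V$ over $U$. As $x_0$ was arbitrary, $\im(P)$ and $\Ker(P)$ are globally smooth subbundles, and pointwise $V=\im(P_x)\oplus\Ker(P_x)$, giving the splitting $M\times V=\im(P)\oplus\Ker(P)$.

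I do not expect a genuine obstacle here --- this is precisely why the lemma can be cited as standard. The only place calling for a little care is checking smoothness, in the Banach-manifold sense, of $x\mapsto\Phi_x^{-1}$ and of $\widetilde\Phi^{\pm1}$, which reduces to the routine facts that continuous multilinear maps between Banach spaces are $C^\infty$ and that operator inversion is $C^\infty$ on the open set of invertibles.
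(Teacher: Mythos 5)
Your argument is correct: the intertwining identity $\Phi_x P_{x_0}=P_x\Phi_x$ does follow from $P_x^2=P_x$ and $P_{x_0}^2=P_{x_0}$ alone (both sides equal $P_xP_{x_0}$), $\Phi_{x_0}=I$ gives invertibility on a neighbourhood, and conjugation by $\Phi_x$ then trivializes both families of subspaces locally. The paper itself omits the proof, citing the lemma as standard, and what you have written is precisely that standard argument (as in Lang's treatment of subbundles defined by smooth families of projections), so there is nothing in the paper to compare it against and no gap to report.
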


%\begin{proof}
%We start by considering  $\Ker P$. Fix $x_0\in M$.  Then since $\Ker P_{x_0}$ and $\im P_{x}$ are transverse when $x=x_0$ and $P_{x}$ varies smoothly with $x$ by assumption, there exists a neighbourhood  $U_0 \ni x_0$ such that 
%\[
%\im P_{x}\cap \Ker P_{x_0}=\im P_{x_0}\cap \Ker P_{x}=\{0\}
%\]
%for all $x\in U$.  It follows that the map $P_{x_0}:\im P_{x}\rightarrow \im P_{x_0}$ is injective for all $x\in U_0$.  We also know that the endomorphism
% $P_{x_0}P_{x}:\im P_{x_0}\rightarrow \im P_{x_0}$ is an isomorphism for $x=x_0$, so by making $U_0$ smaller if necessary we know that 
% it must remain an isomorphism for all $x\in U_0$.  Hence, $P_{x_0}:\im P_{x}\rightarrow \im P_{x_0}$ is an isomorphism for all $x\in U_0$.
% 
%Let $\Phi\colon M \times V \to \im P_{x_0}$ be defined by $\Phi(x,y)= P_{x_0}P_{x}(y)$. We have just shown for $x\in U_0$ that 
%\[
%P_{x_0}P_{x} \colon V \to \im P_{x_0}
%\]
%is surjective with kernel equal to $\Ker P_{x}$, which means that $\Phi^{-1}(0)$ and $\Ker P$ agree in $\pi^{-1}(U_0)$, where $\pi\colon M\times V \to M$ is the projection map.  Thus if we show that $\Phi^{-1}(0)\cap \pi^{-1}(U_0)$ is a manifold we have that $\Ker P$ is a smooth sub-bundle of $M \times V$.
%
%We see that the derivative of $\Phi$ in the $V$ directions is $P_{x_0}P_{x}$ which is surjective for $x\in U_0$, and so the Implicit Function Theorem states that $\Phi^{-1}(0)$ and thus $\Ker P$ is a Banach manifold.  The same argument works for $\im P$ viewed as $\Ker (\id-P)$.
%\end{proof}

We will ultimately apply Lemma \ref{bundle.split.lem} to the projection onto $\Ker L_\phi^*$ along $\im L_\phi$. At this point some parts of the proofs are minor modifications of those in Ebin--Palais and so we give the details. The first step is to show that $L = L_{\id}$ is invertible.

\begin{lem}\label{L.inj}
The map $L \colon \cV_0^{s+1} \to \cF^s$ is injective.
\end{lem}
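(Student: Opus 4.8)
The plan is to prove injectivity of $L\colon \cV_0^{s+1}\to\cF^s$, $Lv=\cL_v\omega=\rd(\iota_v\omega)$, by a unique-continuation-type argument exploiting the fact that $\omega$ is a nondegenerate framing of $\Lambda^2_+$. Suppose $v\in\cV_0^{s+1}$ with $Lv=0$, i.e.\ $\rd(\iota_v\omega_i)=0$ for $i=1,2,3$. First I would observe that since $\omega_1,\omega_2,\omega_3$ is a pointwise-orthogonal frame of $\Lambda^2_+$ and contraction with a vector gives an isomorphism $TX\to\Lambda^2_+$ via $v\mapsto(\iota_v\omega_i)_+$ (or rather: the three $1$-forms $\iota_v\omega_i$ together recover $v$ pointwise, because the map $v\mapsto(\iota_v\omega_1,\iota_v\omega_2,\iota_v\omega_3)$ is pointwise injective — this is the standard quaternionic linear-algebra fact for hyperk\"ahler structures in dimension $4$), it suffices to show the $1$-forms $\xi_i:=\iota_v\omega_i$ all vanish. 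We know $\rd\xi_i=0$, and $\xi_i=0$ on $\p X$ since $v=0$ there. So each $\xi_i$ is a closed $1$-form on $X$ vanishing on the boundary; if we also control $\rd^*\xi_i$ we can run an elliptic argument.

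The key point, following Ebin--Palais, is to compute $\rd^*\xi_i$ and find that the system $(\xi_1,\xi_2,\xi_3)$ satisfies a first-order elliptic system with the unique continuation property, or more simply that $v$ itself satisfies an overdetermined elliptic equation. Concretely: from $\rd\xi_i=0$ and the hyperk\"ahler (in particular K\"ahler, hence $\rd\omega_i=0$ and $\nabla\omega_i=0$) structure, one gets $0=\rd(\iota_v\omega_i)=\cL_v\omega_i$. Decompose $\cL_v\omega_i$ using the Lie derivative formula in terms of $\nabla v$: since $\nabla\omega_i=0$, $\cL_v\omega_i=\omega_i(\nabla\cdot\, v,\cdot)+\omega_i(\cdot,\nabla\cdot\, v)$, which is a pointwise-linear expression in the symmetric part of $\nabla v$ (equivalently in $\cL_v g$, since hyperk\"ahler is Ricci-flat and the three conditions $\cL_v\omega_i=0$ are well-known to be equivalent to $v$ being a Killing field when combined — but here we only have them, not the Killing equation a priori; rather, $\cL_v\omega_i=0$ for all $i$ forces $\cL_v g=0$ because the $\omega_i$ span $\Lambda^2_+$ and $\cL_v g$ can be reconstructed from how $v$ moves the complex structures and the metric). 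I would cite that $\cL_v\omega_i=0$ for $i=1,2,3$ implies $v$ is Killing, hence $\nabla v$ is skew, hence $\Delta v = -\mathrm{Ric}(v)=0$ (Bochner, using Ricci-flatness), and a Killing field vanishing to infinite order — or even to first order — on a hypersurface vanishes identically. Actually the cleanest route: a Killing field is determined by its $1$-jet at a point (the Killing equation plus $\nabla^2 v = -R(v,\cdot)\cdot$ is an ODE along geodesics), and $v\equiv 0$ on $\p X$ forces $\nabla v$ to be controlled there too, giving $v\equiv 0$ in a neighbourhood of $\p X$ and then everywhere by connectedness.

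Let me record the argument more carefully, since the $1$-jet claim needs $\nabla v=0$ on $\p X$, not just $v=0$. On $\p X=Y$, $v=0$ gives $\nabla_{T}v = 0$ for $T$ tangent to $Y$; the normal derivative $\nabla_N v$ is skew (Killing) and the tangential part of $\nabla_N v$ is $\mathrm{II}$-type data that need not vanish. So instead I would argue: $v$ Killing and $v|_Y=0$ means $Y$ is invariant and $v$ generates a flow fixing $Y$ pointwise; the linearisation of this flow at $Y$ acts on the normal bundle, and $\cL_v\omega=0$ restricted to $Y$ forces this linearised action to be trivial as well — equivalently, the $1$-jet of $v$ along $Y$ vanishes — after which unique continuation for the ODE governing Killing fields finishes the proof. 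The main obstacle is precisely this boundary step: showing that $v|_Y=0$ together with $\cL_v\omega=0$ forces the full $1$-jet of $v$ to vanish on $Y$. In Ebin--Palais the analogous statement (for metrics) is handled by the same mechanism, and I would mimic it: write $\cL_v\omega_i=0$ in a collar $[0,\varepsilon)\times Y$, note the leading term in the collar coordinate is $(\nabla_N v)$ acting on $\omega_i|_Y$, and since the $\omega_i|_Y$ framing has an injective such action, conclude $\nabla_N v|_Y=0$; then the standard uniqueness theorem for the linear ODE system satisfied by a Killing field gives $v\equiv 0$ near $Y$, hence on all of $X$ by connectedness.
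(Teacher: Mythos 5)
Your proposal is correct and takes essentially the same route as the paper, whose entire proof is the observation that $Lv=0$ makes $v$ a Killing field for the metric determined by $\omega$, and a Killing field vanishing on $Y$ must vanish on all of $X$. The boundary step you labour over is in fact immediate from skew-symmetry of $\nabla v$: since $\nabla_T v=0$ for every $T$ tangent to $Y$, skewness forces $\nabla_\nu v=0$ too, so the $1$-jet of $v$ vanishes along $Y$ and the standard ODE rigidity of Killing fields finishes the argument (your collar computation with the framing is a correct but more roundabout way of reaching the same conclusion).
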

\begin{proof}
If $Lv =  0$, then $v$ is a Killing field for the metric defined by $\omega$, but $v$ vanishes on $Y$ and so must also vanish on $X$.
\end{proof}

We next need a concise formula for $L^*$. We start with the adjoint of the map $TX \to T^*X \otimes \RR^3$ given by $v \mapsto \iota_v \omega$. The hyperk\"ahler triple $\omega = (\omega_1, \omega_2, \omega_3)$ determines a triple of complex structures $J = (J_1, J_2, J_3)$ via the requirement that $g(J_j u,v) = \omega_j(u, v)$ for $j=1,2,3$.  Using these we define a map $T^*X \otimes \RR^3 \to T^*X$ by 
\begin{equation}
a=(a_1,a_2,a_3) \mapsto J \cdot a\defeq J_1a_1 + J_2a_2 + J_3a_3. 
\label{Jdot_definition}
\end{equation}
The following lemma is the result of a simple calculation.

\begin{lem}
The map $a \mapsto (J\cdot a)^\sharp$ is adjoint to $v \mapsto \iota_v\omega$ (where $\alpha^\sharp$ is the vector metric-dual to the 1-form $\alpha$).
\end{lem}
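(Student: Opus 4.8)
The plan is to verify the adjointness pointwise, since it is the adjointness of a zeroth‑order bundle map and involves no integration by parts. Fix a point of $X$ and work algebraically in $T_pX$. The relevant inner products are the metric $g$ on $TX$ and the induced inner product on $T^*X\otimes\RR^3$, namely $\langle a,b\rangle = \sum_{j=1}^3 g(a_j^\sharp, b_j^\sharp)$. So for $v\in TX$ and $a=(a_1,a_2,a_3)\in T^*X\otimes\RR^3$ I must check
\[
\sum_{j=1}^3 \langle \iota_v\omega_j, a_j\rangle \;=\; g\big(v,(J\cdot a)^\sharp\big) \;=\; (J\cdot a)(v).
\]

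The first step is to record the identity $(\iota_v\omega_j)^\sharp = J_j v$, which is immediate from the defining relation $\omega_j(u,w)=g(J_ju,w)$: indeed $(\iota_v\omega_j)(w)=\omega_j(v,w)=g(J_jv,w)$. Hence the left‑hand side equals $\sum_j g(J_jv, a_j^\sharp)$. The second step is to rewrite each summand using the action of $J_j$ on $1$‑forms. With the transpose convention $(J_j\alpha)(w):=\alpha(J_jw)$ — equivalently, since $\omega_j$ antisymmetric forces $J_j$ to be $g$‑skew, $J_j\alpha = -(J_j\alpha^\sharp)^\flat$, so that raising an index intertwines $J_j$ on $1$‑forms with $J_j^{-1}$ on vectors — one has $g(J_jv, a_j^\sharp) = g(a_j^\sharp, J_jv) = a_j(J_jv) = (J_ja_j)(v)$. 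Summing over $j$ gives $(J\cdot a)(v) = g\big((J\cdot a)^\sharp, v\big)$, which is exactly the right‑hand side, and this is the claimed adjointness.

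There is no genuine obstacle here; the one point requiring care is the sign/convention for the action of the $J_j$ on $1$‑forms, i.e.\ whether one raises the index before or after applying $J_j$. With the transpose convention above the stated formula $a\mapsto(J\cdot a)^\sharp$ holds on the nose; under the opposite convention the computation instead produces the adjoint $a\mapsto -(J\cdot a)^\sharp$, and the formula in the lemma should be read with that convention fixed throughout.
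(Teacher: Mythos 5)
Your pointwise verification is correct and is precisely the ``simple calculation'' the paper leaves unproved: the identity $(\iota_v\omega_j)^\sharp=J_jv$ plus $g$-skewness of each $J_j$ gives the adjointness at once. Your caveat about the action of $J_j$ on $1$-forms is well taken: with the transpose convention $(J_j\alpha)(w)=\alpha(J_jw)$ the stated formula $a\mapsto(J\cdot a)^\sharp$ holds exactly, whereas the convention $J_i\alpha=*(\alpha\wedge\omega_i)$ used later in the paper (proof of Proposition~\ref{linearisation.vector.field}) corresponds to $(J_j\alpha)^\sharp=J_j(\alpha^\sharp)$ and would produce the adjoint $-(J\cdot a)^\sharp$, so fixing one convention throughout, as you do, is the right thing.
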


\begin{lem}\label{Lstar}
We have $L^*\eta = (J \cdot \rd^* \eta)^\sharp$. In other words, if $v \in \cV_0$ and $\eta \in \cF$ then
\[
\langle Lv, \eta \rangle_{L^2} = \langle v,  (J \cdot \rd^* \eta)^\sharp \rangle_{L^2}
\]
\end{lem}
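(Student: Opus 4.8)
The plan is to reduce the statement to the fibrewise adjoint identity of the preceding lemma together with a single integration by parts. Write $T\colon C^\infty(X,TX)\to\Omega^1(X)\otimes\RR^3$ for the (zeroth order) bundle map $Tv=\iota_v\omega$; since $\omega$ is closed, Cartan's formula gives $L=\rd\circ T$ on $\cV_0$, where $\rd$ acts componentwise on the $\RR^3$ factor, consistent with \eqref{e11.10.2.16}. Note that $T$ makes sense on \emph{all} vector fields, not only those vanishing on $Y$; this is the reason $L^*$ can take values in arbitrary vector fields.

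First I would compute, for $v\in\cV_0$ and $\eta\in\cF$, applying Stokes' theorem in each $\RR^3$-slot,
\[
\langle Lv,\eta\rangle_{L^2}=\langle\rd(Tv),\eta\rangle_{L^2}=\langle Tv,\rd^*\eta\rangle_{L^2}\pm\int_Y\iota^*(Tv)\wedge\iota^*(*\eta).
\]
The only place the boundary hypothesis enters is here: because $v$ vanishes at every point of $Y$, the $1$-form $Tv=\iota_v\omega$ vanishes identically along $Y$, so $\iota^*(Tv)=0$ and the boundary integral drops out regardless of its sign. This leaves $\langle Lv,\eta\rangle_{L^2}=\langle Tv,\rd^*\eta\rangle_{L^2}$.

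Next I would invoke the preceding lemma: the fibrewise adjoint of $v\mapsto\iota_v\omega$ is $a\mapsto(J\cdot a)^\sharp$. Integrating this pointwise identity over $X$ against the hyperk\"ahler volume form gives $\langle Tv,a\rangle_{L^2}=\langle v,(J\cdot a)^\sharp\rangle_{L^2}$ for every $a\in\Omega^1(X)\otimes\RR^3$; taking $a=\rd^*\eta$ yields $\langle Lv,\eta\rangle_{L^2}=\langle v,(J\cdot\rd^*\eta)^\sharp\rangle_{L^2}$, i.e.\ $L^*\eta=(J\cdot\rd^*\eta)^\sharp$ as claimed. As for difficulty, there is no serious obstacle — the substance is already packaged in the ``simple calculation'' lemma, and the one thing to take care over is the boundary term, namely confirming that $\iota_v\omega$ restricts to zero on $Y$ when $v|_Y=0$ (true because it vanishes as a form at each boundary point, not merely after pullback) and that the classical Hodge-theoretic integration by parts applies componentwise in the $\RR^3$ factor. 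I would also record that this formula exhibits the factorisation $L^*=\pi\circ\rd^*$ anticipated in \eqref{e13.10.2.16}, with $\pi(a)=(J\cdot a)^\sharp$ the advertised zeroth order bundle map.
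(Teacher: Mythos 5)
Your argument is correct and is essentially the paper's own proof: Cartan's formula $Lv=\rd(\iota_v\omega)$, Stokes' theorem with the boundary term killed by $v|_Y=0$, and the pointwise adjoint identity from the preceding lemma applied to $a=\rd^*\eta$. The only difference is presentational — you spell out the factorisation $L^*=\pi\circ\rd^*$ and the vanishing of $\iota^*(\iota_v\omega)$ explicitly, which the paper leaves implicit.
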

\begin{proof}
This is a direct calculation. Suppose that $v$ is a smooth vector field on $X$ vanishing on the boundary $Y$. Then
\begin{equation}\label{e1.3.2.17}
\langle Lv, \eta\rangle_{L^2}
	=
\int_X \rd(i_v\omega)\wedge *\eta
	=
\int_Y i_v\omega\wedge *\eta+\int_X i_v\omega\wedge \rd*\eta
	=
\langle v,  (J \cdot \rd^* \eta)^\sharp \rangle_{L^2}
\end{equation}
where we have used that $\cL_v \omega = \rd (i_v \omega)$ (since $\rd \omega=0$) and that the boundary integral vanishes since $v$ is zero on $Y$.
\end{proof}

We now prove a Hodge decomposition for $L$ and $L^*$.

\begin{prop}\label{tgt.split.prop}~
\begin{enumerate}
\item
The map $L^*L \colon \cV_0^{s+1} \to \cV^{s-1}$ is an isomorphism. 
\item 
There is a splitting $\cF^s= \im L\oplus\Ker L^*$.
\item
The map $P=L \circ (L^*L)^{-1} \circ L^*$ is the projection onto $\im L$ with respect to this splitting.
\end{enumerate}
\end{prop}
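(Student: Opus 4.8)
The plan is to establish (1) first, since (2) and (3) follow formally from it by elementary functional analysis. To prove that $L^*L \colon \cV_0^{s+1} \to \cV^{s-1}$ is an isomorphism, I would argue in two stages: first injectivity, then surjectivity via Fredholm theory for an appropriate elliptic boundary value problem. Injectivity is immediate: if $L^*Lv = 0$ then $\langle L^*Lv, v\rangle_{L^2} = \|Lv\|_{L^2}^2 = 0$ (the boundary term in the integration by parts vanishes because $v \in \cV_0^{s+1}$ vanishes on $Y$), so $Lv = 0$, and then $v = 0$ by Lemma~\ref{L.inj}. For surjectivity, the key point is that $L^*L$ is a second-order operator which, by the formula $L^*\eta = (J\cdot \rd^*\eta)^\sharp$ from Lemma~\ref{Lstar} and $Lv = \rd(\iota_v\omega)$, one computes to have the same principal symbol as (a nonnegative multiple of) the rough Laplacian — in fact $L^*L$ should agree to leading order with a Hodge-type Laplacian on $1$-forms after identifying vector fields with $1$-forms via the metric, so it is elliptic. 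Paired with the Dirichlet boundary condition $v|_Y = 0$ implicit in the domain $\cV_0^{s+1}$, this forms an elliptic boundary value problem satisfying the Lopatinski--Shapiro condition, hence $L^*L \colon \cV_0^{s+1} \to \cV^{s-1}$ is Fredholm of index $0$; combined with injectivity, it is an isomorphism. One must also invoke elliptic regularity to ensure the inverse genuinely lands in $\cV_0^{s+1}$ (and, more importantly for later sections, preserves smoothness in the interior).

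Granting (1), statement (3) is a direct verification: set $P = L\circ(L^*L)^{-1}\circ L^*$ on $\cF^s$. Then $P^2 = L\circ(L^*L)^{-1}\circ L^*\circ L\circ(L^*L)^{-1}\circ L^* = L\circ(L^*L)^{-1}\circ L^* = P$, so $P$ is idempotent; its image is contained in $\im L$ (manifestly), and if $\chi = Lv$ then $P\chi = L(L^*L)^{-1}L^*Lv = Lv = \chi$, so $\im P = \im L$; finally $\ker P = \ker L^*$ because $P\chi = 0$ forces $L^*L\big((L^*L)^{-1}L^*\chi\big) = L^*\chi$ ... more directly, $L^*\chi = 0 \Rightarrow P\chi = 0$, and conversely $P\chi = 0 \Rightarrow L(L^*L)^{-1}L^*\chi = 0 \Rightarrow (L^*L)^{-1}L^*\chi = 0$ by injectivity of $L$ (Lemma~\ref{L.inj}) $\Rightarrow L^*\chi = 0$. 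This gives both (2) and (3) simultaneously: every $\chi \in \cF^s$ decomposes as $\chi = P\chi + (\chi - P\chi)$ with $P\chi \in \im L$ and $(\chi - P\chi) \in \ker L^*$ (since $L^*(\chi - P\chi) = L^*\chi - L^*\chi = 0$), and the sum is direct because $\im L \cap \ker L^* = 0$ (if $Lv \in \ker L^*$ then $L^*Lv = 0$ so $v = 0$).

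The main obstacle is the surjectivity half of (1), specifically checking that $(L^*L, \text{Dirichlet})$ is a well-posed elliptic boundary value problem. Two subtleties deserve care. First, one must confirm that the Dirichlet condition $v|_Y = 0$ is the \emph{correct} number of conditions — i.e.\ that it satisfies the Lopatinski--Shapiro (complementing) condition for the Laplace-type operator $L^*L$; for the standard Hodge Laplacian on $1$-forms this is classical, and since $L^*L$ differs from it by lower-order terms the symbol computation is unaffected, but the self-adjointness/coercivity argument (Gårding inequality with the Dirichlet condition, giving an a priori estimate $\|v\|_{H^{s+1}} \le C(\|L^*Lv\|_{H^{s-1}} + \|v\|_{L^2})$) should be spelled out or cited. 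Second, one should note that $L^*$ a priori maps $\cF^s \to \cV^{s-1}$ with values in \emph{all} vector fields (as remarked after Lemma~\ref{Lphi.smooth}), so the claim that $(L^*L)^{-1}$ maps back into the subspace $\cV_0^{s+1}$ of boundary-vanishing fields is exactly the content of the boundary value problem being solvable within that space — this is not automatic and is where the analysis genuinely lives. Here too one can follow Ebin--Palais, as this is the direct analogue of their Hodge decomposition for the diffeomorphism action on metrics.
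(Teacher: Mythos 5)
Your overall strategy is the same as the paper's: injectivity of $L^*L$ from $\langle L^*Lv,v\rangle_{L^2}=\|Lv\|^2_{L^2}$ plus Lemma~\ref{L.inj}, surjectivity from ellipticity of the Dirichlet problem together with an index-zero argument, and then parts (2) and (3) by the same formal manipulations the paper uses (solve $L^*Lv=L^*\eta$ to split $\eta$, and check $P^2=P$, $P(Lv)=Lv$, $P|_{\Ker L^*}=0$).

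There is, however, one step that is wrong as stated. $L^*L$ does \emph{not} have the principal symbol of (a multiple of) the rough or Hodge Laplacian on $1$-forms: since $\sigma_L(\xi)v=\xi\wedge\iota_v\omega$ and $\iota_v\omega_j=(J_jv)^\flat$, one computes $|\sigma_L(\xi)v|^2=3|\xi|^2|v|^2-\sum_j\langle\xi^\sharp,J_jv\rangle^2=2|\xi|^2|v|^2+\langle\xi^\sharp,v\rangle^2$, so $\sigma_{L^*L}(\xi)=2|\xi|^2\,\mathrm{id}+\xi^\sharp\otimes\xi$, which is anisotropic (an elasticity-type, not Laplace-type, symbol). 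Your conclusion survives because this symbol is still positive definite, i.e.\ $L^*L$ is strongly elliptic, and for strongly elliptic second-order systems the Dirichlet condition is automatically regular; this is exactly how the paper proceeds, proving the bound $\langle\sigma_{L^*L}(\xi)v,v\rangle\geq 2|\xi|^2|v|^2$ directly and citing Taylor for regularity and Fredholmness of $v\mapsto(L^*Lv,v|_Y)$. But as written, your justification of ellipticity rests on a false identification and needs to be replaced by this computation. Secondly, the index-zero claim, which you flag as ``should be spelled out or cited,'' is genuinely needed and is where the paper does some work: it shows via the Green's formula of Lemma~\ref{Lstar} (skew-symmetrizing $\langle L^*Lv,w\rangle-\langle v,L^*Lw\rangle=\int_Y(\iota_v\omega\wedge *Lw-\iota_w\omega\wedge *Lv)$) that $v|_Y=0$ is a self-adjoint boundary condition for the formally self-adjoint operator $L^*L$, whence the index vanishes and injectivity gives surjectivity. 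Alternatively your coercivity/G\aa rding route would also close this, but some such argument must actually be supplied; with these two repairs your proof coincides with the paper's.
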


\begin{proof}
We start with point 1. The operator $L$ is a first order differential operator with symbol
\[
\sigma_L(\xi,v)=\xi\wedge i_v\omega,
\]
which is injective.  In fact, since $i_v\omega_i=(J_iv)^{\flat}$ form an orthogonal triple of 1-forms, we have
\[
|\sigma_L(\xi,v)|^2
	=
		\sum_{j=1}^3|\xi|^2|J_jv|^2-\langle\xi,i_v\omega_j\rangle^2
	=
		3|\xi|^2|v|^2-\sum_{j=1}^3\langle\xi,i_v\omega_j\rangle^2\geq 2|\xi|^2|v|^2.
\]
Thus $\langle\sigma_{L^*L}(\xi,v),v\rangle \geq 2|\xi|^2|v|^2$, meaning $L^*L$ is a strongly elliptic operator in the sense of \cite[Chapter 5 (11.79)]{Taylor}. It follows that the Dirichlet problem for $L^*L$ is regular in the sense of \cite[p.454]{Taylor} by \cite[Chapter 5 Proposition 11.10]{Taylor}.  We may thus apply \cite[Chapter 5 Proposition 11.16]{Taylor} to conclude that the map
\begin{align}
\cV^{s+1}(X)&\rightarrow \cV^{s-1}(X)\oplus \cV^{s+\frac{1}{2}}(Y)\nonumber\\
v&\mapsto (L^*Lv,v|_Y)
\label{bdry.map.eq}
\end{align}
is Fredholm, where $\cV(Y)$ denotes sections of $TX|_Y$.  

Now if $v\in\cV_0^{s+1}$ with $L^*Lv=0$ then
\[
0=\langle L^*Lv,v\rangle_{L^2}=\|Lv\|^2_{L^2}.
\]
Since $L$ is injective (Lemma~\ref{L.inj}) it follows that $L^*L$ is
injective on $\cV_0^{s+1}$.   The operator $L^*L$ is formally
self-adjoint, and from the formula in the proof of Lemma~\ref{Lstar}
$v|Y=0$ is a self-adjoint boundary condition.  Indeed, if  $\eta =
Lw$ in \eqref{e1.3.2.17}, for another vector field $w$, we obtain
\[
\langle Lv, Lw\rangle_{L^2} - \langle v,  L^*Lw \rangle_{L^2}
	=
\int_Y i_v\omega\wedge *Lw
\]
and so by skew symmetrizing,
\[
\langle L^*L v, w\rangle_{L^2} - \langle v,  L^*Lw \rangle_{L^2}
	=
\int_Y \left(i_v\omega\wedge *Lw  - i_w\omega\wedge *Lv\right).
\]
Since $v|Y=0$ (as an element of $TX|Y$) implies that
$\iota^*(\iota_v\omega)=0$, we see that $v|Y=0$ is a self-adjoint
boundary condition for the operator $L^*L$ on $X$.   Hence the index
of (\ref{bdry.map.eq}) is zero, which therefore means it is surjective
as well, and so $L^*L \colon \cV_0^{s+1} \to \cV^{s-1}$ is an isomorphism.

We next turn to the splitting claimed in point~2. The sum is clearly direct because if $\eta=Lv$ for $v\in\cV_0^{s+1}$ and $L^*\eta=0$ then $L^*Lv=0$. To show the sum spans, let $\eta\in\cF^s$; we must find $v\in\cV_0^{s+1}$ such that $\eta-Lv$ is in $\Ker L^*$. This amounts to solving $L^*Lv=L^*\eta$ for $v\in\cV_0^{s+1}$ which we can do by the surjectivity of (\ref{bdry.map.eq}).

Finally, for point~3, note $P^2=P$, $P(L v) = Lv $ and that $P$ vanishes on $\Ker L^*$.
\end{proof}

\begin{cor}
Let $s>3$. For all $\phi \in \cG_0^{s+1}$, the following are true.
\begin{enumerate}
\item
The map $L^*_\phi L_\phi \colon \cV_0^{s+1} \to \cV^{s-1}$ is an isomorphism.
\item
There is a splitting $\cF^s = \im L_\phi \oplus \Ker L_\phi^*$.
\item
The map $P_\phi = L_\phi \circ (L_\phi^*L_\phi)^{-1} \circ L_\phi^*$ is the projection onto $\im L_\phi$ with respect to this splitting.
\end{enumerate}
\end{cor}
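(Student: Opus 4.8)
The plan is to deduce the corollary from Proposition~\ref{tgt.split.prop} by conjugating everything with the diffeomorphism $\phi$. The key point is Lemma~\ref{Lphi-Lphi-star}, which tells us that $L_\phi = \phi^* \circ L \circ \phi_*^{-1}$ and $L_\phi^* = \phi_* \circ L^* \circ (\phi^{-1})^*$, where the adjoint $L_\phi^*$ is taken with respect to the metric induced by $\phi^*\omega$; since $\phi^*\omega$ is the pull-back of the hyperk\"ahler metric of $\omega$, the maps $\phi^*$ and $\phi_*^{-1} = (\phi^{-1})_*$ are isometries between the relevant $L^2$-spaces (up to the usual change-of-variables Jacobian, which is built into the definition of $L_\phi^*$), so all three assertions transport cleanly.

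First I would prove point~1. Since $\phi \in \cG_0^{s+1}$, pull-back $\phi^*$ and push-forward $\phi_*$ give topological isomorphisms of the Sobolev spaces $\cV_0^{s+1}$, $\cV^{s-1}$ and $\cF^s$ with themselves (this is exactly the content of the fact, recalled just before Lemma~\ref{smooth.action}, that composition with an $H^{s+1}$-diffeomorphism preserves $H^s$-regularity, together with the analogous statement for vector fields and $2$-forms). Composing, $L_\phi^* L_\phi = \phi_* \circ L^* \circ (\phi^{-1})^* \circ \phi^* \circ L \circ \phi_*^{-1} = \phi_* \circ (L^*L) \circ \phi_*^{-1}$, which is a composition of isomorphisms by Proposition~\ref{tgt.split.prop}(1); hence it is an isomorphism $\cV_0^{s+1} \to \cV^{s-1}$.

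Next, for point~2: applying $\phi^*$ to the splitting $\cF^s = \im L \oplus \Ker L^*$ gives $\cF^s = \phi^*(\im L) \oplus \phi^*(\Ker L^*)$, since $\phi^* \colon \cF^s \to \cF^s$ is an isomorphism. By Lemma~\ref{Lphi-Lphi-star} we have $\phi^*(\im L) = \im(\phi^* \circ L) = \im(\phi^* \circ L \circ \phi_*^{-1}) = \im L_\phi$ (the middle equality because $\phi_*^{-1}$ is onto $\cV_0^{s+1}$), and $\phi^* \Ker L^* = \Ker L_\phi^*$ is recorded in the last line of Lemma~\ref{Lphi-Lphi-star}. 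Finally, point~3 is a formal consequence of points~1 and~2 exactly as in the proof of Proposition~\ref{tgt.split.prop}(3): one checks directly that $P_\phi = L_\phi \circ (L_\phi^* L_\phi)^{-1} \circ L_\phi^*$ satisfies $P_\phi^2 = P_\phi$, that $P_\phi(L_\phi v) = L_\phi v$ for all $v \in \cV_0^{s+1}$, and that $P_\phi$ vanishes on $\Ker L_\phi^*$, so it is the projection onto $\im L_\phi$ along $\Ker L_\phi^*$.

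There is essentially no obstacle here: the corollary is pure transport of structure, and the only thing to be slightly careful about is the bookkeeping of which metric the adjoints are taken with respect to, so that the conjugation identities of Lemma~\ref{Lphi-Lphi-star} apply verbatim. (One could alternatively repeat the proof of Proposition~\ref{tgt.split.prop} with $\omega$ replaced by the hyperk\"ahler triple $\phi^*\omega$, but $\phi^*\omega$ has only $H^{s}$ regularity rather than being smooth, so the conjugation argument is the cleaner route, as it reduces everything to the smooth case already handled.)
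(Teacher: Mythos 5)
There is a genuine gap, and it is precisely the subtlety that the paper's own proof is at pains to handle. Your argument rests on the assertion that, for $\phi \in \cG_0^{s+1}$, the push-forward $\phi_*$ and its inverse are topological isomorphisms of $\cV_0^{s+1}$ with itself (and likewise that ``$\phi_*^{-1}$ is onto $\cV_0^{s+1}$'' in your treatment of $\im L_\phi$). This is false at the stated regularity: $\phi$ is only of class $H^{s+1}$, so $\rd\phi$ is only $H^s$, and pushing forward an $H^{s+1}$ vector field multiplies it by $\rd\phi$; hence $\phi_*$ (and $\phi_*^{-1}$) map $\cV_0^{s+1}$ only into $\cV_0^{s}$, losing a derivative. (You even note at the end that $\phi^*\omega$ is merely $H^s$, but the same finite regularity of $\phi$ undermines the ``clean transport'' route you chose instead.) Consequently $L_\phi^*L_\phi = \phi_* \circ L^*L \circ \phi_*^{-1}$ is \emph{not} a composition of isomorphisms between the spaces $\cV_0^{s+1}$ and $\cV^{s-1}$, and your proofs of points~1 and~2 do not go through as written: injectivity survives (each factor is injective regardless of where it lands), but surjectivity at the correct Sobolev level does not follow from conjugation alone, and neither does the identification $\im L_\phi = \phi^*(\im L)$ as subspaces of $\cF^s$.

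The paper closes exactly this gap as follows: to solve $L_\phi^*L_\phi v = w$ for $w \in \cV^{s-1}$, one first solves $L^*Lu = \phi^{-1}_*w$ at the \emph{lower} regularity $u \in \cV_0^{s}$ (using invertibility of $L^*L \colon \cV_0^{s} \to \cV^{s-2}$, which is available because $\omega$ is smooth), sets $v = \phi_* u$, which a priori lies only in $\cV_0^{s-1}$, and then invokes elliptic regularity for the equation $L_\phi^*L_\phi v = w$ to upgrade $v$ to $\cV_0^{s+1}$. Points~2 and~3 are then deduced from the isomorphism in point~1 by repeating the argument of Proposition~\ref{tgt.split.prop} (directness from injectivity of $L_\phi$, spanning by solving $L_\phi^*L_\phi v = L_\phi^*\eta$), rather than by transporting the splitting for $\omega$ through $\phi$. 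If you want to keep your structure, you must either insert this regularity bookkeeping and the elliptic-regularity upgrade, or restrict the conjugation argument to statements (like injectivity, or the identity $\ker L_\phi^* = \phi^*\ker L^*$ from Lemma~\ref{Lphi-Lphi-star}) that are insensitive to the derivative loss.
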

\begin{proof}
Recall that Lemma~\ref{Lphi-Lphi-star} asserts that
\[
L_\phi = \phi^* \circ L \circ \phi_*^{-1},\quad
L_\phi^* = \phi_* \circ L^* \circ (\phi^{-1})^*.
\]
It follows that $L_\phi^*L_\phi = \phi_* \circ  L^*L \circ \phi_*^{-1}$. Note that when decomposed in this fashion one must be careful to keep track of regularity since, for example, $\phi_*^{-1} \colon \cV_0^{s+1} \to \cV_0^s$. Nonetheless all three maps in this composition are injective and so the same is true of $L_\phi^*L_\phi$. Similarly to solve the equation $L_\phi^*L_\phi (v) = w$, with $w \in \cV^{s-1}$, one first solves $L^*Lu = \phi^{-1}_*w$ for $u \in \cV_0^s$ (by invertibility of $L^*L \colon \cV_0^s \to \cV^{s-2}$) and then sets $v = \phi_*u$. Now a~priori $v \in \cV_0^{s-1}$ but, since $L^*_\phi L_\phi(v) = w$, elliptic regularity ensures that $v \in \cV_0^{s+1}$ after all, proving surjectivity of $L^*_\phi L_\phi$. The other points follow exactly as before in the proof of Proposition~\ref{tgt.split.prop}.
\end{proof}

We have now justified the three key points mentioned in the introduction, namely:

\begin{itemize}
\item 
$\cE^s \to\cG^{s+1}_0$ is a Banach vector bundle. This follows from Lemma~\ref{bundle.split.lem} and the fact that the projection onto $\im L_\phi$ depends smoothly on $\phi$. This is because $P_\phi = L_\phi \circ (L_\phi^*L_\phi)^{-1}\circ L_\phi^*$ and each operator in this composition is smooth in $\phi$ (by Lemma~\ref{Lphi.smooth}).
\item
The map $F \colon \cE^s \to \cF^s$ given by $F(\phi, \chi) = \phi^*\omega + \chi$ is smooth, by Lemma \ref{smooth.action}.
\item
Its derivative at $(\id, 0)$ is given by
\[
\rd F \colon \cV^{s+1}_0 \oplus \Ker L^* \to \cF^s,\quad
\rd F(v ,\eta) = Lv + \eta
\]
which is an isomorphism, by Proposition~\ref{tgt.split.prop}.
\end{itemize}

The existence part of Theorem~\ref{main.slice.thm} now follows, just as in Ebin--Palais, by an application of the implicit function theorem. Accordingly, we state the result without writing the details.

\begin{thm}\label{existence}
Fix $s>3$. There are constants $C, \epsilon ,\delta >0$ and an open neighbourhood $U$ of $\id \in \cG^{s+1}_0$ such that if $\|\hat{\omega} - \omega\|_{H^s} < \epsilon$ then there exists a unique diffeomorphism $\phi \in U$ such that both $L^*(\phi^*\hat{\omega}- \omega) = 0$ and $\| \phi^*\hat{\omega} - \omega\|_{H^s} < \delta$. Moreover, in this case $\| \phi^*\hat{\omega} - \omega\|_{H^s} < C \| \hat{\omega} - \omega\|_{H^s}$.
\end{thm}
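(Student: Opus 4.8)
The plan is to deduce Theorem~\ref{existence} from the inverse function theorem for smooth maps between Banach manifolds, applied to $F \colon \cE^s \to \cF^s$, $F(\phi,\chi) = \phi^*\omega + \chi$, at the point $(\id, 0)$, exactly along the lines of Ebin--Palais. The three hypotheses of the inverse function theorem have just been assembled: $\cE^s$ is a Banach manifold, being the total space of a Banach vector bundle over $\cG_0^{s+1}$ (whose fibre over $\phi$ is $\ker L_\phi^*$); $F$ is smooth by Lemma~\ref{smooth.action}; and the differential $\rd F_{(\id,0)} \colon \cV_0^{s+1} \oplus \ker L^* \to \cF^s$ is the map $(v,\eta) \mapsto Lv + \eta$, which is a bounded linear isomorphism by the splitting of Proposition~\ref{tgt.split.prop}. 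The inverse function theorem then produces open sets $\cU \ni (\id, 0)$ in $\cE^s$ and $\cV \ni \omega$ in $\cF^s$ such that $F|_{\cU} \colon \cU \to \cV$ is a diffeomorphism; in particular $(F|_\cU)^{-1}$ is $C^1$, hence locally Lipschitz when read in the natural charts.

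Given this, I would fix $\epsilon > 0$ so that $\|\hat\omega - \omega\|_{H^s} < \epsilon$ forces $\hat\omega \in \cV$, and for such $\hat\omega$ set $(\psi, \chi) := (F|_\cU)^{-1}(\hat\omega)$, so that $\psi^*\omega + \chi = \hat\omega$ and $(\psi,\chi) \in \cE^s$. By the formula $L_\psi^* = \psi_* \circ L^* \circ (\psi^{-1})^*$ of Lemma~\ref{Lphi-Lphi-star} and injectivity of $\psi_*$, the latter condition means $\chi \in \ker L_\psi^* = \psi^*(\ker L^*)$, say $\chi = \psi^*\zeta$ with $\zeta \in \ker L^*$. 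Writing $\phi := \psi^{-1} \in \cG_0^{s+1}$, the identity $(\psi \circ \phi)^* = \phi^* \circ \psi^* = \id$ yields $\phi^*\hat\omega - \omega = \phi^*\chi = \phi^*\psi^*\zeta = \zeta \in \ker L^*$, so $L^*(\phi^*\hat\omega - \omega) = 0$; this is the required diffeomorphism. For the uniqueness clause (within a neighbourhood $U$ of $\id$ and the $\delta$-ball) I would run the argument backwards: any competitor $\phi' \in U$ with $L^*((\phi')^*\hat\omega - \omega) = 0$ and $\|(\phi')^*\hat\omega - \omega\|_{H^s} < \delta$ gives, on setting $\psi' := (\phi')^{-1}$ and $\chi' := \hat\omega - (\psi')^*\omega$, a point $(\psi', \chi') \in \cE^s$ with $F(\psi',\chi') = \hat\omega$; choosing $U$ and $\delta$ small enough that $(\psi',\chi')$ is forced to lie in $\cU$ — using continuity of inversion on $\cG_0^{s+1}$ and of $\psi' \mapsto (\psi')^*\omega$ — injectivity of $F|_\cU$ gives $(\psi',\chi') = (\psi,\chi)$, hence $\phi' = \phi$.

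It remains to produce the quantitative bound and to arrange $\|\phi^*\hat\omega - \omega\|_{H^s} < \delta$. Here I would write $\|\phi^*\hat\omega - \omega\|_{H^s} = \|\phi^*\chi\|_{H^s} \le \|\phi^*\|_{B(\cF^s)}\,\|\chi\|_{H^s}$. As $\hat\omega \to \omega$ we have $\psi \to \id$ in $\cG_0^{s+1}$ by continuity of $(F|_\cU)^{-1}$, hence $\phi = \psi^{-1} \to \id$, so $\|\phi^*\|_{B(\cF^s)}$ is bounded by a fixed constant on a neighbourhood of $\id$ (local boundedness of the pullback action of $\cG_0^{s+1}$ on $\cF^s$); and $\|\chi\|_{H^s} \le C'\|\hat\omega - \omega\|_{H^s}$ follows from the local Lipschitz estimate for $(F|_\cU)^{-1}$, once one passes between the vector-bundle chart for $\cE^s$ near $(\id,0)$ and the genuine $H^s$-norm on the $\chi$-component. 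Combining gives $\|\phi^*\hat\omega - \omega\|_{H^s} \le C\|\hat\omega - \omega\|_{H^s}$, and shrinking $\epsilon$ makes the right-hand side smaller than any prescribed $\delta$. I expect the only real friction to be bookkeeping: tracking which maps lose a derivative under the $\psi \leftrightarrow \psi^{-1}$ switch, and using only that inversion and the action are \emph{continuous} (not smooth) in order to control pullback operator norms near $\id$ — the genuine loss-of-derivative difficulty having already been bypassed by working with $F$ on $\cE^s$ rather than with the non-smooth action directly. As with the rest of this section, all of these steps are routine modifications of Ebin--Palais, so only the outline is needed.
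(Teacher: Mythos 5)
Your argument is exactly the one the paper intends (and leaves to the reader with a reference to Ebin--Palais): apply the inverse function theorem to $F(\phi,\chi)=\phi^*\omega+\chi$ on the Banach bundle $\cE^s$, using the three ingredients already established (bundle structure of $\cE^s$, smoothness of $F$, invertibility of $\rd F_{(\id,0)}$ via the splitting $\cF^s=\im L\oplus\ker L^*$), and then translate back through $\phi=\psi^{-1}$ using $\ker L_\psi^*=\psi^*\ker L^*$. The translation, the uniqueness-by-injectivity step and the Lipschitz estimate for $(F|_\cU)^{-1}$ giving the bound $\|\phi^*\hat\omega-\omega\|_{H^s}<C\|\hat\omega-\omega\|_{H^s}$ are all correctly handled, so this is a faithful reconstruction of the paper's (omitted) proof.
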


We next turn to uniqueness. The crux is to show that the action of $\cG_0^{s+1}$ on $\cF^s$ is proper, in a certain sense. Before we state this result, we need a preliminary definition.

\begin{dfn}
A triple of 2-forms $(\omega_1, \omega_2, \omega_3)$ on a 4-manifold is called \emph{definite} if there is a nowhere vanishing 4-form $\Omega$ such that the $3\times 3$-matrix-valued function $(\omega_i \wedge \omega_j)/\Omega$ is positive definite. 
\end{dfn}

A hyperk\"ahler triple is an example of a definite triple. Note that definiteness is an open condition in $\cF^s$, as long as $s$ is large enough that Sobolev multiplication holds. Given any definite triple $(\omega_1, \omega_2, \omega_3)$ the wedge product is definite on $\langle \omega_1,\omega_2,\omega_3 \rangle$ and hence there is a unique conformal class making the $\omega_j$ self-dual. One can then specify a metric in this conformal class by taking the volume form to be $\frac{1}{6}(\omega_1^2 + \omega_2^2 + \omega_3^2)$ (c.f.~\eqref{hk_volume_form}). In this way we canonically associate a Riemannian metric to every definite triple. (Of course, when the triple is hyperk\"ahler, this metric is the obvious one.)  For more details on definite triples see \cite{Donaldson}.

We are now ready to prove that the action of $\cG_0^{s+1}$ on $\cF^s$ is proper, at least when restricted to definite triples.   

\begin{thm}\label{proper.action}
Fix $s>4$. Let $\omega_n \in \cF^s$ be a sequence of definite triples and $\phi_n \in \cG^{s+1}_0$ a sequence of diffeomorphisms. Suppose that $\omega_n$ converges in $H^s$ to a definite triple $\omega$ and that $\phi_n^*\omega_n$ converges in $H^s$ to a definite triple $\hat{\omega}$. Then there is a subsequence of the $\phi_n$ which converges in $\cG^{s+1}_0$ to a diffeomorphism $\phi$. Moreover, $\phi \colon (X,\hat{g}) \to (X,g)$ is an isometry where $g$ and $\hat{g}$ are the Riemannian metrics associated to $\omega$ and $\hat{\omega}$ respectively .
\end{thm}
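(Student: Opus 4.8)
The plan is to reduce the assertion to the Ebin--Palais properness theorem for the action of the diffeomorphism group on Riemannian metrics --- in the version where the metrics are allowed to vary with $n$ and the base is a manifold with boundary --- and then to run the classical compactness-and-elliptic-regularity argument. The first step is to pass from triples to metrics. Let $g_n$, $\hat g_n$, $g$, $\hat g$ be the Riemannian metrics canonically associated to $\omega_n$, $\phi_n^*\omega_n$, $\omega$, $\hat\omega$ as described just before the statement. The assignment (definite triple) $\mapsto$ (metric) is a fibrewise-algebraic, smooth operation, so since $s>4>\tfrac n2$ (here $n=4$) Sobolev multiplication shows that $\omega_n\to\omega$ and $\phi_n^*\omega_n\to\hat\omega$ in $H^s$ force $g_n\to g$ and $\hat g_n\to\hat g$ in $H^s$, hence in $C^2$; moreover $g,\hat g$ are genuine positive-definite metrics and the construction is natural under pull-back, so $\phi_n^*g_n=\hat g_n$. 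Thus each $\phi_n$ is an orientation-preserving Riemannian isometry $(X,\hat g_n)\to(X,g_n)$ restricting to the identity on $Y$, the metrics converge in $C^2$ to genuine limit metrics, and it remains to extract a subsequence of the $\phi_n$ converging in $\cG^{s+1}_0$ to a Riemannian isometry $(X,\hat g)\to(X,g)$.

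Next I would produce a $C^1$ limit by compactness. Fix an interior point $p\in X$. Passing to a subsequence, $\phi_n(p)\to q$; the differentials $\rd\phi_n|_p$ are $(\hat g_n,g_n)$-linear isometries, hence, after trivialising $TX$ near $p$ and $q$, lie in a compact set, so a further subsequence has $\rd\phi_n|_p\to A$, a $(\hat g,g)$-linear isometry. A Riemannian isometry is determined near $p$ by its $1$-jet there through
\[
\phi_n=\exp^{g_n}_{\phi_n(p)}\circ\,\rd\phi_n|_p\circ\big(\exp^{\hat g_n}_p\big)^{-1},
\]
and $C^2$-convergence of the metrics gives $C^1_{\mathrm{loc}}$-convergence of the exponential maps; hence $\phi_n\to\phi:=\exp^{g}_{q}\circ\,A\circ\big(\exp^{\hat g}_p\big)^{-1}$ in $C^1_{\mathrm{loc}}$ on the interior of $X$. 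Applying the same reasoning to $\phi_n^{-1}$ (which are $(g_n,\hat g_n)$-isometries) yields a $C^1$ inverse, and passing to the limit in $\phi_n^*g_n=\hat g_n$ gives $\phi^*g=\hat g$; since $\phi_n|_Y=\id$, also $\phi|_Y=\id$. So $\phi$ is an (orientation-preserving) $C^1$ Riemannian isometry $(X,\hat g)\to(X,g)$ fixing the boundary. Behaviour up to $\p X$ is the one point requiring care here: one uses that the $\phi_n$ fix $Y$ together with the uniform two-sided bound on $|\rd\phi_n|$ coming from $\phi_n^*g_n=\hat g_n$ to extend the estimates to a collar of $Y$.

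Finally I would bootstrap the regularity and upgrade the convergence. Choose $\hat g$-harmonic coordinates near $p$ and $g$-harmonic coordinates near $\phi(p)$; for an $H^s$ metric with $s>\tfrac n2+1$ these exist and are $H^{s+1}$-compatible. In these coordinates the components $y^\alpha\circ\phi$ of $\phi$ satisfy $\Delta_{\hat g}(y^\alpha\circ\phi)=0$, because $\phi$ is a $C^1$ Riemannian isometry and therefore intertwines the Laplacians, $\Delta_{\hat g}(u\circ\phi)=(\Delta_g u)\circ\phi$, while $\Delta_g y^\alpha=0$. Elliptic regularity for this scalar second-order equation, whose coefficients are $H^s$, upgrades $y^\alpha\circ\phi$ from $C^1$ to $H^{s+1}$, so $\phi\in\cG^{s+1}_0$. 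Running the same elliptic estimates on the differences $y^\alpha\circ\phi_n-y^\alpha\circ\phi$ --- using the $C^1_{\mathrm{loc}}$-convergence $\phi_n\to\phi$ and the $H^s$-convergence of both the metrics and the harmonic charts --- promotes that convergence to convergence of $\phi_n$ to $\phi$ in $\cG^{s+1}_0$, which finishes the proof.

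The main obstacle is this last step. The qualitative compactness of the first two steps is routine once one has the $\exp$-formula and the uniform $C^1$ bounds; what takes work is converting it into \emph{quantitative} $H^{s+1}$-convergence, which requires the elliptic estimates in harmonic coordinates to be uniform as the variable metrics $g_n,\hat g_n$ and their harmonic charts vary with $n$, together with the regularity theory of harmonic coordinates for metrics of only finite Sobolev regularity. Both ingredients are available, and the argument as a whole is, as elsewhere in this section, a minor modification of Ebin--Palais adapted to a family of varying definite triples on a manifold with boundary.
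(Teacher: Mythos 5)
Your first paragraph is, in essence, the paper's entire proof: the authors likewise pass from the definite triples to their associated metrics, using naturality of the triple-to-metric construction under pull-back (so $\phi_n^*g_n$ is the metric associated to $\phi_n^*\omega_n$) and Sobolev multiplication for $s>4$ to conclude $g_n\to g$ and $\phi_n^*g_n\to\hat g$ in $H^s$, and then simply invoke the Ebin--Palais properness theorem for the action of diffeomorphisms on Riemannian metrics (cf.\ \cite{Ebin,Tromba}) to extract the convergent subsequence and the limiting isometry. Where you differ is that you do not cite that theorem but instead sketch a proof of it: Myers--Steenrod-type compactness via the $1$-jet/exponential-map formula to produce a $C^1_{\mathrm{loc}}$ limit which is an isometry fixing $Y$, followed by harmonicity of the coordinate components of $\phi$ in harmonic charts and elliptic regularity to upgrade both $\phi$ and the convergence to $H^{s+1}$. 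That reconstruction is essentially the standard proof of the cited result adapted to varying metrics on a manifold with boundary, and the points you flag as needing care---the estimates in a collar of $Y$ where $\phi_n|_Y=\id$, and uniformity of the elliptic estimates in harmonic coordinates as the metrics and charts vary with $n$---are precisely where the remaining work lies; the paper sidesteps all of this by treating the metric statement as a black box. Both routes are sound: yours is self-contained but necessarily sketchier in its second half, while the paper's is two lines at the cost of deferring the analytic content to Ebin--Palais.
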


This is a direct analogue of---and follows immediately from---a theorem of Ebin--Palais for the action of diffeomorphisms on Riemannian metrics (cf.~\cite{Tromba}). (As an aside, the lower bound on $s$ is necessary for the proof of Ebin--Palais which uses Sobolev multiplication at a certain point.) 
 
\begin{proof}
Write $g_n$, $g$ and $\hat{g}$ for the Riemannian metrics corresponding to the definite triples $\omega_n, \omega$ and $\hat{\omega}$ respectively. We have that $g_n \to g$ and $\phi^*_n g_n \to \hat{g}$ in $H^s$. Now the result of Ebin--Palais gives a subsequence of the $\phi_n$ which converges in $\cG^{s+1}_0$ to a diffeomorphism $\phi$ satisfying $\phi^*g = \hat{g}$. \end{proof}
 
The full Slice Theorem~\ref{main.slice.thm} now follows from Theorems~\ref{existence} and~\ref{proper.action}, in identical fashion to Ebin--Palais's orignal slice theorem.

\section{The hyperk\"ahler equation modulo diffeomorphisms}
\label{spinor.section}

The goal of this section is to gauge fix the hyperk\"ahler equation in
order to be able to apply elliptic theory. The main result is
Theorem~\ref{regularity} below, which shows that the moduli space of
all hyperk\"ahler triples up to diffeomorphism is locally homeomorphic
to the zero locus of a non-linear  operator with certain ellipticity
properties.  

There are complications in arriving at Theorem~\ref{regularity} which come from the fact that there are two competing notions of gauge. The first is the differential condition of the previous section, coming from the action of diffeomorphisms on 2-forms. This has the advantage that triples of 2-forms can always be put in ``differential gauge'' by the Slice Theorem~\ref{main.slice.thm}. It does not, however, lead to an elliptic equation. The other kind of gauge fixing arises when one parametrises \emph{cohomologous} triples of 2-forms by triples of 1-forms $a$ via $\omega + \rd a$. This leads naturally to an \emph{algebraic} condition on $a$ and with this gauge imposed the hyperk\"ahler equation becomes genuinely elliptic. The problem, however, is that it is not in general possible to put a triple $a$ in ``algebraic gauge''  via the action of diffeomorphisms. The proof of Theorem~\ref{regularity} involves the interaction between these two notions of gauge.
 
\subsection{A non-linear Dirac equation}\label{nonlinear.Dirac}

Our starting point is the following formulation of hyperk\"ahler metrics in terms of triples of 2-forms (cf.~\cite{Donaldson,Hitchin}). The lemma is standard and accordingly we only sketch the proof.

\begin{lem}\label{l1.12.8.15}
Let $X$ be a 4-manifold and $(\omega_1, \omega_2, \omega_3)$ a triple of closed 2-forms on $X$. Suppose that 
\[
\omega_i \wedge \omega_j = \delta_{ij} \mu
\]
for some nowhere vanishing 4-form $\mu$. Then $X$ carries a hyperk\"ahler metric $g$ which is  characterised by the fact that the $\omega_i$ are all self-dual and the volume form is given by $\mu = 2 \rd V_g$. 
\end{lem}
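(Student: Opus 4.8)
The plan is to verify that the stated algebraic conditions on the closed triple $(\omega_1,\omega_2,\omega_3)$ force the associated metric to be hyperk\"ahler, using the well-known characterization of hyperk\"ahler metrics among metrics admitting a parallel self-dual frame. First I would observe that the hypotheses $\omega_i\wedge\omega_j=\delta_{ij}\mu$ with $\mu$ nowhere vanishing are exactly the conditions \eqref{omegas_orthogonal}--\eqref{hk_volume_form} (after normalizing so $\mu=\tfrac16\sum\omega_k^2$, or rather absorbing the factor into the definition of the metric volume form). As recalled in the introduction, the span $\langle\omega_1,\omega_2,\omega_3\rangle$ is then a rank-$3$ subbundle of $\Lambda^2T^*X$ on which the wedge-product quadratic form is positive definite; by the standard correspondence in four dimensions between such subbundles and conformal structures, this determines a conformal class, and declaring $\mu$ (up to the factor $2$ in the statement) to be the volume form pins down a metric $g$. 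By construction the $\omega_i$ are self-dual and pointwise orthogonal with $|\omega_i|^2$ constant, so after rescaling they form a pointwise orthonormal frame of $\Lambda^2_+$.

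The key step is then to show that each $\omega_i$ is in fact \emph{parallel} for the Levi-Civita connection of $g$. The standard argument: a closed and coclosed $2$-form is harmonic, and a self-dual form is automatically coclosed (since $\rd^*\omega_i=-*\rd*\omega_i=-*\rd\omega_i=0$ using $*\omega_i=\omega_i$), so each $\omega_i$ is harmonic. On a four-manifold, the Weitzenb\"ock formula for a self-dual harmonic $2$-form reads $0=\nabla^*\nabla\omega_i - 2W_+(\omega_i)+\tfrac{s}{3}\omega_i$, where $W_+$ is the self-dual Weyl curvature and $s$ the scalar curvature. Here I would invoke the algebraic identity, valid because $\omega_1,\omega_2,\omega_3$ is a \emph{global orthonormal frame} of $\Lambda^2_+$: the operator $\sum_i \omega_i\otimes\omega_i$ (acting on $\Lambda^2_+$) is the identity, and feeding this into the three Weitzenb\"ock equations simultaneously and tracing forces $s=0$ and $W_+=0$ — equivalently, one shows directly that a four-manifold whose bundle $\Lambda^2_+$ admits a global \emph{flat} orthonormal frame of closed forms has $\nabla\omega_i=0$. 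The cleanest route is: the three equations $\nabla^*\nabla\omega_i = 2W_+(\omega_i)-\tfrac{s}{3}\omega_i$ say that $\nabla^*\nabla$ preserves $\Lambda^2_+$ and acts there as the symmetric endomorphism $2W_+-\tfrac s3$; pairing with $\omega_i$ and summing over $i$ gives $\sum_i\|\nabla\omega_i\|^2 = \int(2\tr W_+-s)=\int(-s)$, while pairing $\nabla^*\nabla(\sum\omega_i\otimes\omega_i)$ against the identity shows the same integral is $-\int s$ computed the other way; a more careful bookkeeping (or the Berger-type argument that a self-dual harmonic form of constant length on a four-manifold has covariant derivative controlled by $W_+$ and $s$) yields $\nabla\omega_i=0$ for all $i$. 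Since the $\omega_i$ are parallel, the almost complex structures $J_i$ defined by $\omega_i(\cdot,\cdot)=g(J_i\cdot,\cdot)$ are parallel, hence integrable, satisfy the quaternion relations pointwise (this is the algebra of $\Lambda^2_+$ with a chosen orthonormal basis), and $g$ is K\"ahler with respect to each — that is, $(g,J_1,J_2,J_3)$ is hyperk\"ahler.

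For the characterization/uniqueness clause, I would note that any metric $g'$ for which all three $\omega_i$ are self-dual must lie in the conformal class determined by the subbundle $\langle\omega_i\rangle$ (self-duality of a $2$-form depends only on the conformal class, and three pointwise-independent self-dual forms span $\Lambda^2_+$, which determines the conformal class); the volume normalization $\mu=2\,\rd V_{g'}$ then fixes $g'=g$. The main obstacle is the middle step — turning the purely pointwise algebraic input into the global statement $\nabla\omega_i=0$ — since naively one only gets harmonicity; the resolution is the Weitzenb\"ock argument above, exploiting crucially that the $\omega_i$ furnish a \emph{global} orthonormal frame of $\Lambda^2_+$ (not merely three harmonic forms), which is what forces $W_+$ and $s$ to vanish. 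Given the lemma is flagged as standard with only a sketch expected, I would present this Weitzenb\"ock computation compactly and refer to \cite{Donaldson,Hitchin} for the remaining routine verifications.
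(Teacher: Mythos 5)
There is a genuine gap at the key step, namely the parallelism of the $\omega_i$. Your Weitzenb\"ock computation does not deliver it: pairing the three identities $\nabla^*\nabla\omega_i = 2W_+(\omega_i)-\tfrac{s}{3}\omega_i$ with $\omega_i$, using constancy of $|\omega_i|^2$, and summing yields only the single pointwise identity $\sum_i|\nabla\omega_i|^2 = 2\sum_i\langle W_+\omega_i,\omega_i\rangle - 2s = -2s$, because $\sum_i\langle W_+\omega_i,\omega_i\rangle$ is a multiple of $\mathrm{tr}\,W_+$, which vanishes identically for \emph{every} metric. So "tracing" forces nothing: it shows $s\le 0$, not $s=0$, and one cannot conclude $W_+=0$ or $\nabla\omega_i=0$ from it; the appeal to "more careful bookkeeping" or a "Berger-type argument" is precisely the missing content. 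Note that for a single closed self-dual form of constant norm the desired conclusion is false (strictly almost-K\"ahler $4$-manifolds exist), so a correct argument must use the triple structure in a way your computation does not --- the only triple input you use is $\mathrm{tr}\,W_+=0$, which carries no information. Moreover the integrated version $\sum_i\|\nabla\omega_i\|^2_{L^2}=-2\int_X s$ is unavailable in any case: the lemma is stated for an arbitrary $4$-manifold and is applied in the paper to compact manifolds \emph{with boundary}, where integration by parts produces uncontrolled boundary terms, and hyperk\"ahlerness is a local property, so the proof must be local.

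The paper's argument is local and algebraic at exactly this point: the $\omega_i$ trivialise $\Lambda^2_+$, and the resulting product connection is metric and, because $\rd\omega_i=0$, torsion-free in the relevant sense; hence it coincides with the connection induced on $\Lambda^2_+$ by the Levi-Civita connection, so $\Lambda^2_+$ is flat with trivial holonomy, which characterises hyperk\"ahler metrics. Concretely, write $\nabla\omega_i=\sum_j a_{ij}\otimes\omega_j$; constancy of the inner products makes $(a_{ij})$ skew-symmetric, and $0=\rd\omega_i=\sum_j a_{ij}\wedge\omega_j$ is a pointwise linear condition on $a\in\mathfrak{so}(3)\otimes\Lambda^1$ whose only solution is $a=0$ (a short computation using $*(\alpha\wedge\omega_j)=J_j\alpha$ and the quaternion relations), so the frame is parallel. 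If you replace your Weitzenb\"ock paragraph by this connection-matrix argument, the rest of your proposal (construction of the conformal class and metric, integrability and the quaternion relations once $\nabla\omega_i=0$, and the uniqueness clause) is correct and in line with the paper.
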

\begin{proof}[Sketch of proof]
Since the wedge product is definite on the sub-bundle $\langle \omega_i \rangle$ of $\Lambda^2$ spanned by the forms $\omega_i$, there is a unique conformal class for which the $\omega_i$ are all self-dual. Choosing $\rd V_g = \mu/2$ determines a metric in this conformal class. The $\omega_i$ now give a metric trivialisation $\Lambda^2_+ \cong X \times \RR^3$ of the bundle  of self-dual 2-forms. Under this identification, the product connection preserves the metric in $\Lambda^2_+$ and, since the $\omega_i$ are closed, it is also torsion-free. It follows that the product connection is identified with the Levi-Civita connection in $\Lambda^2_+$, which is thus flat with trivial holonomy;  this is one characterisation of a hyperk\"ahler metric. 
\end{proof}

As  mentioned in the introduction, $\mu$ can be recovered from the symplectic forms via $\mu = \frac{1}{3}\sum \omega_i^2$. This means that hyperk\"ahler triples are exactly those triples of symplectic forms, all inducing the same orientation, solving the equation $Q(\omega) = 0$ where $Q(\omega)$ is the symmetric trace-free $3\times 3$-matrix valued function defined by
\begin{equation}
Q(\omega)_{ij} 
=
\frac{\omega_i \wedge \omega_j}
{\frac{1}{3}(\omega_1^2 + \omega_2^2 +\omega_3^2)}
-
\delta_{ij}.
\label{Q}
\end{equation}

Linearising $Q$ at a hyperk\"ahler triple $\omega = (\omega_1, \omega_2, \omega_3)$, we see that infinitesimal hyperk\"ahler deformations of $\omega$ are given by triples $\theta = (\theta_1, \theta_2, \theta_3)$ of closed 2-forms which lie in the kernel of the operator $P \colon \Lambda^2 \otimes \RR^3 \to S^2_0\RR^3$ (i.e., taking values in symmetric trace-free endomorphisms of $\RR^3$) defined by
\begin{equation}\label{P}
P(\theta)_{ij} 
= 
\frac{1}{2}(\theta_i, \omega_j) + \frac{1}{2}(\omega_i, \theta_j)
-
\frac{1}{3}\delta_{ij}\sum_{k=1}^3 (\theta_k, \omega_k).
\end{equation}
Here $(\theta_i ,\omega_j)$ etc.\ denote pointwise inner products. (To obtain this formula, recall that dividing by the volume form converts wedge products with self-dual 2-forms into inner products.) The operator $P$ can be written more succinctly by identifying $\RR^3 \cong \Lambda^2_+$ via $\omega$. Then $P$ is the map $\Lambda^2 \otimes \Lambda^2_+ \to S^2_0 \Lambda^2_+$ given by $P(\theta) = s^2_0(\theta_+)$, the projection onto the trace-free symmetric part of the self-dual component of $\theta$ in $\Lambda^2_+ \otimes \Lambda^2_+$.

We next consider infinitesimal deformations of $\omega$ which fix the cohomology class. These correspond to $\theta = \rd a$ for $a \in \Omega^1 \otimes \RR^3$ a triple of 1-forms which solve $P(\rd a) = s^2_0(\rd_+a) = 0$. There is ambiguity in the choice of $a$ with $\rd a = \theta$ fixed which we can reduce by requiring that $a$ is in ``Coulomb gauge'', 
$\rd^*a=0$.  
Such an $a$ can always be found (see Lemma~\ref{coulomb.gauge} below) but there is still redundancy in this parametrisation; there are many different solutions $a$ to $\rd a = \theta$ with $\rd^*a = 0$. Indeed on a manifold with boundary they form an infinite-dimensional space. Lemma~\ref{coulomb.gauge} shows how to cut this down to a space of dimension $b^1(X)$ by imposing appropriate boundary conditions. Before discussing this, we  look at the ``Coulomb gauge fixed'' operator $D(a) = (P(\rd a), \rd^*a)$ whose kernel parametrises infinitesimal cohomologous hyperk\"ahler deformations of $\omega$. 

As written, $D$ is a differential operator $D \colon \Omega^1 (\RR^3) \to C^{\infty}(S^2_0\RR^3 \oplus \RR^3)$ between sections of bundles of different ranks and so cannot be elliptic. This is to be expected because of the action of vector fields: given a vector field $v$, the triple $\cL_v\omega$ gives an infinitesimal hyperk\"ahler deformation of $\omega$ and so must lie in the kernel of $P$. Since $\cL_v \omega = \rd (i_v \omega)$, this suggests that on the level of 1-forms we should work orthogonal to triples of the form $i_v \omega$, i.e., consider $a$ with 
\begin{equation}\label{Jdota.eq}
J \cdot a =0
\end{equation}
 (where $J\cdot a$ is defined in (\ref{Jdot_definition})). Notice that this is an \emph{algebraic} condition, and is not the same as the \emph{differential} gauge fixing condition $L^*(\rd a) = 0$ of \S\ref{slice_theorem_section}. 

The advantage of \eqref{Jdota.eq} is that it leads directly to an elliptic operator; as we will show shortly, when suitably interpreted in this way $D$ is a Dirac operator. The disadvantage of \eqref{Jdota.eq} is that it cannot be imposed by acting via diffeomorphisms. The problem occurs at the boundary. On the infinitesimal level, to put $a$ in ``algebraic gauge'', one must solve $J \cdot (i_v \omega +a )= 0$, which amounts to $v = \frac{1}{3}(J\cdot a)^\flat$. For arbitrary $a$ this vector field will not vanish on the boundary and so the action of $\cG_0$ is not sufficient to ensure a given triple satisfies \eqref{Jdota.eq}.  

Nonetheless, understanding the restriction of $D$ to those $a$ with $J\cdot a=0$ will be crucial in the sequel. The most efficient way to proceed is via spinors. Write $S_+, S_- \to X$ for the positive and negative spin bundles of $X$ and $S^m_\pm$ for the $m^\text{th}$ tensor product of $S_{\pm}$. In what follows we will only ever encounter tensor products $S^m_+\otimes S^n_-$ with an even number of factors, $m+n = 2k$, and so the question of whether or not $X$ is spin can safely be ignored. Moreover, when $m+n =2k$ this tensor product carries a real structure and we will write $S^m_+\otimes S^n_-$ to mean the \emph{real} locus of this bundle, a real vector bundle of rank $(m+1)(n+1)$. 

We begin by recalling, without proof, some spinorial isomorphisms
(cf.~\cite{AHS}). 
\begin{lem}\label{spin_identities}
There are the following natural isomorphisms of vector bundles:
\begin{itemize}
\item
$S_+ \otimes S_- \cong TX \cong \Lambda^1$;
\item
$S^2_+ \cong \Lambda^2_+$;
\item
$S_+ \otimes S^m_+ \cong S^{m+1}_+ \oplus S^{m-1}_+$;
\item
$S^2_0(S^2_+) \cong S^4_+$,
\end{itemize}
where in the last isomorphism, $S^2_0(S^2_+)$ denotes trace-free symmetric endomorphisms of $S^2_+$.
\end{lem}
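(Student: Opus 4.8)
The plan is to deduce every isomorphism in the lemma from the exceptional isomorphism $\mathrm{Spin}(4) \cong \SU(2) \times \SU(2)$ together with the representation theory of $\SU(2)$ (Clebsch--Gordan). Write $E$ and $H$ for the complex rank-$2$ bundles associated to the defining representations of the two factors, so $S_+ = E$ and $S_- = H$, and let $S^m_+$, $S^n_-$ denote the bundles associated to the symmetric powers $S^m E$, $S^n H$, i.e.\ the irreducible $\SU(2)$-representations of dimension $m+1$, $n+1$ (this is the interpretation consistent with the stated real rank $(m+1)(n+1)$). Each of $E$ and $H$ carries a quaternionic structure $j$ with $j^2 = -\mathrm{id}$ coming from $\SU(2) = \mathrm{Sp}(1)$; on any $S^m_+ \otimes S^n_-$ with $m+n$ even the product of these structures squares to $+\mathrm{id}$ and so defines a real structure, and $S^m_+\otimes S^n_-$ denotes the corresponding real sub-bundle. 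Since each isomorphism below is realised by a $\mathrm{Spin}(4)$-equivariant map of complex bundles, it automatically commutes with these induced real structures, hence restricts to an isomorphism of the real bundles; I would therefore argue at the level of complexifications throughout and not dwell on this reduction.

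First I would record the two identifications that actually involve the geometry of the $4$-manifold. The double cover $\mathrm{Spin}(4) \to \SO(4)$ is precisely $\SU(2) \times \SU(2) \to \SO(4)$ with the vector representation $\CC^4$ pulled back to $E \otimes H$; this gives $S_+ \otimes S_- \cong TX \otimes \CC$, and after passing to real loci and using the metric, $S_+ \otimes S_- \cong TX \cong \Lambda^1$. For the $2$-forms, decompose $\Lambda^2(E \otimes H) \cong (S^2E \otimes \Lambda^2 H) \oplus (\Lambda^2 E \otimes S^2 H) \cong S^2E \oplus S^2H$, using that $\Lambda^2 E$ and $\Lambda^2 H$ are the trivial (symplectic) line bundles; comparing with the splitting $\Lambda^2 \otimes \CC = (\Lambda^2_+ \otimes \CC) \oplus (\Lambda^2_- \otimes \CC)$ and matching $\mathrm{Spin}(4)$-weights identifies $\Lambda^2_+ \otimes \CC \cong S^2E$, so $\Lambda^2_+ \cong S^2_+$ on taking real parts.

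The remaining two isomorphisms are pure $\SU(2)$ bookkeeping. Clebsch--Gordan gives $E \otimes S^mE \cong S^{m+1}E \oplus S^{m-1}E$, which is exactly $S_+ \otimes S^m_+ \cong S^{m+1}_+ \oplus S^{m-1}_+$. For the last one, $S^2E \otimes S^2E \cong S^4E \oplus S^2E \oplus S^0E$; the symmetric summand is $S^2(S^2E) \cong S^4E \oplus S^0E$ (dimensions $6 = 5 + 1$), and removing the trace line $S^0E \cong \CC$ leaves $S^2_0(S^2_+) \cong S^4E = S^4_+$, where $S^2_0$ is the space of trace-free symmetric endomorphisms as in the statement.

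The only genuinely substantive input is the identification of $TX \otimes \CC$ and $\Lambda^2_+ \otimes \CC$ with $E \otimes H$ and $S^2E$ respectively; everything else is weight-counting. The one point needing (slight) care is checking that the quaternionic structures multiply correctly so that the complex isomorphisms descend to the advertised real bundles — this is straightforward but is where a careless argument could go wrong. I would also remark that, since every bundle in the lemma involves an even number of spinor factors, all of them are in fact associated to $\SO(4)$, so no spin structure on $X$ is needed and the statements make sense on an arbitrary oriented Riemannian $4$-manifold; this is why the authors note that "the question of whether or not $X$ is spin can safely be ignored."
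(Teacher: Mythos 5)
Your proposal is correct, and it is essentially the argument the paper has in mind: Lemma~\ref{spin_identities} is stated ``without proof'' with a citation to \cite{AHS}, and the standard derivation there is exactly your route via $\mathrm{Spin}(4)\cong\SU(2)\times\SU(2)$, the identifications $TX\otimes\CC\cong S_+\otimes S_-$ and $\Lambda^2_+\otimes\CC\cong S^2(S_+\otimes\CC)$, and Clebsch--Gordan for the remaining two items. Your handling of the quaternionic structures (squaring to $(-1)^{m+n}$, hence giving real structures precisely when $m+n$ is even) and the observation that no spin structure is needed are also accurate and consistent with the paper's conventions.
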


\begin{cor}\label{spinor_domain_range}
Let $(X, \omega)$ be a hyperk\"ahler 4-manifold. Using the hyperk\"ahler triple to identify $\Lambda^2_+ \cong \RR^3$, there are isomorphisms
\begin{eqnarray}
\Lambda^1 \otimes \RR^3 
	&\cong&
		(S_- \otimes S_+) \oplus (S_- \otimes S^3_+),
		\label{splitting_domain}\\
S^2_0(\RR^3) \oplus \RR^3
	&\cong&
		S_+ \otimes S^3_+.
		\label{splitting_range}
\end{eqnarray}
Moreover, the first summand in (\ref{splitting_domain}) is identified with triples of the form $\iota_v\omega$ where $v$ is a vector field whilst the second summand is identified with triples $a$ such that $J \cdot a = 0$. 
\end{cor}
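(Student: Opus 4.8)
The isomorphisms \eqref{splitting_domain} and \eqref{splitting_range} are pure representation theory, obtained by feeding Lemma~\ref{spin_identities} into the Clebsch--Gordan rule $S_+\otimes S^m_+\cong S^{m+1}_+\oplus S^{m-1}_+$. For \eqref{splitting_domain} one writes
\[
\Lambda^1\otimes\RR^3\;\cong\;(S_+\otimes S_-)\otimes S^2_+\;\cong\;S_-\otimes(S_+\otimes S^2_+)\;\cong\;S_-\otimes(S^3_+\oplus S_+),
\]
which rearranges to $(S_-\otimes S_+)\oplus(S_-\otimes S^3_+)$. For \eqref{splitting_range}, using $\RR^3\cong S^2_+$, the identification $S^2_0(S^2_+)\cong S^4_+$, and again Clebsch--Gordan with $m=3$,
\[
S^2_0(\RR^3)\oplus\RR^3\;\cong\;S^4_+\oplus S^2_+\;\cong\;S_+\otimes S^3_+ .
\]
A rank count ($12=4+8$, $8=5+3$) confirms nothing has been lost in either line.

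The substance is the ``Moreover'' part, matching the two summands of \eqref{splitting_domain} with the two geometric subspaces. The plan is as follows. First observe that the splitting $S_+\otimes S^2_+\cong S^3_+\oplus S^1_+$ is canonical, being the decomposition coming from the natural action of $\Lambda^2_+\cong S^2_+$ on $S_+$ (Clifford multiplication, equivalently the complex structures $J_i$); hence the two summands of \eqref{splitting_domain} are well-defined sub-bundles, and as $S^1_+$ and $S^3_+$ are inequivalent $\mathrm{Sp}(1)_+$-representations they are precisely the two isotypic pieces of $\Lambda^1\otimes\RR^3$ under this action. Next consider the bundle map $\iota\colon TX\to\Lambda^1\otimes\RR^3$, $v\mapsto\iota_v\omega=\bigl((J_1v)^\flat,(J_2v)^\flat,(J_3v)^\flat\bigr)$. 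It is injective, since $|\iota_v\omega|^2=\sum_i|J_iv|^2=3|v|^2$ (or one may quote the symbol estimate in the proof of Proposition~\ref{tgt.split.prop}), so its image is a rank-$4$ sub-bundle isomorphic to $TX\cong S_-\otimes S_+$. Because $\iota$ is assembled from the $J_i$ it intertwines the $\mathrm{Sp}(1)_+$-actions, so by Schur / the isotypic decomposition $\im\iota$ must be the summand of type $S_-\otimes S_+$, and being an injection of rank-$4$ bundles it is an isomorphism onto that first summand. For the second summand, recall from \S\ref{slice_theorem_section} that $a\mapsto(J\cdot a)^\sharp$ is the metric adjoint of $\iota$; therefore $\{a:J\cdot a=0\}$ is the orthogonal complement of $\im\iota$, which is exactly the complementary isotypic component $S_-\otimes S^3_+$. (As a sanity check, $J\cdot\iota_v\omega=\sum_i J_i(J_iv)^\flat=-3v^\flat$, so $\im\iota$ and $\Ker(J\cdot)$ are transverse and, by the rank count, complementary.)

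The one genuinely delicate point is the claim that the splitting \eqref{splitting_domain} is \emph{canonical}, which is what licenses the Schur-type identification of $\im\iota$ with a named summand: on a hyperk\"ahler $4$-manifold $S_+$ is flat, so one might worry that $S_-\otimes S_+$ and $S_-\otimes S^3_+$ are not intrinsically distinguished bundles. The resolution is that the decomposition is taken not with respect to the holonomy structure but with respect to the tautological $\Lambda^2_+$-module structure on $S_+$ (equivalently, the action of the $J_i$) --- which is precisely the structure that the operators $\iota$ and $J\cdot$ see --- and under this action $S^1_+$ and $S^3_+$ are inequivalent, so the two summands are unambiguous. Everything else is a routine unwinding of Lemma~\ref{spin_identities} and the definitions of $\iota_v\omega$ and $J\cdot a$.
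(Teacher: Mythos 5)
Your proposal is correct and follows essentially the same route as the paper: the isomorphisms come from Lemma~\ref{spin_identities}, the image of $v\mapsto\iota_v\omega$ is matched with the $S_-\otimes S_+$ summand by $\SU(2)$-equivariance and Schur's Lemma, and the second summand is identified with $\{a: J\cdot a=0\}$ via the adjointness of $a\mapsto(J\cdot a)^\sharp$. The extra details you supply (injectivity, rank counts, the remark that the relevant $\SU(2)$-action is the one by the $J_i$ rather than anything tied to the flat holonomy of $S_+$) are sound elaborations of the argument the paper leaves implicit.
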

\begin{proof}
The isomorphisms follow from Lemma~\ref{spin_identities}. To prove the last claim, note that the map $v \mapsto \iota_v \omega$ from $TX \to \Lambda^1 \otimes \RR^3$ is $\SU(2)$-equivariant under the natural action of $\SU(2)$ on $TX, \Lambda^1$ and $\RR^3 \cong \Lambda^2_+$. It follows that the image of this map agrees with the first summand in (\ref{splitting_domain}) by Schur's Lemma. Finally, since $a \mapsto (J \cdot a)^\sharp$ is the adjoint of $v \mapsto i_v \omega$, the second summand in (\ref{splitting_domain}) is identified with solutions to $J \cdot a = 0$. 
\end{proof}

\begin{prop}\label{D_is_Dirac}
Let $(X,\omega)$ be a hyperk\"ahler 4-manifold. On restriction to sections of the sub-bundle $S_- \otimes S^3_+ \subset \Lambda^1 \otimes \RR^3$, and under the isomorphisms of Corollary~\ref{spinor_domain_range}, the operator $D(a) = (s^2_0(\rd_+a), \rd^*a)$ is identified with the negative Dirac operator coupled to the Levi-Civita connection on $S^3_+$:
\[
\cD \colon C^{\infty}(S_- \otimes S^3_+) \to C^{\infty}(S_+ \otimes S^3_+).
\]
\end{prop}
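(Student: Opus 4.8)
The plan is to verify the identification by a representation-theoretic/symbol argument, checking first that the two operators have the same principal symbol and then that the zeroth-order terms agree; since both sides depend only on the Levi-Civita connection, and a hyperk\"ahler metric is locally flat-model-like enough to pin down the zeroth order term from equivariance, this should suffice. First I would fix a point $p \in X$ and work in a local oriented orthonormal frame, using the standard spinor conventions as in \cite{AHS}. Under the isomorphisms of Corollary~\ref{spinor_domain_range}, a triple $a$ with $J \cdot a = 0$ is a section of $S_- \otimes S^3_+$; Clifford multiplication gives the bundle map $S_- \otimes S^3_+ \to S_+ \otimes S^3_+$ (tensoring $T^*X \otimes S_- \to S_+$ with the identity on $S^3_+$), and this is exactly the principal symbol of $\cD$. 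On the other side, the principal symbol of $a \mapsto (s^2_0(\rd_+ a), \rd^* a)$ at a covector $\xi$ is $a \mapsto (s^2_0((\xi \wedge a)_+), -\iota_\xi a)$; so the first step is the linear-algebra identity that, restricted to the subspace $\{J \cdot a = 0\} \cong S_- \otimes S^3_+$ and with target identified via \eqref{splitting_range} with $S_+ \otimes S^3_+$, this symbol is Clifford multiplication up to a universal nonzero constant. This is a statement about $\SU(2)$-representations: both are $\SU(2)$-equivariant bundle maps between the irreducible (as $\SU(2)$-bundles) pieces $S_- \otimes S^3_+$ and $S_+ \otimes S^3_+$, and by Schur's Lemma the space of such maps is one-dimensional, so it is enough to check the constant is nonzero on a single well-chosen $\xi$ and $a$.

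The second step is to handle the zeroth-order term. Write $D|_{S_- \otimes S^3_+} = \cD + \Phi$ where $\Phi$ is a bundle map $S_- \otimes S^3_+ \to S_+ \otimes S^3_+$; I must show $\Phi = 0$. The cleanest argument is naturality: all the data --- the hyperk\"ahler triple $\omega$, the induced metric, the splittings, the operators $\rd_+$, $\rd^*$, $s^2_0$, and the Levi-Civita connection on $S^3_+$ --- are constructed from $\omega$ by universal algebraic and differential-geometric operations, hence are equivariant under the local holonomy. For a hyperk\"ahler metric the Levi-Civita connection preserves the splitting of Corollary~\ref{spinor_domain_range} (since $\Lambda^2_+$ is flat, so $S^3_+$ is parallel and $S_-\otimes S^3_+ \subset \Lambda^1\otimes\RR^3$ is a parallel subbundle), and $\Phi$ is an $\SU(2)$-equivariant bundle map $S_- \otimes S^3_+ \to S_+ \otimes S^3_+$. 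Here is the key point: $S_-$ and $S_+$ carry \emph{the same} $\SU(2)$ (the one rotating the triple acts on $S_+$ only; the other factor acts on $S_-$), so $\Phi$ as a map of $\SU(2)$-bundles is a morphism between bundles that, as $\SU(2)$-representations, are $S^3_+$-isotypic of the same type tensored with $S_\mp$; Schur again forces $\Phi$ to be pointwise a scalar times a fixed isomorphism, \emph{and} this scalar must be a parallel function, hence locally constant. Finally one evaluates this constant on the flat model $\RR^4$ with its standard hyperk\"ahler triple, where $\rd_+$, $\rd^*$ and $\cD$ can all be written out by hand and one sees directly that $D$ restricted to $J \cdot a = 0$ with no constant correction equals $\cD$; since all terms are universal, the same holds on any hyperk\"ahler $X$.

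I expect the main obstacle to be bookkeeping with conventions: matching the normalisations in \eqref{P} (the factors of $\frac12$ and $\frac13$, and the identification $S^2_0(S^2_+) \cong S^4_+$), matching the sign and scaling in the Clifford multiplication used to define ``the negative Dirac operator'', and being careful that ``$S^m_\pm$'' means the real locus so that $S_- \otimes S^3_+$ really is a rank-$8$ real bundle matching the rank-$8$ complement of $\{i_v\omega\}$ inside $\Lambda^1 \otimes \RR^3$. None of these is deep, but getting a clean statement requires choosing one set of conventions and sticking to it; I would relegate the explicit flat-model computation that fixes the constant to a short calculation, and present the symbol comparison plus the parallel-scalar argument as the conceptual core. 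An alternative to the naturality argument for the zeroth-order term, if one prefers an entirely computational route, is to expand $\rd_+ a$ and $\rd^* a$ in a synchronous frame at $p$, substitute the spinorial identifications, and observe that the Christoffel symbols reassemble precisely into the coupled covariant derivative on $S^3_+$ followed by Clifford multiplication --- but this reproduces a standard Weitzenb\"ock-type computation and the equivariance argument is shorter.
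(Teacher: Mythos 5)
There is a genuine gap, and it sits exactly where the content of the proposition lies. Your symbol step rests on the claim that $S_-\otimes S^3_+$ and $S_+\otimes S^3_+$ are irreducible and that by Schur the space of equivariant maps is one-dimensional, so a single constant check suffices. But $S_+\otimes S^3_+$ is \emph{not} irreducible under $\SU(2)_+\times\SU(2)_-$: it splits as $S^4_+\oplus S^2_+$, which is precisely the splitting \eqref{splitting_range} into $S^2_0\RR^3\oplus\RR^3$. Hence the space of equivariant symbol maps $T^*X\otimes(S_-\otimes S^3_+)\to S_+\otimes S^3_+$ is two-dimensional, one scalar for each summand; for example the symbol of $a\mapsto s^2_0(\rd_+a)$ alone, or of $a\mapsto \rd^*a$ alone, is equivariant but is not a multiple of Clifford multiplication. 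The proposition's point is that the two components $s^2_0(\rd_+a)$ and $\rd^*a$ enter with the \emph{correct relative normalisation} to assemble into Clifford multiplication under \eqref{splitting_range}, and ``check that one constant is nonzero at a single $(\xi,a)$'' does not establish that. (A single evaluation could be made to work, but only by verifying the full vector identity at an element whose image has nonzero projection to both $S^4_+$ and $S^2_+$; the Schur justification as written is incorrect.)

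The zeroth-order step is also misstated. $S_-$ and $S_+$ do \emph{not} carry the same $\SU(2)$: they are the standard representations of the two different factors, and for exactly that reason the space of equivariant bundle maps $S_-\otimes S^3_+\to S_+\otimes S^3_+$ is zero, not one-dimensional --- there is no ``scalar times a fixed isomorphism''. If you could genuinely establish equivariance of $\Phi$ under the holonomy $\SU(2)_-$, Schur would give $\Phi=0$ outright, with no flat-model evaluation; but that equivariance is only asserted via ``naturality'' and is the thing requiring proof. The honest repair is the synchronous-frame computation you set aside: since $\nabla\omega=0$, at any point one may choose normal coordinates in which the $1$-jet of $(g,\omega)$ is the flat one, so the universal expression for $\Phi$ reduces pointwise to its flat-model value, which vanishes. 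Note that the paper's proof sidesteps both issues: it quotes \cite{AHS} for the identification of $\rd^*+\rd_+$ on a single $1$-form with the Dirac operator $S_-\otimes S_+\to S_+\otimes S_+$, couples this to the flat bundle $S^2_+\cong\RR^3$, and observes that the Clebsch--Gordan splittings of $S_\pm\otimes S_+\otimes S^2_+$ are parallel, so the restriction to the $S_-\otimes S^3_+$ summand maps to $S_+\otimes S^3_+$ and is the coupled Dirac operator --- no constants or zeroth-order terms ever need checking. If you want to keep your two-step structure, you must either do the explicit linear algebra fixing both scalars in the symbol, or derive the identification from the rank-one case as the paper does, and replace the holonomy argument for $\Phi$ by the normal-coordinates computation.
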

\begin{proof}
We start with the standard fact that the operator $\rd^* + \rd_+ \colon \Omega^1 \to \Omega^0 \oplus \Omega^2_+$ is a Dirac operator. Namely, under the isomorphisms $\Lambda^1 \cong S_- \otimes S_+$ and $\RR \oplus \Lambda^2_+ \cong S_+ \otimes S_+$, $\rd^* + \rd_+$ is identified with the negative Dirac operator coupled to the Levi-Civita connection on $S_+$:
\[
\cD_1 \colon C^{\infty} (S_- \otimes S_+) \to C^{\infty}(S_+ \otimes S_+).
\]
(For a proof of this, see, for example, \cite{AHS} where they consider $\Lambda^2_-$ rather than $\Lambda^2_+$ but the idea is the same.) Next, we couple this Dirac operator to the bundle $S^2_+ \cong \RR^3$, which is flat since $X$ is hyperk\"ahler. This means that on \emph{triples} of 1-forms, the operator $\rd^* + \rd_+$ is again identified with a negative Dirac operator, this time coupled to the Levi-Civita connection on $S_+ \otimes S^2_+$:
\[
\cD_2 \colon C^{\infty}(S_- \otimes S_+ \otimes S^2_+) \to C^{\infty}(S_+ \otimes S_+ \otimes S^2_+).
\]
Finally, the following decompositions are parallel with respect to the Levi-Civita connection:
\begin{eqnarray*}
S_- \otimes S_+ \otimes S^2_+ 
	& \cong &
		(S_- \otimes S^3_+) \oplus (S_- \otimes S_+);\\
S_+ \otimes S_+ \otimes S^2_+
	& \cong &
		(S_+ \otimes S^3_+) \oplus (S_+ \otimes S_+).
\end{eqnarray*}
It follows that the restriction of $\cD_2$ to $C^{\infty}(S_-\otimes S^3_+)$ maps into $C^{\infty}(S_+\otimes S^3_+)$ where it agrees with the negative Dirac operator coupled to the Levi-Civita connection on $S^3_+$ as claimed.
\end{proof}

\begin{cor}\label{nonlin.Dirac}
The map $F \colon C^{\infty}(S_- \otimes S^3_+) \to C^{\infty}(S_+ \otimes S^3_+)$ given by
\[
F(a) = Q(\omega+\rd a) + \rd^*a
\]
is a non-linear Dirac operator, whose zeros define hyperk\"ahler triples. (Here, we use Corollary~\ref{spinor_domain_range} to identify the domain of $F$ with the subspace of triples in $\Omega^1(X)\otimes\RR^3$ satisfying \eqref{Jdota.eq}  and the range of $F$ with $C^{\infty}(X,S^2_0\RR^3 \oplus \RR^3)$.)
\end{cor}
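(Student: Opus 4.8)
The plan is to reduce the statement to Proposition~\ref{D_is_Dirac} by observing that the linearisation of $F$ is exactly the operator $\cD$ appearing there. First I would note that $F$ as given is manifestly a (smooth, nonlinear) map between sections of the two bundles $S_- \otimes S^3_+$ and $S_+ \otimes S^3_+$, once the identifications of Corollary~\ref{spinor_domain_range} are in place: the domain identification sends a section $a$ of $S_- \otimes S^3_+$ to the triple of $1$-forms satisfying $J \cdot a = 0$, and the range identification sends $(s^2_0(\text{-part}), \text{trace})$ to a section of $S_+ \otimes S^3_+$ via \eqref{splitting_range}. The zero set is characterised by $Q(\omega + \rd a) = 0$ together with $\rd^* a = 0$; by Lemma~\ref{l1.12.8.15} (and the discussion around \eqref{Q}) the vanishing of $Q$ on a triple of closed $2$-forms inducing a fixed orientation is exactly the hyperk\"ahler condition, so zeros of $F$ do define hyperk\"ahler triples (the closedness of $\omega + \rd a$ is automatic, and $\omega + \rd a$ remains a definite triple for $a$ small, which is all that is needed).

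The substantive point is that $F$ is a \emph{nonlinear Dirac operator}, i.e.\ its symbol at every point and every section $a$ agrees with that of a linear Dirac operator. For this I would compute the linearisation of $F$ at $a$. The term $\rd^* a$ is linear, contributing its own (constant-coefficient) part. For the term $Q(\omega + \rd a)$, the chain rule gives $DF|_a(\dot a) = (DQ)|_{\omega+\rd a}(\rd \dot a) + \rd^*\dot a$; since $Q$ is an algebraic (zeroth order) function of the triple, $(DQ)|_{\omega+\rd a}$ is a bundle map, so the only derivatives of $\dot a$ that appear come through $\rd \dot a$ and $\rd^* \dot a$. Hence the symbol of $DF|_a$ in the direction $\xi$ is $\sigma(\dot a) = (DQ)|_{\omega + \rd a}\bigl(\xi \wedge \dot a\bigr) \oplus \langle \xi, \dot a\rangle$, which is precisely the symbol of the operator $D(\dot a) = (s^2_0(\rd_+ \dot a), \rd^* \dot a)$ evaluated with respect to the definite triple $\omega + \rd a$ rather than $\omega$. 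Restricted to the sub-bundle $S_- \otimes S^3_+$, Proposition~\ref{D_is_Dirac} identifies this with the symbol of the (negative) Dirac operator $\cD$ coupled to $S^3_+$ --- for the metric determined by $\omega + \rd a$, but the symbol of a Dirac operator depends only on the Clifford multiplication, hence only on the conformal/metric structure, and in any case the point is just that $\sigma_{DF|_a}(\xi)$ is invertible for $\xi \neq 0$, which follows from the injectivity of the symbol of $L$ computed in Proposition~\ref{tgt.split.prop} (the estimate $|\sigma_L(\xi,v)|^2 \geq 2|\xi|^2 |v|^2$) together with the bundle isomorphism $S^2_0(\RR^3)\oplus\RR^3 \cong S_+\otimes S^3_+$ of \eqref{splitting_range} matching ranks. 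Thus $DF|_a$ is elliptic for every $a$, with the principal part of a Dirac operator, which is the definition of a nonlinear Dirac operator.

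I expect the main obstacle to be purely bookkeeping: keeping straight the two bundle identifications of Corollary~\ref{spinor_domain_range} and checking that under them the restriction of the full linearised operator $a \mapsto (DQ)(\rd \dot a) + \rd^*\dot a$ to $C^\infty(S_-\otimes S^3_+)$ genuinely lands in $C^\infty(S_+\otimes S^3_+)$ and not in the complementary summand $S_+\otimes S_+$ --- this is exactly the content of the parallel decomposition in the proof of Proposition~\ref{D_is_Dirac}, so it transfers, but one must note that at $a \neq 0$ the relevant splitting is the one induced by the metric of $\omega + \rd a$, so strictly speaking $F$ is a section of a bundle over a neighbourhood of $0$ whose fibres vary; since we only care about a small neighbourhood of $0$ and the statement is about the symbolic (top-order) behaviour, this variation is lower order and does not affect the conclusion. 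No genuinely hard analysis is needed here; the corollary is essentially a repackaging of Lemma~\ref{l1.12.8.15}, the definition \eqref{Q}, and Proposition~\ref{D_is_Dirac}.
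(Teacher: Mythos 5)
Your argument is correct and follows the same route the paper intends: the corollary is immediate from Lemma~\ref{l1.12.8.15} (zeros of $Q$ on closed definite triples are hyperk\"ahler) together with Proposition~\ref{D_is_Dirac}, which identifies the linearisation $\dot a \mapsto (s^2_0(\rd_+\dot a),\rd^*\dot a)$ at $a=0$ on the summand $S_-\otimes S^3_+$ with the coupled Dirac operator $\cD$. Your extra discussion of the symbol at general $a$ is more than the statement requires (and your claim that $\rd Q|_{\omega+\rd a}$ is \emph{precisely} the $s^2_0$-operator for the triple $\omega+\rd a$ is only exact when that triple is itself hyperk\"ahler, though near $a=0$ the ellipticity you want follows by continuity), but this does not affect the correctness of the proof of the corollary as stated.
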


Another elementary calculation that will be used later concerns the
operators $\rd_+$ and $\rd^*$ on triples in the summand $S_-\otimes S_+$ in
\eqref{splitting_domain}.  

Let $\alpha$ be a $1$-form and let $\tau_i = J_i\alpha$ be the
corresponding triple of $1$-forms. Then
\begin{equation}\label{e1.30.1.16}
\rd^* \tau_i \in C^\infty(X,\RR^3)
\end{equation}
and
\begin{equation}\label{e2a.30.1.16}
\rd_+\tau_i \in \Omega^2_+(X)\otimes\RR^3 \cong
C^\infty(X,\RR)\oplus C^\infty(X,\RR^3) \oplus
C^\infty(X,S^2_0\RR^3).
\end{equation}
\begin{prop}\label{linearisation.vector.field}
Under the identification $\RR^3 = \Lambda^2_+$ by $\omega$,
$\rd^*\tau$ in \eqref{e1.30.1.16} is identified with $\rd_+\alpha$.

For $\rd_+\tau$, we have
\begin{equation}
s_0^2(\rd_+\tau) = 0
\end{equation}
while the $\RR \oplus \RR^3$ components are identified respectively
with $\rd^*\alpha$ and $\rd_+\alpha$. 
\end{prop}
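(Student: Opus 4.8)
The plan is to reduce the proposition to a single Kähler-type identity for $\rd^*$ together with the quaternionic relations among the $J_i$, after which the stated decomposition is elementary linear algebra on $3\times 3$ matrices.

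I would start from two reductions. First, since $v:=\alpha^\sharp$ gives $\tau=\iota_v\omega$, closedness of $\omega$ yields $\rd\tau=\cL_v\omega=\rd(\iota_v\omega)$, which is tangent to the hyperk\"ahler locus at $\omega$: writing $\phi_t$ for the flow of $v$ and using naturality of $Q$, one has $Q(\phi_t^*\omega)=\phi_t^*Q(\omega)=0$, so differentiating at $t=0$ and recalling that the linearisation of $Q$ at $\omega$ is $P$ gives $P(\rd\tau)=s_0^2(\rd_+\tau)=0$. This disposes of the $S^2_0\RR^3$-component for free. Second, I would record the identity
\[
\rd^*(J_i\beta)=(\rd\beta,\omega_i)=(\rd_+\beta,\omega_i),
\]
valid for every $1$-form $\beta$. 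This is a one-line computation in a local orthonormal frame: since the $J_i$ are parallel for the Levi-Civita connection, $\rd^*(J_i\beta)=\sum_k(\nabla_{e_k}\beta)(J_ie_k)$, and a short manipulation of the definition of the pointwise inner product on $2$-forms rewrites the right side as $(\rd\beta,\omega_i)$; the last equality holds because $\omega_i$ is self-dual.

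Taking $\beta=\alpha$ in this identity shows that the triple of functions $\rd^*\tau=(\rd^*(J_1\alpha),\rd^*(J_2\alpha),\rd^*(J_3\alpha))$ corresponds to $\rd_+\alpha$ under the identification $\RR^3=\Lambda^2_+$ supplied by the frame $\omega$, which is the first assertion. For the remaining components of $\rd_+\tau$ I would take $\beta=J_i\alpha$ and use $J_jJ_i=-\delta_{ij}-\sum_k\varepsilon_{ijk}J_k$ to compute the matrix of pointwise inner products:
\[
(\rd(J_i\alpha),\omega_j)=\rd^*(J_jJ_i\alpha)=-\delta_{ij}\,\rd^*\alpha-\sum_k\varepsilon_{ijk}\,(\rd_+\alpha,\omega_k).
\]
The right-hand side is the sum of a pure-trace term and a term antisymmetric in $(i,j)$. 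Hence the symmetric trace-free part vanishes (consistent with the first reduction); the trace is a multiple of $\rd^*\alpha$, so the $\RR$-component of $\rd_+\tau$ is identified with $\rd^*\alpha$; and the antisymmetric part, regarded as an element of $\Lambda^2\RR^3\cong\RR^3$ via $\varepsilon$, is a multiple of $\big((\rd_+\alpha,\omega_1),(\rd_+\alpha,\omega_2),(\rd_+\alpha,\omega_3)\big)$, i.e.\ of $\rd_+\alpha$ under $\RR^3=\Lambda^2_+$, so the $\RR^3$-component of $\rd_+\tau$ is identified with $\rd_+\alpha$.

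I expect the only real work to be bookkeeping with conventions: the signs entering through $J_i^2=-1$ and through the orientation chosen for $\varepsilon_{ijk}$, and the normalising constants relating a coefficient vector $c\in\RR^3$ to the self-dual $2$-form $\sum_i c_i\omega_i$ (so that the constant in $\rd^*\tau\leftrightarrow\rd_+\alpha$ really comes out to $1$ for the identification $\RR^3=\Lambda^2_+$ used in the paper). As a consistency check for the last two identifications one can also note that, by the parallel splitting $S_+\otimes S^2_+\cong S^3_+\oplus S_+$ from the proof of Proposition~\ref{D_is_Dirac}, the operator $\rd^*+\rd_+$ coupled to the flat bundle $\RR^3\cong S^2_+$, restricted to the vector-field summand $S_-\otimes S_+\subset\Lambda^1\otimes\RR^3$, reduces to the plain operator $\rd^*+\rd_+$ acting on $\alpha$.
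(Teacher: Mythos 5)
Your proposal is correct and follows essentially the same route as the paper: the core of both arguments is the identity $\rd^*(J_i\beta)=(\rd\beta,\omega_i)$ (which the paper obtains from $J_i\beta=*(\beta\wedge\omega_i)$ and $\rd\omega_i=0$, and you obtain from $\nabla J_i=0$ in an orthonormal frame), applied with $\beta=\alpha$ for the $\rd^*\tau$ claim and with $\beta=J_i\alpha$ plus the quaternion relations to split the matrix $(\rd\tau_i,\omega_j)$ into its trace, skew and vanishing $s^2_0$ parts. Your extra naturality-of-$Q$ argument for $s^2_0(\rd_+\tau)=0$ is the same observation the paper makes in \S\ref{nonlinear.Dirac} and is anyway subsumed by the matrix computation, while the residual sign and factor-of-two conventions you defer are treated at the same level of care in the paper's own proof (cf.\ the remark following the proposition).
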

\begin{proof}
We first consider the $s^2_0$-projection of the matrix 
\[
(\omega_i, \rd \tau_j) 
	= 
		\frac{\omega_i \wedge \rd (J_j \alpha)}{\rd V_g}
	=
		\frac{ \rd \left( \omega_i \wedge J_j \alpha \right)}{\rd V_g}.
\]
The complex structures $J_i$ and K\"ahler forms $\omega_i$ are related by $J_i\alpha = *(\alpha \wedge \omega_i)$ and so
\begin{equation}\label{d+tau.eq}
(\omega_i, \rd \tau_j) =\frac{\rd *(J_iJ_j \alpha)}{\rd V_g}.
\end{equation} 
Now the quaternion relations for the $J_i$ imply that the $s^2_0$-projection of this matrix vanishes and the $\RR$-component of $\rd_+\tau$ is $\rd^*\alpha$ as claimed. 

Meanwhile $\rd^*\tau_i = -*\rd*(J_i \alpha) = * \rd (\omega_i \wedge \alpha) = *(\omega_i \wedge \rd \alpha) =  (\omega_i,\rd \alpha)$.
\end{proof}
\begin{rmk}
Another way of stating the second part of Proposition \ref{linearisation.vector.field} is in terms of the matrix
$(\omega_i,\rd \tau_j) = \frac{1}{2}(\omega_i,\rd_+\tau_j)$: specifically, it says that the trace part is equal to $\rd^*\alpha$, the skew
part is equal to $\rd_+\alpha$ and the $s^2_0$ part is zero.
\end{rmk}

\subsection{Proof of Theorem~\ref{smooth_moduli_space}}
\label{s2.29.1.16}

Having considered cohomologous triples $\omega  + \rd a$, with the
additional conditions $\rd^*a = 0 = J \cdot a$, we now turn to the
general case.   We shall write down a smooth map $\cQ$ with domain
essentially triples $a$ of $1$-forms on $X$ with coefficients in the
Sobolev space $H^{s+1}$ such that: $\cQ$ is a submersion at any given
hyperk\"ahler triple; and $\cQ^{-1}(0)$ is precisely the set of all
hyperk\"ahler triples $\widehat{\omega}_i$ with $L^*
(\wh{\omega} - \omega)=0$.  Theorem~\ref{smooth_moduli_space} will
follow from this.

\subsubsection{Hodge theory on $X$}
\label{s1.29.1.16}
We begin by recalling some Hodge theory for manifolds with
boundary. The standard reference for this material is
\cite{Schwarz}.  As before, denote by $\iota$ the boundary inclusion
$\iota: Y \to X$.   Given $\alpha \in \Omega^p(X)$,  define forms on
$Y$ as follows:
\begin{equation}\label{e1.13.9.15}
\alpha_{\top} = \iota^*(\alpha) \mbox{ and }
\alpha_{\bot} = \iota^*(*\alpha).
\end{equation}
Use these to define boundary conditions for two spaces of harmonic forms:
\begin{align}
\cH^p_{\top} 
	&=
		\{ \alpha \in \Omega^p(X) : \rd \alpha = 0,\quad \rd^* \alpha = 0, \quad \alpha_{\top} =0\},\\
\cH^p_{\bot}
	&=
		\{ \alpha \in \Omega^p(X) : \rd \alpha = 0,\quad \rd^* \alpha = 0, \quad \alpha_{\bot} =0\}.\label{Neumann.eq}
\end{align}
Elements of $\cH^p_{\top}$ are called \emph{Hodge forms satisfying
  Dirichlet boundary conditions} and elements of $\cH^p_{\bot}$ are
called \emph{Hodge forms satisfying Neumann boundary conditions} (even
though the traditional Neumann condition involves a normal derivative,
unlike here). The Hodge theorem for manifolds with boundary, due to
Morrey--Friedrichs, is as follows.
\begin{thm}\label{hodge.boundary}
The inclusions $\cH^p_{\top} \to \Omega^p(X,Y)$ and $\cH^p_{\bot} \to
\Omega^p(X)$ induce isomorphisms 
\[
\cH^p_{\top} \cong H^p(X,Y), \quad \cH^p_{\bot} \cong H^p(X).
\]
\end{thm}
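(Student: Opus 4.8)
The plan is to realise both $\cH^p_{\bot}$ and $\cH^p_{\top}$ as kernels of the Hodge Laplacian $\Delta=\rd\rd^*+\rd^*\rd$ on $p$-forms equipped with appropriate boundary conditions, and then run the usual ``harmonic representative'' argument, keeping careful track of which boundary integrals vanish. The analytic input I would quote from \cite{Schwarz} (ultimately Morrey and Friedrichs) is: the Laplacian $\Delta$ with the \emph{absolute} conditions $\alpha_{\bot}=0$, $(\rd\alpha)_{\bot}=0$, and with the \emph{relative} conditions $\alpha_{\top}=0$, $(\rd^*\alpha)_{\top}=0$, is in each case a formally self-adjoint elliptic boundary value problem; hence the associated operator is self-adjoint with finite-dimensional kernel, satisfies elliptic estimates, and one has an $L^2$-orthogonal splitting $\Omega^p(X)=\mathcal{K}\oplus\Delta(\mathcal{D})$ of smooth forms, where $\mathcal{K}$ is the kernel and $\mathcal{D}$ the smooth forms obeying the boundary conditions. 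The kernels are identified by one integration by parts: for the absolute problem $\langle\Delta\alpha,\alpha\rangle_{L^2}=\|\rd\alpha\|^2+\|\rd^*\alpha\|^2$ after dropping the boundary terms via $\alpha_\bot=0$ and $(\rd\alpha)_\bot=0$, so its kernel is $\{\rd\alpha=\rd^*\alpha=0,\ \alpha_\bot=0\}=\cH^p_{\bot}$; likewise the relative kernel is $\cH^p_{\top}$. I record the Green's formula used repeatedly: for $\phi\in\Omega^{k}(X)$, $\chi\in\Omega^{k+1}(X)$,
\[
\langle\rd\phi,\chi\rangle_{L^2}-\langle\phi,\rd^*\chi\rangle_{L^2}=\int_Y \phi_{\top}\wedge\chi_{\bot}.
\]

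Well-definedness and injectivity are then elementary. Every element of $\cH^p_{\bot}$ is closed, hence represents a class in $H^p(X)=H^p_{\mathrm{dR}}(X)$, and every element of $\cH^p_{\top}$ is closed with $\alpha_\top=0$, hence represents a class in $H^p(X,Y)$, which I take to be the cohomology of the subcomplex $\Omega^\bullet(X,Y)=\{\alpha:\iota^*\alpha=0\}$. For injectivity in the absolute case, if $\alpha\in\cH^p_{\bot}$ and $\alpha=\rd\beta$, the Green's formula gives $\|\alpha\|_{L^2}^2=\langle\beta,\rd^*\alpha\rangle+\int_Y\beta_\top\wedge\alpha_\bot=0$ since $\rd^*\alpha=0$ and $\alpha_\bot=0$; hence $\alpha=0$. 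In the relative case, if $\alpha\in\cH^p_{\top}$ is zero in $H^p(X,Y)$ then $\alpha=\rd\beta$ with $\beta_\top=0$, and the same identity gives $\|\alpha\|^2_{L^2}=\langle\beta,\rd^*\alpha\rangle+\int_Y\beta_\top\wedge\alpha_\bot=0$.

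For surjectivity onto $H^p(X)$, take a closed $\omega\in\Omega^p(X)$ and use the absolute decomposition to write $\omega=h+\Delta\alpha$ with $h\in\cH^p_{\bot}$ and $\alpha\in\mathcal{D}$, so $\omega-h=\rd\rd^*\alpha+\rd^*\rd\alpha$. I claim the coexact term $\psi:=\rd^*\rd\alpha$ vanishes: since $\omega$ and $h$ are closed, $0=\rd(\omega-h)=\rd\rd^*\rd\alpha=\rd\psi$, and then the Green's formula with $\phi=\psi$, $\chi=\rd\alpha$ gives $\|\psi\|^2_{L^2}=\langle\rd\psi,\rd\alpha\rangle-\int_Y\psi_\top\wedge(\rd\alpha)_\bot=0$, using $\rd\psi=0$ and the boundary condition $(\rd\alpha)_\bot=0$. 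Hence $\omega-h=\rd(\rd^*\alpha)$ is exact and $[\omega]=[h]$. For surjectivity onto $H^p(X,Y)$, repeat with closed $\omega\in\Omega^p(X,Y)$ and the relative decomposition $\omega=h+\rd\rd^*\alpha+\rd^*\rd\alpha$, $h\in\cH^p_{\top}$, $\alpha\in\mathcal{D}$ satisfying $\alpha_\top=0$, $(\rd^*\alpha)_\top=0$. Now $\rd^*\alpha\in\Omega^{p-1}(X,Y)$, so it again suffices to kill $\psi=\rd^*\rd\alpha$; this time $\psi=(\omega-h)-\rd\rd^*\alpha$ has $\psi_\top=\iota^*\psi=0$, because $\iota^*\omega=\iota^*h=0$ and $\iota^*(\rd\rd^*\alpha)=\rd\,\iota^*(\rd^*\alpha)=0$, and combining $\rd\psi=\rd(\omega-h)=0$ with $\psi_\top=0$ in the Green's formula forces $\|\psi\|^2_{L^2}=0$. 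Thus $\omega-h=\rd(\rd^*\alpha)$ inside $\Omega^\bullet(X,Y)$ and $[\omega]=[h]$ in $H^p(X,Y)$.

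The only genuine obstacle is the analytic input of the first paragraph: verifying that the absolute and relative boundary conditions for $\Delta$ satisfy the Lopatinski--Shapiro (complementing) condition, so that the associated operators are self-adjoint and Fredholm with the stated orthogonal decomposition and regularity. This is classical, and I would cite \cite{Schwarz}, where it is carried out in detail, rather than reproduce it; everything else above is Stokes' theorem together with the de Rham and relative de Rham theorems. (Equivalently, one could deduce Theorem~\ref{hodge.boundary} from the Hodge--Morrey--Friedrichs decomposition of $\Omega^p(X)$ recalled in \cite{Schwarz}, but that rests on the same elliptic theory.)
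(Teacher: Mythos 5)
Your proposal is correct, but note that the paper does not prove this statement at all: it simply quotes the Morrey--Friedrichs theorem, citing \cite{Schwarz} as the standard reference, so there is no argument in the paper to compare against. What you have written is the standard proof, and it checks out: the identification of the kernels of the absolute and relative realisations of $\Delta$ with $\cH^p_{\bot}$ and $\cH^p_{\top}$ (using that $\rd\alpha=0$ already forces $(\rd\alpha)_{\bot}=0$, and dually), the injectivity arguments via the Green's formula, and the surjectivity arguments in which the coexact piece $\rd^*\rd\alpha$ of $\Delta\alpha$ is killed by pairing against $\rd\alpha$ --- using $(\rd\alpha)_{\bot}=0$ in the absolute case and $\psi_{\top}=0$ in the relative case --- are all correct, and in the relative case you rightly observe that $\rd^*\alpha$ lies in $\Omega^{p-1}(X,Y)$ so that exactness holds in the relative complex, which is exactly how the paper defines $H^p(X,Y)$. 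You also correctly isolate the one genuinely nontrivial ingredient, namely that the absolute and relative boundary conditions for $\Delta$ form self-adjoint elliptic (Lopatinski--Shapiro) boundary value problems with the resulting orthogonal splitting $\Omega^p(X)=\mathcal{K}\oplus\Delta(\mathcal{D})$ and elliptic regularity; deferring that to \cite{Schwarz} (or to Taylor's treatment of absolute/relative boundary conditions) is entirely in the spirit of the paper, which defers the whole theorem. In short: your write-up supplies a correct standard proof of a result the paper treats as a black box.
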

\noindent (Here $\Omega^p(X,Y)$ is the space of $p$-forms on $X$ which
restrict to $0$ on $Y$; these vector spaces give a complex under exterior
derivative and $H^p(X,Y)$ is the resulting cohomology group.) 

With this in hand, we can give a convenient parametrisation of closed 2-forms as follows.

\begin{lem}\label{coulomb.gauge}
Let $\theta$ 
be a closed triple of 2-forms on $X$. There exist triples $\chi \in \cH^2_{\bot}\otimes \RR^3$ and $a \in \Omega^1 \otimes \RR^3$ such that $\theta = \chi + \rd a$ with $\rd ^*a = 0$ and $a_{\bot} = 0$. Moreover, $\chi$ is unique and $a$ is unique up to addition of a triple $b \in \cH^1_{\bot} \otimes \RR^3$. 
\end{lem}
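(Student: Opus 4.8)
The statement is a standard Hodge-theoretic decomposition of a closed triple on a compact manifold with boundary, imposing Neumann-type conditions. Since all the work is componentwise in $\RR^3$, I would immediately reduce to the case of a single closed $2$-form $\theta$ on $X$ (the triple case is obtained by applying the scalar statement three times). The plan is to split off the harmonic part first, then solve a boundary value problem for the remainder.

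\emph{Step 1: Extract the harmonic representative.} By the Morrey--Friedrichs Hodge theorem (Theorem~\ref{hodge.boundary}), the de Rham class $[\theta] \in H^2(X)$ has a unique representative $\chi \in \cH^2_{\bot}$ satisfying Neumann boundary conditions. So $\theta - \chi$ is closed and exact in $H^2(X)$; write $\theta - \chi = \rd\beta$ for some $1$-form $\beta$. Uniqueness of $\chi$ is immediate from the isomorphism $\cH^2_{\bot} \cong H^2(X)$ in Theorem~\ref{hodge.boundary}.

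\emph{Step 2: Put $\beta$ in Coulomb gauge with the right boundary condition.} I want to replace $\beta$ by $a = \beta + \rd f + h$, with $h$ harmonic, so that $\rd^* a = 0$ and $a_{\bot} = 0$ while keeping $\rd a = \rd\beta = \theta - \chi$. This amounts to solving, for the function $f$, the equation $\rd^*\rd f = -\rd^*\beta$ in $X$ together with the boundary condition $(\rd f)_{\bot} = -\beta_{\bot}$, i.e. $\partial_\nu f = -(\iota^*(*\beta))$ in the appropriate sense — a Neumann problem for the Laplacian $\Delta = \rd^*\rd$ on functions. The solvability condition for this Neumann problem is that the prescribed data integrate to zero against constants, i.e. $\int_X \rd^*\beta\, \dVol = \int_Y \beta_{\bot}$, which holds automatically by Stokes' theorem (integrate $\rd(*\beta)$ over $X$). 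Having solved for $f$, the form $a_0 = \beta + \rd f$ satisfies $\rd^* a_0 = 0$ and $\rd a_0 = \theta - \chi$. Its boundary value $a_{0,\bot} = \iota^*(*a_0)$ is then both closed and coclosed as a form on $Y$ (one checks $\rd_Y(*a_0|_Y) = 0$ and $\rd^*_Y(\dots)=0$ from $\rd^* a_0 = 0$ and $\rd a_0$ being already accounted for), hence represents a class in $H^{\dim Y - 1}(Y)$; subtracting the pullback of a suitable harmonic form $h \in \cH^1_{\bot}$ on $X$ kills this boundary class (using the surjectivity part of the restriction $\cH^1_{\bot} \to H^1(Y)$ coming from the long exact sequence of $(X,Y)$), and after a further harmless Coulomb adjustment I obtain $a$ with $\rd^* a = 0$, $a_{\bot} = 0$, and $\rd a = \theta - \chi$.

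\emph{Step 3: Uniqueness of $a$.} Suppose $\rd a = \rd a'$, $\rd^* a = \rd^* a' = 0$, $a_{\bot} = a'_{\bot} = 0$. Then $b = a - a'$ is closed, coclosed, and satisfies $b_{\bot} = 0$, i.e. $b \in \cH^1_{\bot}$. Conversely any such $b$ can be added back, so $a$ is unique modulo $\cH^1_{\bot} \otimes \RR^3$.

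\emph{Main obstacle.} The genuinely delicate point is Step~2: one must be careful that the Neumann boundary value problem for the Laplacian on functions is exactly the right thing to impose $a_{\bot}=0$ (as opposed to $a_{\top}=0$, which would be the Dirichlet-type problem), and that after the Coulomb correction the \emph{boundary} cohomology obstruction can be absorbed into $\cH^1_{\bot}$. Tracking which harmonic spaces ($\cH^1_{\bot}$ vs $\cH^1_{\top}$) appear, and invoking the correct piece of the long exact sequence of the pair $(X,Y)$ to guarantee the boundary class of $a_0$ is hit, is where the bookkeeping must be done carefully; the PDE estimates themselves are completely standard elliptic boundary regularity.
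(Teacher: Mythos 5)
Your Steps 1 and 3, together with the first half of Step 2, are exactly the paper's proof: take the unique Neumann-harmonic representative $\chi\in\cH^2_\bot\otimes\RR^3$ of $[\theta]$, write $\theta-\chi=\rd\beta$, and solve the Neumann problem $\Delta f=-\rd^*\beta$ with $(\rd f)_\bot=-\beta_\bot$ (the paper does not even record the Stokes compatibility check, which you rightly include). What you have missed is that at this point the construction is already finished: setting $a=\beta+\rd f$ one has $\rd^*a=0$ \emph{and} $a_\bot=\beta_\bot+(\rd f)_\bot=0$ pointwise, because that is precisely what the Neumann boundary condition was chosen to achieve. There is no residual boundary class to kill and no further adjustment to make.

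The appended correction in the second half of Step 2 is therefore superfluous, and as written it is also wrong on three counts. First, elements $h\in\cH^1_\bot$ satisfy $h_\bot=0$ by definition, so subtracting such an $h$ cannot change $a_\bot$ at all. Second, the restriction map $\cH^1_\bot\cong H^1(X)\to H^1(Y)$ is not surjective in general: its image is $\im\bigl(H^1(X)\to H^1(Y)\bigr)$, which is typically a proper (indeed half-dimensional) subspace of $H^1(Y)$ --- the paper itself exploits this when it introduces $H_{0,+}(Y)$ as a complement of $\im\bigl(H^1(X)\to H^1(Y)\bigr)$, a space that is nonzero in general. Third, even if a suitable harmonic correction existed, killing the cohomology class of $a_{0,\bot}$ is strictly weaker than the pointwise condition $a_\bot=0$ demanded by the lemma (and the degree bookkeeping is off: for a $1$-form $a_0$ on the $4$-manifold $X$, $a_{0,\bot}=\iota^*(*a_0)$ is a top-degree form on $Y$, essentially the normal component times $\rd\mu_Y$, not a $2$-form on $Y$). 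Consequently the ``main obstacle'' you describe --- choosing between $\cH^1_\bot$ and $\cH^1_\top$ and invoking the long exact sequence of $(X,Y)$ --- simply does not arise; deleting the extra step leaves a proof identical to the paper's.
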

\begin{proof}
By the Hodge theorem for manifolds with boundary there is a unique $\chi \in \cH^2_\bot \otimes \RR^3$ such that $\theta - \chi$ is exact. Write $\theta - \chi = \rd \hat{a}$ for some triple $\hat{a}$ of 1-forms. Now let $f \colon X \to \RR^3$ solve $\Delta f = - \rd^*\hat{a}$, with the Neumann boundary condition $(\rd f)_{\bot} = - \hat{a}_{\bot}$ and write $a = \hat{a} + \rd f$. By choice of $f$, $\rd^* a = 0$ and $a_{\bot} = 0$. 
\end{proof}

\subsubsection{A non-linear operator of mixed order}
\label{mixed.order.section}

We next impose the slice condition $L^*(\theta) = 0$ which, by
Theorem~\ref{main.slice.thm}, is equivalent to dividing out by the
action of diffeomorphisms which are the identity on the boundary (at
least for small~$\theta$).  
\begin{lem}\label{second.order.gauge}
Let $\theta = \chi + \rd a$ where $\chi \in \cH_{\bot}^2 \otimes \RR^3$ and $a \in \Omega^1 \otimes \RR^3$ with $\rd^*a=0$. Then $L^*(\theta) = 0$ if and only if $\Delta(J \cdot a) = 0$.
\end{lem}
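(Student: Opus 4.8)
The plan is to compute $L^*\theta$ explicitly using the formula from Lemma~\ref{Lstar}, namely $L^*\eta = (J\cdot \rd^*\eta)^\sharp$, and then massage the condition $L^*\theta = 0$ into the Laplace equation $\Delta(J\cdot a) = 0$. First I would observe that since $\chi \in \cH^2_\bot \otimes \RR^3$ is coclosed, $\rd^*\chi = 0$, so $\rd^*\theta = \rd^*\rd a$. Next, since $a$ is itself coclosed ($\rd^*a = 0$), we have $\rd^*\rd a = (\rd\rd^* + \rd^*\rd)a = \Delta a$, where $\Delta$ is the Hodge Laplacian on $1$-forms. Hence $L^*\theta = (J\cdot \Delta a)^\sharp$, and since raising indices is an isomorphism, $L^*\theta = 0$ if and only if $J\cdot \Delta a = 0$, i.e. $J_1\Delta a_1 + J_2\Delta a_2 + J_3\Delta a_3 = 0$.

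The remaining point is to commute $J\cdot(-)$ past the Laplacian, i.e. to show $J\cdot \Delta a = \Delta(J\cdot a)$. This is where the hyperk\"ahler hypothesis enters: the complex structures $J_i$ are parallel with respect to the Levi--Civita connection, so each $J_i$ commutes with the connection Laplacian $\nabla^*\nabla$; and by the Weitzenb\"ock formula $\Delta = \nabla^*\nabla + \Ric$ on $1$-forms, while a hyperk\"ahler metric is Ricci-flat, we get $\Delta = \nabla^*\nabla$ acting on each factor. Therefore $J\cdot \Delta a = \sum_i J_i \nabla^*\nabla a_i = \sum_i \nabla^*\nabla (J_i a_i) = \nabla^*\nabla(J\cdot a) = \Delta(J\cdot a)$, using Ricci-flatness once more for the last equality since $J\cdot a$ is again a single $1$-form. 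Combining with the previous paragraph, $L^*\theta = 0 \iff \Delta(J\cdot a) = 0$, which is the claim.

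The only subtlety — and the step I expect needs the most care — is the identification of $\rd^*\rd a$ with $\Delta a$: one must be sure that $\rd^*a = 0$ holds as an honest identity (not merely up to harmonic pieces) so that the $\rd\rd^*$ term genuinely drops out, and one must also confirm that no boundary terms intervene, which is fine here because $L^*$ is defined as a purely formal (pointwise) adjoint via Lemma~\ref{Lstar}, so the computation is entirely algebraic-differential and local. A clean way to present the argument is simply: $L^*\theta = (J\cdot\rd^*\theta)^\sharp = (J\cdot\rd^*\rd a)^\sharp = (J\cdot\Delta a)^\sharp = (\Delta(J\cdot a))^\sharp$, so that $L^*\theta = 0$ iff $\Delta(J\cdot a) = 0$.
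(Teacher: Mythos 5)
Your proof is correct and follows essentially the same route as the paper: apply the formula $L^*\theta=(J\cdot\rd^*\theta)^\sharp$, use $\rd^*\chi=0$ and $\rd^*a=0$ to identify $\rd^*\theta$ with $\Delta a$, and then commute $J\cdot(-)$ past the Laplacian via Ricci-flatness (Hodge $=$ rough Laplacian on $1$-forms) and the parallelism of the $J_i$. Your explicit attention to $\rd^*\chi=0$ and to the purely formal, local nature of $L^*$ matches the paper's (more terse) argument, so there is nothing to add.
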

\begin{proof}
Note that $L^*(\theta) = (J \cdot \rd^* \theta)^\sharp$ so $L^*(\theta) = 0$
if and only if $J \cdot (\rd^*\rd a) = 0$. Since $\rd^*a = 0$ this is
equivalent to $J\cdot \Delta a = 0$, where $\Delta = \rd^* \rd + \rd
\rd^*$ is the Hodge Laplacian. A hyperk\"ahler metric is Ricci-flat,
so on 1-forms the Hodge Laplacian is equal to the rough
Laplacian. Moreover, $J$ is covariant constant (since each of $J_1,
J_2, J_3$ are) and thus $J$ commutes with the rough Laplacian, and
hence the Hodge Laplacian on 1-forms. The result follows. 
\end{proof}

Thus given $\theta = \chi + \rd a$ as above, the conditions
\begin{equation}\label{e1.29.1.16}
Q(\omega + \chi + \rd a)=0,\; \rd^*a =0,\; \Delta(J\cdot a)=0
\end{equation}
(where $Q$ is defined in equation \eqref{Q}) are equivalent to 
\begin{equation}\label{e2.29.1.16}
Q(\omega +\theta)=0,\; \rd^*a=0,\; L^*(\theta) =0,
\end{equation}
and we know by the slice theorem that these conditions define a
neighbourhood in $\cM$ of $\omega$ if the norm of $\theta$ is
sufficiently small.  We shall combine the three conditions in
\eqref{e1.29.1.16} to define our smooth map $\cQ$, but before doing
so, we must take care of the fact that $\theta$ does not determine $a$
uniquely, even if $\rd^*a=0$. However, Lemma~\ref{coulomb.gauge} shows
us how to fix this problem. Thus we make the following definition.

\begin{dfn}[The gauge-fixed hyperk\"ahler equation]\label{gauge.fixed.hK}
For $(\chi, a)$ as above, define
\begin{equation}
\cQ(\chi,a)
	= 
		\left(
			Q(\omega + \chi + \rd a),
			\rd^*a,
			\Delta (J \cdot a),
			a_{\bot}
		\right).
\label{Q.definition}
\end{equation}
The domain of $\cQ$ is defined to be the open set of
\begin{equation}
\cU^{s+1}
\subset \big((\cH_{\bot}^2\otimes\RR^3) \oplus H^{s+1}(X,T^*X \otimes \RR^3)\big)/(\cH^1_{\bot}\otimes\RR^3)
\end{equation}
satisfying the condition
\begin{equation}
\sum (\omega_i + \chi_i + \rd a_i)^2 >0
\end{equation}
and the degree of regularity $s$ is taken to be $>4$.
\end{dfn}
\begin{rmk}
The map $\cQ$ is well-defined on this domain because $\chi + \rd a$
does not change if a triple of harmonic $1$-forms is added on to $a$. 

Note further that $\cQ$ maps into
\begin{equation}\label{e3.29.1.16}
		H^s(X,S^2_0\RR^3) \oplus
		H^s(X,\RR^3)\oplus 
		H^{s-1}(X,T^*X)\oplus 
		H^{s+1/2}(Y,\Lambda^3T^*Y \otimes \RR^3),
\end{equation}
the last term being essentially the restriction to $Y$ of the triple of normal components of $a$.
\end{rmk}

From \eqref{P}, if 
\begin{equation}\label{e1.6.2.16}
\theta = \chi + \rd a,\;\; a = \sigma + \tau,\;\;
\end{equation}
where
\begin{equation}\label{e1a.6.2.16}
J\cdot \sigma =0
\end{equation}
and $\tau$ is the component of $a$ in the sub-bundle isomorphic to
$S_-S_+$ (recall \eqref{splitting_domain} again), we have
\begin{equation}\label{e2.6.2.16}
Q(\omega + \chi + \rd a)
= Q(\omega) + P(\omega,\chi + \rd a) + Q(\chi+\rd a)
= P(\omega,\chi + \rd a) + Q(\chi + \rd a).
\end{equation}
Thus the linearization of $\cQ$ at $0$ is
\begin{equation} \label{e3.6.2.16}
\rd \cQ_0(\chi,a) = 
(s_0^2(\omega, \chi + \rd \sigma), \rd^*\sigma + \rd^*\tau,
\Delta(J\cdot \tau), a_{\bot})
\end{equation}
using \eqref{e1a.6.2.16}. 
Combining the first two summands to make $S_+\otimes S_+^3$ as in
\eqref{splitting_range}, and using Proposition~\ref{D_is_Dirac}, we obtain
\begin{equation}\label{e4.6.2.16}
\rd \cQ_0(\chi,a) = (\cD \sigma+ \rd^*\tau +
s_0^2(\omega,\chi),\Delta(J\cdot \tau), a_{\bot})
\end{equation}

\subsection{Regularity of $\cQ^{-1}(0)$}
Whilst not strictly speaking an elliptic operator, $\cQ$ is built from elliptic parts. In particular, it enjoys the following regularity property.

\begin{thm}\label{regularity}
Fix $s>4$. There exists $\epsilon >0$ such that if $\cQ(\chi,a)=0$ with $\chi \in \cH_{\bot}^2\otimes \RR^3$, $a \in H^{s+1}(X, T^*X \otimes \RR^3)$ and $\| (\chi,a)\|_{H^{s+1}} < \epsilon$, then in fact $a$ is smooth in the interior of $X$. 
\end{thm}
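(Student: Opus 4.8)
The idea is to run an elliptic bootstrap on the system $\cQ(\chi,a)=0$, exploiting the fact that the three differential components of $\cQ$ together control all derivatives of $a$ (in the interior) even though no single component is elliptic. Writing $a = \sigma + \tau$ as in \eqref{e1.6.2.16}, with $J\cdot\sigma=0$ and $\tau$ in the $S_-S_+$ summand, the components of $\cQ$ give us: a nonlinear Dirac equation for $\sigma$ (from $Q(\omega+\chi+\rd a)=0$, using Corollary~\ref{nonlin.Dirac} and the decomposition \eqref{e2.6.2.16}), the equation $\rd^*a=0$, and the equation $\Delta(J\cdot a)=0$, i.e.\ $\Delta(J\cdot\tau)=0$. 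The key observation is that $\tau$ is determined algebraically by the $1$-form $\alpha = \tfrac{1}{3}(J\cdot\tau)^\flat$ via $\tau_i = J_i\alpha$, so $\Delta(J\cdot\tau)=0$ is an honest (vector) Laplace equation for $\alpha$: interior elliptic regularity immediately gives $\alpha\in C^\infty_{\mathrm{loc}}(\Int X)$, hence $\tau$ is smooth in the interior. So the content is to bootstrap $\sigma$.

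\textbf{Bootstrapping $\sigma$.} Once $\tau$ is known to be smooth in the interior, rewrite the Dirac component of $\cQ(\chi,a)=0$. From \eqref{e2.6.2.16}, $Q(\omega+\chi+\rd a) = P(\omega,\chi+\rd a) + Q(\chi+\rd a)$; the linear-in-$a$ part is $\cD\sigma + \rd^*\tau + s^2_0(\omega,\chi)$ by \eqref{e4.6.2.16}, and the remaining terms $Q(\chi+\rd a)$ are quadratic in $(\chi, \rd a)$ with smooth (indeed algebraic) coefficients. Thus the equation takes the schematic form
\[
\cD\sigma = - \rd^*\tau - s^2_0(\omega,\chi) - Q(\chi+\rd a),
\]
where $\cD$ is the coupled Dirac operator of Proposition~\ref{D_is_Dirac}. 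Since $\chi$ is harmonic (hence smooth), $\tau$ is smooth in $\Int X$, and the quadratic term lies in $H^s_{\mathrm{loc}}$ by Sobolev multiplication (using $s>4$, so $H^s$ is an algebra and $\rd a\in H^s$), the right-hand side is in $H^s_{\mathrm{loc}}(\Int X)$. Interior elliptic regularity for the first-order elliptic operator $\cD$ then upgrades $\sigma$ from $H^{s+1}_{\mathrm{loc}}$ to $H^{s+2}_{\mathrm{loc}}$ on the interior. Now iterate: at each stage the quadratic term $Q(\chi+\rd a)$ gains a derivative because $\rd a$ does (the worst term being quadratic in $\rd\sigma$, and $H^k$ is a multiplication algebra for $k>2$), so $\cD\sigma\in H^{k}_{\mathrm{loc}}\Rightarrow \sigma\in H^{k+1}_{\mathrm{loc}}$ pushes the regularity up indefinitely. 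Hence $\sigma$, and therefore $a=\sigma+\tau$, is smooth in $\Int X$.

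\textbf{Main obstacle.} The principal subtlety is the interaction between the two gauge conditions and the fact that $\cQ$ is a \emph{mixed-order} system: its components land in Sobolev spaces of different orders ($H^s$, $H^s$, $H^{s-1}$, $H^{s+1/2}(Y)$, cf.\ \eqref{e3.29.1.16}), so one cannot simply invoke a single off-the-shelf elliptic regularity theorem. One must separate variables correctly --- isolating the $\tau$-direction where the operator is the Laplacian acting on $\alpha$, and the $\sigma$-direction where after substituting the known regularity of $\tau$ and $\chi$ the operator is the Dirac operator $\cD$ --- and check at each bootstrap step that the nonlinear term $Q(\chi+\rd a)$ does not lose regularity, which is exactly where the hypothesis $s>4$ is used (it guarantees $H^s$ and all higher $H^k$ are closed under multiplication, and that the smallness $\|(\chi,a)\|_{H^{s+1}}<\epsilon$ keeps $\omega+\chi+\rd a$ definite so that $Q$ is a smooth function of its argument). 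No boundary regularity is claimed or needed, which is what makes the interior bootstrap clean; the $\epsilon$ in the statement is only there to ensure definiteness of the triple so that $Q$ — and hence the whole argument — makes sense.
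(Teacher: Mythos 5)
Your reduction of the problem matches the paper's: $\chi$ is harmonic hence smooth, $\tau$ is handled via the identification of its sub-bundle with $\Lambda^1$ through $a \mapsto J\cdot a$ together with interior regularity for $\Delta$, and the whole issue is the regularity of $\sigma$. The gap is in your bootstrap for $\sigma$. The term $Q(\chi+\rd a)$ on the right-hand side of $\cD\sigma = -\rd^*\tau - s^2_0(\omega,\chi) - Q(\chi+\rd a)$ depends nonlinearly on $\rd\sigma$, i.e.\ on first derivatives of the unknown, so it is of the \emph{same order} as the principal part $\cD\sigma$. From $\sigma \in H^{s+1}$ you only know $\rd\sigma \in H^{s}$, hence $Q(\chi+\rd a) \in H^{s}_{\mathrm{loc}}$, and interior elliptic regularity for the first-order operator $\cD$ then returns only $\sigma \in H^{s+1}_{\mathrm{loc}}$ --- exactly what you started with. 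Your claimed upgrade to $H^{s+2}_{\mathrm{loc}}$ would require the right-hand side to lie in $H^{s+1}_{\mathrm{loc}}$, which you have not established, so the iteration (``the quadratic term gains a derivative because $\rd a$ does'') never gets off the ground. Treating a fully nonlinear first-order equation as ``elliptic operator equals lower-order source'' is precisely what fails here.

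The paper circumvents this by differentiating the equation: applying $\cD^*$ to \eqref{s.dirac} produces a second-order quasilinear equation, and every term containing two derivatives of $\sigma$ is absorbed into a linear second-order operator $\mathcal{P}$ whose coefficients involve $\rd\sigma$, $\chi$ and $\tau$. The smallness hypothesis $\|(\chi,a)\|_{H^{s+1}}<\epsilon$ is then used to ensure that $\mathcal{P}$ is elliptic (it is a small perturbation of $\cD^*\cD$), not merely to keep the triple definite as you assert. With ellipticity in hand, Schauder estimates give $\sigma \in C^{k+2,\alpha}$ from coefficients and right-hand side in $C^{k,\alpha}$, and this genuinely gains regularity at each step because the remaining source terms involve only first derivatives of $\sigma$. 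Some device of this kind --- differentiating the equation, or difference quotients, or Schauder theory for first-order elliptic systems with coefficients depending on $\rd\sigma$ --- is required; your argument as written omits it.
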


It follows that a neighbourhood of $[\omega]$ in the moduli space $\cM^s$ of
hyperk\"ahler triples that are smooth in the interior of $X$ and of
regularity $H^{s+\frac{1}{2}}$ on $Y$ is homeomorphic to a
neighbourhood of $\cQ^{-1}(0)$ in $\cU^{s+1}$.

\begin{proof}
We begin with the proof that $a$ is smooth in the interior of
$X$. Note first that $\chi$ is automatically smooth, since it solves
the linear elliptic system $\rd \chi = 0 = \rd^*\chi$.   The component
$\tau$ of $a$ in \eqref{e1a.6.2.16} is smooth, for the map $a \mapsto
J\cdot a$  identifies the sub-bundle of $\Lambda^1\otimes \RR^3$ that
$\tau$ lives in with $\Lambda^1$, and $\Delta(J\cdot \tau)=0$. 

To show that $\sigma$ is smooth, note first from
\eqref{e2.6.2.16} and \eqref{e3.6.2.16} that $\cQ(\chi,a) =0$ is
equivalent to 
\begin{equation}
\cD \sigma = -\rd^*\tau - s^2_0(\omega,\chi) - Q(\chi + \rd(\sigma + \tau)),
\end{equation}
which we write in the schematic form
\begin{equation}
\cD \sigma=
	-q(\rd \sigma, \rd \sigma) - l(\rd \sigma) - r,
\label{s.dirac}
\end{equation}
where $q$ is quadratic and $l$ is linear in $\rd\sigma$, the
coefficients of $q$, $l$ and $r$ all depending real-analytically on
the smooth data $\tau$ and $\chi$.

Equation (\ref{s.dirac}) is a first order fully non-linear  equation
for $\sigma$. To prove regularity one can work directly with the first order equation, but it is more straightforward to use a standard device and take
another derivative to turn (\ref{s.dirac}) into a second order
quasi-linear equation. To do this, we apply the adjoint Dirac operator
$\cD^*$. Schematically, we obtain 
\begin{equation}\label{Lap.s}
\cD^*\cD \sigma
	=
		-\nabla q \cdot \rd \sigma \cdot \rd \sigma 
		-2 q \cdot \nabla (\rd \sigma) \cdot \rd \sigma
		-\nabla l \cdot \rd \sigma
		-l \cdot \nabla (\rd \sigma)
		-\nabla \cdot r,
\end{equation}
where the dots denote various algebraic contractions whose precise form is not important. Write $\mathcal{P} \colon C^\infty(S^3_+\otimes S_-) \to C^\infty(S^3_+\otimes S_-)$ for the second order linear operator
\[
\mathcal{P} (\rho)
	= 
		\cD^*\cD \rho
		+
		2 q \cdot \nabla (\rd \rho) \cdot \rd \sigma
		+
		l \cdot \nabla (\rd \rho).
\]
We have absorbed all the second order behaviour from (\ref{Lap.s}) into $\mathcal{P}$, making it linear by letting $\rd \sigma$ appear in its coefficients. 

The coefficients of $\mathcal{P}$ depend on those of $\cD^*\cD$ and on $\rd \sigma$, $\chi$ and $\tau$. Since $\chi$ and $\tau$ are smooth and $\cD^*\cD$ has smooth coefficients, the coefficients of $\mathcal{P}$ are in the same Holder space as $\rd \sigma$. Since $\sigma \in H^{s+1}$, Sobolev embedding gives that the coefficients are in $C^{k,\alpha}$ for some $k \geq 0$ and $0<\alpha < 1$. (At this stage $k = s - 3$ is the best we can arrange.) 

Next notice that the $C^{0}$ norm of the coefficients of $\mathcal{P}$  depends continuously on $\rd \sigma, \chi, \tau$ (in the $C^0$-topology). Moreover, $\mathcal{P} = \cD^*\cD$ when $\chi = 0 = \tau$ (since $l$ vanishes in this case).  Hence, for $\rd \sigma, \chi, \tau$ sufficiently small in $C^{0}$, and so in particular in $H^{s+1}$, $\mathcal{P}$ is an elliptic operator. 

Now rearranging (\ref{Lap.s}) gives
\begin{equation}
\label{second.order.s}
\mathcal{P} (\sigma) = - \nabla q \cdot \rd \sigma \cdot \rd \sigma - \nabla l \cdot \rd \sigma - \nabla \cdot r.
\end{equation}
Since $\mathcal{P}$ is elliptic with coefficients in $C^{k,\alpha}$
and the right-hand side of \eqref{second.order.s} is in $C^{k,\alpha}$
as well, Schauder estimates apply, giving $\sigma \in
C^{k+2,\alpha}$. It follows in turn that the coefficients of
$\mathcal{P}$ and the right-hand side of \eqref{second.order.s} are
actually in $C^{k+1,\alpha}$ and so $\sigma \in
C^{k+3,\alpha}$. Bootstrapping this argument then gives that $\sigma$
is smooth in the interior of $X$.  
\end{proof}

\begin{rmk}
We stress that while this result gives that all gauge-fixed
hyperk\"ahler perturbations of $\omega$ are smooth in the
interior of $X$, though there is no reason to believe that they will
extend smoothly up to or through the boundary $Y$.
\end{rmk}

\subsection{$\cQ$ is a submersion}

We show next that for any sufficiently large $s$, $\cQ$ is a
submersion.  For this we need to know that $\cD$ is surjective (with
suitable domain and range).  We gather the results we need first,
before proceeding to the proof in \S\ref{s_q_submers}

\subsubsection{On Dirac operators}

Since $\cD$ is an operator of Dirac type, we have
\begin{equation}\label{e21.13.2.16}
\cD^* \cD = \nabla_1^*\nabla_1 + R_1,\;\;
\cD \cD^* = \nabla_2^*\nabla_2 + R_2
\end{equation}
where $\nabla_1$ is the metric connection on $S_-S_+^3$, $\nabla_2$ is
the metric connection on $S_+S_+^3$, $R_1$ is an endomorphism of
$S_-S_+^3$ and $R_2$ is an endomorphism of $S_+S_+^3$.  The
endomorphisms $R_1$ and $R_2$ depend only upon the curvature of the
bundles in question.  Because the only non-vanishing piece of
curvature on a hyperk\"ahler $4$-manifold is the anti-self-dual part of the Weyl
curvature and this is a section of $S_-^4$ (the symmetric fourth power
of $S_-$) it follows that $R_1$ and $R_2$ both vanish identically. 

\begin{prop}\label{p1.13.2.16}
The operator
\begin{equation}
\cD:H^s(X,S_-S_+^3) \to H^{s-1}(X,S_+S_+^3)
\end{equation}
is surjective.
\end{prop}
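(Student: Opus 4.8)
The plan is to prove surjectivity by combining Hodge theory for the Dirac operator on a manifold with boundary with the Weitzenböck formula $\cD^*\cD = \nabla_1^*\nabla_1$ established above. First I would observe that since $\cD$ has injective symbol and is of Dirac type, it fits into the standard elliptic boundary-value framework: for a suitable elliptic boundary condition the operator is Fredholm, and its cokernel is identified with the kernel of the adjoint $\cD^*$ subject to the dual boundary condition. The key point, however, is that we are \emph{not} imposing a boundary condition on the domain: the range is $H^{s-1}(X,S_+S_+^3)$ with no boundary constraint, and the domain is the full space $H^s(X,S_-S_+^3)$, again with no boundary constraint. In this situation surjectivity is essentially automatic: the only obstruction to solving $\cD\sigma = f$ with $\sigma$ allowed arbitrary boundary values is that $f$ be $L^2$-orthogonal to the space of solutions of $\cD^*\psi = 0$ satisfying the \emph{adjoint} boundary condition coming from integration by parts, and because we place no restriction on $\sigma|_Y$, the adjoint boundary condition forces $\psi|_Y = 0$.

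Concretely, I would argue as follows. To solve $\cD\sigma = f$ it suffices to solve the second-order equation $\cD^*\cD\tilde\sigma = f'$ for an appropriate $f'$; more precisely, set $\sigma = \cD^*\tilde\sigma$ where $\tilde\sigma$ solves the Dirichlet problem
\begin{equation*}
\cD\cD^*\tilde\sigma = f \text{ on } X, \qquad \tilde\sigma|_Y = 0.
\end{equation*}
By \eqref{e21.13.2.16} we have $\cD\cD^* = \nabla_2^*\nabla_2$, which is a positive, strongly elliptic second-order operator (its principal symbol is $|\xi|^2\,\mathrm{id}$). The Dirichlet problem for such an operator is regular, so by the standard theory (the same results of \cite{Taylor} cited in the proof of Proposition~\ref{tgt.split.prop}) the map $\tilde\sigma \mapsto (\cD\cD^*\tilde\sigma, \tilde\sigma|_Y)$ is Fredholm of index zero between the appropriate Sobolev spaces, and it is injective: if $\cD\cD^*\tilde\sigma = 0$ with $\tilde\sigma|_Y = 0$, then $0 = \langle \nabla_2^*\nabla_2\tilde\sigma, \tilde\sigma\rangle_{L^2} = \|\nabla_2\tilde\sigma\|_{L^2}^2$ (the boundary term in the integration by parts vanishes because $\tilde\sigma|_Y = 0$), so $\tilde\sigma$ is parallel, hence determined by its boundary value, hence zero. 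Therefore the Dirichlet problem is uniquely solvable, and $\sigma := \cD^*\tilde\sigma \in H^s(X,S_-S_+^3)$ satisfies $\cD\sigma = \cD\cD^*\tilde\sigma = f$. This proves surjectivity.

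The main obstacle is purely bookkeeping about regularity and about which boundary problem is the right one to invoke: one must check that the Dirichlet problem for $\cD\cD^* = \nabla_2^*\nabla_2$ indeed lands $\tilde\sigma$ in $H^{s+1}$ when $f \in H^{s-1}$ (so that $\sigma = \cD^*\tilde\sigma \in H^s$), which is the standard elliptic gain of two derivatives for the Dirichlet problem, valid since the operator is strongly elliptic and the boundary condition is regular. One should also note that no spin structure is needed, exactly as remarked before Lemma~\ref{spin_identities}, because $S_-S_+^3$ has an even number of spinor factors. An alternative, slightly slicker route avoiding the second-order reduction is to quote directly that $\cD$ with the free (i.e.\ no) boundary condition on the domain is surjective onto $H^{s-1}$ iff its formal adjoint with the complementary Dirichlet-type boundary condition has trivial kernel, and then run the same vanishing argument $\|\cD^*\psi\|^2$ — but since we already have the Weitzenböck identity in hand, the second-order formulation above is the cleanest to write.
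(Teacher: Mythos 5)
Your proposal is correct and is essentially the paper's own argument: both rest on the Weitzenb\"ock identity $\cD\cD^*=\nabla_2^*\nabla_2$, unique solvability of the Dirichlet problem for $\cD\cD^*$ (the paper phrases this as strict positivity of the Dirichlet spectrum, you as injectivity plus index zero, which amounts to the same thing), and then setting $\sigma=\cD^*\tilde\sigma$ so that $\cD\sigma=f$, with the Sobolev regularity recovered by elliptic theory. The only difference is cosmetic: the paper first solves with $f\in L^2$ via a Green's operator and then invokes elliptic regularity, whereas you track the gain of two derivatives directly in the Dirichlet problem.
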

\begin{proof}
From the formula $\cD\cD^* = \nabla_2^*\nabla_2$, we have that the
spectrum of $\cD\cD^*$ on sections satisfying Dirichlet boundary
conditions is strictly positive.   %If $E_\lambda$ is the $\lambda$-eigenspace, then we know that $\oplus_{\lambda} E_\lambda$is dense in $L^2(X,S_+S_+^3)$, so we can define
Hence, there exists
$$
G :  L^2(X,S_+S_+^3) \longrightarrow H^2(X,S_+S_+^3)
$$
with $\cD\cD^*\circ G = 1$. %, where $G$ acts as multiplication by $\lambda^{-1}$ on $E_\lambda$.  
So if $f\in L^2$, $u =\cD^*G f \in
H^1$ and $\cD u = f$.  If we know that $f$ is also in $H^{s-1}$, then
 we still have $\cD u = f$ so elliptic regularity
gives $u \in H^s$.
\end{proof}

\subsubsection{Proof that $\cQ$ is a submersion}
\label{s_q_submers}

We now show that the linearization of $\cQ$ is surjective at every
smooth hyperk\"ahler triple $\omega$.

\begin{prop}
Let $\omega$ be a smooth hyperk\"ahler triple on $X$. Then the
operator $\rd\cQ_0$ in \eqref{e4.6.2.16} is surjective onto
\eqref{e3.29.1.16}.
\end{prop}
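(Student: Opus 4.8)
\emph{The plan.} By \eqref{e4.6.2.16}, after discarding the finite-dimensional parameter $\chi$ (which I take to be $0$: since $\cD$ will be shown surjective, $\chi$ plays no role here), the linearisation reads
\[
\rd\cQ_0(0,a) = \bigl(\cD\sigma + \rd^*\tau,\ \Delta(J\cdot\tau),\ a_\bot\bigr),
\]
where $a = \sigma + \tau$ is the decomposition of Corollary~\ref{spinor_domain_range} into the $S_-S_+^3$-part $\sigma$ (so $J\cdot\sigma = 0$) and the $S_-S_+$-part $\tau$, and where I use \eqref{splitting_range} and Proposition~\ref{D_is_Dirac} to identify the first two factors of \eqref{e3.29.1.16} with $H^s(X,S_+S_+^3)$. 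Given
\[
(p,q,b)\in H^s(X,S_+S_+^3)\oplus H^{s-1}(X,T^*X)\oplus H^{s+1/2}(Y,\Lambda^3 T^*Y\otimes\RR^3),
\]
I must solve $\cD\sigma + \rd^*\tau = p$, $\Delta(J\cdot\tau) = q$ and $(\sigma+\tau)_\bot = b$. The idea is to peel these off in turn: first use $a$ to kill $b$, then $\tau$ to hit $q$, then $\sigma$ to hit what is left of $p$.

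\emph{Reducing to $b=0$, and hitting $q$.} The fibrewise map $S_-S_+^3|_Y \to \Lambda^3 T^*Y\otimes\RR^3$, $\sigma\mapsto\sigma_\bot$, is surjective at every point of $Y$: the algebraic constraint $J\cdot\sigma = 0$ leaves the normal components of the $\sigma_i$ free, as a short computation with the quaternionic frame $n^\flat, J_1 n^\flat, J_2 n^\flat, J_3 n^\flat$ shows. Hence there is $a_0\in H^{s+1}(X,S_-S_+^3)$ with $(a_0)_\bot = b$; its $\tau$-part vanishes, so $\rd\cQ_0(0,a_0) = (\cD a_0,0,b)$ and we may replace $(p,q,b)$ by $(p-\cD a_0,q,0)$. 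Next, by Proposition~\ref{linearisation.vector.field} the ansatz $\tau_i = -\tfrac13 J_i\beta$ identifies the $S_-S_+$-component with an arbitrary $1$-form $\beta = J\cdot\tau$, so it is enough to solve the Dirichlet problem $\Delta\beta = q$, $\beta|_Y = 0$ for the Hodge Laplacian on $1$-forms: its symbol is of Laplace type, so full Dirichlet data give an elliptic boundary condition, and since the hyperk\"ahler metric is Ricci-flat $\Delta = \nabla^*\nabla$ on $1$-forms, so the homogeneous problem has no nonzero solution; thus there is a unique $\beta\in H^{s+1}$, and the resulting $\tau\in H^{s+1}$ has $\tau|_Y = 0$ (hence $\tau_\bot = 0$) and $\Delta(J\cdot\tau) = q$. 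It remains to find $\sigma\in H^{s+1}(X,S_-S_+^3)$ with $\cD\sigma = p - \rd^*\tau$ and $\sigma_\bot = 0$.

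\emph{Hitting the rest of $p$ with $\sigma$.} By Proposition~\ref{p1.13.2.16} (with the Sobolev index raised by one) there is $\sigma_1\in H^{s+1}(X,S_-S_+^3)$ with $\cD\sigma_1 = p - \rd^*\tau$; writing $c = (\sigma_1)_\bot$, it suffices to find $\rho\in\ker\cD\cap H^{s+1}$ with $\rho_\bot = -c$ and set $\sigma = \sigma_1 + \rho$. Equivalently one needs the boundary-trace map $\rho\mapsto\rho_\bot$ to be onto $H^{s+1/2}(Y,\Lambda^3 T^*Y\otimes\RR^3)$ when restricted to $\ker\cD$, i.e.\ that prescribing only the normal components $\sigma_\bot$ is an \emph{under-determined} elliptic boundary condition for $\cD$ (of rank $3$, below the half-rank $4$ of the APS-type splitting of $S_-S_+^3|_Y$). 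Since the boundary values of $\ker\cD$ form the range of the Calder\'on projector, whose symbol projects onto a rank-$4$ subbundle $W_+(\xi)\subset S_-S_+^3|_Y$, this amounts to the symbolic transversality $W_+(\xi) + \ker(\sigma\mapsto\sigma_\bot) = S_-S_+^3|_y$ for every $\xi\in T^*Y\setminus 0$. This reduces to the constant-coefficient model of $\cD$ on a half-space, where the hyperk\"ahler condition is convenient: the principal symbol of $\cD$ is Clifford multiplication tensored with the identity on the flat bundle $S_+^2$, so $W_+(\xi)$ has its flat-model form and the transversality can be checked by hand. Granting this, $\sigma = \sigma_1 + \rho$ completes the construction, and unwinding the reductions gives $\rd\cQ_0(0,a_0 + \sigma + \tau) = (p,q,b)$.

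\emph{Main obstacle.} The decoupling in \eqref{e4.6.2.16}, the Dirichlet solvability of the Hodge Laplacian, the fibrewise surjectivity of $\sigma\mapsto\sigma_\bot$, and the interior surjectivity of $\cD$ (Proposition~\ref{p1.13.2.16}) are all routine; the genuine content is the last step, namely upgrading the surjectivity of $\cD$ to the \emph{simultaneous} prescription of $\cD\sigma$ and of the normal boundary trace $\sigma_\bot$ — equivalently, verifying that this normal-component condition is an (under-determined) elliptic boundary condition for the Dirac operator $\cD$.
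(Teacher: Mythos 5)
Your first two reductions are sound and coincide with the paper's: solving $\Delta(J\cdot\tau)=q$ with $\tau|_Y=0$ via the Dirichlet problem, and using the interior surjectivity of $\cD$ (Proposition~\ref{p1.13.2.16}) to hit the $S_+S^3_+$-component. The genuine gap is exactly the step you flag with ``Granting this'': you need, for every $c\in H^{s+1/2}(Y,\Lambda^3T^*Y\otimes\RR^3)$, an element $\rho\in\ker\cD\cap H^{s+1}$ with $\rho_{\bot}=c$, i.e.\ solvability of the boundary value problem $\cD\rho=0$ with prescribed normal trace. You reduce this to a symbol-level transversality statement about the Calder\'on projector which you never verify; and even granting that verification, an underdetermined boundary condition with surjective boundary symbol only yields closed range and a \emph{finite-dimensional cokernel} for $\rho\mapsto(\cD\rho,\rho_{\bot})$, so surjectivity of the normal trace on $\ker\cD$ would still require an additional vanishing argument for the adjoint problem. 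No such argument is available at this stage of the paper: the proposition carries no hypothesis on the boundary geometry (the mean-curvature positivity that powers the boundary value theory for Dirac operators in \S5 is not assumed here), so the crux of your proof is missing and it is not even clear how to supply it in this generality.

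The paper's proof avoids any boundary value problem for $\cD$. It solves the $\tau$- and $\sigma$-equations first, ignoring $a_{\bot}$, and then corrects the boundary component by adding $\rd f$ with $f$ a triple of harmonic functions: since $\rd_+(\rd f)=0$, $\rd^*\rd f=\Delta f=0$ and $\Delta(J\cdot\rd f)=J\cdot\rd\Delta f=0$ (the metric is hyperk\"ahler, so $J$ is parallel and commutes with the Laplacian), one has $\rd\cQ_0(0,\rd f)=\bigl(0,0,(\rd f)_{\bot}\bigr)$, and the boundary datum is then achieved by solving a scalar Neumann problem. Thus only Proposition~\ref{p1.13.2.16} is needed, with no Calder\'on-projector analysis at all. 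Your argument is repaired by exactly this observation: after constructing $a_0+\tau+\sigma_1$, do not try to remove $(\sigma_1)_{\bot}$ inside $\ker\cD$; instead add $\rd f$ with $f$ harmonic and $(\rd f)_{\bot}$ chosen to cancel it (this also makes your extension $a_0$ and the fibrewise surjectivity of $\sigma\mapsto\sigma_{\bot}$ unnecessary).
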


\begin{proof}
Let $(\psi,v,b)$ lie in \eqref{e3.29.1.16}. To prove surjectivity of $\rd\cQ_0$, it suffices to find $a=\sigma+\tau$ with
\begin{eqnarray}
\cD \sigma +\rd^*\tau &=& \psi, \label{e91.6.2.16}\\
\Delta (J\cdot \tau) &=& v, \label{e92.6.2.16}\\
a_{\bot} &=& b \label{e93.6.2.16}.
\end{eqnarray}
First, let $\tau'$ solve $\Delta (J\cdot \tau') = v$ with Dirichlet boundary
conditions $\tau'|_Y = 0$. Next we use the surjectivity of $\cD$ on $X$
to solve $\cD \sigma' = \psi - \rd^*\tau'$.  With these choices we have
satisfied \eqref{e91.6.2.16} and \eqref{e92.6.2.16}.   Let $a' =
\sigma' + \tau'$.  We adjust $a'$ so as to satisfy \eqref{e93.6.2.16}
without spoiling \eqref{e91.6.2.16} and \eqref{e92.6.2.16}. 

To do this we consider $a = a'+\rd f$ where $f$ is a triple of
harmonic functions, $\Delta f=0$. We have that  
\[
\rd \cQ_0(\rd f) = \big(s^2_0\left( \rd_+(\rd f)\right) + \rd^* (\rd f), \Delta ( J \cdot \rd f), (\rd f)_{\bot}\big).
\]
Now $\rd_+ (\rd f) = (\rd + * \rd)(\rd f) = 0$ and $\rd^* \rd f = \Delta f = 0$. Moreover, $\Delta (J \cdot \rd f) = J \cdot \Delta \rd f$ (since the metric is hyperk\"ahler) and this also vanishes, since $\Delta \rd f = \rd \Delta f$. The conclusion is that when $f$ is a triple of harmonic functions,
\[
\rd \cQ_0(a' + \rd f)
=
(\psi, v, a'_{\bot} + (\rd f)_{\bot}).
\]
To prove that $\rd \cQ_0$ is surjective, we choose $f$ to be harmonic functions with the Neumann boundary condition $(\rd f)_{\bot} = b - a'_{\bot}$; then $a = a' + \rd f$ solves $\rd \cQ_0 (a) = (\psi, v, b)$.
\end{proof}

We have now shown that $\cQ$ is a submersion. Since we have already
seen that for any smooth triple,  a small neighbourhood of $0$ in
$\cQ^{-1}(0)$ is homeomorphic to a small neighbourhood of $[\omega]$ in
$\cM^s$, we have now proved part (ii) of
Theorem~\ref{smooth_moduli_space}, apart from the identification of
the tangent space in \eqref{e31.11.2.16}.  This will be done in the
next section.

\section{The tangent space to \texorpdfstring{$\cM^s$}{Ms}}
\label{dirac.background.section}

We have now seen that for any smooth hyperk\"ahler triple $\omega$, a
neighbourhood of $[\omega]$ in $\cM^s$ is homeomorphic to a ball
containing the origin in $\ker(\rd \cQ_0)$.  We shall now prove
\eqref{e31.11.2.16}, thereby giving a more satisfactory
interpretation of this tangent space.  The assertion to be proved is
the following.
\begin{claim}
Let $X$ be a compact 4-manifold with boundary $Y$ and $\omega$ 
a hyperk\"ahler triple on $X$.  Then
\begin{equation}\label{claim_eqn}
T_{\omega}\cH^s\cap \ker(L^*) = 
[\cZ^2_-(X)\otimes \RR^3\cap H^s]
+ L(\cW^{s+1})  \subset
H^s(X,\Lambda^2\otimes \RR^3).
\end{equation}
\end{claim}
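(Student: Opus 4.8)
The plan is to prove the two inclusions in \eqref{claim_eqn} separately, the reverse one being essentially the computation already indicated in the introduction. If $\theta_-\in\cZ^2_-(X)\otimes\RR^3\cap H^s$, then $\theta_-$ is closed and, since anti-self-dual and self-dual $2$-forms are pointwise orthogonal, $\theta_-$ meets the conditions of \eqref{e21.12.8.15}, so $\theta_-\in T_\omega\cH^s$; being closed and anti-self-dual it is also coclosed, so $L^*\theta_-=(J\cdot\rd^*\theta_-)^\sharp=0$ by Lemma~\ref{Lstar}. For $v\in\cW^{s+1}$ the triple $Lv=\rd(\iota_v\omega)$ is exact, hence closed; as $\iota_v\omega$ is of the form $\tau$ considered in Proposition~\ref{linearisation.vector.field} (with underlying $1$-form the metric dual of $v$), that proposition gives $s^2_0((Lv)_+)=s^2_0(\rd_+\tau)=0$, so $Lv\in T_\omega\cH^s$, while $L^*Lv=0$ holds by definition of $\cW^{s+1}$. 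Hence the right-hand side of \eqref{claim_eqn} is contained in the left.

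For the substantive inclusion, the key reduction I would make is: given $\theta\in T_\omega\cH^s\cap\ker L^*$, it suffices to find $v\in H^{s+1}(X,TX)$ with $\theta-Lv$ anti-self-dual. Everything else is then automatic. Setting $\theta_-=\theta-Lv$, it is closed (as $\theta$ is closed and $Lv$ is exact), hence also coclosed (being closed and anti-self-dual), so $\theta_-\in\cZ^2_-(X)\otimes\RR^3$; it has regularity $H^s$ because $v\in H^{s+1}$ and $\omega$ is smooth. Moreover $L^*Lv=L^*\theta-L^*\theta_-=0$, using the hypothesis $L^*\theta=0$ and $L^*\theta_-=0$ from coclosedness, so $v\in\cW^{s+1}$ and $\theta=\theta_-+Lv$ lies in the right-hand side of \eqref{claim_eqn}.

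To construct such a $v$ I would combine Proposition~\ref{linearisation.vector.field} with the surjectivity of a Dirac operator. First decompose the pointwise self-dual part $\theta_+\in\Omega^2_+(X)\otimes\RR^3$ as in \eqref{e2a.30.1.16}: since $\theta\in T_\omega\cH^s$, the $S^2_0\RR^3$-component of $\theta_+$ vanishes, so $\theta_+$ is recorded by its trace part $f\in H^s(X,\RR)$ and its skew part $\beta\in H^s(X,\Lambda^2_+)$. Then solve $D\alpha=(f,\beta)$, i.e.\ $\rd^*\alpha=f$ and $\rd_+\alpha=\beta$, for $\alpha\in H^{s+1}(X,\Lambda^1)$, where $D=\rd^*+\rd_+$ is the operator of \eqref{e21.11.2.16}. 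Put $v=\alpha^\sharp$; then $\iota_v\omega$ is the triple $\tau$ of Proposition~\ref{linearisation.vector.field} with underlying $1$-form $\alpha$, so $(Lv)_+=\rd_+\tau$ has vanishing $s^2_0$-part, trace part $\rd^*\alpha=f$ and skew part $\rd_+\alpha=\beta$. Hence $(Lv)_+=\theta_+$, so $\theta-Lv$ is anti-self-dual, as needed; and $f,\beta\in H^s$ forces $\alpha\in H^{s+1}$ by elliptic regularity, hence $Lv\in H^s$.

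I expect the main obstacle to be the surjectivity of $D\colon H^{s+1}(X,\Lambda^1)\to H^s(X,\Lambda^0\oplus\Lambda^2_+)$ used in constructing $v$. This is, however, the exact analogue of Proposition~\ref{p1.13.2.16}: $D$ is a Dirac-type operator coupled to a flat bundle (here $S_+$ in place of $S^3_+$) over the Ricci-flat manifold $X$, so the endomorphism term in $DD^*=\nabla^*\nabla+R$ vanishes in the sense of \eqref{e21.13.2.16}, $DD^*$ has strictly positive spectrum under Dirichlet boundary conditions, and the argument of Proposition~\ref{p1.13.2.16} carries over to produce a right inverse of $D$ with the required mapping properties. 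The remaining ingredients (Proposition~\ref{linearisation.vector.field}, the decomposition of $\Omega^2_+\otimes\RR^3$, and Lemma~\ref{Lstar}) are already available, so once the reduction of the second paragraph is in place the rest is essentially bookkeeping.
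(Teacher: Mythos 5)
Your proof is correct and takes essentially the same route as the paper: both hinge on Proposition~\ref{linearisation.vector.field} identifying $L_+$ (on $1$-forms) with $D=\rd^*+\rd_+$ followed by the inclusion $\Omega^0\oplus\Omega^2_+\hookrightarrow\Omega^2_+\otimes\Omega^2_+$, on the surjectivity of $D$ coming from $DD^*=\nabla^*\nabla$, and on the decomposition $\theta=(\theta-Lv)+Lv$ with $(Lv)_+=\theta_+$. The only (harmless) deviation is that you deduce $L^*Lv=0$ directly from coclosedness of the closed ASD triple $\theta-Lv$, where the paper routes this through Lemma~\ref{l1.12.2.16}.
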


\begin{rmk}  The space $\cZ^2_-(X)$ of closed anti-self-dual (ASD) $2$-forms consists
  of elements that are smooth in the interior---any element is
  harmonic---but they can be arbitrarily bad at the boundary.  The notation
  $\cap H^s$ means that we consider those closed ASD $2$-forms which
  are in $H^s(X)$, so having boundary values in $H^{s-1/2}(Y)$.
\end{rmk}

This result depends on two facts.  The first is proved exactly as for
the surjectivity of $\cD$.
\begin{lem}\label{l11.13.2.16}
On the hyperk\"ahler manifold $X$ with boundary $Y$, the operator $D =\rd^* + \rd_+$ is surjective.
\end{lem}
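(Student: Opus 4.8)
The plan is to prove surjectivity by mimicking the proof of Proposition~\ref{p1.13.2.16} essentially verbatim, with the coupling bundle $S^3_+$ replaced by $S_+$. First I would identify $D=\rd^*+\rd_+$ as an operator of Dirac type. This is exactly the first step in the proof of Proposition~\ref{D_is_Dirac}: under the isomorphisms $\Lambda^1\cong S_-\otimes S_+$ and $\RR\oplus\Lambda^2_+\cong S_+\otimes S_+$, the operator $D\colon\Omega^1\to\Omega^0\oplus\Omega^2_+$ is precisely the negative Dirac operator $\cD_1\colon C^\infty(S_-\otimes S_+)\to C^\infty(S_+\otimes S_+)$ coupled to the Levi-Civita connection on $S_+$. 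Consequently $DD^*$ satisfies a Weitzenb\"ock formula $DD^*=\nabla^*\nabla+R$, where $\nabla$ is the metric connection on $S_+\otimes S_+$ and $R$ is a zeroth-order endomorphism built from curvature, exactly as in \eqref{e21.13.2.16}.

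The decisive point is that $R$ vanishes identically, by the same reasoning that gives $R_2=0$ in Proposition~\ref{p1.13.2.16}. The curvature endomorphism in the Weitzenb\"ock formula for $\cD_1\cD_1^*$ is assembled from the scalar curvature of $X$ (the Lichnerowicz term) together with the curvature of the coupling bundle $S_+$. A hyperk\"ahler metric is Ricci-flat, so its scalar curvature vanishes; and by Lemma~\ref{l1.12.8.15} the bundle $\Lambda^2_+\cong S^2_+$ is flat with trivial holonomy, which forces $S_+$ itself to be flat. Equivalently, on a hyperk\"ahler $4$-manifold the only nonvanishing piece of curvature is the anti-self-dual Weyl tensor, a section of $S^4_-$, and this contributes nothing to an endomorphism of $S_+\otimes S_+$. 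Hence $DD^*=\nabla^*\nabla$.

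With this in hand I would run the Green's-operator argument of Proposition~\ref{p1.13.2.16}. Since $DD^*=\nabla^*\nabla$ is a nonnegative second-order operator, its spectrum on sections satisfying Dirichlet boundary conditions is strictly positive: if $\nabla^*\nabla\psi=0$ with $\psi|_Y=0$, then integrating by parts gives $\|\nabla\psi\|^2_{L^2}=0$, so $\psi$ is parallel, and $\psi|_Y=0$ then forces $\psi\equiv0$. The Dirichlet problem for $\nabla^*\nabla$ is a regular elliptic boundary value problem, so there is a Green's operator $G\colon H^{s-1}\to H^{s+1}$ with $DD^*\circ G=1$ and $(Gf)|_Y=0$. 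Given $f$, set $u=D^*Gf$; then $Du=DD^*Gf=f$, and because $Gf\in H^{s+1}$ and $D^*$ is first order we get $u\in H^s$. This establishes surjectivity of $D$ in each Sobolev degree, precisely as for $\cD$.

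The only genuine obstacle is the vanishing of $R$, and this is dictated entirely by the hyperk\"ahler condition (Ricci-flatness together with the flatness of $S_+$) in exactly the manner already recorded for $\cD$ in the discussion following \eqref{e21.13.2.16}. Everything else---the strict positivity under Dirichlet conditions, the Green's operator, the elliptic regularity at the boundary, and the formula $u=D^*Gf$---is a line-by-line repetition of the proof of Proposition~\ref{p1.13.2.16} with $S^3_+$ replaced by $S_+$, so I would simply state it and refer back rather than reproduce the computation.
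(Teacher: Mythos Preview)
Your proposal is correct and follows exactly the same approach as the paper: both reduce to copying the proof of Proposition~\ref{p1.13.2.16} with $S^3_+$ replaced by $S_+$, the sole point being that $DD^*=\nabla^*\nabla$ because the anti-self-dual Weyl curvature (the only nonzero curvature on a hyperk\"ahler $4$-manifold) cannot act nontrivially on $S_+\otimes S_+$. You have simply written out in more detail what the paper compresses into two lines.
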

\begin{proof} The only thing to check in copying the proof of
  Proposition~\ref{p1.13.2.16} is that $DD^* = \nabla^*\nabla$.  In
  fact, 
\begin{equation}\label{e2.30.1.16}
D^*D =\nabla^*_1\nabla_1\quad\mbox{and}\quad DD^* = \nabla_2^*\nabla_2
\end{equation}
for the same reason that $R_1=0$ and $R_2=0$ in  \eqref{e21.13.2.16}:
the anti-self-dual part of the Weyl curvature cannot act as a non-zero endomorphism of
$S_-S_+$ or $S_+S_+$.
\end{proof}

The second observation we need is contained in the following.
\begin{lem}  \label{l1.12.2.16}
Let $L_+v= \rd_+(\iota_v\omega)$ be the self-dual part of
  the operator $L$. Then
\begin{equation}
L^*L = L^*L_+.
\end{equation}
Furthermore, if $\theta = \theta_+ + \theta_-$ is a triple of closed
$2$-forms decomposed into self-dual and anti-self-dual parts which satisfies $L^*\theta=0$, then we also have
\begin{equation*}
L^*\theta_+ = 0 = L^*\theta_-.
\end{equation*}
\end{lem}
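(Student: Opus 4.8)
The plan is to reduce both claims to one elementary fact: in dimension four the Hodge star sends closed $2$-forms to coclosed $2$-forms. Combined with the formula $L^*\eta = (J\cdot\rd^*\eta)^\sharp$ of Lemma~\ref{Lstar}, which shows that $L^*$ factors through $\rd^*$ and hence annihilates every coclosed triple of $2$-forms, this is all the structural input required.

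For the identity $L^*L = L^*L_+$, note that $L_+v - Lv = \rd_+(\iota_v\omega) - \rd(\iota_v\omega) = \ast\,\rd(\iota_v\omega)$ (with the normalisation $\rd_+ = (1+\ast)\rd$ used here). Since $\rd(\iota_v\omega)$ is exact, hence closed, the $2$-form $\ast\,\rd(\iota_v\omega)$ is coclosed: $\rd^*\!\big(\ast\,\rd(\iota_v\omega)\big) = -\ast\,\rd\ast\!\big(\ast\,\rd(\iota_v\omega)\big) = -\ast\,\rd\big(\rd(\iota_v\omega)\big) = 0$, using $\ast\ast = \id$ on $2$-forms and $\rd^2=0$. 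Hence $L^*(L_+v - Lv) = \big(J\cdot\rd^*(\ast\,\rd(\iota_v\omega))\big)^\sharp = 0$ by Lemma~\ref{Lstar}, which gives $L^*L = L^*L_+$. For the second claim, put $\theta_\pm = \tfrac12(\theta\pm\ast\theta)$; then $\rd^*(\ast\theta) = -\ast\,\rd\ast(\ast\theta) = -\ast\,\rd\theta = 0$ because $\theta$ is closed, so $\rd^*\theta_+ = \rd^*\theta_- = \tfrac12\rd^*\theta$, and therefore $L^*\theta_\pm = \tfrac12 L^*\theta$ again by Lemma~\ref{Lstar}. In particular, if $L^*\theta=0$ then $L^*\theta_+ = 0 = L^*\theta_-$.

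I do not expect any genuine obstacle: each part is a two-line computation once Lemma~\ref{Lstar} is in hand. The only points needing care are bookkeeping --- the normalisation of $\rd_+$, so that $L_+v - Lv$ is precisely $\ast\,\rd(\iota_v\omega)$ --- and the remark that every identity used is pointwise (or distributional), so the boundary $\partial X$ contributes nothing even though $X$ is a manifold with boundary.
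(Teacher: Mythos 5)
Your proposal is correct and takes essentially the same route as the paper: both parts rest on the formula $L^*\eta=(J\cdot\rd^*\eta)^\sharp$ of Lemma~\ref{Lstar} together with elementary Hodge-star identities on $2$-forms in dimension four, and you rightly flag that the exact identity $L^*L=L^*L_+$ uses the paper's normalisation $\rd_+=(1+*)\rd$. Your treatment of the second claim is only a mild repackaging of the paper's argument (the paper combines $\rd\theta=0$ and $L^*\theta=0$ as a linear system in $J\cdot *\rd\,\theta_\pm$, whereas you observe $\rd^*(*\theta)=0$ and hence $\rd^*\theta_\pm=\tfrac12\rd^*\theta$), so the mathematical content is the same.
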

\begin{proof}
Since $L^*\theta = J\cdot \rd^*\theta =-  J \cdot *\rd * \theta$, 
$$
L^*L_+v =  - J\cdot *\rd *(1 + *)\rd (\iota_v\omega) = - J\cdot
*\rd*\rd(\iota_v\omega)
= L^*L v.
$$
For the second part, 
since $\theta$ is closed, we trivially have
\begin{equation}\label{e1.8.2.16}
\rd \theta_+ + \rd \theta_- = 0\quad\mbox{ and so }\quad
J\cdot *\rd \theta_+ +  J \cdot *\rd \theta_- = 0.
\end{equation}
Writing $L^* = -J\cdot (*\rd *)$, we see that $L^*(\theta)=0$ becomes
\begin{equation}\label{e2.8.2.16}
0 = J\cdot *\rd(*\theta_+ + *\theta_-) = J\cdot*\rd \theta_+ - 
J\cdot *\rd \theta_-.
\end{equation}
Combining \eqref{e1.8.2.16} and \eqref{e2.8.2.16}, we obtain
\begin{equation}
J\cdot *\rd \theta_+ = 0 = J\cdot *\rd \theta_-
\end{equation}
from which the result follows because $J\cdot *\rd$ equals $-L^*$ on triples
of self-dual forms and equals $L^*$ on triples of anti-self-dual forms.
\end{proof}

\subsection{Proof of Claim}

In one direction, it is clear that the right-hand side of \eqref{claim_eqn} is contained in the left-hand side.
For the converse, suppose that for any given $\theta \in T_{\omega}\cH^s\cap \ker(L^*)$, we
can solve $L_+ v = \theta_+$ 
for some $v \in C^\infty(X,TX)$.  Write
$$
\theta = (\theta - Lv) + Lv.
$$
Then by construction, $\theta - Lv$ lies in $\Omega^2_-(X)\otimes
\RR^3$.  It is also closed, because $\theta$ is closed by hypothesis
and $L v$ is exact.  So we just have to show
\begin{equation*}
L^*\theta = 0 \Rightarrow L^*L v = 0.
\end{equation*}
However, the second part of Lemma~\ref{l1.12.2.16} applies to give $L^*\theta_+
=0$ and $L^*\theta_-=0$.  Hence if $L_+v = \theta_+$, we have
$$
L^*Lv=L^*L_+ (v) = L^*(\theta_+) = 0,
$$
using the first part of Lemma~\ref{l1.12.2.16} as well.

It remains only to discuss the solvability of $L_+(v) = \theta_+$.

Recall from Proposition~\ref{linearisation.vector.field} that $L_+(v) = \rd_+ J\alpha$, if
$\alpha$ is the $1$-form dual to $v$, and that this map is the
composite of $D = \rd^* + \rd_+$ with the algebraic inclusion 
\begin{equation*}
\Omega^0 \oplus \Omega^2_+ \hookrightarrow \Omega^2_+\otimes
\Omega^2_+.
\end{equation*}
The definition \eqref{e21.12.8.15} of $T_{\omega}\cH^s$ includes the
condition $s_0^2(\omega,\theta)=0$ which says precisely that $\theta_+$
lies in the image of this inclusion.  Since $D$ is
surjective (Lemma~\ref{l11.13.2.16}), it follows that the equation
$L_+(v) = \theta_+$ can be solved for any $\theta \in T_{\omega}\cH^s$.
The proof of the claim is complete, as is the proof of Theorem~\ref{smooth_moduli_space}.

\section{The moduli space \texorpdfstring{$\cM_+$}{M+}}
\label{sect5}

In this section, we make the following assumption.
\begin{assume}\label{a1.31.1.16}
The mean curvature $H$ of $Y$ is everywhere non-negative, and
positive at least one point.  (A definition of $H$ appears in \eqref{e101.8.2.16}.)
\end{assume}

For the avoidance of doubt, our convention is that the mean curvature of
the boundary of the ball in $\RR^4$ is positive.

So far we have only used the surjectivity of Dirac operators on
manifolds with boundary.  We now bring in APS type boundary conditions
for $D$ giving surjectivity of this operator.  We shall refer to the
literature for most of the proofs; we have found 
\cite{BaerBallmann} and \cite{BartnikChrusciel} to be
good references for this material.

\subsection{Geometry near $\p X$}

We first introduce some notation. Let $\rho \colon X \to \RR$ measure the
distance of a point to the boundary $Y$. This function is smooth near
$Y$. Using geodesics which are orthogonal to $Y$ we can identify a
neighbourhood $U$ of $Y$ with the product $[0, \epsilon) \times Y$, with
the function $\rho$ corresponding to projection onto the first
factor.  If $\rho\in [0,\ve)$, $y\in Y$, use parallel transport along
orthogonal geodesics to identify $T_{\rho,y}X$ with $\RR \oplus T_yY$,
and similarly for $1$-forms etc.  The $\RR$ summand here corresponds
to the coefficient of $\nu$, the outward unit vector field tangent to the
orthogonal geodesics.   A consequence of this identification
is that the normal component $\nabla_\nu$ of the metric connection
acts simply as $-\p/\p \rho$ on the $\RR$ and $TY$ components of
vector fields.  We can write any $1$-form $a$ in the form
\begin{equation}\label{e410a}
a = f \rd \rho + b
\end{equation}
where $f$ is a path of functions on $Y$ and $b$ is a path of 1-forms
on $Y$, and 
\begin{equation}
\nabla_\nu a = -\p_\rho f\,\rd \rho - \p_\rho b.
\end{equation}
Similarly the metric takes the form $g = \rd \rho^2 + h(\rho)$ where $h(\rho)$
is a path of Riemannian metrics on $Y$.  Recall that the metric volume
element $\rd \mu_Y$ of the path of metrics $h$ on $TY$ is not closed:
instead we have
\begin{equation}\label{e101.8.2.16}
\rd[\rd \mu_Y] = 
H \rd \mu_X = 
-H \rd \rho\wedge\rd\mu_Y,
\end{equation}
where $H$ is the mean curvature of the family of level sets of
$\rho$.  (We think of $H$ as a path of functions on $Y$.)

Similarly, any self-dual
$2$-form $\theta$ has the form
\begin{equation}\label{e510a}
\theta  = -\rd \rho\wedge c + *_Yc
\end{equation}
in $U$, where $c \in T^*Y$ and $*_Y : T^*Y \to \Lambda^2T^*Y$ is the boundary $*$
operator.  Then mapping $\theta$ to $c = \iota_{\nu}\theta$
identifies $\Lambda^2_+X|U$ with $T^*Y|U$.

\begin{lem}\label{calc.boundary.op}
In the collar neighbourhood $U$ of $Y$, we have that $D=\rd^*+\rd_+$ is given by
\begin{equation}\label{e91.31.1.16}
D:\begin{bmatrix} f \\ b\end{bmatrix}
\longmapsto
\begin{bmatrix}
\nu + H & 0 \\ 0 &  \nu
\end{bmatrix}
\begin{bmatrix}
f \\ b
\end{bmatrix}  + D_Y
\begin{bmatrix}
f \\ b
\end{bmatrix}
\end{equation}
where
\begin{equation}\label{e92.31.1.16}
D_Y = \begin{bmatrix} 0 & \rd^*_Y \\ \rd_Y & *_Y\rd_Y \end{bmatrix}.
\end{equation}
\end{lem}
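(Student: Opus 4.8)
The plan is to compute $D = \rd^* + \rd_+$ explicitly in the product coordinates $U \cong [0,\epsilon)\times Y$ furnished by the normal exponential map, where $g = \rd\rho^2 + h(\rho)$. First I would record the relevant formulas for the exterior derivative and codifferential of a $1$-form $a = f\,\rd\rho + b$ adapted to this splitting. Writing $\rd$ on $U$ as $\rd = \rd\rho\wedge\p_\rho + \rd_Y$ (where $\rd_Y$ is the exterior derivative along the fibres, acting on the $\rho$-dependent family), one gets $\rd a = \rd\rho\wedge(\p_\rho b - \rd_Y f) + \rd_Y b$, so the self-dual projection of $\rd a$ is obtained by symmetrizing against the collar description \eqref{e510a} of self-dual forms: $\rd_+ a$ corresponds, under $\Lambda^2_+ \cong T^*Y$ via $\theta\mapsto\iota_\nu\theta$, to $-\tfrac12\big((\p_\rho b - \rd_Y f) - *_Y \rd_Y b\big)$ up to the sign/normalization fixed by the paper's conventions. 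For $\rd^* a$ one uses $\rd^* = -*\rd*$, together with $*(\rd\rho\wedge \beta) = *_Y\beta$ and the fact \eqref{e101.8.2.16} that $\rd[\rd\mu_Y] = -H\,\rd\rho\wedge\rd\mu_Y$, which is exactly where the mean curvature term enters: differentiating $*a = f\,\rd\mu_Y + (\text{terms in }\rd\rho\wedge *_Y b)$ produces an extra $Hf$ from the non-closedness of the volume element, giving $\rd^* a = -\p_\rho f - Hf + \rd^*_Y b$.

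Second, I would assemble these into the matrix form. Grouping the output $(\rd^* a, \rd_+ a)\in \Omega^0\oplus\Omega^2_+$ as $(\rd^* a, \iota_\nu(\rd_+ a))\in \Omega^0(Y)\oplus\Omega^1(Y)$ along the collar, the $\p_\rho$-terms collect into $\diag(\nu + H,\ \nu)\begin{bmatrix} f\\ b\end{bmatrix}$ — recalling that $\nabla_\nu = -\p_\rho$ on these components, so $\nu$ acts as $-\p_\rho$ — while the tangential terms collect into precisely $\begin{bmatrix} 0 & \rd^*_Y\\ \rd_Y & *_Y\rd_Y\end{bmatrix}\begin{bmatrix} f\\ b\end{bmatrix} = D_Y\begin{bmatrix} f\\ b\end{bmatrix}$. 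The $\rd_Y$ entry in the second row is the $-\rd_Y f$ piece of $\rd_+ a$, the $*_Y\rd_Y$ entry is the $*_Y\rd_Y b$ piece, and the $\rd^*_Y$ entry in the first row is the tangential part of $\rd^* a$. One must double-check that the cross-terms (e.g.\ the $f$-contribution to $\rd_+$ and the $b$-contribution to $\rd^*$) land in the off-diagonal slots of $D_Y$ and not in the normal part; this is a bookkeeping check on signs and on the identification $\Lambda^2_+ X|_U \cong T^*Y|_U$.

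The main obstacle is purely the careful sign/normalization accounting: tracking the conventions for $*$ in four versus three dimensions, the factor in the identification $\Lambda^2_+ \cong T^*Y$ via $c = \iota_\nu\theta$ together with $\theta = -\rd\rho\wedge c + *_Y c$, and ensuring that the $\rho$-dependence of $h(\rho)$ (hence of $*_Y$ and $\rd_Y$) does not contribute spurious zeroth-order terms beyond the mean-curvature term already isolated. Since the level sets of $\rho$ have second fundamental form whose trace is $H$ and the paper only needs the zeroth-order piece along $Y$ captured correctly, the $\rho$-derivatives of $*_Y$ contribute lower-order-in-$\rho$ corrections that either vanish on $Y$ or are subsumed; I would verify that the only surviving zeroth-order endomorphism is the $+H$ in the top-left slot, exactly as coming from \eqref{e101.8.2.16}. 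The computation is otherwise routine, and I would present it compactly, emphasizing the volume-element identity as the conceptual source of the mean-curvature term.
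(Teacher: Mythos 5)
Your overall route is the same as the paper's: compute $\rd^*a$ and $\rd_+a$ directly for $a=f\,\rd\rho+b$ in the collar, with \eqref{e101.8.2.16} supplying the mean-curvature term and \eqref{e510a} giving the identification $\Lambda^2_+\cong T^*Y$ via $\iota_\nu$. However, your formula for $\rd^*a$ has the wrong sign on precisely the term the lemma is about. You assert $\rd^*a=-\p_\rho f-Hf+\rd^*_Yb$, whereas the correct formula is $\rd^*a=\nu(f)+Hf+\rd^*_Yb=-\p_\rho f+Hf+\rd^*_Yb$. Indeed, since $\p_\rho(\rd\mu_Y)=-H\,\rd\mu_Y$ by \eqref{e101.8.2.16}, one gets $\rd\,{*}(f\,\rd\rho)=\pm(\p_\rho f-Hf)\,\rd\rho\wedge\rd\mu_Y$ and then $-{*}$ of this is $-\p_\rho f+Hf$ under either consistent orientation convention, so the sign is not a convention issue. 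A quick independent check: on the unit ball in $\RR^4$ take $a=F(r)\,\rd r$, so $\rd^*a=-F'-\tfrac{3}{r}F$; with $\rho=1-r$, $f=-F$, this reads $-\p_\rho f+\tfrac{3}{r}f=\nu(f)+Hf$. Note also that your own final assembly claims the matrix $\diag(\nu+H,\nu)$, which (with $\nu=-\p_\rho$, as you state) contradicts your intermediate formula, which would give $\diag(\nu-H,\nu)$; so as written the computation does not establish \eqref{e91.31.1.16}. The sign matters downstream: the $+H$ in the $(1,1)$ slot is exactly what makes the boundary terms in the Green's formulae \eqref{e5.11.1.16}--\eqref{e2.11.1.16} nonnegative under Assumption~\ref{a1.31.1.16}, so an $H$ with the opposite sign would break the injectivity/surjectivity arguments for $D$.

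Two smaller points. First, pin down the normalization of $\rd_+$: if you use $\rd_+=\tfrac12(1+*)\rd$, your second row carries a factor $\tfrac12$ relative to the first and you do not obtain the clean matrix \eqref{e91.31.1.16}; the paper's convention (used elsewhere, e.g.\ $\rd_+(\rd f)=(\rd+*\rd)(\rd f)$) is $\rd_+a=(1+*)\rd a$, which gives $\iota_\nu(\rd_+a)=\nu(b)+\rd_Yf+*_Y\rd_Yb$ exactly, matching the second row of $D$. Second, your remark that the $\rho$-dependence of $*_Y$ and $\rd_Y$ contributes no further zeroth-order endomorphism beyond the $H$-term is correct and is implicitly how the paper argues, since the only place a derivative of the induced data enters is through $\rd[\rd\mu_Y]$.
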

\begin{proof}
We start by computing $\rd^*a$ for $a$ given in \eqref{e410a}:
\begin{align*}
\rd^*a & =  -*\rd*(f\,\rd\rho + b) \\
  & =  -*\rd(-f \rd \mu_Y - \rd\rho\wedge *_Y b) \\
 & =  -*(\nu(f)\rd\rho\wedge \rd \mu_Y -Hf \rd \mu_X + \rd\rho\wedge \rd_Y*_Y b)
 \\
 & = \nu(f) +Hf  + \rd^*_Y b.
\end{align*}
Similarly,
\begin{equation*}
\rd a = \rd ( f \rd\rho + b) =
-\rd\rho\wedge \rd_Y f -\rd\rho \wedge \nu(b) + \rd_Y b.
\end{equation*}
Hence
\begin{equation*}
(1 +*)\rd a =
-\rd\rho\wedge( \nu(b) + \rd_Y f + *_Y\rd_Y b) +*_Y(\nu(b) + \rd_Y f + *_Y\rd_Y b)
\end{equation*}
which gets identified with 
\begin{equation*}
\nu(b) + \rd_Y f + *_Y\rd_Y b.
\end{equation*}
These computations complete the proof.
\end{proof}

We now turn to the formal adjoint of $D=\rd^*+\rd_+$.
\begin{prop}\label{p31.31.1.16}
The formal adjoint $D^*$ of $D$, with the same identifications, is
given by
\begin{equation*}
D^*
\begin{bmatrix}
f \\ b
\end{bmatrix} \longmapsto
\begin{bmatrix}
-\nu  & 0 \\ 0 &  -\nu - H 
\end{bmatrix}
\begin{bmatrix}
f \\ b
\end{bmatrix}  + D_Y
\begin{bmatrix}
f \\ b
\end{bmatrix}.
\end{equation*}
Moreover, 
\begin{equation}\label{e21.31.1.16}
(D u,v) - (u,D^* v) = \int_{Y} \langle u,v\rangle\rd \mu_Y.
\end{equation}
\end{prop}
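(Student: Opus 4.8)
The statement to prove is Proposition~\ref{p31.31.1.16}: the formula for $D^*$ in the collar $U$, together with the Green's formula \eqref{e21.31.1.16}. The plan is to deduce both facts from the already-established local formula for $D$ in Lemma~\ref{calc.boundary.op}, by a direct integration-by-parts computation rather than re-deriving $D^*$ from scratch. First I would fix the $L^2$-pairings: on $X$ a $1$-form $a = f\,\rd\rho + b$ pairs with a pair $(g,e)$ (a function and a path of $1$-forms on $Y$) via $\int_U (fg + \langle b,e\rangle_{h(\rho)})\,\rd\mu_X$, with $\rd\mu_X = \rd\rho\wedge\rd\mu_Y$; similarly on the target $\Omega^0\oplus\Omega^2_+$, using $c = \iota_\nu\theta$ to identify $\Lambda^2_+$ with $T^*Y$ as in \eqref{e510a}. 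The key subtlety is that these identifications are $\rho$-dependent (the metric is $g = \rd\rho^2 + h(\rho)$, and $h$ varies), so differentiating $\langle\cdot,\cdot\rangle$ in $\rho$ produces exactly the mean-curvature terms via \eqref{e101.8.2.16}.

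\textbf{Key steps.} \emph{Step 1.} Write $D = \partial_\rho$-part $+\ D_Y$ as in \eqref{e91.31.1.16}, where the normal part is $\mathrm{diag}(\nu+H,\nu)$ and $\nu = -\partial_\rho$ acts on the trivialized components. Since $D_Y$ in \eqref{e92.31.1.16} is a formally self-adjoint operator \emph{on $Y$ for each fixed $\rho$} (the matrix $\begin{bmatrix} 0 & \rd^*_Y\\ \rd_Y & *_Y\rd_Y\end{bmatrix}$ is symmetric, as $\rd_Y$ and $\rd^*_Y$ are mutually adjoint on $Y$ and $*_Y\rd_Y$ is symmetric on $1$-forms on a $3$-manifold), integrating the $D_Y$-part over $Y$ at each $\rho$ and then over $\rho$ contributes no boundary term and transposes to $D_Y$ again. \emph{Step 2.} Handle the normal part: for the function component, $\int_U (\nu f + Hf)\,g\,\rd\mu_X = \int_U(-\partial_\rho f + Hf)g\,\rd\rho\wedge\rd\mu_Y$; integrate by parts in $\rho$, moving $-\partial_\rho$ onto $g\,\rd\mu_Y$. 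Using \eqref{e101.8.2.16}, $\partial_\rho(\rd\mu_Y)$ contributes $-H\,\rd\mu_Y$ (since $\rd\rho\wedge\partial_\rho\rd\mu_Y = \rd[\rd\mu_Y]|_{\text{normal}} = -H\rd\rho\wedge\rd\mu_Y$), and the $\pm Hf g$ terms cancel, leaving $-\nu g = +\partial_\rho g$ acting on the function slot of $D^*$, i.e.\ the $(1,1)$-entry $-\nu$. The boundary term at $\rho = 0$ is $-\int_Y fg\,\rd\mu_Y$; with the sign convention ($\nu$ \emph{outward}, $\rho$ increasing inward so $\rho = 0$ is $Y$) this comes out as $+\int_Y fg\,\rd\mu_Y$, matching \eqref{e21.31.1.16}. \emph{Step 3.} For the $1$-form component, the same computation with $\nu$ (no $H$) on the $X$-side, but now the measure-derivative $-H$ from $\partial_\rho\rd\mu_Y$ is \emph{not} cancelled by a curvature-of-bundle term, so it survives and gets absorbed into the $D^*$ operator as the $-H$ in the $(2,2)$-entry $-\nu - H$. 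Again collect the $\rho = 0$ boundary term $\int_Y\langle b,e\rangle_{h(0)}\rd\mu_Y$. Summing the two components gives exactly $\int_Y\langle u,v\rangle\,\rd\mu_Y$ as the total boundary defect, which is \eqref{e21.31.1.16}, and reading off the interior operator gives the claimed matrix for $D^*$.

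\textbf{Main obstacle.} The delicate point is bookkeeping of signs and of the $\rho$-dependence of the metric identifications — in particular making sure the $H$ lands in the $(1,1)$-slot of $D$ but the $(2,2)$-slot of $D^*$. The conceptual reason is that $\rd^*$ ``sees'' $H$ through the non-closedness of $\rd\mu_Y$ on functions, while $\rd_+$ on the $1$-form part is (near $Y$) essentially $\nu$ with no $H$; upon taking adjoints the roles swap because the $\partial_\rho\rd\mu_Y = -H\rd\mu_Y$ term migrates from the measure to the operator precisely on the slot where it was \emph{not} already present. I would double-check this against a model case — the flat half-space $\RR_{\ge 0}\times\RR^3$, where $H = 0$, so that both $D$ and $D^*$ reduce to $\pm\partial_\rho + D_Y$ and the boundary term is the standard $\int_Y\langle u,v\rangle$; and then against, say, a round sphere of radius $R$ in $\RR^4$ to fix the sign of $H$ relative to the ``positive mean curvature'' convention stated just before Lemma~\ref{calc.boundary.op}. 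Everything else is routine.
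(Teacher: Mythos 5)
Your proposal is correct and takes essentially the same route as the paper: the paper's (one-line) proof is precisely the observation, which you spell out by integrating by parts in $\rho$, that \eqref{e101.8.2.16} (i.e.\ $\partial_\rho(\rd\mu_Y)=-H\,\rd\mu_Y$ under the collar trivialisation) makes $\nu+H$ and $-\nu$ — and hence $\nu$ and $-\nu-H$ — formal adjoints of each other, while $D_Y$ is formally self-adjoint on the closed slices, the surviving $\rho=0$ term giving \eqref{e21.31.1.16}. Your extra bookkeeping (the sign check at $\rho=0$ and the flat half-space/round-sphere sanity checks) is simply an expansion of the argument the paper leaves implicit.
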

\begin{proof}
This follows from our formula \eqref{e101.8.2.16} which shows that $\nu + H$ and
$-\nu$ are formal adjoints to each other.  The second equation also follows from
this formula.
\end{proof}

\subsection{Green's formulae}

By combining \eqref{e21.31.1.16} with the formulae \eqref{e2.30.1.16},
we obtain the following useful result, which will be used to obtain
sharp statements about the injectivity and surjectivity of $D$ with
suitable boundary conditions.

\begin{prop}\label{p1.31.1.16}
Let $D = \rd^* + \rd_+$ on the hyperk\"ahler manifold $X$ with
boundary $Y$. Then   for $u\in \Omega^1(X)$ we have
\begin{equation}\label{e5.11.1.16}
\|D u\|^2 = \|\nabla u\|^2 + \int_{Y} \left(H |\iota_\nu u|^2 +
(u,D_Y u)_Y\right)\rd \mu_Y.
\end{equation}

Furthermore,
\begin{equation}\label{e2.11.1.16}
\|D^* v\|^2 = \|\nabla v\|^2 + \int_{Y}\left( H|\iota_\nu c|^2 -
  (v,D_Y v)_Y\right)
\rd\mu_Y
\end{equation}
for $v \in \Omega^0(X)\oplus \Omega^2_+(X)$, $c$ being the
component of $v$ in $\Omega^2_+(X)$.
\end{prop}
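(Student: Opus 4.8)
The plan is to derive both identities as integrated Weitzenb\"ock formulae, starting from the decompositions $D^*D = \nabla_1^*\nabla_1$ and $DD^* = \nabla_2^*\nabla_2$ recorded in \eqref{e2.30.1.16}, and carefully tracking the boundary terms that arise from integration by parts on the manifold with boundary. First I would write, for $u \in \Omega^1(X)$,
\[
\|Du\|^2 = \langle Du, Du\rangle_{L^2} = \langle D^*Du, u\rangle_{L^2} + (\text{boundary term}),
\]
where the boundary term comes from the Green's formula relating $D$ and $D^*$. Precisely, applying \eqref{e21.31.1.16} with $v$ replaced by $Du$ gives
\[
\langle Du, Du\rangle_{L^2} - \langle u, D^*Du\rangle_{L^2} = \int_Y \langle u, Du\rangle\,\rd\mu_Y .
\]
Using $D^*D = \nabla_1^*\nabla_1$ and integrating by parts once more for the rough Laplacian,
\[
\langle u, \nabla_1^*\nabla_1 u\rangle_{L^2} = \|\nabla u\|^2 - \int_Y \langle \nabla_\nu u, u\rangle\,\rd\mu_Y ,
\]
so that
\[
\|Du\|^2 = \|\nabla u\|^2 + \int_Y \big(\langle u, Du\rangle - \langle \nabla_\nu u, u\rangle\big)\,\rd\mu_Y .
\]
It then remains to simplify the boundary integrand. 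Here I would use the collar-model formula \eqref{e91.31.1.16} for $D$, the identification $\nabla_\nu = -\partial_\rho$ on the $\RR \oplus TY$ components from \S5.1, and write $u = f\,\rd\rho + b$. The $\partial_\rho$ terms in $Du$ (namely $\nu f$ in the first slot and $\nu b$ in the second) cancel exactly against $-\nabla_\nu u = (\partial_\rho f, \partial_\rho b)$ up to the extra $Hf$ term and the tangential operator $D_Y$. After this cancellation the boundary integrand collapses to $H|f|^2 + \langle u, D_Y u\rangle_Y = H|\iota_\nu u|^2 + (u, D_Y u)_Y$, which is \eqref{e5.11.1.16}.

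For the second identity the computation is the mirror image: for $v \in \Omega^0(X)\oplus\Omega^2_+(X)$ I would start from $\|D^*v\|^2 = \langle DD^*v, v\rangle_{L^2} + (\text{boundary})$, use $DD^* = \nabla_2^*\nabla_2$ from \eqref{e2.30.1.16}, and apply \eqref{e21.31.1.16} in the form $\langle Dw, v\rangle - \langle w, D^*v\rangle = \int_Y \langle w, v\rangle\,\rd\mu_Y$ with $w = D^*v$. This gives
\[
\|D^*v\|^2 = \|\nabla v\|^2 + \int_Y \big(\langle D^*v, v\rangle - \langle \nabla_\nu v, v\rangle\big)\,\rd\mu_Y ,
\]
and now I would substitute the collar formula for $D^*$ from Proposition~\ref{p31.31.1.16}. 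The key sign difference is that in $D^*$ the mean-curvature term $-H$ sits in the \emph{second} slot (the $\Omega^2_+$ component, i.e.\ $b = *_Y c$ with $c = \iota_\nu v$) rather than the first, and the normal derivatives appear as $-\nu$ in the first slot and $-\nu - H$ in the second. Pairing against $v$ and cancelling the $\partial_\rho$ contributions against $-\nabla_\nu v$ as before, the surviving boundary integrand is $H|c|^2 - \langle v, D_Y v\rangle_Y$ — the sign flip on the $D_Y$ term coming from the fact that $D^*$ and $D$ have the same tangential part $D_Y$ but opposite normal parts, so that the Green's formula manipulation transfers the sign. This yields \eqref{e2.11.1.16}.

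The main obstacle, and the only place real care is needed, is the bookkeeping in the boundary integrand: one must verify that the normal-derivative pieces of $D$ (resp.\ $D^*$) cancel precisely against the normal-derivative boundary term $-\int_Y\langle\nabla_\nu u, u\rangle$ coming from integrating $\nabla_1^*\nabla_1$ (resp.\ $\nabla_2^*\nabla_2$) by parts, leaving only the algebraic $H$-term and the tangential operator $D_Y$, and that the self-dual $2$-form $\theta$ is correctly identified with $c = \iota_\nu\theta \in T^*Y$ via \eqref{e510a} so that $|\iota_\nu c|^2$ in \eqref{e2.11.1.16} is the genuine pointwise norm of the $\Omega^2_+$-component of $v$ on $Y$. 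Everything else is a routine application of \eqref{e2.30.1.16}, \eqref{e21.31.1.16}, and the collar formulae of \S5.1.
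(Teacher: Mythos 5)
Your overall plan is exactly the paper's argument: apply the Green's formula \eqref{e21.31.1.16} with one argument replaced by $Du$ (respectively $D^*v$), use $D^*D=\nabla^*\nabla$ and $DD^*=\nabla^*\nabla$ from \eqref{e2.30.1.16}, integrate the rough Laplacian by parts, and then collapse the boundary integrand using the collar formulae of Lemma~\ref{calc.boundary.op} and Proposition~\ref{p31.31.1.16}. Your derivation of \eqref{e5.11.1.16} is correct: the combination $\langle u, Du\rangle-\langle \nabla_\nu u,u\rangle$ does kill the normal derivatives and leaves $H|\iota_\nu u|^2+(u,D_Yu)_Y$.

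There is, however, a sign slip in your treatment of \eqref{e2.11.1.16} which, taken literally, breaks the step you rely on. Putting $u=D^*v$ into \eqref{e21.31.1.16} gives $\|D^*v\|^2=\langle DD^*v,v\rangle_{L^2}-\int_Y\langle D^*v,v\rangle\,\rd\mu_Y$, so after using $DD^*=\nabla^*\nabla$ and integrating by parts the boundary integrand is $-\bigl(\langle\nabla_\nu v,v\rangle+\langle D^*v,v\rangle\bigr)$, not $\langle D^*v,v\rangle-\langle\nabla_\nu v,v\rangle$ as in your display. With your sign the normal parts of $D^*$ (which are $-\nu$ and $-\nu-H$) do not cancel against $-\nabla_\nu v$ but double, and the $H$ and $D_Y$ terms then come out with the wrong signs, contradicting the integrand $H|\iota_\nu c|^2-(v,D_Yv)_Y$ that you subsequently assert. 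With the correct sign one has, in collar components, $\nabla_\nu v+D^*v=(0,-Hc)+D_Yv$, and \eqref{e2.11.1.16} follows immediately; in particular the sign reversal on the $D_Y$ term comes from the boundary term of \eqref{e21.31.1.16} entering with the opposite sign when the squared quantity sits in the second slot of the pairing (together with the $-\nu$ normal part of $D^*$), rather than being ``transferred'' by the Green's formula manipulation as you suggest. This is a local and easily repaired error; once fixed, your proof coincides with the paper's, which carries out the $D^*$ case explicitly and notes that the $D$ case is identical.
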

\begin{proof}  For \eqref{e2.11.1.16}, put 
$u = D^*v$ into \eqref{e21.31.1.16}, to get
\begin{equation*} 
(v,DD^* v) - \|D^* v\|^2  = \int_{Y}(v,D^* v)\,\rd\mu_Y.
\end{equation*}
We have an analogous formula for $\nabla^*\nabla$:
\begin{equation*} 
(v,\nabla^*\nabla v) - \|\nabla v\|^2  = -\int_{Y}(v,\nabla_\nu v)\,\rd\mu_Y.
\end{equation*}
Subtracting and recalling that $DD^*=\nabla^*\nabla$ gives
\begin{equation*} 
\|D^*v\|^2 - \|\nabla v\|^2 =-\int_{Y}( v, \nu(v) + D^* v)\,\rd\mu_Y.
\end{equation*}
Now substitute the formula for $D^*$ from
Proposition~\ref{p31.31.1.16} into the right-hand side to obtain
\eqref{e2.11.1.16}.  The formula \eqref{e5.11.1.16} follows in
precisely the same way.
\end{proof}

Another useful result analogous to those in Proposition \ref{p1.31.1.16} relates
the $L^2$-norms of $Du$ and $\wt{D}u$, where 
\begin{equation}\label{e41.31.1.16}
\wt{D} = \rd^* + \rd_-.
\end{equation}

\begin{prop}
Let the notation be as above.  Then, for $u\in \Omega^1(X)$ and $b = \iota^*u$, we have
\begin{equation}\label{e42.31.1.16}
\|\wt{D}u\|^2 - \|D u \|^2 = - 2\int_{Y} (b,*_Y\rd_Y b)\,\rd\mu_Y.
\end{equation}
\end{prop}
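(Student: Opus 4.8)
The plan is to compute both $\|Du\|^2$ and $\|\wt Du\|^2$ via the same integration-by-parts trick that produced \eqref{e5.11.1.16}, and then subtract. First I would observe that $\wt D = \rd^* + \rd_-$ differs from $D = \rd^* + \rd_+$ only in the sign of the "anti-self-dual versus self-dual" projection of $\rd u$; more precisely, $\rd_\pm = \frac12(\rd \pm *\rd)$, so for any $1$-form $u$,
\[
\|\rd_+ u\|^2 - \|\rd_- u\|^2 = \frac12\left(\|\rd u + *\rd u\|^2 - \|\rd u - *\rd u\|^2\right)\cdot\tfrac12 = \langle \rd u, *\rd u\rangle_{L^2} = \int_X \rd u \wedge \rd u.
\]
Since $\|Du\|^2 = \|\rd^* u\|^2 + \|\rd_+ u\|^2$ and $\|\wt D u\|^2 = \|\rd^* u\|^2 + \|\rd_- u\|^2$, the $\rd^* u$ terms cancel and we get
\[
\|\wt D u\|^2 - \|D u\|^2 = -\int_X \rd u \wedge \rd u.
\]

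The main step is then to identify $\int_X \rd u \wedge \rd u$ with a boundary integral. Since $\rd u \wedge \rd u = \rd(u \wedge \rd u)$ is exact, Stokes' theorem gives $\int_X \rd u \wedge \rd u = \int_Y \iota^*(u \wedge \rd u) = \int_Y \iota^* u \wedge \iota^*(\rd u) = \int_Y b \wedge \rd_Y b$, where $b = \iota^* u$ and I have used that pullback commutes with $\rd$. Finally, on the $3$-manifold $Y$ with its induced metric, for a $1$-form $b$ and the $2$-form $\rd_Y b$ one has $b \wedge \rd_Y b = \langle b, *_Y \rd_Y b\rangle\, \rd\mu_Y = (b, *_Y\rd_Y b)\,\rd\mu_Y$ by definition of the Hodge star on $Y$ (a $2$-form $\eta$ satisfies $\beta \wedge \eta = \langle *_Y\beta, \eta\rangle \rd\mu_Y = \langle \beta, *_Y\eta\rangle\rd\mu_Y$). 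Putting these together yields
\[
\|\wt D u\|^2 - \|D u\|^2 = -\int_Y (b, *_Y \rd_Y b)\,\rd\mu_Y,
\]
which is off by a factor of $2$ from the claimed \eqref{e42.31.1.16}; I would reconcile this by being careful about the normalization convention for $\rd_\pm$ and the pointwise inner product on $2$-forms used in the paper (the factor of $2$ in \eqref{omegas_orthogonal} and the appearance of $\frac12(\omega_i,\rd_+\tau_j)$ in the Remark after Proposition~\ref{linearisation.vector.field} suggest the authors' conventions supply exactly this factor of $2$), so that the constant comes out as stated.

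The only genuine obstacle is bookkeeping: tracking the factors of $\frac12$ in the definitions $\rd_\pm = \frac12(1\pm *)\rd$ versus an unnormalized convention, and in the identification $\Lambda^2_+ \cong \RR^3$ relative to the pointwise metric on $2$-forms. None of this is conceptually hard — the identity $\|\wt D u\|^2 - \|D u\|^2 = -\langle \rd u, *\rd u\rangle = -\int_X \rd u\wedge\rd u$ is forced, and Stokes plus the definition of $*_Y$ does the rest — but I would present the computation carefully enough that the reader can check the constant against the conventions fixed earlier in the paper. Alternatively, one could avoid recomputing from scratch: subtract \eqref{e5.11.1.16} from its analogue for $\wt D$, noting that the rough-Laplacian (Weyl-curvature) arguments giving $D^*D = \nabla^*\nabla$ apply equally to $\wt D$ since the anti-self-dual Weyl curvature still cannot act on $S_-S_+$; the bulk terms $\|\nabla u\|^2$ then cancel and only the difference of the boundary terms $(u, D_Y u)_Y$ versus the corresponding $\wt D_Y$ boundary operator survives, again producing the $*_Y\rd_Y$ piece.
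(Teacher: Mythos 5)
Your core idea is correct and is a genuinely different route from the paper's. You note that the $\rd^*$ contributions cancel, so $\|\wt{D}u\|^2-\|Du\|^2$ is a multiple of $\langle \rd u, *\rd u\rangle_{L^2}=\int_X \rd u\wedge \rd u$, and then Stokes' theorem plus the definition of $*_Y$ turns this into $\int_Y (b,*_Y\rd_Y b)\,\rd\mu_Y$; this exhibits the difference as a purely boundary/topological quantity with no collar computation at all. The paper instead subtracts the Green's identity for $D$ from the one for $\wt{D}$, using $D^*D=\wt{D}^*\wt{D}=\nabla^*\nabla$ (cf.\ \eqref{e2.30.1.16} and \eqref{e51.31.1.16}) so that the bulk terms cancel, and then reads off $\wt{D}-D$ from the collar formulas \eqref{e91.31.1.16} and \eqref{e93.31.1.16}: the two operators differ only by $-2*_Y\rd_Y$ on the tangential component, which is where the factor $2$ comes from. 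Your closing ``alternative'' is essentially the paper's argument.

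The genuine gap is the constant, and your proposed reconciliation does not close it. With $\rd_\pm=\tfrac12(1\pm*)\rd$ and the standard pointwise norm on $2$-forms you correctly get the constant $1$. The paper does use the unnormalized convention $\rd_\pm=(1\pm *)\rd$ (as you inferred from the remark after Proposition~\ref{linearisation.vector.field} and from ``$\rd_+(\rd f)=(\rd+*\rd)(\rd f)$''), but that change \emph{alone}, with the standard $2$-form norm, gives $\|\rd_\pm u\|^2=2\|\rd u\|^2\pm 2\int_X\rd u\wedge\rd u$ and hence the constant $4$, not $2$. The second ingredient you are missing is the norm the paper uses on $\Omega^0\oplus\Omega^2_\pm$: it is the one for which the symbol of $D$ is an isometry and $D^*D=\nabla^*\nabla$ holds as stated, equivalently the collar identification $\theta\mapsto\iota_\nu\theta$ of Lemma~\ref{calc.boundary.op} (or declaring $\omega_1,\omega_2,\omega_3$ orthonormal), and it halves the usual norm-squared of a (anti-)self-dual form. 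With this pair of conventions one has $\|\rd_\pm u\|^2=\|\rd u\|^2\pm\int_X\rd u\wedge\rd u$, the $\rd^*$ terms cancel as you say, and your Stokes argument gives exactly $-2\int_Y(b,*_Y\rd_Y b)\,\rd\mu_Y$, i.e.\ \eqref{e42.31.1.16}. So your method is fine, but as written it proves the identity only up to an undetermined normalization constant, and the specific bookkeeping you gesture at would land on the wrong one; you need either the explicit norm computation above or to pin the conventions down by requiring consistency with \eqref{e5.11.1.16} and \eqref{e2.30.1.16}.
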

\begin{proof} We note that
\begin{equation}\label{e51.31.1.16}
\wt{D}^*\wt{D} = \nabla^*\nabla
\end{equation}
by the same argument that gives \eqref{e2.30.1.16}. 
Computations similar to those in the proofs of Lemma~\ref{calc.boundary.op} 
and Proposition~\ref{p31.31.1.16} give
\begin{equation}\label{e93.31.1.16}
\wt{D} = 
\begin{bmatrix}
\nu + H & 0 \\
0 & \nu \end{bmatrix}
+
\begin{bmatrix}
0 & \rd_Y^* \\
\rd_Y & - *_Y\rd_Y
\end{bmatrix}
\end{equation}
with formal adjoint
\begin{equation*}
\wt{D}^* = 
\begin{bmatrix}
-\nu  & 0 \\
0 & -\nu - H \end{bmatrix}
+
\begin{bmatrix}
0 & \rd_Y^* \\
\rd_Y & - *_Y\rd_Y
\end{bmatrix}
\end{equation*}
in the collar neighbourhood $U$ of $Y$.  Arguing now as in the
proof of Proposition~\ref{p1.31.1.16}, we obtain the formulae
\begin{equation*} 
(u,D^*D u) - \|D u\|^2 = - \int_{Y} (u,D u)\,\rd\mu_Y
\end{equation*}
and
\begin{equation*} 
(u,\wt{D}^*\wt{D} u) - \|\wt{D} u\|^2 = - \int_{Y} (u,\wt{D} u)\,\rd\mu_Y.
\end{equation*}
The first term on the left-hand side of each of these two equations is
$(u,\nabla^*\nabla u)$, so subtracting we obtain
\begin{equation}
\|\wt{D}u\|^2 - \|Du\|^2 = \int_{Y} (u, (\wt{D} - D)u)\,\rd\mu_Y.
\end{equation}
The result now follows from our formulae for $\wt{D}$ and $D$,
\eqref{e91.31.1.16} and \eqref{e93.31.1.16}.
\end{proof}

\subsection{The kernel of $D$ in terms of boundary data}

We shall now combine the formulae just obtained with standard Fredholm
results for operators of Dirac type on a manifold with boundary to
parameterize the null space of $D$ in terms of boundary data.

The operator $D_Y$ is (formally) self-adjoint and of first order, so
it has a discrete real spectrum which is unbounded above and below,
with no (finite) accumulation points.  Denote by $H_{\lambda}$ the
eigenspace of $D_Y$ corresponding to the eigenvalue $\lambda$.  Fix a
real number $s>1/2$. 
\begin{dfn}
Denote by $H^{s-1/2}_+(Y)$ the completion in the Sobolev
$(s-1/2)$-norm of $\oplus_{\lambda>0} H_\lambda$. Similarly, denote by
$H^{s-1/2}_-(Y)$ the completion in the $(s-1/2)$-norm of
$\oplus_{\lambda <0} H_\lambda$.
\end{dfn}
\begin{rmk} We shall refer to the elements of $H^{s-1/2}_+(Y)$ as
  positive frequency boundary data, and similarly to the elements of
  $H^{s-1/2}_-(Y)$ as negative frequency boundary data.
\end{rmk}

Then we have
\begin{equation}\label{e1.26.1.16}
H^{s-1/2}(Y) = H^{s-1/2}_-(Y) \oplus H_0(Y) \oplus H^{s-1/2}_+(Y),
\end{equation}
with $H_0(Y)$ being the (finite-dimensional) kernel of $D_Y$.
Similarly define:
\begin{align*}
H^s_+(X) &= \{u \in H^s(X,\Lambda^1): u|Y \in H_+^{s-1/2}(Y)\};\\
H^s_-(X) &= \{u \in H^s(X,\Lambda^1): u|Y \in H_-^{s-1/2}(Y)\};\\
H^s_0(X) &= \{u \in H^s(X,\Lambda^1): u|Y \in H_0^{s-1/2}(Y)\}.
\end{align*}
The basic results we need are as follows.
\begin{thm}
Let $X$ be a hyperk\"ahler manifold with smooth boundary and mean curvature
$H\geq 0$, and strictly positive at at least one point.   Then for
$s>1/2$, the operator
\begin{equation}\label{e1.11.1.16}
D=\rd^*+\rd_+ : H^s_{\geq 0}(X,\Lambda^1) \to 
H^{s-1}(X,\RR \oplus \Lambda^2_+)
\end{equation}
is surjective, with finite-dimensional kernel isomorphic to $H^1(X)$.

Further, there is a Poisson operator
\begin{equation*}
\cP: H^{s-1/2}_{-}(Y) \to \Ker(D)\cap H^s(X,T^*X),
\end{equation*}
i.e.~the projection to $H^{s-1/2}_-(Y)$ of the restriction $\cP(f)|Y$
is equal to $f$.
\label{t1.26.1.16}\end{thm}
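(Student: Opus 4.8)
The plan is to recognise $D=\rd^*+\rd_+$, equipped with the non-negative-frequency boundary condition built into $H^s_{\geq 0}(X,\Lambda^1):=\{u\in H^s(X,\Lambda^1): u|_Y\in H_0(Y)\oplus H^{s-1/2}_+(Y)\}$, as an Atiyah--Patodi--Singer type elliptic boundary value problem, and then to control its kernel and cokernel with the Green's formulae of Proposition~\ref{p1.31.1.16} together with the unconditioned surjectivity of Lemma~\ref{l11.13.2.16}.

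First I would set up the APS picture. By Lemma~\ref{calc.boundary.op}, in the collar $[0,\epsilon)\times Y$ the operator $D$ has the normal form $\nabla_\nu+D_Y+(\text{zeroth order})$ with $D_Y$ the self-adjoint tangential operator \eqref{e92.31.1.16}; this is precisely the framework of elliptic boundary value problems for operators of Dirac type. Invoking the standard theory (see \cite{BaerBallmann,BartnikChrusciel}) with the spectral projection of $D_Y$ onto $\bigoplus_{\lambda\geq 0}H_\lambda$, one gets that $D\colon H^s_{\geq 0}(X,\Lambda^1)\to H^{s-1}(X,\RR\oplus\Lambda^2_+)$ is Fredholm for every $s>1/2$, that elements of its kernel are smooth up to $Y$, and that the Cauchy data space $\{u|_Y:u\in H^s,\ Du=0\}$ is the range of a pseudodifferential projector differing from a spectral projection of $D_Y$ by a smoothing operator. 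In particular the map $u\mapsto\Pi_-(u|_Y)$, from $\Ker D\cap H^s(X,\Lambda^1)$ onto $H^{s-1/2}_-(Y)$, admits a bounded linear right inverse $\cP\colon H^{s-1/2}_-(Y)\to\Ker D\cap H^s(X,T^*X)$ with $\Pi_-(\cP(f)|_Y)=f$; this is the asserted Poisson operator.

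Surjectivity is then immediate: given $g\in H^{s-1}(X,\RR\oplus\Lambda^2_+)$, Lemma~\ref{l11.13.2.16} yields $u_0\in H^s(X,\Lambda^1)$ with $Du_0=g$, and, splitting $u_0|_Y$ as in \eqref{e1.26.1.16}, the form $u:=u_0-\cP(\Pi_-(u_0|_Y))$ satisfies $Du=g$ and $\Pi_-(u|_Y)=0$, hence $u\in H^s_{\geq 0}(X,\Lambda^1)$. For the kernel, let $u\in H^s_{\geq 0}$ solve $Du=0$; then $u$ is smooth up to $Y$, so \eqref{e5.11.1.16} applies, and since $u|_Y$ has non-negative frequency $\int_Y(u,D_Yu)_Y\,\rd\mu_Y=\sum_{\lambda\geq 0}\lambda\|u_\lambda\|^2\geq 0$ while $H\geq 0$; thus the identity $0=\|\nabla u\|^2+\int_Y\bigl(H|\iota_\nu u|^2+(u,D_Yu)_Y\bigr)\rd\mu_Y$ forces $\nabla u=0$ and $u|_Y\in H_0=\Ker D_Y$. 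Unwinding the collar identities of Lemma~\ref{calc.boundary.op} shows that a parallel $1$-form has $\nu(\iota_\nu u)\equiv 0$; feeding this together with $u|_Y\in H_0$ back into the collar expression for $\rd^*u$ gives $H\cdot(\iota_\nu u|_Y)=0$ with $\iota_\nu u|_Y$ locally constant, so the presence of a point with $H>0$ forces $\iota_\nu u=0$ on all of $Y$, i.e.\ $u\in\cH^1_\bot$. Conversely, any $u\in\cH^1_\bot$ has $\rd^*u=0$ and $\rd_+u=\tfrac12(\rd u+{*}\rd u)=0$, so $Du=0$; as $\iota_\nu u=0$ and $\iota^*u$ is closed on $Y$ the boundary integrand in \eqref{e5.11.1.16} vanishes pointwise, whence $\nabla u=0$ and $u|_Y\in H_0\subset H^{s-1/2}_{\geq 0}(Y)$, so $u\in\Ker(D|_{H^s_{\geq 0}})$. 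Hence $\Ker(D|_{H^s_{\geq 0}})=\cH^1_\bot\cong H^1(X)$ by Theorem~\ref{hodge.boundary}, which is in particular finite-dimensional.

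The main obstacle is twofold: (i) checking that the APS/Calder\'on machinery of \cite{BaerBallmann,BartnikChrusciel} applies verbatim to $D=\rd^*+\rd_+$ --- here the only subtlety is the lower-order term $H$ in the normal form of Lemma~\ref{calc.boundary.op}, which affects neither the principal symbol nor the Fredholm alternative --- and (ii) the collar book-keeping in the kernel computation that converts the pair of conditions $\nabla u=0$, $u|_Y\in H_0$ into the single Hodge-theoretic condition cutting out $\cH^1_\bot$; the remaining steps are routine once the Green's formulae and the unconditioned surjectivity are in hand. (If $Y$ is disconnected one argues componentwise, using that a nonzero parallel $1$-form forces the hyperk\"ahler metric to be flat.)
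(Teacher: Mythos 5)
There is a genuine gap at the very first step, namely the sentence ``In particular the map $u\mapsto\Pi_-(u|_Y)$, from $\Ker D\cap H^s$ \emph{onto} $H^{s-1/2}_-(Y)$, admits a bounded right inverse $\cP$''. The general Calder\'on/APS machinery of \cite{BaerBallmann,BartnikChrusciel} gives that the problem \eqref{e1.11.1.16} is Fredholm and that the Cauchy data space is comparable (modulo lower-order perturbation) to the non-positive spectral subspace of $D_Y$; it does \emph{not} give that $\Pi_-$ maps the Cauchy data space onto $H^{s-1/2}_-(Y)$ --- it only gives closed range of finite codimension. That finite-dimensional cokernel is exactly the null space of the adjoint $D^*$ with domain $H^s_-(X)$, and killing it is precisely the content of the surjectivity assertion you are trying to prove. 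So your argument is circular in effect: you deduce surjectivity of \eqref{e1.11.1.16} from the existence of $\cP$, but the existence of $\cP$ (given the unconditional surjectivity of Lemma~\ref{l11.13.2.16}) is essentially equivalent to that surjectivity. A telltale symptom is that your surjectivity argument never uses the hypothesis $H\geq 0$; but the hypothesis must enter there, since without it the kernel can jump and (the index being fixed) a cokernel can appear. The missing ingredient is the second Green's formula \eqref{e2.11.1.16}: if $D^*v=0$ with $v|_Y\in H^{s-1/2}_-(Y)$, then $0=\|\nabla v\|^2+\int_Y\bigl(H|\iota_\nu c|^2-(v,D_Yv)_Y\bigr)\rd\mu_Y$, and since $-(v,D_Yv)_Y$ integrates to a strictly positive quantity when $v|_Y\neq 0$ is negative frequency while $H\geq 0$, one gets $v\equiv 0$; this is how the paper kills the cokernel, and only afterwards does it build $\cP f=Ef-GD(Ef)$ from a right inverse $G$ of the (now known to be surjective) map \eqref{e1.11.1.16} and an extension operator $E$. (A smaller inaccuracy: the Calder\'on projector differs from the spectral projection by a smoothing operator only for product-type structure near $Y$; in general the difference is merely of order $-1$ --- though even the smoothing statement would not rescue the ``onto''.)

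The rest of your proposal is sound and close to the paper: once surjectivity and $\cP$ are in hand, your deduction $u:=u_0-\cP(\Pi_-(u_0|_Y))$ is exactly the inverse of the paper's construction, and your kernel computation via \eqref{e5.11.1.16} (forcing $\nabla u=0$, $u|_Y\in H_0$, then $Hf=0$ with $f$ locally constant, hence $f=0$, and identifying the result with $\cH^1_\bot\cong H^1(X)$) is a correct minor variant of the paper's argument, as is your converse inclusion $\cH^1_\bot\subseteq\Ker(D|_{H^s_{\geq 0}})$. To repair the proof, insert the adjoint argument above before claiming either surjectivity or the Poisson operator.
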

\begin{rmk} Here we have written $H_{\geq 0}$ for the direct sum of
  $H_+$ and $H_0$.
\end{rmk}
\begin{proof}
Without any restriction on the mean curvature, that \eqref{e1.11.1.16}
is Fredholm is standard in the theory of Dirac operators on manifolds
with boundary \cite{BaerBallmann,BartnikChrusciel}.  This theory also identifies the cokernel
of \eqref{e1.11.1.16} with the null-space of the adjoint operator $D^*$
with domain $H^{s}_-(X)$.

Consider \eqref{e2.11.1.16} applied to $v$ with
\begin{equation}\label{e102.8.2.16}
D^* v=0, \quad v \in H^s_-(X).
\end{equation}
The first term on the right-hand side of \eqref{e2.11.1.16}  is manifestly $\geq 0$, the second term is $\geq 0$ by
Assumption~\ref{a1.31.1.16} and the third is strictly positive if
$0\neq v|Y \in H^{s-1/2}_-(Y)$ by \eqref{e102.8.2.16}.  But the left-hand side of \eqref{e2.11.1.16} is $0$ by \eqref{e102.8.2.16}, which means that
$v|Y=0$ and $\nabla v=0$.  Hence $v$ is identically zero and $D^*$ is
injective on $H^s_-(X)$. 

To identify the kernel of $D$ we use the formula
\eqref{e5.11.1.16}.  We see that if $D u=0$ then $\nabla u =0$ and
$D_Y(u|Y)=0$. Looking at the formula for $D_Y$, it follows that if we write 
$u=f\rd\rho+b$ on $Y$ then
$$
\rd_Y f = 0, \quad \rd_Y b = 0 = \rd^*_Yb.
$$
Hence $f$ is constant and $\int_Y Hf^2 = 0$ implies $f=0$ if $H\geq 0$
and is positive at a point.

Now the standard Weitzenb\"ock formula for $1$-forms on a Ricci-flat
$4$-manifold shows that every harmonic 1-form $a$ with $\iota_\nu(a)=0$
is parallel.  So the null space of \eqref{e1.11.1.16} is isomorphic to this space of forms,
which is in turn identifiable with $H^1(X)$ by Hodge theory.

The construction of the Poisson operator is standard, but we recall
the details. For any given $s$, we can define a bounded extension
operator
$$
E : H^{s-1/2}_-(Y) \longrightarrow H^s(X,\Lambda^1)
$$
so that $Ef|Y =
f$.   Let $G : H^{s-1}(X,\RR\oplus\Lambda^2_+) \to H^{s}_{\geq 0}(X,\Lambda^1)$ be a right-inverse of \eqref{e1.11.1.16}.  
Set
$$
\cP f = Ef - GD(Ef).
$$
By definition $\cP$ maps into $\Ker(D)\cap H^s$.  Since $(G\sigma|Y)_-
=0$ for any $\sigma$ in $H^{s-1}(X,\RR\oplus\Lambda^2_+)$,  it follows that $(\cP f|Y)_- = (Ef|Y)_- = f$ as required.
\end{proof}

\subsection{The kernel of $D$}

 We now wish to give a precise description of $\ker(D)$
  in terms of boundary data.  Recall the decomposition
  \eqref{e1.26.1.16} of boundary data
\begin{equation}\label{e1.12.10.16}
H^{s-1/2}(Y) = H^{s-1/2}_-(Y) \oplus H_0(Y) \oplus H^{s-1/2}_+(Y),
\end{equation}
in terms of the spectrum of $D_Y$, and that the coefficient bundle
here is $T^*X|Y = \RR \oplus T^*Y$. 

The finite-dimensional space $H_0(Y)$ consists of pairs $(f,b)$ where
$f$ is a constant function and $b$ is a harmonic $1$-form on $Y$. Split
\begin{equation}
H_0(Y) = H_{0,-}(Y) \oplus H_{0,+}(Y)
\end{equation}
where
\begin{equation*}
H_{0,-}(Y) = \im(H^1(X) \rightarrow H^1(Y))
\end{equation*}
and $H_{0,+}(Y)$ is the orthogonal complement of this space in
$H_0(Y)$.

\begin{lem}\label{l1.11.10.16}
Suppose that $u\in \ker(D)\cap H^s$ and $u|Y \in H_0(Y) \oplus
H^{s-1/2}_+(Y)$.    Then if $H\geq 0$ and is strictly positive at at least one point, it follows that the component
$u_0$ of $u|Y$ in $H_0(Y)$ must lie in $H_{0,-}(Y)$ and the
positive-frequency part $u_+$ of $u|Y$ is zero.
\end{lem}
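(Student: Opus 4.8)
The plan is to use the Green-type formula \eqref{e5.11.1.16} for $\|Du\|^2$ together with the description of $\ker D_Y$ in terms of harmonic data on $Y$. First I would observe that since $u \in \ker(D)$, formula \eqref{e5.11.1.16} gives $0 = \|\nabla u\|^2 + \int_Y (H|\iota_\nu u|^2 + (u, D_Y u)_Y)\,\rd\mu_Y$. The boundary hypothesis says $u|Y \in H_0(Y) \oplus H^{s-1/2}_+(Y)$, so write $u|Y = u_0 + u_+$ with $u_0 \in H_0(Y) = \ker(D_Y)$ and $u_+$ in the positive-frequency part. Then $(u|Y, D_Y(u|Y))_Y = (u_+, D_Y u_+)_Y \geq 0$, since $D_Y$ is non-negative on $\oplus_{\lambda>0}H_\lambda$ (with equality only if $u_+=0$). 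Combined with $H \geq 0$, every term on the right-hand side is non-negative, forcing $\nabla u = 0$, $(u_+, D_Y u_+)_Y = 0$, and $\int_Y H|\iota_\nu u|^2 \rd\mu_Y = 0$.

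From $(u_+, D_Y u_+)_Y = 0$ and positivity of $D_Y$ on positive frequencies I conclude $u_+ = 0$, which is the second assertion. It remains to pin down $u_0$. Since $u$ is parallel (being in the kernel of $D = \rd^* + \rd_+$ and harmonic — indeed by the Weitzenböck argument already invoked in the proof of Theorem~\ref{t1.26.1.16}, a $1$-form with $Du=0$ is parallel), its restriction $u|Y = u_0$ lies in $H_0(Y)$; writing $u_0 = f\,\rd\rho + b$ with $f$ constant and $b$ a harmonic $1$-form on $Y$, the vanishing $\int_Y Hf^2\,\rd\mu_Y = 0$ together with $H \geq 0$ and $H>0$ somewhere forces $f = 0$. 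Hence $\iota_\nu u$ vanishes on $Y$; since $u$ is parallel, $\iota_\nu u \equiv 0$ on the whole collar, so $u$ is a harmonic $1$-form on $X$ with vanishing normal component, i.e.\ $u \in \cH^1_\bot$. By Hodge theory (Theorem~\ref{hodge.boundary}) such $u$ represents a class in $H^1(X)$, and its boundary restriction $u_0 = \iota^* u$ lands in $\im(H^1(X) \to H^1(Y)) = H_{0,-}(Y)$ by definition. This gives the first assertion.

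The one point requiring a little care — and the main obstacle — is the identification of $\iota^*u$ with an element of $\im(H^1(X)\to H^1(Y))$: one must check that the natural map $\cH^1_\bot \to H^1(Y)$, $u \mapsto \iota^*u$, factors as the composite of the Hodge isomorphism $\cH^1_\bot \cong H^1(X)$ with the topological restriction $H^1(X)\to H^1(Y)$. This is exactly the compatibility of harmonic representatives with pullback, which is standard (it follows because $\iota^*u$ is closed and its cohomology class is the image of $[u]$ under $\iota^*$ on de Rham cohomology), so no genuinely new work is needed; the content of the lemma is really the sign/positivity bookkeeping in the Green's formula. I would also remark that the positivity of $D_Y$ on $H^{s-1/2}_+(Y)$ used here is immediate from the spectral definition \eqref{e2.11.13.16} of $H_\lambda$, so the argument does not require any further analysis of $D_Y$.
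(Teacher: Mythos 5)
Your proposal is correct and follows essentially the same route as the paper's proof: apply the Green-type formula \eqref{e5.11.1.16} to force $\nabla u=0$, $u_+=0$ and $\iota_\nu u|_Y=0$ under the positivity hypotheses, then identify the parallel (hence closed and coclosed, by Ricci-flatness) $1$-form $u$ with an element of $\cH^1_\bot\cong H^1(X)$ whose boundary restriction therefore lies in $\im(H^1(X)\to H^1(Y))=H_{0,-}(Y)$. The compatibility point you flag (harmonic representatives versus pullback in cohomology) is handled implicitly in the paper in exactly the way you indicate, so there is no gap.
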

\begin{proof}
For $u$ as given, we have, from \eqref{e5.11.1.16},
\begin{equation}\label{e2.12.10.16}
0 = \|\nabla u\|^2 + \int_Y \left(H |\iota_\nu u|^2 +
(u,D_Y u)_Y\right)\rd \mu_Y.
\end{equation}
and all terms on the RHS are separately $\geq 0$. Hence they are all
zero. It follows in particular that $\int_Y (u,D_Yu)_Y\rd \mu_Y=0$, so
$u_+=0$.  Thus, $u|Y = u_0 = (f,b)$ where $f$ is constant and
$b$ is harmonic. Since $H\geq 0$ with strict inequality at some point, 
$f=\iota_{\nu} u=0$. %So $b$ is harmonic and 
 Moreover, $\nabla u=0$ in $X$ which, since $X$ is Ricci-flat, is equivalent to $\rd u =0 = 
\rd^* u$.  Thus $u \in \cH^1_{\bot}(X)$, given in \eqref{Neumann.eq}, which is isomorphic to
$H^1(X)$, and $b$ is its restriction to the boundary.  Since $b=0$
implies $u=0$ (since $\nabla u=0$ in $X$), we have that $u$ is uniquely
 determined by its boundary value $b$, which defines a unique element in 
 $H_{0,-}(Y)$.  
\end{proof}

Combining Lemma \ref{l1.11.10.16} with Theorem~\ref{t1.26.1.16}, we obtain the following.
\begin{prop}\label{kerd}
Under the positive mean curvature assumption \ref{a1.31.1.16},
we have a natural isomorphism
\begin{equation}\label{e11.11.10.16}
\ker(D)\cap H^s \cong H_-^{s-1/2}(Y) \oplus H_{0,-}(Y).
\end{equation}
\end{prop}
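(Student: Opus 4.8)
The plan is to exhibit the isomorphism concretely as the trace map $u\mapsto u|Y$ followed by the spectral projections of $D_Y$, and to prove it is a bijection using only three ingredients already in place: the Green's formula \eqref{e5.11.1.16}, the surjectivity and Poisson operator of Theorem~\ref{t1.26.1.16}, and the rigidity statement Lemma~\ref{l1.11.10.16}. Explicitly, I would set
\[
\Phi \colon \ker(D)\cap H^s \longrightarrow H^{s-1/2}_-(Y)\oplus H_{0,-}(Y),
\qquad
\Phi(u)=\big((u|Y)_-,\,\pi_{0,-}(u|Y)_0\big),
\]
where $(u|Y)_-$ and $(u|Y)_0$ are the components of $u|Y$ in $H^{s-1/2}_-(Y)$ and $H_0(Y)$ under \eqref{e1.12.10.16} and $\pi_{0,-}\colon H_0(Y)\to H_{0,-}(Y)$ is the projection along $H_{0,+}(Y)$. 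Since the trace map $H^s(X,\Lambda^1)\to H^{s-1/2}(Y)$ is bounded and the spectral projections of the self-adjoint elliptic operator $D_Y$ are bounded on $H^{s-1/2}(Y)$, $\Phi$ is a bounded map of Banach spaces ($\ker(D)\cap H^s$ being a closed subspace of $H^s(X,\Lambda^1)$). It therefore suffices to prove that $\Phi$ is a bijection, after which the open mapping theorem supplies continuity of the inverse, and naturality is clear since $\Phi$ is canonically defined.

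For injectivity, suppose $\Phi(u)=0$. Then $(u|Y)_-=0$ and $(u|Y)_0\in H_{0,+}(Y)$, so $u|Y\in H_0(Y)\oplus H^{s-1/2}_+(Y)$; under Assumption~\ref{a1.31.1.16}, Lemma~\ref{l1.11.10.16} then forces $(u|Y)_0\in H_{0,-}(Y)$ and $(u|Y)_+=0$, whence $u|Y=0$ since $H_{0,-}(Y)\cap H_{0,+}(Y)=0$. Feeding $Du=0$ and $u|Y=0$ into \eqref{e5.11.1.16} gives $\|\nabla u\|^2=0$, so $u$ is parallel; being parallel and zero on $Y$, it vanishes identically. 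For surjectivity, given $(f,c)$ I would first use the Poisson operator of Theorem~\ref{t1.26.1.16} to produce $\cP f\in\ker(D)\cap H^s$ with $(\cP f|Y)_-=f$, so that $\Phi(\cP f)=(f,c_0)$ for some $c_0\in H_{0,-}(Y)$, and then correct the $H_{0,-}$-component by a parallel $1$-form. Precisely, Theorem~\ref{t1.26.1.16} identifies the kernel of $D$ on $H^s_{\geq 0}(X,\Lambda^1)$ with $H^1(X)$, realised by parallel $1$-forms $u$ with $\iota_\nu u|Y=0$; by Lemma~\ref{l1.11.10.16} each such $u$ has $u|Y\in H_{0,-}(Y)$ and $\Phi(u)=(0,u|Y)$, and since $u|Y$ is the harmonic representative of the class $\iota^*[u]$ and $H_{0,-}(Y)=\im(H^1(X)\to H^1(Y))$ by definition, the assignment $u\mapsto u|Y$ carries this kernel onto $\{0\}\oplus H_{0,-}(Y)$. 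Choosing $w$ in it with $\Phi(w)=(0,c-c_0)$ gives $\Phi(\cP f+w)=(f,c)$.

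The step I expect to be the real content — and the only place the positive mean curvature Assumption~\ref{a1.31.1.16} is used — is injectivity: the point is that once the positive-frequency and $H_{0,+}$ parts of the boundary data of a solution of $Du=0$ are killed, the solution is forced to be parallel and hence to vanish, which is exactly what the sign of the boundary term in \eqref{e5.11.1.16} (as exploited in Lemma~\ref{l1.11.10.16}) provides. Everything else is soft: the surjectivity of $D$ and the existence of the Poisson operator (Theorem~\ref{t1.26.1.16}), boundedness of the trace map and of the spectral projections of $D_Y$, and the open mapping theorem.
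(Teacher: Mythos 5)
Your proposal is correct and takes essentially the same route as the paper: the isomorphism is restriction to the boundary followed by the spectral projections, injectivity comes from Lemma~\ref{l1.11.10.16} (together with \eqref{e5.11.1.16} forcing a solution with vanishing boundary value to be parallel, hence zero), and surjectivity combines the Poisson operator of Theorem~\ref{t1.26.1.16} with the parallel $1$-forms realising $H^1(X)$ to hit the $H_{0,-}(Y)$ factor. The only difference is that you make explicit a few points the paper leaves implicit (boundedness of $\Phi$, the open mapping theorem, and the final step $u|Y=0\Rightarrow u=0$).
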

\begin{proof}
The map is given by restriction to the boundary followed by projection
onto $H_-^{s-1/2}(Y) \oplus H_{0,-}(Y)$.

Given $v = (v_-,v_0)\in H_-^{s-1/2}(Y) \oplus H_{0,-}(Y)$, by
definition there exists $u_0$ with $Du_0=0$ and $u_0|Y =
v_0$. Then
\begin{equation}
\cP v_- + u_0\in \ker D
\end{equation}
and the projection to $H_-^{s-1/2}(Y) \oplus H_{0,-}(Y)$ of this element is
$(v_-,v_0)$.  Hence \eqref{e11.11.10.16} is surjective.

Conversely, suppose $u \in \ker(D)$ has $u|Y \in 
H_0(Y)\oplus H_+^{s-1/2}(Y)$. By Lemma \ref{l1.11.10.16}, $u|Y \in H_{0,-}(Y)$
and this proves that \eqref{e11.11.10.16} is also injective.
\end{proof}

\subsubsection{More Hodge theory}

On our compact manifold $X$ with boundary inclusion $\iota: Y \to X$, the intersection pairing is
well-defined on the space
\begin{equation}\label{e5.13.9.15}
\ker(H^2(X) \to H^2(Y)) = \im(H^2(X,Y) \to H^2(X))
\end{equation}
by the usual formula
\begin{equation}\label{e6.13.9.15}
[\alpha]\cup[\beta]=\int_X \alpha\wedge\beta,
\end{equation}
where we need $\iota^*(\alpha) = \iota^*(\beta)=0$ for this to be
well-defined in cohomology.   
Thus we may choose a decomposition
\begin{equation}\label{e1.19.3.16}
\ker(H^2(X)\to  H^2(Y)) = H^2_+(X) \oplus H^2_-(X)
\end{equation}
such that \eqref{e6.13.9.15} is positive-definite on $H^2_+(X)$,
negative-definite on $H^2_-(X)$.  By choosing a  
complement $H^2_0(X)$ of \eqref{e5.13.9.15} in $H^2(X)$, 
we complete \eqref{e1.19.3.16} to a decomposition
\begin{equation}\label{e8.13.9.15}
H^2(X) = H^2_+(X) \oplus H^2_-(X) \oplus H^2_0(X).
\end{equation}
and the dimensions of these spaces depend only on the topology of the
pair $(X,Y)$. 

By the Hodge theory in Theorem~\ref{hodge.boundary}, 
\begin{equation}\label{Neumann.eq.2}
H^2(X) \cong \cH^2_{\bot}=\{\alpha\in\Omega^2(X):\rd\alpha=\rd^*\alpha=0,\,\iota^*(*\alpha)=0\}
\end{equation} and
so we have an isomorphism
\begin{equation}\label{e2.19.3.16}
\ker(H^2(X) \to H^2(Y)) \cong
\{\alpha \in \cH^2_{\bot} :  [\iota^*\alpha] = 0\in H^2(Y)\}.
\end{equation}
Notice that the projections from $\Lambda^2$ to $\Lambda^2_{\pm}$ given by
\begin{equation}\label{e3.19.3.16}
P_{\pm}(\alpha) = \frac{1}{2}(\alpha \pm * \alpha)
\end{equation}
map closed and coclosed 2-forms to $\cZ^2_{\pm}(X)$ and if $\iota^*(*\alpha)=0$ then $\iota^*(2P_{\pm}(\alpha))=\iota^*(\alpha)$. Thus, if we define 
finite-dimensional spaces
\begin{align}
\cH^2_{\pm}(X)&=\{P_{\pm}(\alpha):\alpha\in\cH^2_\bot,\quad [\iota^*\alpha]=0\in H^2(Y)\}\subset\cZ^2_{\pm}(X),\label{H2pm.eq}\\
\cH^2_0(X) &=\{\alpha\in\cH^2_{\bot}: [*\alpha]=0\in H^2(X)\},\\
\cH^2_{0,\pm}(X)&=P_{\pm}\cH^2_0(X)\subset \cZ^2_{\pm}(X),\label{H20pm.eq}
\end{align}
where $\cZ^2_{\pm}(X)$ are the closed self-dual/anti-self-dual 2-forms on $X$, then we have the following.

\begin{thm}\label{cohom.thm}  In the notation above, 
$\cH^2_{\pm}(X)\cong H^2_{\pm}(X)$ and $\cH^2_{0,\pm}(X)\cong H^2_0(X)$.
\end{thm}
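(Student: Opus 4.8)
The plan is to deduce everything from the Hodge theory on manifolds with boundary recalled in \S\ref{s1.29.1.16}, together with Poincar\'e--Lefschetz duality, keeping careful track of the two boundary conditions. Write $\cH^2_\bot\cong H^2(X)$ and $\cH^2_\top\cong H^2(X,Y)$ for the Neumann and Dirichlet harmonic fields, so that the Hodge star induces an isomorphism $*\colon \cH^2_\bot\to\cH^2_\top$ realising Poincar\'e--Lefschetz duality. The first observation is that for $\alpha\in\cH^2_\bot$ (or $\cH^2_\top$) the forms $P_\pm\alpha=\tfrac12(\alpha\pm *\alpha)$ are again closed: $\rd\alpha=0$ and $\rd^*\alpha=0$ together give $\rd(*\alpha)=0$, so $P_\pm\alpha\in\cZ^2_\pm(X)$. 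Thus $P_\pm$ genuinely maps the spaces in \eqref{H2pm.eq}--\eqref{H20pm.eq} into $\cZ^2_\pm(X)$, and the content of the theorem is that these maps are injective and hit spaces of the advertised dimension.

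For the statement about $\cH^2_{0,\pm}$ I would argue directly. If $\alpha\in\cH^2_0(X)$ and $P_+\alpha=0$ then $\alpha$ is anti-self-dual, so $*\alpha=-\alpha$ and the defining condition $[*\alpha]=0\in H^2(X)$ becomes $[\alpha]=0$; since the Morrey--Friedrichs isomorphism $\cH^2_\bot\cong H^2(X)$ is $\alpha\mapsto[\alpha]$, we get $\alpha=0$. Hence $P_+$ is injective on $\cH^2_0(X)$, and symmetrically so is $P_-$. It remains to identify $\cH^2_0(X)$ with $H^2_0(X)$: under the Hodge isomorphism $\cH^2_0(X)$ corresponds to $\{\xi\in H^2(X): (j\circ\mathrm{PD})(\xi)=0\}$ with $\mathrm{PD}\colon H^2(X)\to H^2(X,Y)$ and $j\colon H^2(X,Y)\to H^2(X)$; chasing the long exact sequence of the pair, and using that the intersection form is non-degenerate on $\im j=\ker(H^2(X)\to H^2(Y))$, shows this is a complement of that kernel and hence has the same dimension as $H^2_0(X)$. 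This gives $\cH^2_{0,\pm}(X)\cong\cH^2_0(X)\cong H^2_0(X)$.

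The real work is $\cH^2_\pm(X)\cong H^2_\pm(X)$. Set $V=\{\alpha\in\cH^2_\bot:[\iota^*\alpha]=0\}$, which under the Hodge isomorphism corresponds to $K:=\ker(H^2(X)\to H^2(Y))=H^2_+(X)\oplus H^2_-(X)$; the theorem asks that $P_+$ kill exactly a $\dim H^2_-(X)$--dimensional subspace of $V$ (namely the anti-self-dual forms lying in $\cH^2_\bot$) and that $P_-$ kill exactly a $\dim H^2_+(X)$--dimensional subspace. For $\alpha\in V$ one has the pointwise identity $\int_X\alpha\wedge\alpha=\|P_+\alpha\|_{L^2}^2-\|P_-\alpha\|_{L^2}^2$, so the anti-self-dual forms in $\cH^2_\bot$ span a subspace on which $\int_X\alpha\wedge\alpha$ is negative definite, and dually for the self-dual ones; comparing with the relative-representative description of the intersection form on $K$ — and controlling the boundary correction $\int_Y e\wedge\rd e$ arising from $\iota^*\alpha=\rd e$ — yields the inequalities $\dim\cH^2_\pm(X)\ge\dim H^2_\pm(X)$. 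For the reverse inequalities one must produce enough (anti-)self-dual harmonic forms inside $\cH^2_\bot$, and here I would bring in the hyperk\"ahler hypothesis through the Weitzenb\"ock formula: on a hyperk\"ahler $4$--manifold $W^+=0$ and $s=0$, so a closed self-dual $2$--form $\beta=P_+\alpha$ satisfies $\nabla^*\nabla\beta=0$, whence $\|\nabla\beta\|_{L^2}^2=\int_Y\langle\nabla_\nu\beta,\beta\rangle\,\rd\mu_Y$; combining this with the collar computations and the Green's formulas of Proposition~\ref{p1.31.1.16} — exactly in the spirit of the analysis of $D$, and possibly invoking Assumption~\ref{a1.31.1.16} to fix the sign of the boundary term — forces $\beta$ to be parallel, hence determined by its cohomology class, which matches the dimensions.

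I expect the genuinely delicate point to be precisely this last step: squeezing the dimension count \emph{down} from above. Unlike on a closed manifold, a closed self-dual $2$--form need not be Neumann-harmonic, and an exact self-dual $2$--form need not vanish because of the boundary term $\int_Y e\wedge\rd e$, so one cannot simply invoke the Hodge index theorem; it is the Weitzenb\"ock identity together with the correct sign of the boundary integral (where the collar geometry enters), plus the bookkeeping between the Neumann and Dirichlet harmonic spaces under $*$, that must carry the argument.
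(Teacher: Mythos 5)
Your handling of the second half of the statement is fine and is essentially the paper's argument: for $\alpha\in\cH^2_0(X)$, $P_\pm\alpha=0$ forces $\alpha$ to be (anti-)self-dual, so $[\alpha]=\mp[*\alpha]=0$ and $\alpha=0$ by Theorem~\ref{hodge.boundary}; together with $\dim\cH^2_0(X)=\dim\ker\bigl(H^2(X,Y)\to H^2(X)\bigr)=\dim H^2(X)-\dim\ker\bigl(H^2(X)\to H^2(Y)\bigr)$ this gives $\cH^2_{0,\pm}(X)\cong H^2_0(X)$. Your ``lower bound'' for the first half is also sound, and in fact needs no control of the boundary correction: an anti-self-dual element of $\cH^2_\bot$ has $\iota^*\alpha=-\iota^*(*\alpha)=0$, so it is already a relative representative and the intersection form on its class is honestly $-\|\alpha\|_{L^2}^2$.

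The genuine gap is the step you yourself flag as delicate, and the route you propose for it cannot be completed. The paper's proof of $\cH^2_\pm(X)\cong H^2_\pm(X)$ is purely Hodge-theoretic and cohomological: it combines the isomorphism \eqref{e2.19.3.16} with $P_++P_-=\id$ to identify $\cH^2_+\oplus\cH^2_-$ with $\ker(H^2(X)\to H^2(Y))$, and then uses the definiteness of $\int_X\beta\wedge\beta$ on $\cH^2_\pm$ together with the chosen splitting \eqref{e1.19.3.16}; no Weitzenb\"ock identity, no hyperk\"ahler hypothesis and no mean-curvature condition enter, and Theorem~\ref{cohom.thm} is not stated under Assumption~\ref{a1.31.1.16}. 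By contrast you propose to close the count by ``producing enough (anti-)self-dual harmonic forms inside $\cH^2_\bot$'' via the Weitzenb\"ock formula. Three things go wrong. First, what is needed there is an \emph{existence}/dimension statement, and a rigidity conclusion (``$\beta$ is parallel'') cannot supply it; moreover the parallel self-dual $2$-forms on a hyperk\"ahler $4$-manifold are just the span of $\omega_1,\omega_2,\omega_3$ and bear no relation to $\dim H^2_\pm(X)$. Second, the sign of the boundary term $\int_Y\langle\nabla_\nu\beta,\beta\rangle\,\rd\mu_Y$ is not controlled by positive mean curvature: the Green's formulae of Proposition~\ref{p1.31.1.16} concern the $1$-form operator $D$, and even there the tangential contribution $(u,D_Yu)$ has no sign without a frequency restriction. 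Third, and decisively, the forms you hope to produce do not exist: a self-dual or anti-self-dual element of $\cH^2_\bot$ has both $\iota^*\alpha=0$ and $\iota^*(*\alpha)=0$, i.e.\ its full restriction to $Y$ vanishes, and unique continuation for harmonic fields (the first-order system $\rd\alpha=0=\rd^*\alpha$ determines all normal derivatives along $Y$, and then Aronszajn's theorem applies to $\Delta\alpha=0$) forces $\alpha\equiv0$. So the reverse inequality cannot be obtained by exhibiting (anti-)self-dual representatives inside $\cH^2_\bot$ at all; any completion of your argument has to proceed at the level of cohomology classes, which is the route the paper takes.
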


\begin{proof}
Given  the isomorphism \eqref{e2.19.3.16} and the fact that $P_++P_-=\id$, we see that 
$$\cH^2_+\oplus\cH^2_-\cong \ker(H^2(X)\to H^2(Y)).$$
Moreover, the intersection-form is
positive-definite on $\cH^2_+$ and negative-definite on $\cH^2_-$.   
The decomposition \eqref{e1.19.3.16} then implies that 
$\cH^2_{\pm}\cong H^2_{\pm}(X)$. 

Now consider $H^2_0(X)$, which is
isomorphic to the cokernel of the map
\begin{equation}\label{e5.19.3.16}
H^2(X,Y) \to H^2(X).
\end{equation}
Since the $*$-operator interchanges the spaces in \eqref{e5.19.3.16}, it also interchanges
the kernel and cokernel of this map. In particular, \eqref{e1.19.3.16}
is complemented in $H^2(X)$ by the classes represented by $\alpha \in \cH^2_{\bot}(X)$ such
that $*\alpha$ is in the kernel of \eqref{e5.19.3.16}; that is, by
\begin{equation}\label{H20.eq}
\{[\alpha]\in H^2(X):\alpha\in\cH^2_0(X)\}.
\end{equation}  In fact, \eqref{H20.eq} is the $L^2$ orthogonal complement of
\eqref{e1.19.3.16}.  (It is easy to see that these two spaces are
orthogonal inside $H^2(X)$, and the argument just given
shows that they span $H^2(X)$.)  Thus we may set $H^2_0(X)$ equal to \eqref{H20.eq} so that \eqref{e8.13.9.15} holds.

For $\alpha\in \cH^2_0(X)$,  we see that $[2P_{\pm}(\alpha)]=[\alpha]\in H^2(X)$.  
It follows that $\cH^2_{0,\pm}(X)=P_{\pm}\cH^2_0(X)$ are isomorphic to $H^2_0(X)$ as claimed.
\end{proof}

\begin{rmk} 
The fact that we can choose a complement $H^2_0(X)$ of \eqref{e1.19.3.16} in $H^2(X)$
 which can be represented equally well
by self-dual or anti-self-dual forms shows clearly that the cup
product is not well-defined on this space!
\end{rmk}

The next result gives a `standard form' for any element of $\cZ^2_-(X)$.

\begin{prop}\label{z2-.splitting.prop} We have the following direct sum decomposition:
\begin{equation}\label{z2-.splitting.eq}
\cZ^2_-(X)= \cH^2_{0,-}(X)\oplus\cH^2_-(X)\oplus\{\rd a\in
\rd\Omega^1(X)\,:\,(\rd^*+\rd_+)a=0\}.
\end{equation}
Moreover, with respect to the decomposition of $1$-forms in a collar neighbourhood
$$
a = f\,\rd \rho + b
$$
(cf.\ \eqref{e410a}) we may assume $f|Y=0$.
\label{p1.13.9.15}\end{prop}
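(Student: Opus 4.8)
The plan is to reduce the statement to an assertion about the de Rham class map $\mathrm{cl}\colon \cZ^2_-(X)\to H^2(X)$, $\theta\mapsto[\theta]$, and then to invoke Theorem~\ref{cohom.thm}.

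First I would identify the third summand with the exact anti-self-dual $2$-forms. If $(\rd^*+\rd_+)a=0$ then $\rd_+a=0$, so $\rd a$ is exact and anti-self-dual; conversely, if $\beta=\rd a_1$ is exact and anti-self-dual, solve the Dirichlet problem $\Delta g=-\rd^* a_1$, $g|_Y=0$ and put $a=a_1+\rd g$, so that $\rd a=\beta$, $\rd^* a=0$, $\rd_+a=\beta_+=0$. Hence $\{\rd a:(\rd^*+\rd_+)a=0\}$ is exactly the space of exact anti-self-dual $2$-forms, i.e.\ $\ker(\mathrm{cl})$. For the final clause I would take such an $a$, write $a=f\,\rd\rho+b$ in the collar, and solve the Neumann problem $\Delta g=0$ on $X$ with normal derivative equal to $f|_Y$ on $Y$; this is solvable because $\int_Y f|_Y\,\rd\mu_Y=\int_Y a_{\bot}=\int_X\rd{*}a=-\int_X\rd^* a\;\rd\mu_X=0$, and replacing $a$ by $a+\rd g$ leaves $\rd a$, $\rd^* a$ and $\rd_+a$ unchanged while making $f|_Y=0$.

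By Theorem~\ref{cohom.thm} the map $\mathrm{cl}$ restricts to an isomorphism of $\cH^2_{0,-}(X)\oplus\cH^2_-(X)$ onto $H^2_0(X)\oplus H^2_-(X)$. So the whole proposition follows as soon as one knows $\im(\mathrm{cl})=H^2_0(X)\oplus H^2_-(X)$: the inclusion $\supseteq$ is part of Theorem~\ref{cohom.thm}; granted $\subseteq$, any $\theta\in\cZ^2_-(X)$ can be written $\theta=\eta+(\theta-\eta)$ with $\eta\in\cH^2_{0,-}(X)\oplus\cH^2_-(X)$ chosen so that $[\eta]=[\theta]$, whence $\theta-\eta\in\ker(\mathrm{cl})$; and the threefold sum is direct because $\mathrm{cl}$ is injective on $\cH^2_{0,-}(X)\oplus\cH^2_-(X)$, the subspaces $H^2_0(X)$, $H^2_-(X)$, $H^2_+(X)$ are transverse in $H^2(X)$, and $\ker(\mathrm{cl})$ is the third summand.

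Thus the crux is that the $H^2_+(X)$-component of $[\theta]$ vanishes for every closed anti-self-dual $\theta$. Here I would first exploit the freedom in \eqref{e1.19.3.16}: take $H^2_+(X)$ to be the span of the classes of closed \emph{self-dual} forms $\psi$ with $\iota^*\psi=0$ — since such $\psi$ is automatically coclosed ($\rd^*\psi=-*\rd\psi=0$) and satisfies $\iota^*(*\psi)=\iota^*\psi=0$, it lies in $\cH^2_\top\cap\cH^2_\bot$ — and take $H^2_-(X)$ to be the complement of $H^2_+(X)$, orthogonal for the intersection form, inside $\ker(H^2(X)\to H^2(Y))$. That this choice is admissible uses the refined Hodge theory of \cite{Schwarz}: $\cH^2_\top\cap\cH^2_\bot\cong\im(H^2(X,Y)\to H^2(X))$, this space is $*$-invariant, the $\pm1$-eigenspaces of $*$ on it are positive- resp.\ negative-definite for the intersection form, and a dimension count then makes them maximal definite, so that $H^2_+(X)$ really is a maximal positive subspace. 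Now take $\theta\in\cZ^2_-(X)$ and subtract the unique $\eta_0\in\cH^2_{0,-}(X)$ with $[\eta_0]$ equal to the $H^2_0(X)$-component of $[\theta]$, so that $\theta'=\theta-\eta_0\in\cZ^2_-(X)$ has $[\iota^*\theta']=0$. For any closed self-dual $\psi$ with $\iota^*\psi=0$, Stokes' theorem — using $\iota^*\psi=0$ on the nose and $[\iota^*\theta']=0$ to kill the boundary integral — gives $\int_X\theta'\wedge\psi=[\theta']\cdot[\psi]$, the intersection number, while pointwise orthogonality of $\Lambda^2_+$ and $\Lambda^2_-$ gives $\int_X\theta'\wedge\psi=\langle\theta',\psi\rangle_{L^2}=0$. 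Hence $[\theta']$ is intersection-orthogonal to all of $H^2_+(X)$, and as $[\theta']\in H^2_+(X)\oplus H^2_-(X)$ with the form definite on each factor, the $H^2_+(X)$-component of $[\theta']$, hence of $[\theta]$, is zero. I expect the main difficulty to be precisely this last paragraph: guaranteeing enough closed self-dual forms vanishing on $Y$ (equivalently, checking the Hodge-theoretic input that makes the chosen $H^2_+(X)$ maximal), and organising the Stokes computation so that no boundary term survives — which is why one works with $\psi$ having $\iota^*\psi=0$ identically and with $\theta$ first normalised to restrict trivially in $H^2(Y)$.
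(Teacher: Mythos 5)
The first and last steps of your argument are fine and essentially the paper's: the third summand is the space of exact ASD $2$-forms, and the primitive $a$ can be normalised so that $\rd^*a=0$ and $f|_Y=0$ (the paper does both adjustments with a single Poisson problem with Neumann data; your Dirichlet-then-Neumann variant works equally well). The genuine gap is exactly at what you yourself call the crux. Your proposed $H^2_+(X)$ is the span of classes of closed \emph{self-dual} forms $\psi$ with $\iota^*\psi=0$. But for a self-dual form the vanishing of $\iota^*\psi$ forces the whole restriction $\psi|_Y$ to vanish: by \eqref{e510a}, $\psi=-\rd\rho\wedge c+*_Yc$ is determined by $c=\iota_\nu\psi$, and $\iota^*\psi=*_Yc$. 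A closed self-dual form is also coclosed, so such a $\psi$ is a harmonic field whose full trace on $Y$ vanishes, and unique continuation for the Dirac-type system $\rd+\rd^*$ (extend $\psi$ by zero across $Y$) forces $\psi\equiv 0$. For the same reason the ``refined Hodge theory'' you invoke is false as stated: $\cH^2_\top\cap\cH^2_\bot=\{0\}$ on a connected manifold with boundary (the classical Friedrichs/DeTurck--Gluck fact that Dirichlet and Neumann harmonic fields meet only in $0$); it is \emph{not} isomorphic to $\im(H^2(X,Y)\to H^2(X))$. Consequently your $H^2_+(X)$ is the zero subspace, which is not an admissible choice in \eqref{e1.19.3.16} whenever the intersection form on $\ker(H^2(X)\to H^2(Y))$ has a positive part (e.g.\ $X$ a disc bundle over $S^2$ of positive Euler number, such as orientation-reversed Eguchi--Hanson), and then your Stokes argument says nothing about the $H^2_+$-component of $[\theta]$.

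Nor is this a removable technicality: the representatives of $H^2_+(X)$ actually supplied by Theorem~\ref{cohom.thm} are the forms $P_+\alpha$ with $\alpha\in\cH^2_\bot$ and $[\iota^*\alpha]=0$, whose pullbacks to $Y$ are exact but not identically zero. With such representatives the pairing of $[\theta']$ with $[P_+\alpha]$ picks up a boundary term of the form $\int_Y a\wedge\rd b$ (where $\iota^*\theta'=\rd a$ and $\iota^*P_+\alpha=\rd b$), which has no reason to vanish, so pointwise orthogonality of $\Lambda^2_+$ and $\Lambda^2_-$ alone does not kill the $H^2_+$-component. The paper's proof does not attempt your route at all: it quotes Theorem~\ref{cohom.thm} for the statement that the class of a closed ASD form lies in $H^2_-(X)\oplus H^2_0(X)$ and then proceeds exactly as in your first paragraph. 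So the cohomological crux still needs an argument, and the one you propose cannot supply it.
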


\begin{proof}  
It is clear that the right-hand-side of \eqref{z2-.splitting.eq} is contained in $\cZ^2_-(X)$
since if $\rd_+a=0$ then $\rd a$ is anti-self-dual and exact.  Let
$\alpha \in \cZ^2_-(X)$. By Theorem \ref{cohom.thm}, the corresponding
cohomology class $[\alpha]$ has components only in $H^2_-(X)\oplus
H^2_0(X)$, and these have unique representatives $(\alpha_-,\alpha_0)
\in\cH^2_-(X)\oplus \cH^2_{0,-}(X)$. Then $\alpha - \alpha_--\alpha_0$ is
exact, so we may write
\begin{equation*} 
\rd a' = \alpha - \alpha_- - \alpha_0,
\end{equation*}
and automatically
\begin{equation*} 
\rd_+a'=0.
\end{equation*}
Suppose further that
$$
a' = f'\,\rd \rho + b'\mbox{ near }Y.
$$
We have not yet arranged $f'|Y=0$ or  $\rd^*a'=0$.  For this, 
define $a=a' + \rd u$, so $\rd a = \rd a'$,
$$
\rd^*a = \rd^*\rd u + \rd^*a'
$$
and if $a = f\,\rd \rho  + b$ then
$$
f = f' + \p_\rho u \mbox{ on }Y.
$$
Solving Poisson's equation $\rd^*\rd u = - \rd^*a'$ with the Neumann
condition $\p_\rho u|Y = -f'|Y$ yields $a$ satisfying $\rd^*a=0$ and 
$f|Y=0$ as required.
\end{proof}

Let 
$$
K^s = \{a \in \ker(D)\cap H^s(X,\Lambda^1): a_{\bot} =0\}.
$$
Proposition \ref{z2-.splitting.prop} shows that $\rd K^s$ is isomorphic to the
 space of exact ASD $2$-forms.  The next result shows that, up to $H^1(X)$, $\rd$ gives an
isomorphism of $K^s$ onto $\rd K^s$.
\begin{prop}  With the above definitions, 
the following sequence is exact:
\begin{equation}
0 \to H^1(X) \to K^s \to \rd K^s \to 0.
\end{equation}
\end{prop}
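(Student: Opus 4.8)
The plan is to make all three maps in the sequence explicit and then check exactness spot by spot; the whole argument is a short consequence of the Morrey--Friedrichs Hodge theorem (Theorem~\ref{hodge.boundary}) together with the trivial remark that a $1$-form $a$ lies in $\ker(D)$ precisely when $\rd^*a=0$ and $\rd_+a=0$.

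First I would describe the maps. The map $K^s\to\rd K^s$ is just $a\mapsto\rd a$; it is surjective because $\rd K^s$ is by definition its image, which gives exactness at $\rd K^s$ immediately. For the first map, recall that Theorem~\ref{hodge.boundary} gives an isomorphism $\cH^1_\bot(X)\cong H^1(X)$, where $\cH^1_\bot(X)=\{a\in\Omega^1(X):\rd a=0,\ \rd^*a=0,\ \iota^*(*a)=0\}$ as in \eqref{Neumann.eq}. Any such $a$ satisfies $\rd_+a=0$ (since $\rd a=0$), hence $Da=\rd^*a+\rd_+a=0$, and $a_\bot=\iota^*(*a)=0$ by definition; moreover $a$ is harmonic, hence smooth, hence in $H^s$. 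So $\cH^1_\bot(X)\subset K^s$, and the first map of the sequence is the composite $H^1(X)\xrightarrow{\ \sim\ }\cH^1_\bot(X)\hookrightarrow K^s$, which is injective, giving exactness at $H^1(X)$.

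It remains to prove exactness at $K^s$, i.e.\ that $\ker\bigl(\rd\colon K^s\to\rd K^s\bigr)=\cH^1_\bot(X)$. If $a\in K^s$ with $\rd a=0$, then from $a\in\ker D$ we get $\rd^*a=0$ (the condition $\rd_+a=0$ being automatic here), and from $a\in K^s$ we have $a_\bot=\iota^*(*a)=0$; thus $a\in\cH^1_\bot(X)$. The reverse inclusion was checked in the previous paragraph. Hence the kernel of $\rd$ on $K^s$ is exactly the image of $H^1(X)$, as required.

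There is no real obstacle here: the only thing that needs a moment's care is matching the defining condition $a_\bot=0$ of $K^s$ with the ``Neumann'' condition $\iota^*(*a)=0$ of $\cH^1_\bot(X)$ (immediate from \eqref{e1.13.9.15}), and noting that a form in $\ker D$ with $\rd a=0$ is automatically harmonic, hence smooth, so that the space of smooth harmonic fields $\cH^1_\bot(X)$ really does exhaust this kernel inside $K^s$. In short, the proposition just repackages Hodge theory on a manifold with boundary.
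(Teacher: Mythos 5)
Your proof is correct and follows essentially the same route as the paper: surjectivity onto $\rd K^s$ is tautological, injectivity of $H^1(X)\cong\cH^1_\bot(X)\hookrightarrow K^s$ is immediate, and exactness at $K^s$ comes from observing that $a\in K^s$ with $\rd a=0$ satisfies $\rd^*a=0$ and $a_\bot=0$, hence lies in $\cH^1_\bot(X)$, which is $H^1(X)$ by the Morrey--Friedrichs theorem. If anything you are slightly more careful than the paper, since you explicitly note the regularity point that such an $a$ is a smooth harmonic field and so genuinely belongs to $\cH^1_\bot(X)$ as defined.
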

\begin{proof}
Proposition~\ref{p1.13.9.15} shows that the sequence is exact at $\rd
K^s$.  It is also clear that it is exact at $H^1(X)$ and that it is a
complex. It remains to show that the kernel of $\rd$ is precisely 
$H^1(X)$, identified as $\cH^1_{\bot}(X)$, the harmonic $1$-forms $a$ with
$a_{\bot}=0$. Suppose $\rd a=0$, with $a\in K^s$. Since $\rd^*a=0$ and $a_{\bot}=0$
as part of the definition of $K^s$, 
 $a\in\cH^1_{\bot}(X)$ as required.
\end{proof}

\begin{rmk}  Recall that $H_{\lambda}$ is the $\lambda$-eigenspace of $D_Y$.  
For real $\lambda$, put
\begin{equation}\label{Glambda.eq}
G_\lambda = \{ u \in H_{\lambda} : \rd^* u = 0\}.
\end{equation}
Clearly $G_\lambda$ is finite-dimensional for every $\lambda$ and the
set of $\lambda$ with $G_{\lambda}\neq 0$ is discrete.  It can also be
shown that the set of $\lambda$ with $G_\lambda\neq 0$ is 
unbounded above and below, just as for the $H_\lambda$.  
Denote by $G^{s-1/2}_-(Y)$ the completion of the direct sum
$\oplus_{\lambda<0} G_\lambda$.   Then $K^s$ is isomorphic to
$H_{0,-}(Y)\oplus G^{s-1/2}_-(Y)$ (and is infinite-dimensional), and $\rd
K^s$ is isomorphic to $G^{s-1/2}_-(Y)$.  This follows at once from \eqref{e42.31.1.16}.
\end{rmk}

We may now prove Theorem~\ref{asd_boundary_values}, for which
we need a definition of $\cW_+$.  Recall the splitting
\begin{equation*}
H^{s-1/2}(Y) = H^{s-1/2}_-(Y)\oplus H_{0,-}(Y) \oplus H_{0,+}(Y) \oplus
H^{s-1/2}_+(Y)
\end{equation*}
where the suppressed bundle is $T^*X|Y = \RR \oplus T^*Y$.  The space
$\cW^s$ is by definition
\begin{equation*}
\cW^s = \ker(L^*L)\cap H^{s}
\end{equation*}
and restriction to the boundary gives an isomorphism
\begin{equation*}
\cW^s\cong H^{s-1/2}(Y)
\end{equation*}
(where we identify vector fields with $1$-forms using the
metric). Define
\begin{equation*}
\cW_+^s = \{w \in \cW^s : w|Y \in H_{0,+}(Y) \oplus H^{s-1/2}_+(Y)\}.
\end{equation*}

We shall prove the following sharpened version of Theorem~\ref{asd_boundary_values}.
\begin{thm}\label{t1.18.2.16}
Let $\omega$ be a smooth hyperk\"ahler triple on $X$, inducing
positive mean curvature on the boundary $Y$.  Then the gauge-fixed tangent space
\begin{equation}\label{e21.18.2.16}
T_{[\omega]}\cM^s_+ = T_{\omega}\cH^s \cap \ker(L^*)
\end{equation}
is isomorphic to the direct sum
\begin{equation}\label{e22.18.2.16}
[\cZ^2_-(X)\otimes \RR^3 \cap H^s]
\oplus L(\cW_+^{s+1}).
\end{equation}
Moreover, the summands are naturally isomorphic to the spaces of boundary values
\begin{equation}\label{e22.11.2.16}
\cZ^2_-(X)\otimes \RR^3 \cap H^s  \cong \cH^2_{0,-}(X)\oplus \cH^2_-(X) \oplus G^{s+1/2}_-(Y)
\end{equation}
and
\begin{equation}\label{e21a.18.2.16}
L(\cW_+^{s+1})  \cong H_{0,+}(Y)\oplus H^{s+1/2}_+(Y).
\end{equation}
\end{thm}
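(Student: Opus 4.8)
The plan is to build on the Claim of \S\ref{dirac.background.section} (equation~\eqref{claim_eqn}), which already gives $T_{\omega}\cH^s\cap\ker(L^*) = [\cZ^2_-(X)\otimes\RR^3\cap H^s] + L(\cW^{s+1})$, and to upgrade this to a direct sum with $\cW^{s+1}$ replaced by $\cW_+^{s+1}$, using the sharp description of $\ker D$ in Proposition~\ref{kerd}, valid under Assumption~\ref{a1.31.1.16}. Two structural inputs are used throughout. First, by Lemma~\ref{l1.12.2.16} and Proposition~\ref{linearisation.vector.field}, for a vector field $w$ with dual $1$-form $w^\flat$ the self-dual part $L_+w$ depends only on $D(w^\flat)$ (it is the image of $D(w^\flat)$ under the algebraic inclusion $\Omega^0\oplus\Omega^2_+\hookrightarrow\Omega^2_+\otimes\Omega^2_+$), and $L^*Lw = L^*L_+w$; in particular $D(w^\flat)=0$ implies $L^*Lw=0$, so (identifying vector fields with $1$-forms) $\ker D\cap H^{s+1}\subseteq\cW^{s+1}$, while conversely if the image $Lw$ is a triple of anti-self-dual forms then $(Lw)_+=L_+w=0$, hence $D(w^\flat)=0$. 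Second, restriction to $Y$ gives an isomorphism $\cW^{s+1}\cong H^{s+1/2}(Y)$ which restricts to $\cW_+^{s+1}\cong H_{0,+}(Y)\oplus H^{s+1/2}_+(Y)$.

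The key step is the splitting $\cW^{s+1} = (\ker D\cap H^{s+1})\oplus\cW_+^{s+1}$. The intersection is trivial: if $w$ lies in both then $w^\flat\in\ker D$ with $w|Y\in H_{0,+}(Y)\oplus H^{s+1/2}_+(Y)\subseteq H_0(Y)\oplus H^{s+1/2}_+(Y)$, so Lemma~\ref{l1.11.10.16} forces $w|Y=0$ and hence $w=0$. For spanning, given $w\in\cW^{s+1}$ set $v=D(w^\flat)$; decompose $w|Y = g_- + g_{0,-} + g_{0,+} + g_+$ along \eqref{e1.12.10.16} and $H_0(Y)=H_{0,-}(Y)\oplus H_{0,+}(Y)$, and use Proposition~\ref{kerd} to choose the unique $u\in\ker D\cap H^{s+1}$ with $(u|Y)_-=g_-$ and $(u|Y)_{0,-}=g_{0,-}$. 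Then $D(w^\flat-u)=v=D(w^\flat)$, so $(w^\flat-u)^\sharp$ and $w$ have the same image under $L_+$, giving $L^*L(w^\flat-u)^\sharp = L^*L_+(w^\flat-u)^\sharp = L^*L_+w = L^*Lw = 0$; moreover $(w^\flat-u)|Y\in H_{0,+}(Y)\oplus H^{s+1/2}_+(Y)$, so $(w^\flat-u)^\sharp\in\cW_+^{s+1}$ and $w=u^\sharp+(w^\flat-u)^\sharp$. Since each $w\in\ker D\cap H^{s+1}$ has $L_+w=0$, its image $Lw=L_-w$ is a closed, exact triple of anti-self-dual forms in $H^s$, so $L(\ker D\cap H^{s+1})\subseteq\cZ^2_-(X)\otimes\RR^3\cap H^s$. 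Feeding this splitting into \eqref{claim_eqn} gives $T_{\omega}\cH^s\cap\ker(L^*) = [\cZ^2_-(X)\otimes\RR^3\cap H^s] + L(\cW_+^{s+1})$, i.e.\ \eqref{e22.18.2.16} as a sum.

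To see the sum is direct, let $\theta=Lw$ be a triple of anti-self-dual forms with $w\in\cW_+^{s+1}$; then $L_+w=0$, so $w^\flat\in\ker D\cap H^{s+1}$ with $w|Y\in H_{0,+}(Y)\oplus H^{s+1/2}_+(Y)$, and Lemma~\ref{l1.11.10.16} gives $w=0$, so $\theta=0$. The same argument shows $L$ is injective on $\cW_+^{s+1}$, hence $L(\cW_+^{s+1})\cong\cW_+^{s+1}\cong H_{0,+}(Y)\oplus H^{s+1/2}_+(Y)$, which is \eqref{e21a.18.2.16}. Lastly, \eqref{e22.11.2.16} is the $H^s$-version of the splitting in Proposition~\ref{z2-.splitting.prop}, applied to each of the three $\RR^3$-components: the finite-dimensional spaces $\cH^2_{0,-}(X)$ and $\cH^2_-(X)$ consist of smooth forms and survive intersection with $H^s$ unchanged, while $\{\rd a:(\rd^*+\rd_+)a=0\}\cap H^s = \rd K^{s+1}$, which by \eqref{e42.31.1.16} (as in the remark following Proposition~\ref{z2-.splitting.prop}) is isomorphic to $G^{s+1/2}_-(Y)$.

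The hard part is the spanning of $\cW^{s+1}$ by $(\ker D\cap H^{s+1})$ and $\cW_+^{s+1}$: elements of $\ker D$ need not have boundary values lying purely in the negative part of \eqref{e1.12.10.16}, so one cannot simply project by frequency; instead one subtracts a $\ker D$-element cancelling the negative part of the given boundary data, and the delicate point is to check the correction stays in $\cW^{s+1}$, which is exactly where $L^*L=L^*L_+$ is used. The rest is bookkeeping --- keeping track of the $H^{s+1}$-versus-$H^{s+1/2}$ regularities between $X$ and $Y$, the decomposition $H_0(Y)=H_{0,-}(Y)\oplus H_{0,+}(Y)$, and carrying the $\RR^3$-factor through each identification.
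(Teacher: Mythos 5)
Your proposal is correct and follows essentially the same route as the paper: the decisive step in both is to correct an arbitrary $w\in\cW^{s+1}$ by the element $u\in\ker D$ supplied by Proposition~\ref{kerd} (so that $w-u\in\cW_+^{s+1}$ and $Lu\in\cZ^2_-(X)\otimes\RR^3$, using $D=L_+$ and $L^*L=L^*L_+$), with directness and injectivity of $L$ on $\cW_+^{s+1}$ coming from Lemma~\ref{l1.11.10.16}, and the boundary-value identifications from Proposition~\ref{z2-.splitting.prop} and the remark on $\rd K^s\cong G^{s-1/2}_-(Y)$. Your packaging of the correction as the splitting $\cW^{s+1}=(\ker D\cap H^{s+1})\oplus\cW_+^{s+1}$ is only a cosmetic reorganization of the paper's argument.
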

\begin{proof}
Note first of all that $L$ is injective on $\cW_+^{s+1}$. Indeed, if
$w\in \cW_+^{s+1}$ and $Lw=0$, then in particular $L_+w=0$.  But
Proposition~\ref{linearisation.vector.field} shows that $L_+w$ can be
identified with $Dw$. By Lemma~\ref{l1.11.10.16}
$Dw=0$ and $w\in \cW_+^{s+1}$ implies that $w=0$.

The same argument shows
\begin{equation*}
[\cZ^2_-(X)\otimes \RR^3]\cap L(\cW^{s+1}_+) = 0.
\end{equation*}
Indeed, if $w\in \cW^{s+1}_+$ is such that $Lw$ lies in the intersection, then $L_+w=0$, so $w=0$ as before. 

Since \eqref{e22.11.2.16} follows from our earlier discussion and the isomorphism \eqref{e21a.18.2.16} follows from the injectivity of $L$ on
$\cW_+^{s+1}$, it remains only to prove that the direct sum
\eqref{e22.18.2.16} is equal to the tangent space as given in
\eqref{claim_eqn}:
$$
[\cZ^2_-(X)\otimes \RR^3\cap H^s]
+ L(\cW^{s+1})  \subset
H^s(X,\Lambda^2\otimes \RR^3).
$$
For this, let $w \in \cW^{s+1}$ and 
\begin{equation}\label{e31.18.2.16}
Lw  = L_+w + L_-w
\end{equation}
be the self-dual/anti-self-dual decomposition of the triple $Lw$.
Since $L_-w=\rd_-(\iota_w\omega)\in \cZ^2_-(X)\otimes \RR^3\cap H^s$, we just need to show that we can find $w' \in \cW^{s+1}_+$ with
$$
L_+w = L_+w'.
$$
Let
the boundary value of $w$ be written $w_- + w_+$ where
\begin{equation}\label{e21.11.10.16}
w_- \in H^{s-1/2}_-(Y)\oplus H_{0,-}(Y),\quad
w_+ \in H^{s-1/2}_+(Y)\oplus H_{0,+}(Y).
\end{equation}
Using Proposition~\ref{kerd}, we find $u$ with $D u =0$ and $u|Y = w_-
+ u_+$, where $u_+\in H^{s-1/2}_+(Y)\oplus H_{0,+}(Y)$. Recalling
again that $D = L_+$ and that $L^*L=L^*L_+$ by Lemma \ref{l1.12.2.16}, if we define
$$
w' = w - u,
$$
then we have
$$
L_+w' = L_+w,\quad  L^*Lw' = L^*L_+w' = L^*L_+w = 0,
$$
and $w'|Y=w_+-u_+$ is positive frequency.  Hence $w' \in \cW^{s+1}_+$ with $L_+w=L_+w'$ as required.
%and 
%$$
%Lw' = L_+w + L_-w'.
%$$
%Hence the result.
%
%
%If \eqref{e22.18.2.16} is strictly contained in this sum, then there is $w\in \cW^{s+1}$ with
%\begin{equation}\label{e31.18.2.16}
%Lw \not\in \cZ^2_-(X)\otimes \RR^3
%\end{equation}
%but
%$$v:=w|Y \in H^{s-1/2}_{-}(Y) \oplus H_{0,-}(Y).$$
%Notice that $w$ is the unique extension of $v$ into the interior with $L^*Lw=0$.  But we also have an extension $w'$ on $X$ with $Dw'=0$, $w'|Y = v$ because $v$ is negative frequency.  By 
%Proposition~\ref{kerd}, $w'$ is also unique.  Now  by Proposition~\ref{linearisation.vector.field}, $Dw'=0$ implies that $L_+w'=0$ and this implies, in turn, that $L^*Lw'=0$
%(Lemma~\ref{l1.12.2.16}).  By uniqueness, it follows that $w=w'$. Hence $L_+w =0$, so $Lw$ is an ASD triple, which contradicts \eqref{e31.18.2.16}. 
\end{proof}

\subsection{Proof of Theorem~\ref{global_moduli_space}}
\label{sglobal_mod}

We now show that the moduli space $\cM_+$ of smooth (up to the boundary)
hyperk\"ahler triples inducing positive mean curvature on the boundary
is a manifold.

First we note that $\cM_+$ is well-defined: every smooth hyperk\"ahler
triple $\omega$ (or rather its $\cG_0^{s+1}$-equivalence class) has a
neighbourhood in $\cM_+^s$ homeomorphic to a ball in 
$$
[\cZ^{2}_-(X)\otimes \RR^3 \cap H^s]\oplus L(\cW^{s+1}_+).
$$
The elements of this ball are smooth in the interior and of finite
regularity at the boundary.  However, the parameterization in terms of
boundary values shows that there is a non-zero subspace of smooth
elements of this space: simply choose boundary values in $H^{s}$ on $Y$
for every $s$ (and also satisfying the relevant frequency conditions).

The issue is that the gauged-fixed tangent spaces 
\begin{equation}\label{cT.eq}
T_{[\omega]}\cM_+\cong \cT=[\cZ^2_-(X)\otimes \RR^3] \oplus L(\cW_+)
\end{equation}
depend on $\omega$: the notion of anti-self-duality depends on the
metric, as does $L$, and the operator $D_Y$, which defines
the frequency decomposition that defines $\cW_+$.

Although these spaces move, the claim is that they are all naturally
isomorphic on the path components of $\cM_+$.

\begin{prop}\label{tangent.proj.prop}
Let $\omega_0$ and $\omega_1$ be two smooth hyperk\"ahler triples in
the same path component of $\cM_+$.  Let $\cT_0$ and $\cT_1$ be the
corresponding gauge-fixed tangent spaces as given by \eqref{cT.eq}.  Then the restriction to $\cT_0$ of the
$L^2$-orthogonal projection on $\cT_1$ is an isomorphism.
\end{prop}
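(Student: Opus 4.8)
The plan is to join $\omega_0$ and $\omega_1$ by a path and run a continuity-and-compactness argument, the point being that the family of tangent spaces moves continuously. Since $[\omega_0]$ and $[\omega_1]$ lie in the same path component of $\cM_+$, I would first join them by a continuous path $[\omega_t]$, $t\in[0,1]$, and lift it to a continuous path $\omega_t$ of smooth hyperk\"ahler triples inducing positive mean curvature on $Y$; such a lift exists because near a smooth point the slice of Theorem~\ref{t1.18.2.16} is a ball of hyperk\"ahler triples, and the representatives can be kept smooth up to the boundary by taking the boundary data on $Y$ smooth. Write $g_t$, $L_t$ (defined as in \eqref{e11.10.2.16} using $\omega_t$) and $D_{Y,t}$ (the tangential operator \eqref{e92.31.1.16} of $g_t$) for the data of $\omega_t$, and
\[
\cT_t = [\cZ^2_-(X,g_t)\otimes\RR^3 \cap H^s] \oplus L_t(\cW_{+,t}^{s+1}) \subset H^s(X,\Lambda^2\otimes\RR^3).
\]
By Theorem~\ref{t1.18.2.16}, $\cT_t$ is carried by a bounded isomorphism onto the complete space $\cH^2_{0,-}(X)\oplus\cH^2_-(X)\oplus G^{s+1/2}_-(Y)\otimes\RR^3\oplus H^{s+1/2}_+(Y)$ (formed from $g_t$), so $\cT_t$ is closed and the $L^2$-orthogonal projection $\Pi_t$ onto it is a well-defined bounded operator on $H^s(X,\Lambda^2\otimes\RR^3)$.

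\textbf{The core step} is to show that $t\mapsto\Pi_t$ is continuous in operator norm. I would do this by writing $\Pi_t$ explicitly in terms of the solution operators of the elliptic boundary value problems on $(X,g_t)$ underlying Theorem~\ref{t1.18.2.16}: the Green and Poisson operators for $D_t=\rd^*+\rd_{+,t}$ and for the coupled Dirac operator $\cD_t$, the Laplacians used in Lemmas~\ref{coulomb.gauge} and~\ref{second.order.gauge}, and the finite-rank projections onto the harmonic spaces $\cH^2_{0,-}(X)$, $\cH^2_-(X)$, $\cH^1_{\bot}(X)$; all of these have coefficients depending smoothly on $\omega_t$, so standard elliptic perturbation theory gives continuity of each. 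The one delicate ingredient is the frequency splitting determined by $D_{Y,t}$: here one uses that $\ker D_{Y,t}$ consists of a constant together with a harmonic $1$-form on $(Y,h_t)$, hence has the $t$-independent dimension $1+b^1(Y)$, so that no eigenvalue of $D_{Y,t}$ crosses $0$ along the path and the spectral projections onto $H^{s-1/2}_{\pm}(Y)$ and $H_{0,\pm}(Y)$ vary continuously with $t$.

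\textbf{Conclusion.} I would then invoke the elementary fact that if $V,W$ are closed subspaces of a Hilbert space with $\|\Pi_V-\Pi_W\|<1$ then $\Pi_W|_V\colon V\to W$ is a Banach-space isomorphism (it is bounded below by $1-\|\Pi_V-\Pi_W\|$, and $\Pi_W\Pi_V|_W$ is invertible on $W$). By continuity of $t\mapsto\Pi_t$ and compactness of $[0,1]$, choose $0=t_0<\cdots<t_N=1$ with $\|\Pi_{t_k}-\Pi_{t_{k+1}}\|<1$; then each $\Pi_{t_{k+1}}|_{\cT_{t_k}}\colon\cT_{t_k}\to\cT_{t_{k+1}}$ is an isomorphism. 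For $\omega_0$, $\omega_1$ close this is already the assertion, which is what is needed for smooth transition maps in the manifold structure of $\cM_+$. To reach the assertion for arbitrary $\omega_0$, $\omega_1$ in the same path component — that the single projection $\Pi_1|_{\cT_0}$, not merely a composition of the $\Pi_{t_{k+1}}|_{\cT_{t_k}}$, is an isomorphism — one must show $\cT_0\cap\cT_t^{\perp}=0$ and $\cT_t\cap\cT_0^{\perp}=0$ (in $L^2$), with a uniform lower bound on the angle, for \emph{every} $t\in[0,1]$. Here I would use the splitting $\cT_t=[\text{closed anti-self-dual forms for }g_t]\oplus L_t(\cW_{+,t}^{s+1})$ together with: the algebraic fact that at each point of $X$ a maximal negative and a maximal positive subspace of the signature-$(3,3)$ wedge form on $\Lambda^2$ meet only in $0$, so that $\Lambda^2_-(g_0)$ and $\Lambda^2_+(g_t)$ are fibrewise transverse with angle bounded below (by compactness of $X$), which controls the anti-self-dual parts; and the injectivity of $L_{+,t}$ on $\cW_{+,t}^{s+1}$ from the proof of Theorem~\ref{t1.18.2.16}, which makes $L_t(\cW_{+,t}^{s+1})$ a graph over its self-dual part, so that together with the absence of zero-crossings of $D_{Y,t}$ the remaining pieces stay transverse along the whole path.

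\textbf{Main obstacle.} The hard part is the operator-norm continuity of $t\mapsto\Pi_t$: each constituent is a standard elliptic problem, but $\Pi_t$ is assembled from the solution operators of several coupled problems whose boundary conditions themselves move with $t$ through the spectrum of $D_{Y,t}$, and turning these into a norm-continuous family — together with the uniform transversality estimate between the metric-dependent (anti-)self-dual splitting and the metric-dependent frequency splitting that upgrades the local statement to the path-component statement — is where the real work lies.
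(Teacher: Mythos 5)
Your opening reduction is the same as the paper's (the restricted projection is an isomorphism iff $\cT_0\cap\cT_1^{\perp}=0=\cT_1\cap\cT_0^{\perp}$ in $L^2$), but the route you then take has a genuine gap. Your main mechanism --- operator-norm continuity of $t\mapsto\Pi_t$ plus chaining over a fine subdivision --- only shows that each $\Pi_{t_{k+1}}|_{\cT_{t_k}}$ is an isomorphism, and you yourself concede this does not give the statement for the single projection $\Pi_1|_{\cT_0}$ with arbitrary endpoints; so everything rests on your final sketch, and that sketch does not prove the needed transversality. Membership in $\cT_0^{\perp}$ is a \emph{global} $L^2$ condition (orthogonality to all closed $g_0$-anti-self-dual triples and to $L(\cW_+)$ formed from $\omega_0$), which is much weaker than any pointwise condition: a nonzero closed $g_1$-ASD triple could a priori be $L^2$-orthogonal to every closed $g_0$-ASD triple without violating the fibrewise transversality of $\Lambda^2_-(g_0)$ and $\Lambda^2_+(g_1)$, and elements of $L_1(\cW_{+,1})$ have both self-dual and anti-self-dual parts, so the pointwise signature-$(3,3)$ argument controls none of the intersections you need. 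Likewise, ``together with the absence of zero-crossings of $D_{Y,t}$ the remaining pieces stay transverse along the whole path'' is an assertion of the conclusion, not an argument: constancy of $\dim\ker D_{Y,t}$ gives continuity of the spectral projections of the one family $D_{Y,t}$, but says nothing by itself about the \emph{relative} position of the $t=1$ and $t=0$ frequency splittings, which is exactly what could fail.

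What the paper actually does at this point is push the transversality question entirely to the boundary. Using the boundary-value isomorphisms of Theorem~\ref{t1.18.2.16}, $\cZ^2_-(X)\otimes\RR^3\cong\cH^2_{0,-}(X)\oplus\cH^2_-(X)\oplus G_-(Y)$ and $L(\cW_+)\cong H_{0,+}(Y)\oplus H_+(Y)$ (all formed from $\omega_t$), a failure of $\cT_1\cap\cT_0^{\perp}=0$ forces either $G_-(1)\cap[G_0(0)\oplus G_+(0)]\neq 0$ or $F_+(1)\cap F_-(0)\neq 0$; following such an element along the path, a crossing would force a jump in $\dim G_0(t)\cong\dim H^1(Y)$ or in $\dim H_{0,+}(Y)\cong\dim\bigl(H^1(Y)/\im(H^1(X)\to H^1(Y))\bigr)$, and these dimensions are topologically fixed. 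So the ``no zero-crossing'' fact you cite is indeed the key ingredient, but it must be deployed on the boundary to compare the two frequency decompositions along the path (a spectral-flow/intermediate-value argument), not merely to get norm-continuity of $\Pi_t$. If you replace the fibrewise-transversality step by this boundary-data comparison you essentially recover the paper's proof; as written, that crucial step is missing. (A smaller technical wobble in your chaining route: $\cT_t$ is closed in $H^s$, not in $L^2$, so the ``$L^2$-orthogonal projection onto $\cT_t$, bounded on $H^s$'' and the Hilbert-space lemma $\|\Pi_V-\Pi_W\|<1$ really concern the $L^2$-closures; this would also need attention, though it is moot since that route does not suffice anyway.)
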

\begin{proof}
Denote by $\cT_i^{\perp}$ the $L^2$-orthogonal complement of $\cT_i$ in
$L^2(X,\Lambda^2\otimes \RR^3)$.   Note first the standard fact that
the restricted $L^2$-orthogonal projection maps are isomorphisms if and only if 
\begin{equation}\label{e35.18.2.16}
\cT_0\cap \cT_1^{\perp} = 0 = \cT_1\cap \cT_0^{\perp}.
\end{equation}
To see this, let $\pi : \cT_0 \to \cT_1$ be the restricted projection map. Then $\pi$
is injective if and only if $\cT_0\cap \cT_1^{\perp}=0$.  If $\pi$ is not
surjective, there is $\xi\in \cT_1$, orthogonal to the image $\pi(\cT_0)$
of $\cT_0$ in $\cT_1$. If $\eta \in \cT_0$ and we write $\eta = \eta_1 +
\eta_1^{\perp}\in \cT_1\oplus\cT_1^{\perp}$, then
\begin{equation*} 
\langle \xi,\eta\rangle = \langle \xi,\eta_1\rangle =0
\end{equation*}
because $\pi(\eta) = \eta_1$.  This is true for all $\eta \in \cT_0$ so $\xi \in \cT_1\cap
\cT_0^{\perp}$.  So the assumption \eqref{e35.18.2.16} implies that
$\xi=0$, and $\pi$ is surjective.

It therefore suffices to prove \eqref{e35.18.2.16}.  By hypothesis, there is
a path of hyperk\"ahler triples $\omega(t)$, $0\leq t\leq 1$,
connecting $\omega_0$ to $\omega_1$ in $\cM_+$, and a corresponding
continuous path $\cT_t$ of gauge-fixed tangent spaces.  If one of
\eqref{e35.18.2.16} fails, then we may suppose by symmetry that $\cT_1\cap
\cT_0^\perp\neq 0$. 

We shall use the boundary value description:
\begin{align*}
\cZ^2_-(X)\otimes \RR^3  \cong  \cH^2_{0,-}(X)\oplus \cH^2_-(X) \oplus G_-(Y)
\quad\text{and}\quad
L(\cW_+)  \cong H_{0,+}(Y)\oplus H_+(Y).
\end{align*} 
Note that $G_\lambda$ given in 
\eqref{Glambda.eq} can also be characterized as the subspace of 
$H_\lambda$, the $\lambda$-eigenspace of $D_Y$, with the function
component zero. So we have a decomposition 
$$
\Omega^1(Y) = G_-(t)\oplus G_0(t) \oplus G_+(t)
$$
for all $t$.  Notice that 
$G_0(t)$ consists of the harmonic 1-forms on $Y$, so $G_0(t)\cong H^1(Y)$.   Let $F_+(t)$ denote the space $H_{0,+}(Y)\oplus H_+(Y)$ as
defined by $\omega_t$ and $F_-(t)$ be its orthogonal complement.   Thus
the boundary values of $\cW_+(t)$ lie in $F_+(t)$.   Moreover, recall 
that $H_{0,+}(Y)\cong H^1(Y)/\im(H^1(X)\to H^1(Y))$, so $H_{0,+}(Y)$ has topologically determined dimension.

Suppose $\cT_1\cap \cT_0^\perp =0$ fails.  Then we have
$$
G_-(1) \cap [G_0(0)\oplus G_+(0)]\neq 0\mbox{ or } F_+(1) \cap F_-(0) \neq 0.
$$
Suppose the first possibility occurs.  Then for some $t$, $G_-(t)$ contains
an element of $G_0(0)$. However, as we observed, $G_0(t)$ is of fixed
 dimension equal to $\dim H^1(Y)$, giving a contradiction.  The second
  possibility is ruled out for a similar reason, since $H_{0,+}(Y)$ has a 
  fixed dimension.
\end{proof}

We now prove Theorem~\ref{global_moduli_space}
which we restate for
convenience. 

\begin{thm*}
  The moduli space $\cM_+$ of hyperk\"ahler triples on $X$ inducing positive mean curvature on the boundary $Y$, modulo the action of $\cG_0$, is a 
  Fr\'echet manifold with
  $$T_{[\omega]}\cM_+=[\cZ^2_-(X)\otimes\RR^3]\oplus L(\cW_+).$$
  It should be noted that the spaces on the right-hand side depend on $\omega$.
\end{thm*}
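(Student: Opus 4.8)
The plan is to combine the local slice/moduli results already established with the comparison result Proposition~\ref{tangent.proj.prop}, using the latter to build compatible charts that assemble into a Fréchet manifold structure. Recall from Theorem~\ref{regularity} and Theorem~\ref{t1.18.2.16} that for each $s>4$ a neighbourhood of $[\omega]$ in $\cM^s_+$ is homeomorphic to a ball about the origin in the Banach space $\cT^s=[\cZ^2_-(X)\otimes\RR^3\cap H^s]\oplus L(\cW^{s+1}_+)$, and that this space is parametrized by its boundary data via \eqref{e22.11.2.16} and \eqref{e21a.18.2.16}. The first step is to take the inverse limit: the smooth moduli space $\cM_+$ (triples smooth up to the boundary) sits inside each $\cM^s_+$, and its local model at $[\omega]$ is $\cT=\bigcap_{s}\cT^s=[\cZ^2_-(X)\otimes\RR^3]\oplus L(\cW_+)$, a Fréchet space as a countable intersection of Banach spaces with the induced projective-limit topology. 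The slice theorem (Theorem~\ref{main.slice.thm}) together with the interior-regularity statement of Theorem~\ref{regularity} shows these charts are genuine homeomorphisms onto open subsets of $\cT$, and the existence of smooth elements with prescribed $H^s$-boundary data for every $s$ (noted at the start of \S\ref{sglobal_mod}) guarantees that the chart domain really is a neighbourhood in the \emph{smooth} moduli space, not just a lower-regularity one. This gives, for each smooth hyperkähler triple $\omega$, a chart $\Phi_\omega$ from a neighbourhood of $[\omega]$ in $\cM_+$ to an open set in the Fréchet space $\cT_\omega$.

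The second step is to check that the transition maps between two such charts, $\Phi_{\omega_1}\circ\Phi_{\omega_0}^{-1}$, are smooth maps of Fréchet manifolds. Here is where Proposition~\ref{tangent.proj.prop} enters: for $\omega_0,\omega_1$ in the same path component, the $L^2$-orthogonal projection restricts to a (bicontinuous linear) isomorphism $\cT_0\to\cT_1$, so the two model spaces are canonically identified. The transition map is then the composition of this linear isomorphism with the change-of-slice diffeomorphism coming from the Ebin–Palais slice theorem, and smoothness of the latter is exactly the content of the inverse-function-theorem argument in \S\ref{slice_theorem_section} (applied at each level $s$, then passing to the limit; one must verify the usual tame-estimate or level-wise-smoothness conditions so that the family of Banach-space maps descends to a smooth map of the Fréchet limits). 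Because the transition maps are composites of smooth maps, $\cM_+$ acquires an atlas with smooth transitions and is therefore a Fréchet manifold, with $T_{[\omega]}\cM_+\cong\cT_\omega=[\cZ^2_-(X)\otimes\RR^3]\oplus L(\cW_+)$ as claimed.

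The main obstacle is the second step, specifically verifying that the level-wise Banach constructions are compatible across $s$ in a way that yields \emph{smoothness} (not merely continuity) of the transition maps on the Fréchet limit. The subtlety, already flagged in \S\ref{spinor.section}, is that the diffeomorphism action is not smooth on $\cF^s$ — it loses a derivative — so one cannot naively assert Fréchet-smoothness; the resolution is to use, just as Ebin does, that the \emph{restricted} action through a fixed smooth triple is smooth at every level (Lemma~\ref{smooth.action}, Lemma~\ref{Lphi.smooth}), and that the relevant elliptic operators and their parametrices depend smoothly on $\omega$, uniformly enough in $s$. Concretely one checks that the chart maps $\Phi_\omega$ and their inverses, built from the Banach slice charts and the Poisson/Dirac parametrices of \S\ref{sect5}, commute with the inclusions $H^{s+1}\hookrightarrow H^s$ up to smooth correction, so the induced map on $\varprojlim$ is smooth in the convenient (projective-limit) sense. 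Finally, one records the remark accompanying the theorem: although $\cT_\omega$ depends on $\omega$, Proposition~\ref{tangent.proj.prop} shows these spaces are canonically isomorphic along path components, so the tangent-space description is well-posed.
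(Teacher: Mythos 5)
Your overall architecture is the same as the paper's: local models $[\cZ^2_-(X)\otimes\RR^3]\oplus L(\cW_+)$ supplied by Theorem~\ref{t1.18.2.16}, the remark that smooth elements with boundary data in every $H^s$ show the charts really do cover the smooth moduli space, and Proposition~\ref{tangent.proj.prop} to identify the $\omega$-dependent models along path components. The genuine gap is in your second step, the smoothness of the transition maps. You present the transition as ``the linear isomorphism of Proposition~\ref{tangent.proj.prop} composed with the change-of-slice diffeomorphism coming from the Ebin--Palais slice theorem'' and assert that its smoothness ``is exactly the content of the inverse-function-theorem argument in \S\ref{slice_theorem_section}'', modulo tame estimates. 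But that argument only shows that $F(\phi,\chi)=\phi^*\omega+\chi$ is a local diffeomorphism for the \emph{fixed smooth} triple $\omega$ (Lemma~\ref{smooth.action}); the full action $(\phi,\hat\omega)\mapsto\phi^*\hat\omega$ loses a derivative and is not even $C^1$ at any fixed Sobolev level, and the triples one must pull back when changing slice are the \emph{varying} elements of the slice, which in general are only smooth in the interior of $X$, so Lemmas~\ref{smooth.action} and~\ref{Lphi.smooth} do not apply to them. Consequently the map $\hat\omega\mapsto\phi(\hat\omega)^*\hat\omega$ implementing the change of slice is only known to be continuous, and the ``tame-estimate or level-wise-smoothness conditions'' you defer to are precisely the unproven substance of your argument; nothing of Nash--Moser type is verified (or needed) in the paper.

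The paper's proof of Theorem~\ref{global_moduli_space} avoids slice-change diffeomorphisms altogether. Because $\cQ$ is a submersion, the gauge-fixed moduli space is locally a \emph{smooth graph} over the gauge-fixed tangent space, so on a component of an overlap $\cM_+$ is simultaneously a smooth graph over $\cT_0$ and over $\cT_1$; by Proposition~\ref{tangent.proj.prop} the space $\cT_1$ is itself a graph over $\cT_0$ under $L^2$-orthogonal projection, and hence the transition map is a composition of projections to and from smooth graphs over open sets in Fr\'echet spaces, which is smooth. To repair your write-up, either carry out the tame estimates you allude to (a substantial separate project, and not what the paper does) or replace the change-of-slice step by this graph-projection argument, which uses only ingredients already established (the submersion property of $\cQ$, Theorem~\ref{t1.18.2.16}, and Proposition~\ref{tangent.proj.prop}).
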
  

\begin{proof}  We have seen that on each connected component all
  tangent spaces are canonically identifiable with each other.  It
  follows from this that the transition maps between different
  coordinate patches are smooth as follows.

On any component of an overlap between two charts, which are
necessarily determined by $[\omega_0]$ and $[\omega_1]$ which are
path-connected, $\cM_+$ can be written as a smooth graph over
the tangent spaces $\cT_{0}$ and $\cT_{1}$.   
Since $\cT_{1}$ is a graph over $\cT_{0}$ by Proposition \ref{tangent.proj.prop}, the transition map on the component will be a composition 
of projections from and to smooth graphs over open sets in Fr\'echet
spaces, and thus is smooth.
\end{proof}

\section[$\SU(2)$-invariant examples]{SU(2)-invariant examples}
\label{examples}

Complete $\SU(2)$-invariant hyperk\"ahler metrics in 4 dimensions have
been well understood for many years \cite{MR934202, GibbonsPope}.  We
give a brief description of the classification from our present point
of view as a further illustration of the formalism of triples and to
justify explicitly the claim of the Introduction that a given metric
on a $3$-manifold can arise by restriction of two non-isometric
hyperk\"ahler metrics.

The $\SU(2)$-invariant hyperk\"ahler metrics fall into two classes, according as the corresponding
hyperk\"ahler triple is fixed or rotated under the $\SU(2)$-action. 
 In both cases one seeks hyperk\"ahler triples of the form $\omega=\rd t\wedge\eta_t+*_t\eta_t$ where $\eta_t$ is a family of left-invariant coclosed coframes on $\SU(2)$ (or quotients thereof) and $*_t$ is the induced Hodge star on each hypersurface in the 4-manifold given by fixing $t$.  We briefly review the analysis of these
gravitational instantons.

For the case where the triple is fixed one chooses the standard left-invariant coframing $\eta=(\eta_1,\eta_2,\eta_3)$ of $\SU(2)$ such that $\rd \eta_i=\epsilon_{ijk}\eta_j\wedge\eta_k$ and considers 
$\omega=(\omega_1,\omega_2,\omega_3)$ where
\begin{equation}\label{e1.13.8.15}
\omega_1=f_1\rd t\wedge \eta_1+f_2f_3 \eta_2\wedge \eta_3,\quad \omega_2=f_2\rd t\wedge \eta_2+f_3f_1 \eta_3\wedge \eta_1,
\quad \omega_3=f_3\rd t\wedge \eta_3+f_1f_2 \eta_1\wedge \eta_2,
\end{equation}
for a triple of $t$-valued functions $f=(f_1,f_2,f_3)$.  This triple
automatically satisfies the orthogonality conditions
\eqref{omegas_orthogonal} provided that $f_1f_2f_3\neq 0$, and so
will define a hyperk\"ahler structure if $\rd \omega_i=0$. This is
equivalent to the following system of ODEs:
\begin{equation}\label{ODEs}
\frac{\rd f_1}{\rd t}=\frac{f_2^2+f_3^2-f_1^2}{f_2f_3},\quad  
\frac{\rd f_2}{\rd t}=\frac{f_3^2+f_1^2-f_2^2}{f_3f_1},\quad
\frac{\rd f_3}{\rd t}=\frac{f_1^2+f_2^2-f_3^2}{f_1f_2}.
\end{equation}
There are then three possibilities.
\begin{itemize}
\item
When $f_1=f_2=f_3$, one quickly obtains the standard flat metric on $\RR^4$ and the standard triple where $f_1=f_2=f_3=t=r$, the radial distance from the origin.   The closed framings of the 2-forms on the $S^3$ orbits of the action are in this case simply 
\[
r^2(\eta_2\wedge\eta_3,\eta_3\wedge\eta_1,\eta_1\wedge\eta_2).
\]  
\item When $f_1\neq f_2=f_3=r$, then one finds that $f_1=r(1-c^4/r^4)^{\frac{1}{2}}$ for a constant $c>0$ where $r\geq c$, which gives the Eguchi--Hanson metric on $T^*S^2$, given for $r>c$ by:
\[
\left(1-\frac{c^4}{r^4}\right)^{-1}\rd r^2+r^2\left(1-\frac{c^4}{r^4}\right)\eta_1^2+r^2\eta_2^2+r^2\eta_3^2.
\]
  The orbits where $r>c$ is constant are
  $\RR\bP^3$s, whereas the exceptional orbit where $r=c$ gives the
  $S^2$ ``bolt'' which is the zero section.  The closed framings of
  $\Lambda^2T^*\RR\bP^3$ are: 
\[
(r^2\eta_2\wedge\eta_3, r^2(1-c^4/r^4)^{\frac{1}{2}}\eta_3\wedge\eta_1,r^2(1-c^4/r^4)^{\frac{1}{2}}\eta_1\wedge\eta_2).
\]  
As $r\to\infty$ these framings approach the standard closed framing on $\RR\bP^3$.  The induced metric on each $\RR\bP^3$ is a Berger metric $r^2(1-c^4/r^4)\eta_1^2+r^2\eta_2^2+r^2\eta_3^2$, where the relative ``squashing'' of the circle corresponding to $\eta_1$ can take any value in $(0,1)$.  Taking the same closed framings on $S^3$ will not lead to a complete invariant hyperk\"ahler metric, but instead to a double cover of the Eguchi--Hanson space. 
\item When all of the $f_i$ are distinct, one does not obtain a complete metric.
\end{itemize}

If one now wants to study invariant hyperk\"ahler metrics where the action rotates the frame one views the standard left-invariant coframe on $\SU(2)$ as a 1-form taking values in the imaginary quaternions (rather than $\RR^3$).  If we also identify points $q\in \SU(2)\cong S^3$ with unit quaternions, we may define a triple $\hat{\omega}$ of 2-forms by
$\hat{\omega}|_q=q\omega q^{-1}$, where $\omega$ is as in \eqref{e1.13.8.15} (now
viewed as taking values in the imaginary quaternions).  This time, in
place of \eqref{ODEs}, we obtain
\begin{equation}\label{ODEs2}
\frac{\rd f_1}{\rd t}=\frac{f_2^2+f_3^2-f_1^2-2f_2f_3}{f_2f_3},\!\quad\!  
\frac{\rd f_2}{\rd t}=\frac{f_3^2+f_1^2-f_2^2-2f_3f_1}{f_3f_1},\!\quad\!
\frac{\rd f_3}{\rd t}=\frac{f_1^2+f_2^2-f_3^2-2f_1f_2}{f_1f_2}.
\end{equation}
Again, there are three possibilities.
\begin{itemize}
\item
When $f_1=f_2=f_3$, one unsurprisingly again obtains the flat hyperk\"ahler metric on $\RR^4$ since $f_1=f_2=f_3=-t$.
\item
When $f_1\neq f_2=f_3$, one can solve \eqref{ODEs2} with $f_1=2m(r-m)^{1/2}(r+m)^{-1/2}$ and $f_2=f_3=(r^2-m^2)^{1/2}$ 
for a constant $m>0$  where $r\geq m$.  This leads again to a metric on $\RR^4$ which now has cubic volume growth at infinity, known as the Taub--NUT metric (with ``mass'' $m$):
\[
\frac{1}{4}\frac{r+m}{r-m}\rd r^2+4m^2\frac{r-m}{r+m}\eta_1^2+(r^2-m^2)\eta_2^2+(r^2-m^2)\eta_3^2.
\]
So we see on the hyperspheres where $r$ is constant the induced metric, as for the Eguchi--Hanson metric, is a Berger metric where the relative ``squashing'' of the circle factor on $S^3$ corresponding to $\eta_1$ can again take any value in $(0,1)$.  This shows that although the metrics on $S^3$ here and in the double cover of the Eguchi--Hanson metric are the same, the closed framings of the bundle of 2-forms are different so that one finds different hyperk\"ahler triples (and hence metrics) extending them, as we must have by Theorem \ref{bryant.thm}.  One can describe the closed framings of $\Lambda^2T^*S^3$ as $q\gamma q^{-1}$ where $\gamma$ is the triple
\[
\big((r^2-m^2)\eta_2\wedge\eta_3, 2m(r-m)\eta_3\wedge\eta_1, 2m(r-m)\eta_1\wedge\eta_2\big).
\]
\item When the $f_i$ are all distinct, one can solve explicitly \eqref{ODEs2} using elliptic functions and obtain the Atiyah--Hitchin metric, defined on $S^4\setminus \RR\bP^2$, which arises in the study of moduli spaces of monopoles on $\RR^3$.  The metric near the Veronese $\RR\bP^2$ at infinity is asymptotic to the Taub--NUT metric with mass $m<0$.   Here the orbits are $\SO(3)/(\ZZ_2\times\ZZ_2)$, except for an exceptional $\RR\bP^2$ orbit, and one can write the induced closed framings of the 2-forms in terms of elliptic functions, and also observe that the induced metrics are no longer Berger metrics.
  (One may also consider the double cover of the Atiyah--Hitchin
  metric on $\bCP^2\setminus S^2$, that has $\SU(2)/\ZZ_4$ as the orbits of the
  action except for a special $S^2$ orbit, and which now can be deformed in a
  1-parameter family of gravitational instantons that are not
  $\SU(2)$-invariant \cite{Dancer}.)
\end{itemize}

We focus on the simplest example of a hyperk\"ahler 4-manifold with boundary arising from this analysis, namely the unit 4-ball with the flat metric.  
We let $\eta$ be the standard left-invariant coframe on $S^3$, let $\omega$ be the standard hyperk\"ahler triple on $B^4$ and let $\gamma=\omega|{S^3}$.

  As we have seen, a key point is to study the closed anti-self-dual 2-forms.  We see that in this case they are simple to describe explicitly using negative frequency data on the boundary.

\begin{lem}\label{B4.ASD}  Let $E_k=\{\alpha\in \Omega^2(S^3):\rd *\alpha=-k\alpha\}$ for $k\in\bN\setminus\{1\}$.
Closed anti-self-dual 2-forms on $B^4$ are given by $\sum_{k=2}^{\infty} \rd(r^{k}*\alpha_k)$ where $\alpha_k\in E_k$. 
Hence, $\cZ^2_-(B^4)$ is isomorphic to
\[
\cZ^2_-(S^3)=\{\alpha\in \Omega^2(S^3):\rd\alpha=0,\,\alpha
\in\oplus_{k=2}^{\infty} E_k\}.
\]
\end{lem}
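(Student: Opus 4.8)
The plan is to use that $(B^4,\text{flat})$ is the metric cone over the round $S^3$, $g=\rd r^2+r^2 g_{S^3}$, and separate variables in the radial direction. First I would identify $B^4\setminus\{0\}$ with $(0,1]\times S^3$ and write a $2$-form there as $\beta=\rd r\wedge a+b$ with $a=a(r)\in\Omega^1(S^3)$ and $b=b(r)\in\Omega^2(S^3)$. A direct computation with the conical metric gives $*(\rd r\wedge a)=r\,*_{S^3}a$ and $*\,b=r^{-1}\rd r\wedge *_{S^3}b$, so the anti-self-duality $*\beta=-\beta$ is equivalent to the single algebraic identity $a=-r^{-1}*_{S^3}b$; thus a closed ASD triple is completely encoded by the family $b(r)$ alone, via $\beta=b-r^{-1}\rd r\wedge *_{S^3}b$. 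Substituting this into $\rd\beta=0$ (writing $\rd=\rd r\wedge\partial_r+\rd_{S^3}$ and $r^{-1}\rd r=\rd\log r$) splits the closedness condition into $\rd_{S^3}b=0$ together with the \emph{first-order} radial equation $r\,\partial_r b=-\rd_{S^3}(*_{S^3}b)$.

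Next I would diagonalise the operator $\mathcal{A}b=\rd_{S^3}(*_{S^3}b)$ on closed $2$-forms on $S^3$. Via the isomorphism $*_{S^3}\colon \cZ^2(S^3)\to\{\text{coclosed }1\text{-forms on }S^3\}$ it is conjugate to the curl operator $\psi\mapsto *_{S^3}\rd_{S^3}\psi$, a formally self-adjoint first-order operator with discrete real spectrum and finite-dimensional eigenspaces; its $(-k)$-eigenspace is precisely $E_k$, and for the round metric the nonzero eigenvalues of curl are $\pm m$ with $m\ge 2$ (so $E_k=0$ for $|k|<2$; recall also $H^2(S^3)=0$). This last fact is visible concretely from $\rd_{S^3}\eta_i=2\,\eta_j\wedge\eta_k$ for the left-invariant coframe. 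Expanding $b(r)$ into $\mathcal{A}$-eigencomponents, the radial equation decouples into scalar ODEs $r\,b_k'=k\,b_k$, whose only solutions are the pure powers $b_k(r)=r^k\alpha_k$ with $\alpha_k\in E_k$ constant — no logarithmic terms occur precisely because the equation is first order in $r$. A one-line computation then shows that the corresponding piece of $\beta$ equals $-\tfrac1k\rd(r^k *_{S^3}\alpha_k)$, so after rescaling $\alpha_k$ we obtain $\beta=\sum_k\rd(r^k *_{S^3}\alpha_k)$.

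It remains to determine which $k$ occur and to treat convergence. A closed ASD $2$-form is coclosed, hence harmonic, hence smooth on the open ball, so interior elliptic regularity at the (smooth) point $0$ both legitimises the eigenfunction expansion, with $C^\infty_{\mathrm{loc}}$-convergence in the interior, and — this is the key point — rules out every radial mode $r^k$ with $k\le1$: the modes with $k\le-2$ are unbounded at $0$, while $E_0=E_1=0$; this leaves exactly $k\ge2$. Finally, the assignment sending $\beta=\sum_{k\ge2}\rd(r^k *_{S^3}\alpha_k)$ to its boundary data $\sum_{k\ge2}\alpha_k\in\bigoplus_{k\ge2}E_k$ is the claimed isomorphism $\cZ^2_-(B^4)\cong\cZ^2_-(S^3)$: it is injective because the $\alpha_k$ determine $\beta$, and surjective because for any closed $\sum_{k\ge2}\alpha_k$ with $\alpha_k\in E_k$ the series $\sum_{k\ge2}\rd(r^k *_{S^3}\alpha_k)$ converges — the factors $r^k\le1$ only improve matters in the interior — to a closed ASD $2$-form on $B^4$ with the prescribed data. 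I expect the main obstacle to be the bookkeeping in the first step (getting the conical weights $r^{\pm1}$ right and checking that ASD plus closedness reduce to exactly the clean pair above) together with the appeal to regularity at the origin needed to discard the non-positive radial modes; the spectral input on $S^3$ is classical.
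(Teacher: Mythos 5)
Your proposal is correct and takes essentially the same route as the paper: the paper's two-line proof simply cites the classical spectrum of $\rd *$ on closed $2$-forms on the round $S^3$ (eigenvalues $k\in\ZZ\setminus\{0,\pm1\}$) and the correspondence between closed ASD $2$-forms on $B^4$ and these eigenforms with radial factor $r^k$, which is exactly the separation-of-variables computation you spell out (the cone identities, the reduction to $\rd_{S^3}b=0$ and $r\partial_r b=-\rd_{S^3}(*_{S^3}b)$, and the regularity-at-the-origin argument discarding the modes with $k\le 1$). One small caveat: $\rd_{S^3}\eta_i=2\,\eta_j\wedge\eta_k$ only exhibits the eigenvalue $2$ and does not by itself exclude eigenvalues $0,\pm1$, so, like the paper, you are ultimately relying on the classical spectral computation for curl on the round $S^3$ rather than proving it.
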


\begin{proof}
The eigenvalues of $\rd *$ on closed 2-forms on $S^3$ are well-known to be $k\in\ZZ\setminus\{0,\pm 1\}$ with multiplicity $k^2-1$.  The result then follows from the one-to-one correspondence between closed anti-self-dual 2-forms on $B^4$ and 
eigenforms for $\rd *$ on $S^3$.
\end{proof}

This lemma together with our main results allow us to explicitly describe the moduli space of hyperk\"ahler triples on 
$B^4$ in terms of boundary values on $S^3$ as follows.  Notice that, by Theorem \ref{smooth_moduli_space}, the true 
space of hyperk\"ahler deformations of the flat metric on $B^4$, working up to the action of diffeomorphisms which can move the boundary $S^3$,  is 
described by the quotient $\cT/L(\cW)$ where $\cT=[\cZ^2_-(B^4)\otimes\RR^3]+L(\cW)$. 

\begin{prop}
On $B^4$, $\cT/L(\cW)\cong\{\alpha\in \cZ^2_-(S^3)\otimes\RR^3: (\alpha_i,\gamma_j)\in C^{\infty}(S^3,S^2_0\RR^3)\}.$ 
\end{prop}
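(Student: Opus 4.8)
\emph{Reduction of the quotient.} The plan is to push everything onto the boundary $S^3$ and read off an explicit complement. Since $\cT=[\cZ^2_-(B^4)\otimes\RR^3]+L(\cW)$, we have
$$\cT/L(\cW)\;\cong\;\bigl[\cZ^2_-(B^4)\otimes\RR^3\bigr]\big/K,\qquad K:=\bigl[\cZ^2_-(B^4)\otimes\RR^3\bigr]\cap L(\cW).$$
If $Lv$ is anti-self-dual then $L_+v=0$, so $L^*Lv=L^*L_+v=0$ by Lemma~\ref{l1.12.2.16} and hence $v\in\cW$ automatically; thus $K=\{\rd(\iota_v\omega):v\in C^\infty(B^4,TB^4),\ \rd(\iota_v\omega)\ \text{anti-self-dual}\}$, and by Proposition~\ref{linearisation.vector.field} the last condition is exactly $D(v^\flat)=0$ for $D=\rd^*+\rd_+$.

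\emph{Passing to the boundary.} By Lemma~\ref{B4.ASD} restriction gives an isomorphism $\iota^*\colon\cZ^2_-(B^4)\otimes\RR^3\xrightarrow{\ \sim\ }\cZ^2_-(S^3)\otimes\RR^3$, so it is enough to describe $\iota^*K$. I would compute $\iota^*(\iota_v\omega)$ from the collar formulae \eqref{e410a}, \eqref{e510a}: writing $v|_{S^3}=f\nu+u$ with $u$ tangent to $S^3$, and using that $\omega_i|_{S^3}$ is the standard framing (so $\iota_\nu\omega_i=\eta_i$ is the left-invariant coframe and $\gamma_i=*_{S^3}\eta_i$, with $*_{S^3}\rd_{S^3}\eta_i=\kappa\eta_i$ for a positive constant $\kappa$), one obtains $\iota^*(\iota_v\omega_i)=\sum_k B_{ik}\eta_k$ with $B=-fI+\widehat{u}$; that is, $B$ is pointwise a multiple of the identity plus a skew matrix. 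Conversely every such $B$ arises from some boundary vector field, and using the Poisson operator of Theorem~\ref{t1.26.1.16} together with Proposition~\ref{kerd} (which, for $B^4$, identifies the boundary values of $\ker D$ with the negative-frequency data, matching Lemma~\ref{B4.ASD} via the homogeneity of the round $S^3$) one identifies
$$\iota^*K=\bigl\{\rd_{S^3}(B\cdot\eta)\in\cZ^2_-(S^3)\otimes\RR^3:\ B=-fI+\widehat{u}\bigr\}.$$

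\emph{The splitting.} Finally I would prove $\cZ^2_-(S^3)\otimes\RR^3=\iota^*K\oplus C$, where $C$ is the subspace appearing in the statement. For $\alpha=\rd_{S^3}(B\cdot\eta)$ a short calculation on $S^3$ gives that the matrix-valued function $\bigl((\alpha_i,\gamma_j)\bigr)$ equals $\kappa B$ plus a first-order differential expression in $B$; with respect to the pointwise decomposition $\mathrm{Mat}_3=S^2_0\RR^3\oplus(\mathrm{skew}\oplus\RR I)$, the $(\mathrm{skew}\oplus\RR I)$-component of the map $B\mapsto\bigl((\alpha_i,\gamma_j)\bigr)$ is a bijection on $\{B=-fI+\widehat{u}\}$. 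This yields simultaneously $\iota^*K\cap C=0$ and $\iota^*K+C=\cZ^2_-(S^3)\otimes\RR^3$: given $\alpha$, subtract the unique element of $\iota^*K$ whose $(\mathrm{skew}\oplus\RR I)$-part agrees with that of $\bigl((\alpha_i,\gamma_j)\bigr)$. Composing with the previous isomorphisms sends $[\theta]\in\cT/L(\cW)$ to $\iota^*\theta$ corrected by the unique element of $\iota^*K$ making $\bigl((\cdot_i,\gamma_j)\bigr)$ symmetric and trace-free, which is the desired isomorphism onto $C$.

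\emph{Main obstacle.} The substantive work is the concrete identification of $\iota^*K$: one must carry out the $S^3$-computations of $\iota^*(\iota_v\omega)$ and of $\bigl((\alpha_i,\gamma_j)\bigr)$, and, more delicately, check that the ``negative frequency'' condition of Proposition~\ref{kerd} cutting out the boundary values of $\ker D$ coincides with ``extends to a closed anti-self-dual form on $B^4$'' from Lemma~\ref{B4.ASD}. Both are routine on the homogeneous space $S^3$ but need care with orientation and sign conventions, and it is this last matching that genuinely uses that the $4$-manifold is the flat ball rather than an arbitrary hyperk\"ahler domain.
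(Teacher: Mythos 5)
Your opening reductions match the paper: since $\cT=[\cZ^2_-(B^4)\otimes\RR^3]+L(\cW)$, the quotient is $[\cZ^2_-(B^4)\otimes\RR^3]/K$ with $K=L(\cW)\cap[\cZ^2_-(B^4)\otimes\RR^3]=\{Lv:\ Dv^\flat=0\}$ (via Lemma~\ref{l1.12.2.16} and Proposition~\ref{linearisation.vector.field}), and boundary restriction is injective by Lemma~\ref{B4.ASD}. But your proposed identification of $\iota^*K$ is a genuine gap, not a routine check. Proposition~\ref{kerd} says that boundary values of $\ker D$ form a \emph{graph} over the negative-frequency data $H^{s-1/2}_-(Y)\oplus H_{0,-}(Y)$: the positive-frequency part of $v|_{S^3}$ is determined and in general non-zero, so the admissible pairs $(f,u)$ are constrained. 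From an arbitrary $B=-fI+\widehat{u}$ for which $\rd_{S^3}(B\cdot\eta)$ happens to be negative frequency, nothing cited lets you produce $v$ on $B^4$ with $Dv^\flat=0$ and $\iota^*(Lv)=\rd_{S^3}(B\cdot\eta)$: membership in $K$ is the global condition that the \emph{interior} primitive be $\iota_v\omega$ for a single $1$-form $v^\flat$ in $\ker D$, which is far stronger than ``some boundary primitive has the pointwise shape $-fI+\widehat{u}$ and the result is negative frequency''. This is precisely where the paper instead expands solutions of $Dv=0$ on the flat ball into homogeneous pieces $v=r^kf\partial_r+r^{k-1}w$ and derives the coupled constraints $\rd^*w=(k+1)f$, $\rd f-*\rd w=(k+3)w$; that constrained family \emph{is} the description of $K$, and it cannot be replaced by the unconstrained set you write down without proof.

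The concluding splitting step has a second gap. The claimed bijectivity of $B\mapsto$ the $(\mathrm{skew}\oplus\RR I)$-component of $\bigl((\alpha_i,\gamma_j)\bigr)$ is a statement about a first-order operator of Dirac type plus a zeroth-order term being invertible on $C^\infty(S^3)$; this needs a spectral argument you do not give. Worse, even granting it, the corrector $\rd_{S^3}(B\cdot\eta)$ it produces from a given $\alpha\in\cZ^2_-(S^3)\otimes\RR^3$ is exact but has no reason to be negative frequency, hence no reason to lie in $\iota^*K$ (even under your own description), and then $\alpha-\rd_{S^3}(B\cdot\eta)$ has symmetric trace-free matrix but need not lie in $\cZ^2_-(S^3)\otimes\RR^3$. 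So neither $\iota^*K\cap C=0$ together with $\iota^*K+C=\cZ^2_-(S^3)\otimes\RR^3$, nor the induced isomorphism of the quotient with $C$, is established; the frequency compatibility you postpone to your ``main obstacle'' paragraph is in fact the substance of the proposition. The paper sidesteps the need for any such direct-sum construction by realizing the quotient as the $L^2$-orthogonal complement of $L(\cW)_-$ inside the closed ASD triples: pairing $\alpha$ with $Lv|_{S^3}=*\rd(\iota_w\eta)-(k+1)f\gamma+(k+3)w\wedge\eta$ over all admissible homogeneous data $(f,w,k)$, the exact term integrates away because $\rd\alpha=0$, and density of this data forces the trace and skew parts of $\bigl((\alpha_i,\gamma_j)\bigr)$ to vanish pointwise. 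To repair your argument you would either have to prove the frequency compatibility directly (essentially redoing the paper's homogeneous analysis) or switch to the orthogonality formulation.
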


\begin{proof}
Elements $Lv\in L(\cW)_-=L(\cW)\cap [\cZ_-^2(B^4)\otimes\RR^3]$ satisfy $L_+v=0$, which is equivalent to the 
Dirac equation $Dv=0$ by Proposition \ref{linearisation.vector.field}, and thus are determined by the boundary values of $v$.  Moreover, we know that $Lv$ is given as a sum of forms which are homogeneous in $r$ by Lemma \ref{B4.ASD}.  We thus restrict to the case where $v=r^kf\partial_r+r^{k-1}w$ where $f$ is a function on $S^3$,  $w$ is a vector field on $S^3$ and $k\in\bN$.  We calculate from the equation $L_+v=0$ that we have (viewing $w$ as a 1-form)
\[
\rd^*w=(k+1)f\quad\text{and}\quad \rd f-*\rd w=(k+3)w.
\]
We deduce that, recalling that $\eta$ is the standard coframe on $S^3$, 
\[
Lv|{S^3}=L_-v|{S^3}=*\rd(i_w\eta)-(k+1)f\gamma+(k+3)w\wedge \eta.
\]
Hence, $\alpha\in\cZ^2_-(Y)$ is $L^2$-orthogonal  to $Lv|{S^3}$ 
if and only if 
\[
\langle\alpha,*\rd(i_w\eta)-(k+1)f\gamma+(k+3)w\wedge\eta\rangle
=-(k+1)f\langle\alpha,\gamma\rangle+(k+3)\langle\alpha,w\wedge\eta\rangle=0,
\]
since $\rd\alpha=0$.  
Hence, by imposing this condition for all $Lv\in L(\cW)_-$, which amounts to varying $f$ and $w$ (and hence $k$) so that $Lv|{S^3}\in\cZ^2_-(S^3)$, we deduce that we must have, in summation convention:
$\alpha_i\wedge\eta_i=0$ and $\epsilon_{ijk}\alpha_j\wedge\eta_k=0$.
This is equivalent to the vanishing of the 
trace and skew-parts of the matrix $(\alpha_i,\gamma_j)$ of 
inner products.
\end{proof}

It is natural to ask what happens when one takes positive frequency data on
 $S^3$ instead.  One knows that this cannot fill in to a hyperk\"ahler triple on
  $B^4$, but in general one cannot say more than that.  However, in a special 
  case we can explicitly demonstrate that we can take arbitrarily small positive 
  frequency data which has no hyperk\"ahler filling, by 
relating the deformation of the boundary data to the Eguchi--Hanson metric.

\begin{prop}\label{non.exist.prop}
  For $c\in (0,1)$ let 
\[
\hat{\gamma}=(\eta_2\wedge\eta_3, (1-c^4)^{\frac{1}{2}}\eta_3\wedge\eta_1,(1-c^4)^{\frac{1}{2}}\eta_1\wedge\eta_2).
\]
Then $\hat{\gamma}-\gamma$ has positive frequency with respect to $\rd*$ on $S^3$ and there does not exist a 
hyperk\"ahler triple $\hat{\omega}$ on $B^4$ such that $\hat{\omega}|{S^3}=\hat{\gamma}$.
\end{prop}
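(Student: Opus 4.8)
The plan is to dispose of the two assertions of Proposition~\ref{non.exist.prop} separately: the positive-frequency statement is a short computation with left-invariant forms, while the non-existence is the substantive part, which I would extract from the \emph{uniqueness} half of Bryant's Theorem~\ref{bryant.thm} together with the $\SU(2)$-invariant classification of \S\ref{examples}.

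For the frequency claim, write
\[
\hat\gamma-\gamma=\big((1-c^4)^{1/2}-1\big)\,(0,\ \eta_3\wedge\eta_1,\ \eta_1\wedge\eta_2),
\]
which is nonzero (as $c\in(0,1)$) and closed, since $\rd(\eta_i\wedge\eta_j)=0$. Each of $\eta_2\wedge\eta_3,\ \eta_3\wedge\eta_1,\ \eta_1\wedge\eta_2$ is an eigenform of $\rd*$ on $S^3$ with the \emph{same} positive eigenvalue: with the metric and orientation induced by $\gamma$ one has $*(\eta_2\wedge\eta_3)=\eta_1$, so $\rd*(\eta_2\wedge\eta_3)=\rd\eta_1=2\,\eta_2\wedge\eta_3$, and cyclically. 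Hence $\hat\gamma-\gamma$ lies entirely in a single positive eigenspace of $\rd*$, i.e.\ in $E_{-2}$ in the notation of Lemma~\ref{B4.ASD}. Since the eigenspaces occurring as boundary values of closed anti-self-dual $2$-forms on $B^4$ are the $E_k$ with $k\ge2$ (Lemma~\ref{B4.ASD}), the space $E_{-2}$ lies on the complementary side, which is precisely the positive-frequency side; so $\hat\gamma-\gamma$ is a positive-frequency deformation of $\gamma$.

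For the non-existence, suppose for contradiction that $\hat\omega$ is a hyperk\"ahler triple on $B^4$ with $\iota^*\hat\omega=\hat\gamma$. Being polynomial in left-invariant forms, $\hat\gamma$ is a real-analytic closed framing of $\Lambda^2_{S^3}$, and comparing with \eqref{e1.13.8.15} shows it is exactly the restriction to the slice $\{r=1\}$ of the Eguchi--Hanson triple written with $S^3$-slices, whose profile satisfies $(f_1,f_2,f_3)|_{r=1}=\big((1-c^4)^{1/2},1,1\big)$ with $f_2=f_3$. By the uniqueness part of Theorem~\ref{bryant.thm}, a collar of $\partial B^4=S^3$ in $(B^4,\hat\omega)$ is therefore isometric, as a hyperk\"ahler manifold, to a collar of $\{r=1\}$ in the $S^3$-sliced Eguchi--Hanson space, which carries the $\SU(2)$-action fixing the triple. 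As $\hat\omega$ is Ricci-flat it is real-analytic (up to the boundary, by the previous sentence), so the associated local Killing fields continue to global ones on the compact, simply connected manifold $B^4$, producing an isometric $\SU(2)$-action on $(B^4,\hat\omega)$ restricting to the standard transitive action on $\partial B^4$.

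It remains to run the orbit-space argument. Near $\partial B^4$ the principal orbit is $S^3$; a standard analysis of the orbit structure of such an $\SU(2)$-action on $B^4$ — using $\pi_1(B^4)=0$ to exclude ``mirror'' orbits and $H^2(B^4)=0$ (via the normal Euler number) to exclude $2$-dimensional orbits — forces the action to be the standard linear one, with a single fixed point at the centre and $B^4\setminus\{\text{pt}\}\cong(0,1]\times S^3$ equivariantly. Thus $\hat\omega$ has the form \eqref{e1.13.8.15} for a profile $(f_1,f_2,f_3)(t)$ solving \eqref{ODEs} with $f_2=f_3\ne f_1$ at the boundary; $f_2=f_3$ is preserved by \eqref{ODEs}, and by uniqueness this profile coincides, up to the $t$-translation freedom, with the Eguchi--Hanson one, for which $f_2=f_3\ge c>0$ on its whole maximal interval of existence — the orbits never collapse to a point, and the interval ends only where $f_1\to0$, i.e.\ where a ``bolt'' $S^2$-orbit would appear. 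This is incompatible with the fixed point at the centre of $B^4$ (there all of $f_1,f_2,f_3$ must tend to $0$, as for flat $\RR^4$), giving the desired contradiction. The main obstacle is the step promoting Bryant's \emph{local} uniqueness to the global $\SU(2)$-invariance of $\hat\omega$: this requires care with real-analyticity of $\hat\omega$ up to the boundary and with the extension of local Killing fields across the compact simply connected $B^4$; with that in hand the remainder is the classical $\SU(2)$-classification together with elementary topology.
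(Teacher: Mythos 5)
Your proof is correct and follows essentially the paper's own route: the paper likewise observes that $\hat\gamma-\gamma$ is a constant multiple of the $\eta_i\wedge\eta_j$, which are eigenforms of $\rd*$ with eigenvalue $+2$ and hence positive frequency, and rules out a filling by noting that (via the uniqueness in Theorem~\ref{bryant.thm}) the only hyperk\"ahler extension of $\hat\gamma$ is the Eguchi--Hanson solution of \eqref{ODEs} (on the double cover with $S^3$ slices), which cannot close up smoothly over $B^4$ by the $\SU(2)$-invariant classification. The only difference is one of detail: the globalization you carry out via real-analyticity, continuation of Killing fields and the cohomogeneity-one orbit analysis is exactly what the paper compresses into its appeal to that classification.
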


\begin{proof}
We see that $\hat{\gamma}-\gamma$ consists of constant multiples of $\eta_i\wedge\eta_j$ which have eigenvalue 2 
with respect to $\rd *$ and so are positive frequency.  We know that $\hat{\gamma}$ has a unique hyperk\"ahler extension
$\hat{\omega}$ given by the ODEs \eqref{ODEs} derived above, which lead to the Eguchi--Hanson triple
\[
\hat{\omega}=(r\rd r\wedge\eta_1+r^2\eta_2\wedge\eta_3,rf^{-1}\rd r\wedge\eta_2+ r^2f\eta_3\wedge\eta_1,rf^{-1}\rd r\wedge\eta_3+r^2f\eta_1\wedge\eta_2)
\]
where $f(r)=(1-c^4/r^4)^{\frac{1}{2}}$ for $r>c$.  The issue is whether this can be extended smoothly to $r=c$ to give a 
hyperk\"ahler metric on $B^4$, but this is not possible by the classification of $\SU(2)$-invariant hyperk\"ahler 4-manifolds.    
\end{proof}

As we saw above, there are two $\SU(2)$-invariant hyperk\"ahler metrics on $B^4$: the flat metric and the Taub--NUT metric.  We have an induced closed framing of the 2-forms on $S^3$ in Taub--NUT when $r=m+\frac{1}{2m}$ given by $q\hat{\gamma}q^{-1}$ where
\[
\hat{\gamma}=\big((1+\frac{1}{4m^2})\eta_2\wedge\eta_3,\eta_3\wedge\eta_1,\eta_1\wedge\eta_2).
\]
Hence, if we consider the second standard framing of the 2-forms on
$S^3$ given by $q\gamma q^{-1}$, then $\hat{\gamma}-\gamma=
(4m^2)^{-1}(\eta_2\wedge\eta_3,0,0)$, which can clearly be made
arbitrarily small by making the mass $m$ sufficiently large.  Notice
that in Proposition~\ref{non.exist.prop} this difference was seen to be positive
frequency with respect to $\gamma$.  However, we observe that in the
analysis of the $\SU(2)$-invariant hyperk\"ahler 4-manifolds above
that  
the induced orientation on $S^3$ is such that $-\eta_1\wedge\eta_2\wedge\eta_3>0$ (i.e.~reversed). Hence, $q(\hat{\gamma}-\gamma)q^{-1}$ is now negative frequency as we would expect.

We summarise this discussion in a final proposition.

\begin{prop}
Let $S^3$ be endowed with the closed framing of the $2$-forms given at $q\in S^3$ by $q\gamma q^{-1}$.  For any $m>0$ there exist closed framings $q\hat{\gamma}q^{-1}$ of the $2$-forms on $S^3$ such that $q(\hat{\gamma}-\gamma)q^{-1}$ is negative frequency with respect to $\rd*$ on $S^3$ and the hyperk\"ahler filling of 
$q\hat{\gamma}q^{-1}$ to $B^4$ is given by Taub--NUT with mass $m$.  
\end{prop}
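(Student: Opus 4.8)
The plan is to assemble the explicit computations from the analysis of $\SU(2)$-invariant gravitational instantons carried out above. Recall that solving the system \eqref{ODEs2} with $f_1\neq f_2=f_3$ produces the Taub--NUT metric of mass $m$ on $\RR^4$, with $f_2=f_3=(r^2-m^2)^{1/2}$ and $f_1=2m(r-m)^{1/2}(r+m)^{-1/2}$ for $r\ge m$, the locus $r=m$ being the ``NUT'' at which the Hopf circle (the $\eta_1$-direction) collapses to a point. Consequently, for any fixed $m>0$, the region $\{\,r\le m+\frac{1}{2m}\,\}$ is diffeomorphic to the closed $4$-ball $B^4$, and it carries the Taub--NUT hyperk\"ahler triple $\hat\omega|_q=q\,\omega\, q^{-1}$, where $\omega$ is as in \eqref{e1.13.8.15} now viewed as taking values in the imaginary quaternions.

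First I would restrict $\hat\omega$ to the hypersurface $\{\,r=m+\frac{1}{2m}\,\}$: as in the Taub--NUT case treated above, the induced closed framing of the bundle of $2$-forms on $S^3$ is $q\hat\gamma q^{-1}$ with $\hat\gamma=\big(f_2f_3\,\eta_2\wedge\eta_3,\ f_3f_1\,\eta_3\wedge\eta_1,\ f_1f_2\,\eta_1\wedge\eta_2\big)$. Substituting $r-m=\frac{1}{2m}$ and $r+m=2m+\frac{1}{2m}$ gives $f_3f_1=f_1f_2=2m(r-m)=1$ and $f_2f_3=r^2-m^2=(r-m)(r+m)=1+\frac{1}{4m^2}$, so that $\hat\gamma=\big((1+\frac{1}{4m^2})\eta_2\wedge\eta_3,\ \eta_3\wedge\eta_1,\ \eta_1\wedge\eta_2\big)$. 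Taking $\gamma=(\eta_2\wedge\eta_3,\eta_3\wedge\eta_1,\eta_1\wedge\eta_2)$, the framing induced on $\{r=1\}$ by the \emph{rotating} flat structure on $B^4$ (the first bullet of the rotating case, $f_1=f_2=f_3=-t$), we obtain $\hat\gamma-\gamma=\frac{1}{4m^2}(\eta_2\wedge\eta_3,0,0)$. Since conjugation by a unit quaternion $q$ acts pointwise by an element of $\SO(3)$, $q(\hat\gamma-\gamma)q^{-1}$ has the same (bounded) pointwise norm as $\hat\gamma-\gamma$, so it is $O(m^{-2})$ and can be made arbitrarily small; and $q\hat\gamma q^{-1}$ is by construction the pull-back of the Taub--NUT triple on $B^4$, so by the essential uniqueness in Theorem~\ref{bryant.thm} its hyperk\"ahler filling is Taub--NUT of mass $m$.

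It remains to check that $q(\hat\gamma-\gamma)q^{-1}$ is negative frequency, which is the only genuinely delicate point; everything else is substitution into formulae already derived. The subtlety is orientation bookkeeping: as noted in the discussion preceding this proposition, the orientation that the rotating $\SU(2)$-picture induces on the $S^3$-orbits is $-\eta_1\wedge\eta_2\wedge\eta_3>0$, \emph{opposite} to the one used in Lemma~\ref{B4.ASD} and Proposition~\ref{non.exist.prop}. With respect to this orientation (and the induced metric) one has $*(\eta_2\wedge\eta_3)=-\lambda\,\eta_1$ for some $\lambda>0$, whence $\rd*(\eta_2\wedge\eta_3)=-\lambda\,\rd\eta_1$ is a negative multiple of $\eta_2\wedge\eta_3$ since $\rd\eta_i=\epsilon_{ijk}\eta_j\wedge\eta_k$; thus $\eta_2\wedge\eta_3$, and hence $q(\eta_2\wedge\eta_3)q^{-1}$, lies in a negative eigenspace of $\rd*$. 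This is exactly the sign flip relative to Proposition~\ref{non.exist.prop}, where the un-reversed orientation made the analogous multiples of $\eta_i\wedge\eta_j$ positive frequency. Hence $q(\hat\gamma-\gamma)q^{-1}$ is negative frequency, which completes the argument. The hard part, as indicated, is simply keeping the orientation conventions of \S\ref{examples} consistent; the remaining steps are the routine substitutions above.
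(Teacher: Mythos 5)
Your argument is essentially identical to the paper's own: the proposition is proved there by exactly the discussion you reproduce, namely computing the Taub--NUT framing on the orbit $r=m+\tfrac{1}{2m}$ (giving $f_3f_1=f_1f_2=2m(r-m)=1$, $f_2f_3=1+\tfrac{1}{4m^2}$), obtaining $\hat\gamma-\gamma=\tfrac{1}{4m^2}(\eta_2\wedge\eta_3,0,0)$, and deducing negative frequency from the reversed orientation $-\eta_1\wedge\eta_2\wedge\eta_3>0$ induced in the rotating picture, with the filling being Taub--NUT by construction and (Bryant) uniqueness. The one step you gloss --- that conjugation by $q$, which multiplies $\eta_2\wedge\eta_3$ by non-constant functions so that the components of $q(\hat\gamma-\gamma)q^{-1}$ are no longer single eigenforms of $\rd*$, still yields a negative-frequency form --- is treated at exactly the same level of detail in the paper, so your proof matches the paper's argument.
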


\thebibliography{99}

\bibitem{AHS} M.~F.~Atiyah, N.~J.~Hitchin and I.~M.~Singer, {\it Self-duality in four-dimensional Riemannian geometry},
Proc.~Roy.~Soc.~London Ser.~A  {\bf 362}  (1978), 425--461. 

\bibitem{MR934202} M.~F.~Atiyah and N.~J.~Hitchin, 
{\it The geometry and dynamics of magnetic monopoles},
M.~B.~Porter Lectures,
  Princeton University Press, Princeton, NJ, 1988.

\bibitem{BaerBallmann} C.~B\"ar and W.~Ballmann, {\it Guide to boundary value problems for Dirac-type operators}, arXiv:1307.3021.

\bibitem{BartnikChrusciel} R.~A.~Bartnik and P.~T.~Chrusciel, {\it Boundary value problems for Dirac-type equations}, J. Reine Angew. Math. {\bf 579} (2005), 13--73.

\bibitem{Biquard} O.~Biquard, {\it M\'etriques autoduales sur la boule}, Invent.~ Math.~{\bf 148} (2002), 545--607.

\bibitem{Bryant} R.~L.~Bryant, {\it Nonembedding and nonextension results in special holonomy}, in The many facets of geometry, pp.~346--367, Oxford University Press, Oxford, 2010.

\bibitem{Cartan} \'E.~Cartan, {\it La g\'eometrie des espaces de {R}iemann}, M\'emorial des Sciences Mathematiques, Fasc.~IX (1925).

\bibitem{Dancer} A.~S.~Dancer, {\it A family of hyperk\"ahler manifolds}, 
Q.~J.~Math.~{\bf 45} (1994), 463--478.

\bibitem{Donaldson} S. K. Donaldson, {\it Two-forms on four-manifolds and elliptic equations}, in Inspired by S.~S.~Chern, volume 11 of Nankai Tracts Math., pp.~153--172, World Sci. Publ., Hackensack, NJ, 2006.

\bibitem{Ebin} D.~Ebin, {\it The manifold of {R}iemannian metrics}, Proc.~Symp.~Pure Math.~{\bf 15} (1970), 11--40.

\bibitem{EbinMarsden}  D.~Ebin and J.~Marsden, {\it Groups of
    diffeomorphisms and the motion of an incompressible fluid},
  Ann.~of Math.~{\bf 92} (1970), 102--163.

\bibitem{GibbonsPope} G.~Gibbons and  C.~Pope, {\it The positive action conjecture and
asymptotically Euclidean metrics in quantum gravity}, Commun.
Math. Phys. {\bf 66} (1979), 267-290.

\bibitem{Gromov} M.~Gromov, {\it Partial Differential Relations}, Springer, 1986.

\bibitem{HMZ} O.~Hijazi, S.~Montiel and X.~Zhang, {\it Dirac operator on embedded hypersurfaces}, Math.~Res.~Lett.~{\bf 8} (2001), 195--208.

\bibitem{Hitchin} N.~Hitchin, {\it Hyper-K\"ahler manifolds},
S\'eminaire Bourbaki, Vol.~1991/92,
 Ast\'erisque  {\bf 206}  (1992), Exp.~No.~748, 3, 137--166.

\bibitem{Claude_thesis} C.~LeBrun, {\it Spaces of complex geodesics and related structures}, DPhil, University of Oxford, 1980. Available at 
\href{http://ora.ox.ac.uk/objects/uuid:e29dd99c-0437-4956-8280-89dda76fa3f8}{\texttt{http://ora.ox.ac.uk/objects/uuid:e29dd99c-0437-4956-8280-89dda76fa3f8}}.

\bibitem{LeB91} C.~LeBrun, {\it On complete quaternionic-K\"ahler manifolds}, 
Duke Math.~J.~{\bf 63} (1991), 723--743. 

\bibitem{Schwarz} G.~Schwarz, {\it Hodge decomposition -- A method for solving boundary value problems},
Springer-Verlag, Berlin, 1995.

\bibitem{Taylor} M.~E.~Taylor, {\it Partial differential equations}, Vol.~1, Applied Math.~Sciences {\bf 115}, Springer-Verlag, New York, 1996.

\bibitem{Tromba} A.~J.~Tromba, {\it Teichm\"uller theory in Riemannian geometry}, Lectures in Mathematics ETH Z\"urich, Birkh\"auser Verlag, Basel, 1992.

\end{document}